\newtheorem{thm}{\bf Theorem}[section]
\newtheorem{prop}[thm]{\bf Proposition}
\newtheorem{lem}[thm]{\bf Lemma}
\newtheorem{cor}[thm]{\bf Corollary}
\newtheorem{definition}[thm]{\bf Definition}
\renewcommand{\labelenumi}{(\roman{enumi})}
\def \R {\mathbb{R}}
\def \C {\mathbb{C}}
\def \id {\operatorname{id}}
\def \Ad {\operatorname{Ad}}
\def \ad {\operatorname{ad}}
\def \dim {\operatorname{dim}}
\def \o {\mathrm{o}}
\def \g {\mathfrak{g}}
\def \h {\mathfrak{h}}
\def \j {\mathfrak{j}}
\def \l {\mathfrak{l}}
\def \p {\mathfrak{p}}
\def \n {\mathfrak{n}}
\def \G {\mathcal{G}}
\def \F {\mathcal{F}}
\def \P {\mathrm{Poly}}
\def \a {\alpha}
\begin{document}
\title{Local rigidity of certain actions of solvable groups on the boundaries of rank one symmetric spaces}
\author{Mao Okada}
\maketitle
\begin{abstract}
Let $G$ be the group of orientation-preserving isometries of a rank-one symmetric space $X$ of non-compact type.
We study local rigidity of certain actions of a solvable subgroup $\Gamma \subset G$ on the boundary of $X$, which is diffeomorphic to a sphere.
When $X$ is  a quaternionic hyperbolic space or the Cayley hyperplane, the action we constructed is locally rigid.

\end{abstract}
\section{Introduction}
One of the most active areas of the study of rigidity of group actions is around the Zimmer program, in which many remarkable properties of actions of a lattice $\Gamma$ of a higher rank Lie group have been discovered.
See \cite{Fisher} for the recent development.
As pointed out in \cite{Fisher}, in the study of actions of a lattice of a higher rank Lie group, the study of actions of a higher rank abelian group $\Gamma = \mathbb{Z}^n$, $n \geq2$ of certain hyperbolicity plays an important role.
On the other hand, Burslem and Wilkinson showed that there exists a solvable group $\Gamma$ which does not contain a higher rank abelian group such that an action on the circle $\mathrm{S}^1$ is locally rigid \cite{Burslem-Wilkinson}.
In this paper, we consider locally rigid actions of solvable groups which does not contain a hyperbolic action of a higher rank abelian group.

As a higher dimensional analogue of the result of Burslem and Wilkinson, Asaoka constructed an action of a solvable group on $\mathrm{S}^n$, $n \geq 2$ \cite{Asaoka1}.
Asaoka showed that, while the action is not locally rigid, it is locally rigid in a weaker sense.
One of the most important example of such a weak form of local rigidity is \cite{Ghys}.
In \cite{Asaoka2}, Asaoka studied local rigidity of an action of the same group on the torus $\mathrm{T}^n$, which can also be viewed as a higher dimensional version of the result of Burslem and Wilkinson.
In \cite{Okada}, the author studied local rigidity of certain action of a solvable group on the sphere.
In \cite{Wilkinson-Xue}, Wilkinson and Xue studied rigidity of an action of a solvable group on the torus.

In this paper, we consider a generalization of the results of \cite{Asaoka1} and \cite{Okada} which can be formulated as follows.
Let $X$ be a rank one symmetric space of non-compact type, $G$ the group of orientation-preserving isometries of $X$, and $G = KAN$ an Iwasawa decomposition.
\begin{definition}
A subgroup $\Gamma$ of $AN \subset G$ is called a \textrm{standard subgroup} of $G = KAN$ if $\ \Gamma$ is generated by a lattice $\Lambda$ of $N$ and a nontrivial element $a \in A$ such that $a\Lambda a^{-1} \subset \Lambda$.
\end{definition}
Let $M \subset K$ be a centralizer of $A$ in $K$ so that $P = MAN$ is a minimal parabolic subgroup of $G$.
Then the homogeneous space $G/P$ is diffeomorphic to a sphere.
The action of $G$ on $G/P$ by the left translation will be denoted by $l: G \to \mathrm{Diff}(G/P)$.
The following theorem, which can be referred to as $C^2$-local rigidity of $l|_\Gamma$ up to embedding of $\Gamma$ into $G$, is the main theorem of this paper.
\begin{thm}\label{thm:main}
Let $G$ be the group of orientation-preserving isometries of a rank one symmetric space of non-compact type, $\Gamma$ a standard subgroup of $G$, and $l|_\Gamma$ the action of $\,\Gamma$ on $G/P$ by left translations.
Assume $G \neq \mathrm{PSL}(2, \R)$.
If $\rho$ is a $C^\infty$ action of $\,\Gamma$ on $G/P$ sufficiently $C^2$-close to $l|_\Gamma$, then there is an embedding $\iota$ of $\,\Gamma$ into $G$ as a standard subgroup and a $C^\infty$ diffeomorphism $h$ of $G/P$ such that 
\[
\rho(g) = h \circ l(\iota(g)) \circ h^{-1}
\]
for all $g \in \Gamma$.
\end{thm}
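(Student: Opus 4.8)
The plan is the standard three-step scheme for local rigidity of an action with one hyperbolic generator: first use the hyperbolicity of $\rho(a)$ to pin down the dynamics and a global fixed point, then linearize $\rho$ near that fixed point and recognize the normal form as that of a standard subgroup, and finally globalize. To fix coordinates, identify $G/P$ with the one-point compactification of $N$ via the open Bruhat cell $N w_0 P/P\cong N$. On this cell $l|_\Gamma$ is explicit: $\Lambda$ acts by left translations of $N$, and $a$ acts by the automorphism $n\mapsto a n a^{-1}$, which is expanding because $a\Lambda a^{-1}\subset\Lambda$ forces all eigenvalues of $\Ad(a)|_{\n}$ to exceed $1$. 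Hence $l(a)$ is a North--South diffeomorphism of the sphere $G/P$ with hyperbolic attracting fixed point $eP$ and hyperbolic repelling fixed point $w_0 P$, and $eP$ is the unique common fixed point of $l|_\Gamma$. At $eP$ one has $D l(a)=\Ad(a)|_{\g/\p}$, equal to $\operatorname{diag}(\mu\,\id,\mu^2\,\id)$ on the grading $\g/\p\cong\g_{-\a}\oplus\g_{-2\a}$ with $\mu=e^{-\a(\log a)}\in(0,1)$ (where $\g_{-2\a}=0$ exactly in the real hyperbolic case), while $D l(\lambda)=\Ad(\lambda)|_{\g/\p}$ is unipotent and preserves $\g_{-\a}$; this subspace integrates to the $G$-invariant ``contact'' distribution on $G/P$, which is the slow direction of $l(a)$.

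\textit{Step 1: dynamics and a common fixed point.} Since $\rho(a)$ is $C^1$-close to the structurally stable North--South map $l(a)$, it is topologically conjugate to it, so it has hyperbolic fixed points $p^{\pm}$ near $eP$ and $w_0 P$ with $p^+$ attracting. As $\Gamma$ is solvable, hence amenable, $\rho$ preserves a probability measure $m_0$ on $G/P$; being $\rho(a)$-invariant under a North--South map, $m_0=s\,\delta_{p^+}+(1-s)\,\delta_{p^-}$. For $\rho$ close enough, $\rho(\lambda_0)(p^-)\neq p^-$ for a generator $\lambda_0$ of $\Lambda$ (because $l(\lambda_0)$ moves $w_0 P$ a definite amount), so $s>0$, and then $\rho(\lambda)$ permutes $\{p^+,p^-\}$ for every $\lambda\in\Lambda$; it cannot swap them, since then $\rho(\lambda)\rho(a)\rho(\lambda)^{-1}=\rho(\lambda a\lambda^{-1})$ would have attracting fixed point $\rho(\lambda)(p^+)=p^-$, whereas $\rho(\lambda a\lambda^{-1})$ is $C^1$-close to $l(\lambda a\lambda^{-1})$, whose attracting fixed point lies near $eP$, not near $w_0 P$. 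Hence $\rho(\lambda)p^+=p^+$ for all $\lambda\in\Lambda$, so $\rho(\Gamma)$ fixes $p^+$; after conjugating by a diffeomorphism we may assume $p^+=eP$.

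\textit{Step 2: local normal form near $eP$.} Now $D_{eP}\rho(a)$ is close to $\operatorname{diag}(\mu\,\id,\mu^2\,\id)$, hence has a dominated splitting $T_{eP}(G/P)=E^s\oplus E^{ss}$ with spectrum near $\mu$ on $E^s$ and near $\mu^2$ on $E^{ss}$; using that $\rho(\gamma)$ conjugates $\rho(a)$ to $\rho(\gamma a\gamma^{-1})$ and that $\gamma a\gamma^{-1}\in P$ has slow subspace $\g_{-\a}$ in the unperturbed model, one checks that $E^s$ (hence the contact distribution) and the strong stable foliation of $\rho(a)$ are $\rho(\Gamma)$-invariant. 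The only resonances among the eigenvalues of $D_{eP}\rho(a)$ are the quadratic ones $\mu^2\approx\mu\cdot\mu$ (neither $\mu^3$ nor $\mu^4$ lies near the spectrum), so by Sternberg's normal form theorem there is a $C^\infty$ chart near $eP$, close to the exponential chart of $\bar N=\theta(N)$, in which $\rho(a)$ becomes a polynomial map $\Phi(x,y)=(D_1 x,\,D_2 y+Q(x,x))$ of Heisenberg type, with $D_i$ near $\mu^i\,\id$ and $Q$ near the bracket of $\bar{\n}$.

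\textit{Step 3: rigidifying $\rho(\Lambda)$ and globalizing; the crux.} In this chart each $\rho(\lambda)$ fixes $0$ and obeys $\Phi^{\,n}\rho(\lambda)\Phi^{-n}=\rho(\phi^n(\lambda))$; rescaling by the graded contraction $\Phi$ and passing to the limit extracts the resonant polynomial jets of the $\rho(\lambda)$'s, and imposing the group relations on these jets drives $D_1,D_2,Q$ and the jets of the $\rho(\lambda)$'s to standard form --- that is, after one further common $C^\infty$ change of chart, $\rho(a)$ is the action on a nilpotent group $\bar N'\cong\bar N$ of an element $a'$ of a rank-one Iwasawa torus $A'$, and the $\rho(\lambda)$'s realize the left-translation action of a lattice $\Lambda'\subset N'$ with $a'\Lambda'a'^{-1}\subset\Lambda'$; this yields both the embedding $\iota$ of $\Gamma$ as a standard subgroup and a $C^\infty$ conjugacy $h$ near $eP$. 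Since the forward $\rho(a)$-orbit of any $x\neq p^-$ enters this chart, $h(x):=l(\iota(a))^{-n}h(\rho(a)^n x)$ for $n\gg 0$ extends $h$ to a $C^\infty$ diffeomorphism of $G/P\setminus\{p^-\}$ conjugating all of $\rho(\Gamma)$ to $l\circ\iota$, and smoothness at $p^-$ follows by running the same local analysis for $\rho(a^{-1})$. The main obstacle is Step 3: because the action of $\Lambda$ already has unbounded derivatives at the fixed point in the model (the unipotent parts $\Ad(\lambda)$ grow with $\lambda$), the $\rho(\lambda)$ cannot be normalized one at a time, and one must treat their polynomial jets simultaneously through their interaction with $\Phi$; moreover it is exactly the constraint imposed by $\rho(\Lambda)$ that forces the normal form of $\rho(a)$ to be a genuine dilation rather than an arbitrary contraction, and the precise shape of this mechanism --- together with whether it pins $\iota$ down up to conjugacy in $G$ --- varies with the four rank-one families, which is also the reason $\mathrm{PSL}(2,\R)$, where $\bar N$ is one-dimensional, is excluded.
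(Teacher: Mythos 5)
Your skeleton (a common fixed point, a normal form for the local action at that fixed point, then globalization) does parallel the paper, and your Step 1 --- an invariant measure for the amenable group $\Gamma$ combined with structural stability of the North--South map $l(a)$ --- is a reasonable alternative to the paper's route via Stowe's theorem and the vanishing $H^1(\Gamma,\g/\p)=0$ (Proposition \ref{prop:vanish}). But Step 3, which you yourself call ``the main obstacle,'' is the actual content of the theorem, and you describe it rather than prove it. ``Rescaling by $\Phi$ and imposing the group relations on the resonant jets'' cannot by itself force the data into the form of a standard subgroup. What the paper has to do there is: put the $1$-jets of $\rho(\Lambda)$ in triangular form (Lemma \ref{lem:1-jet_rigid}), extend $\rho|_\Lambda$ to a continuous homomorphism of all of $N$ into the $3$-jet group by Raghunathan's rigidity of lattices in nilpotent groups (Theorem \ref{nilp_ext}), show the resulting map $\n\to\j^3(\n_-,0)$ is $\mathfrak{a}$-invariant and close to $l_*$, and, above all, prove $H^0(\n,\mathrm{Poly}(\n_-))^{\mathfrak{a}}=0$ and $H^1(\n,\j^3(\n_-,0))^{\mathfrak{a}}\cong H^1(\n,\g)^{\mathfrak{a}}$ (Corollaries \ref{cor_0cohom}, \ref{cor_1cohom} and Lemma \ref{lem:H1(np)a_to_H1(nj3)a}); this rests on Kostant's theorem and the Casselman--Osborne/Vogan infinitesimal-character machinery applied case by case to the rank-one root systems, and it is exactly what guarantees that the perturbed jet data is conjugate to $l_*$ of some (possibly different) standard subgroup. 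One must then still pass from $3$-jets to formal transformations (no further $\mathfrak{a}$-invariant resonances, Proposition \ref{prop:jet_to_formal}) and from formal transformations to germs (the contraction estimate with the $l(N)$-invariant Maurer--Cartan form, Proposition \ref{prop:formal_to_local}). That the conclusion can only be ``up to a new embedding $\iota$,'' and that the action is in fact not locally rigid for $\mathrm{SO}_0(n+1,1)$ and $\mathrm{SU}(n+1,1)$ (Proposition \ref{prop:nonloc_rigid}), shows that no purely formal manipulation of the relations of $\Gamma$ can settle Step 3: the answer depends on which classes survive in $H^1(\n,\g)^{\mathfrak{a}}$, and your proposal offers no substitute for that computation.

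There is also a gap in your globalization. The formula $h(x)=l(\iota(a))^{-n}h(\rho(a)^n x)$ is $a$-equivariant by construction, but to get $\Lambda$-equivariance at a point $x$ you must apply the germ identity to the elements $a^n\lambda a^{-n}$ at the points $\rho(a)^n x$; since $a^n\lambda a^{-n}\to\infty$ in $\Gamma$, the neighborhoods of $p^+$ on which those germ identities hold may shrink faster than $\rho(a)^n x\to p^+$, and nothing in your argument supplies the required uniformity. The paper sidesteps this by identifying $(G/P)\setminus\{\o\}$ with $N$, on which $\Lambda$ acts properly discontinuously and cocompactly, and extending the conjugacy $\Lambda$-equivariantly using that $\Lambda$ has one end and infinite center (Lemmas \ref{lem:equiv_ext_} and \ref{lem:equiv_ext}), deducing $a$-equivariance afterwards from $a\Lambda a^{-1}\subset\Lambda$. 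A smaller inaccuracy in your Step 2: the strong stable foliation of $\rho(a)$ is not $\rho(\Gamma)$-invariant even in the unperturbed model --- the unipotent $1$-jets of $l(\Lambda)$ preserve the slow subspace $\g_{-1}$ but not $\g_{-2}$ --- so only the slow (contact) distribution can be used as you intend.
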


While we excluded the case $G = \mathrm{PSL}(2, \R)$ for a technical reason, the claim also holds.
In this case, $\Gamma$ can be presented as 
\[
\langle a, b \mid aba^{-1} = b^k \rangle
\]
for some integer $k \geq 2$ and $G/P$ is diffeomorphic to a circle $S^1$.
The action $l|_\Gamma$ admits a common fixed point and the action on the complement, which is diffeomorphic to $\R$, is given by
\[
a\cdot x = kx,\; b\cdot x = x + 1\;\; (x \in \R).
\]
The local rigidity of the action follows from the result of Burslem and Wilkinson mentioned above.
It is not difficult to check that the case $G = \mathrm{SO}_0(n+1, 1)$, $n \geq 2$ is exactly the above result of Asaoka.
The case $G = \mathrm{SU}(n+1, 1)$, $n \geq 2$ for $C^3$-small perturbation is the above result of the author.

When $G = \mathrm{Sp}(n+1, 1)$, $n \geq 2$ or $F_4^{-20}$, we can show the inclusion $\Gamma \hookrightarrow G$ is locally rigid.
So we obtain local rigidity in the strict sense:
\begin{cor}\label{cor:main}
For $G = \mathrm{Sp}(n+1, 1)$ $(n \geq 2)$ and $F_4^{-20}$, the action $l|_\Gamma$ of a standard subgroup $\Gamma$ of $G$ on $G/P$ is $C^2$-locally rigid;
a $C^\infty$ action sufficiently $C^2$-close to $l|_\Gamma$ is $C^\infty$-conjugate to $l|_\Gamma$.
\end{cor}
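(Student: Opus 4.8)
The plan is to derive Corollary~\ref{cor:main} from Theorem~\ref{thm:main} by establishing the \emph{local rigidity of the inclusion} $\Gamma\hookrightarrow G$ as a standard subgroup: for $G=\mathrm{Sp}(n+1,1)$ and $G=F_4^{-20}$, every injective homomorphism $\iota\colon\Gamma\to G$ onto a standard subgroup that is $C^0$-close to the inclusion is of the form $\iota(g)=g_0gg_0^{-1}$ for a single $g_0\in G$. Granting this, if $\rho$ is $C^\infty$ and $C^2$-close to $l|_\Gamma$, then Theorem~\ref{thm:main}---and, for the closeness of $\iota$, the construction in its proof---produces such an $\iota$ together with a $C^\infty$ diffeomorphism $h$ of $G/P$ with $\rho(g)=h\circ l(\iota(g))\circ h^{-1}$; substituting $\iota(g)=g_0gg_0^{-1}$ gives $\rho(g)=(h\circ l(g_0))\circ l(g)\circ(h\circ l(g_0))^{-1}$, so $\rho$ is $C^\infty$-conjugate to $l|_\Gamma$.

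To prove the reduction I would first normalize $\iota$. The group $\iota(\Lambda)$ is a finitely generated nilpotent subgroup of the standard group $\iota(\Gamma)$, which is an ascending HNN extension of a lattice of $N$ by $\mathbb{Z}$ whose generating automorphism contracts that lattice; hence $\iota(\Lambda)$ must lie in $N$, and being a discrete subgroup of $N$ of Hirsch length $\dim N$ it is a lattice of $N$, $C^0$-close to $\Lambda$ since $\iota$ is close to the inclusion on a generating set. Moreover $\iota(a)\notin N$ (otherwise $\iota(\Gamma)\subset N$ would be nilpotent), so $\iota(a)$ has a nontrivial image in $A=AN/N$; conjugating $\iota$ by a small element of $N$---possible since $\Ad(\iota(a))$ has no eigenvalue $1$ on $\n$---I may assume $\iota(a)\in A$, and then $\iota(a)\iota(\Lambda)\iota(a)^{-1}\subset\iota(\Lambda)$ because $a\Lambda a^{-1}\subset\Lambda$. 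Writing $\Lambda':=\iota(\Lambda)$ and $\Lambda'_\a$ for its image in $\n_\a$, the scalar by which $\iota(a)$ acts on $\n_\a$ is then an endomorphism of the lattice $\Lambda'_\a$, hence a positive integer; being close to the integer $q\geq 2$ by which $a$ scales $\n_\a$, and $A$ acting faithfully on $\n_\a$, one gets $\iota(a)=a$.

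It remains to show that a lattice $\Lambda'$ of $N$ close to $\Lambda$ with $a\Lambda'a^{-1}\subset\Lambda'$ is $MA$-conjugate to $\Lambda$. By Mal'cev rigidity $\Lambda'=\Psi(\Lambda)$ for a unique $\Psi\in\mathrm{Aut}(N)$ close to $\id$; in the decomposition $\mathrm{Aut}(N)=\mathrm{Aut}_{\mathrm{gr}}(N)\ltimes U$, with $U=\mathrm{Hom}(\n_\a,\n_{2\a})$ the group of non-graded unipotent automorphisms $x\mapsto x+Tx$, write $\Psi=\Psi_{\mathrm{gr}}\cdot(1+T)$. Using that $\Ad(a)$ is central in $\mathrm{Aut}_{\mathrm{gr}}(N)$ and that $\Ad(a)(1+T)\Ad(a)^{-1}=1+qT$, a short computation turns $a\Lambda'a^{-1}\subset\Lambda'$ into $(1+(q-1)T)(a\Lambda a^{-1})\subset\Lambda$, which is equivalent to $q(q-1)\,T(\Lambda_\a)\subset\Lambda_{2\a}$; thus $T$ lies in the lattice $\tfrac{1}{q(q-1)}\mathrm{Hom}(\Lambda_\a,\Lambda_{2\a})$ of $U$, and being small it vanishes, so $\Lambda'=\Psi_{\mathrm{gr}}(\Lambda)$. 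The decisive point is now that, for $N$ the quaternionic Heisenberg group ($G=\mathrm{Sp}(n+1,1)$) or the octonionic Heisenberg group ($G=F_4^{-20}$), $\mathrm{Aut}_{\mathrm{gr}}(N)^\circ=\overline{MA}^\circ$, where $\overline{MA}$ is the image of $MA$ in $\mathrm{Aut}(N)$. Indeed a graded automorphism $(g,h)$ is determined by $g\in\mathrm{GL}(\n_\a)$, and the homomorphism $(g,h)\mapsto h\in\mathrm{GL}(\n_{2\a})$ has kernel $\bigcap_i\mathrm{Sp}(\omega_i)$---the common stabilizer of the components $\omega_i$ of the bracket $\n_\a\times\n_\a\to\n_{2\a}$, which is the compact group $\mathrm{Sp}(n)$ for $\H$ and finite for $\O$---and image inside $\mathrm{CO}(\n_{2\a})$ (a suitable exterior power of the bracket singles out the conformal class of the standard quadratic form on $\n_{2\a}$); a dimension count against $\overline{MA}$ forces equality of identity components. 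Hence $\Psi_{\mathrm{gr}}$ lifts to a small $ma\in MA$ with $\Lambda'=(ma)\Lambda(ma)^{-1}$; since $ma$ centralizes $a$, and since conjugation by $ma$ and $\iota$ both restrict to maps $\Lambda\to\Lambda'$ close to the inclusion while $\mathrm{Aut}(\Lambda)$ is discrete, these homomorphisms coincide on $\Lambda$, so $\iota=\mathrm{conj}_{ma}$ and $g_0=ma$ works.

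The main obstacle is the structural input $\mathrm{Aut}_{\mathrm{gr}}(N)^\circ=\overline{MA}^\circ$---the statement that the quaternionic and octonionic Heisenberg groups carry no graded automorphisms beyond the compact symmetries $M$ and the grading scalings $A$, a rigidity phenomenon of the same flavour as Pansu's. This is exactly what separates $\mathrm{Sp}(n+1,1)$ and $F_4^{-20}$ from $\mathrm{SO}_0(n+1,1)$ and $\mathrm{SU}(n+1,1)$, where $\mathrm{Aut}_{\mathrm{gr}}(N)$ is strictly larger---namely $\mathrm{GL}(n,\R)$ and $\mathrm{GSp}(2n,\R)$ respectively---and already produces genuine deformations of the standard subgroup inside $G/P$; note that the relation $a\Lambda'a^{-1}\subset\Lambda'$ is what eliminates the non-graded deformations $1+T$, which would otherwise persist in all cases. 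With this rigidity of $\mathrm{Aut}(N)$ granted, the remaining steps---the normalization of $\iota$, the vanishing of $T$, and the return to $\rho$---are routine.
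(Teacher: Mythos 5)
Your route is genuinely different from the paper's. The paper proves the corollary not by a separate rigidity statement for the embedding $\Gamma\hookrightarrow G$, but by feeding the vanishing $H^1(\n,\g)^{\mathfrak{a}}=0$ (Lemma \ref{lem:H1nga}, the $\mathfrak{sp}$ and $\mathfrak{f}_4$ cases) into the same chain of reductions: at the level of $3$-jets the perturbed homomorphism is then conjugate to $l|_\Gamma$ itself (Corollary \ref{cor:jet_loc_rigid}), and this strict conjugacy passes unchanged through Propositions \ref{prop:jet_to_formal}, \ref{prop:formal_to_local} and \ref{prop:local_to_global}. Your plan instead treats Theorem \ref{thm:main} as a black box, plus the fact--extractable from the proof of Proposition \ref{prop:jet_rigid}, where $\iota(a)=a$ and $\iota|_\Lambda$ is the restriction of an $\mathfrak{a}$-equivariant automorphism of $N$ close to the identity--that the embedding $\iota$ it produces is close to the inclusion, and then adds a group-level rigidity statement for standard embeddings. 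Your reductions there ($\iota(\Lambda)\subset N$, normalization $\iota(a)=a$ via the absence of eigenvalue $1$, Mal'cev extension, the splitting $\mathrm{Aut}(N)=\mathrm{Aut}_{\mathrm{gr}}(N)\ltimes\mathrm{Hom}(\g_1,\g_2)$, and the elimination of the unipotent part $1+T$ through the integrality forced by $a\Lambda'a^{-1}\subset\Lambda'$) are sound, and the final bootstrapping back to $\rho$ is fine.

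The genuine gap is the pivotal structural claim $\mathrm{Aut}_{\mathrm{gr}}(N)^{\circ}=\overline{MA}^{\circ}$ for the quaternionic and octonionic Heisenberg groups, which is exactly where the whole content of the corollary sits: it is the group-level counterpart of $H^1(\n,\g)^{\mathfrak{a}}=0$ (an $\mathfrak{a}$-invariant cocycle $\n\to\g$ restricted from a graded derivation is a coboundary $\ad(v)|_\n$ with $v\in\g_0=\mathfrak{m}\oplus\mathfrak{a}$, so the vanishing says precisely that $\mathrm{Lie}(\mathrm{Aut}_{\mathrm{gr}}(N))=\ad(\mathfrak{m}\oplus\mathfrak{a})|_\n$), and it is also exactly what distinguishes these cases from $\mathrm{SU}(n+1,1)$, where Proposition \ref{prop:nonloc_rigid} exploits its failure. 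As written you assert it rather than prove it: the kernel computation (common stabilizer of the bracket components equal to $\mathrm{Sp}(n)$, resp. a finite group) is fine, but your dimension count only closes if the image of $\mathrm{Aut}_{\mathrm{gr}}(N)\to\mathrm{GL}(\g_2)$ lies in the conformal group of $\g_2$ (dimension $4$, resp. $22$); knowing only that it lies in $\mathrm{GL}(3,\R)$, resp. $\mathrm{GL}(7,\R)$, gives a bound far exceeding $\dim\overline{MA}$. The parenthetical ``a suitable exterior power of the bracket singles out the conformal class'' is not an argument, and conformality is precisely the feature that fails for the complex Heisenberg group, where $\g_2$ is a line and $\mathrm{Aut}_{\mathrm{gr}}$ contains all of $\mathrm{GSp}(2n,\R)$; so this step must genuinely use the quaternionic/octonionic structure. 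The claim itself is true (it is the classical computation $\mathrm{Aut}_{\mathrm{gr}}\cong\mathrm{Sp}(n)\mathrm{Sp}(1)\cdot\R_{>0}$, resp. $\mathrm{Spin}(7)\cdot\R_{>0}$, underlying Pansu-type rigidity), so the gap can be closed by citing that computation or by deriving it from Lemma \ref{lem:H1nga} as above; but without that input your proof does not yet stand on its own.
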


It should be pointed out that the action $l|_\Gamma$ is not locally rigid in the remaining cases.
When $G = \mathrm{SO}_0(n+1, 1)$, $n \geq 2$,  the classification up to conjugacy of the actions of standard subgroups by left translations is given in \cite{Asaoka1}.
In particular, the action $l|_\Gamma$ is not locally rigid.
When $G = \mathrm{SU}(n+1, 1)$, $n \geq 2$, we can also show that $l|_\Gamma$ is not locally rigid. See Proposition \ref{prop:nonloc_rigid}.

The proof of the main theorem can be described as follows.
Put $\o = eP \in G/P$.
The point $\o$ is the common fixed point of the action $l|_\Gamma$.
Using Stowe's theorem \cite{Stowe}, we see that an action close to $l|_\Gamma$ also admits a common fixed point close to $\o$.
Moreover, by an argument similar to that of \cite{Asaoka1}, a conjugacy defined around the common fixed points extends to a diffeomorphism of the whole $G/P$.
So the main theorem is reduced to local rigidity of a homomorphism of $\Gamma$ into the group $\G(G/P, \o)$ of germs of diffeomorphisms defined around $\o \in G/P$ and fixing $\o \in G/P$.

Such a problem of classification of  ``local action'' around a fixed point can be found in \cite{Sternberg}.
In fact, Sternberg's theory of normal forms of a hyperbolic fixed point of diffeomorphisms can be viewed as certain rigidity of a homomorphism of $\mathbb{Z}$ into $\G(\R^n, 0)$.
To prove certain local rigidity of a homomorphism of $\Gamma$ into $\G(G/P, \o)$, we will adopt a similar strategy.
Sternberg's theory can be summarized as follows.
The first step is to show that a diffeomorphism around a hyperbolic fixed point is determined by its Taylor expansion at the fixed point.
The next step is to show there exists $r \geq 1$ such that the Taylor expansion is determined by the derivatives of order at most $r$ at the fixed point, where $r$ depends on the ``resonance'' of the eigenvalues of the first-order derivative of the diffeomorphism at the fixed point.
The last step is the classification of elements in  the group $J^r(\R^n, 0)$ of $r$-jets at $0 \in \R^n$ of diffeomorphisms around $0 \in \R^n$.

In our case, the problem can be reduced to local rigidity of a homomorphism of $\Gamma$ into $J^3(G/P, \o)$.
Computing the induced homomorphism of $\Gamma$ into $J^1(G/P, \o)$, and using a theorem of Malcev, we will show that the problem can be reduced to local rigidity of a homomorphism of the closure $\langle a \rangle N$ of $\Gamma$ in $G$ into $J^3(G/P, \o)$.
Then the problem reduces to the computation of the cohomology $H^1(\n, \j^3(G/P, \o))^{\langle a \rangle} = H^1(\n, \j^3(G/P, \o))^{\mathfrak{a}}$, where $\n$, $\j^3(G/P, \o)$, and $\mathfrak{a}$ denote the Lie algebras of $N$, $J^3(G/P, \o)$ and $A$, respectively.
The computation of such a cohomology is, as we can see in \cite{Asaoka1} and \cite{Okada}, one of the most difficult part of the proof.
In this paper, we will compute the cohomology using some tools from (non-unitary) representation theory as well as an explicit classification of rank one simple Lie algebras.
As a result, we obtain an isomorphism
\[
H^1(\n, \j^3(G/P, \o))^{\mathfrak{a}} = H^1(\n, \g)^{\mathfrak{a}},
\]
which means that perturbation of the homomorphism of $\langle a \rangle N$ into $J^3(G/P, \o)$ is locally rigid up to embedding of $\langle a \rangle N$ into $G$.
Moreover, it is not difficult to check $H^1(\n, \g)^{\mathfrak{a}} = 0$ if and only if $G = \mathrm{Sp}(n+1, 1)$ $(n \geq 2)$ or $F_4^{-20}$, in which case our result is in fact local rigidity in the strict sense.

In Section \ref{sect:prel}, we collect facts which will be used later.
In particular, in Subsection \ref{subsect:std_boundary} we establish a fundamental property of the action $l|_\Gamma$ of a standard subgroup $\Gamma$ on $G/P$.
Section \ref{sect:Lie_cohom} is devoted to the computation of the cohomology of $\n$ mentioned above.
In Section \ref{section:vanish}, we compute certain cohomology of a standard subgroup $\Gamma$, vanishing of which is the assumption of the above theorem of Stowe.
In Section \ref{sect:loc_rigid_jet}, we study local rigidity of a homomorphism of a standard subgroup $\Gamma$ into the group $J^3(G/P, \o)$ of 3-jets.
In Section \ref{sect:loc_rigid_formal}, we consider local rigidity of a homomorphism of a standard subgroup $\Gamma$ into the group $\F(G/P, \o)$ of Taylor expansions of the diffeomorphisms in $\G(G/P, \o)$, called the group of formal transformations.
In Section \ref{sect:loc_rigid_loc}, we study local rigidity of a homomorphism of a standard subgroup $\Gamma$ into the group $\G(G/P, \o)$ of germs of diffeomorphism defined around $\o \in G/P$ fixing $\o \in G/P$.
In Section \ref{sect:loc_rigid_act}, we prove the main theorem.

\section*{Acknowledgement}
I would like to thank Hirokazu Maruhashi for suggesting the use of representation theory and Masahiko Kanai for his comments that greatly improved the manuscript.

\section{Preliminaries}\label{sect:prel}
\subsection{Representation of a semisimple Lie algebra}\label{subsect_ss_repr}
The goal of this subsection is to introduce Theorem \ref{thm:Vogan} and Theorem \ref{thm:Kostant}.
See \cite{Vogan} for the detail.
Theorem \ref{thm:Kostant} is the formula for cohomology of the nilradical $\n$ of a parabolic subalgebra $\p$ of a complex semisimple Lie algebra $\g$ with the coefficient in a finite-dimensional $\g$-module, while in Section \ref{sect:Lie_cohom} we have to compute cohomology of $\n$ with the coefficient in an infinite-dimensional $\g$-module.
The proof of Theorem \ref{thm:Kostant} due to Casselman and Osborne contains a study of an infinite-dimensional $\g$-module.
Theorem \ref{thm:Vogan} is a consequence of the result of Casselman and Osborne, the formulation of which is due to Vogan.
We will use Theorem \ref{thm:Vogan} for our computation in Section \ref{sect:Lie_cohom}.

To state Theorem \ref{thm:Vogan}, we review representation theory of semisimple Lie algebras.
A standard reference is \cite{Knapp}.
Let $\g$ be a complex semisimple Lie algebra, $\h \subset \g$ a Cartan subalgebra, and $\Delta(\g, \h)\subset \h^*$ the system of roots.
Fix a positivity on $\h^*$.
Let $\Delta^+(\g, \h) \subset \Delta(\g, \h)$ be the system of positive roots.
Then the subalgebra
\[
\mathfrak{b} = \h \oplus \bigoplus_{\alpha \in \Delta^+(\g, \h)} \g_\alpha
\]
is called the corresponding \textit{Borel subalgebra}.
A subalgebra $\p$ of $\g$ containing $\mathfrak{b}$ is called a \textit{parabolic subalgebra}.
A parabolic subalgebra $\p$ admits the decomposition
\[
\p = \mathfrak{l} \oplus \n
\]
such that
\[
\n = \bigoplus_{\a \in \Delta(\n, \h)} \g_\a
\]
is a nilpotent subalgebra with $\Delta(\n, \h)$ contained in $\Delta^+(\g, \h)$, and
\[
\mathfrak{l} = \h \oplus \bigoplus_{\a \in \Delta(\mathfrak{l}, \h)} \g_\a
\]
is reductive with $\Delta(\mathfrak{l}, \h) =  \Delta(\g, \h)\setminus(\Delta(\n, \h) \cup -\Delta(\n, \h))$, where $-\Delta(\n, \h)= \{ -\a \mid \a \in \Delta(\n, \h)\}$.
The subalgebra
\[
\n_- = \bigoplus_{\a \in -\Delta(\n, \h)} \g_\a
\]
is called  the \textit{opposite} of $\n$ and we obtain the decomposition
\[
\g = \n_- \oplus \mathfrak{l} \oplus \n
\]
of $\g$ as a vector field.

Let $\g$ be a complex semisimple Lie algebra, $U(\g)$ the universal enveloping algebra of $\g$, and $Z(\g)$ the center of $U(\g)$.
There is an isomorphism called the \textit{Harish-Chandra isomorphism} of $Z(\g)$ onto the algebra $U(\h)^W$ of the Weyl group $W = W(\g, \h)$ invariant elements of $U(\h)$ which can be constructed as follows.
Let $\g = \g_- \oplus \h \oplus \g_+$ be the decomposition corresponding to the Borel subalgebra $\mathfrak{b} = \h \oplus \g_+$, where $\g_\pm = \oplus_{\alpha \in \pm\Delta^+(\g, \h)} \g_\alpha$.
By Poincar\'e-Birkhoff-Witt theorem,
\[
U(\g) = U(\h) \oplus (\g_- U(\g) + U(\g)\g_+).
\]
Let $p:U(\g) \to U(\h)$ be the projection onto the first term.
Define the shift map $\sigma_c:U(\h) \to U(\h)$, $c \in \C$ by the extension of $\h \to U(\h)$, $X \mapsto X - cX$ as a homomorphism of algebra.
Then the composition $\gamma = \sigma_{\delta(\g)} \circ p:U(\g) \to U(\h)$ is the Harish-Chandra map, where
\[
\delta(\g) = \frac{1}{2}\sum_{\a \in \Delta^+(\g, \h)} \a
\]
is the \textit{lowest form} of $\g$.
It is known that the Harish-Chandra map induces an isomorphism between $Z(\g)$ and $U(\h)^W$ called the Harish-Chandra isomorphism and that the Harish-Chandra isomorphism does not depend on the choice of a positivity on $\h^*$.

Let $\g$ be a complex semisimple Lie algebra and $Z(\g)$ the center of the universal enveloping algebra $U(\g)$.
A $\g$-module is naturally a module over $U(\g)$.
A representation of $\g$ is said to admit an \textit{infinitesimal character} if each element of $Z(\g)$ acts by multiplication by a scalar.
In this case, the action of $Z(\g)$ is described by a homomorphism of $Z(\g)$ into $\C$.
Via the Harish-Chandra isomorphism, we obtain a homomorphism of $U(\h)^W$ into $\C$.
Since $\h$ is abelian, $U(\h)$ can be considered as the algebra of polynomial functions on $\h^*$.
It is not difficult to see that a homomorphism of $U(\h)^W$ into $\C$ is the evaluation map $\mathrm{ev}_\lambda$ at a point $\lambda \in \h^*$ and $\mathrm{ev}_\lambda = \mathrm{ev}_\mu$ if and only if $\lambda$ and $\mu$ have the same $W$-orbit.
Such a $\lambda$ is called an \textit{infinitesimal character} of the representation.

A typical example of a representation with an infinitesimal character is an irreducible finite-dimensional representation.
A weight vector in a $\g$-module is \textit{highest} (resp. \textit{lowest}) if it is annihilated by $\g_+$ (resp. $\g_-$).
Let $F^\g_\lambda$ of $\g$ be the irreducible finite-dimensional representation with a highest weight vector of weight $\lambda$.
By the construction of the Harish-Chandra isomorphism, we see that it has an infinitesimal character $\lambda + \delta(\g)$.
More generally, let $V$ be a (possibly infinite-dimensional) representation $V$ of $\g$ such that each root space $V_\a$ $(\a \in \h^*)$ is finite dimensional.
Then $V$ admits an infinitesimal character $\lambda + \delta(\g)$ if $V$ has a unique highest weight vector (up to scalar multiplication) of weight $\lambda$.
Similarly, using the fact that the Harish-Chandra isomorphism does not depend on the positivity on $\h^*$, $V$ admits an infinitesimal character $\lambda - \delta(\g)$ if $V$ has a unique lowest weight vector of weight $\lambda$.

Let $\g$ be a complex semisimple Lie algebra with the decomposition $\g = \n_- \oplus \l \oplus \n$ corresponding to a parabolic subalgebra.
A $\g$-module $V$ is $\l$-\textit{finite} if $V$ admits a decomposition into the sum of (possibly infinitely many) finite-dimensional representations of $\mathfrak{l}$.

\begin{thm}[\cite{Vogan} Corollary 3.1.6.]\label{thm:Vogan}
Let $\g$ be a complex semisimple Lie algebra with the decomposition $\g = \n_- \oplus \mathfrak{l} \oplus \n$ and $V$ be a representation of $\g$ which admits an infinitesimal character $\lambda$.
Assume $V$ is $\l$-finite.
Then $H^*(\n, V)$ also admits a decomposition into the sum of finite-dimensional representations of $\mathfrak{l}$.
Moreover, a weight $\mu \in \h^*$ appears as an $\mathfrak{l}$-highest weight of $H^*(\n, V)$ only if $\mu + \delta(\g)$ and $\lambda$ have the same $W$-orbit.
\end{thm}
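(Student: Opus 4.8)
The plan is to treat the two assertions separately; the decomposition into finite-dimensional $\l$-modules is essentially formal, while the weight condition is extracted from the Casselman--Osborne lemma. For the first assertion I would compute $H^*(\n,V)$ from the Chevalley--Eilenberg complex $C^k=\Lambda^k\n^*\otimes V$ with its standard differential $d$. Since $[\l,\n]\subseteq\n$, the adjoint action makes each $\Lambda^k\n^*$ a finite-dimensional $\l$-module, so, $V$ being $\l$-finite, every $C^k$ is a sum of finite-dimensional $\l$-modules and $d$ is an $\l$-module map. Because $\l$ is reductive, this decomposition property passes to the subquotients $H^k(\n,V)$, which is the first claim.

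For the weight condition I would keep track of the action of the centre $Z(\g)$ of $U(\g)$ on $H^k(\n,V)$ in two ways. On one hand, any $z\in Z(\g)$ commutes with $\n$, so the operator $1\otimes z$ on $C^\bullet$ commutes with $d$ and descends to $H^k(\n,V)$; since $V$ has infinitesimal character $\lambda$, it acts on $H^k(\n,V)$ as multiplication by the scalar $\mathrm{ev}_\lambda(\gamma_\g(z))$, where $\gamma_\g\colon Z(\g)\to U(\h)^{W(\g,\h)}$ is the Harish--Chandra homomorphism. On the other hand $H^k(\n,V)$ is an $\l$-module, so the centre $Z(\l)$ of $U(\l)$ also acts on it, with Harish--Chandra homomorphism $\gamma_\l\colon Z(\l)\to U(\h)^{W(\l,\h)}$. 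The bridge between the two is the Casselman--Osborne lemma: there is an algebra homomorphism $\phi\colon Z(\g)\to Z(\l)$, the Harish--Chandra homomorphism attached to the parabolic $\p$ (obtained from the projection $U(\g)\to U(\l)$ along $U(\g)=U(\l)\oplus(\n_-U(\g)+U(\g)\n)$), such that ``$z$ acting through $V$'' and ``$\phi(z)$ acting through the $\l$-structure'' agree on $H^k(\n,V)$ for every $\g$-module $V$, every $k$, and every $z\in Z(\g)$; moreover the two Harish--Chandra homomorphisms are compatible, $\mathrm{ev}_{\nu}\circ\gamma_\l\circ\phi=\mathrm{ev}_{\nu+\delta(\g)-\delta(\l)}\circ\gamma_\g$ for all $\nu\in\h^*$. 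I would take this lemma as an input, citing \cite{Vogan} (it is precisely the study of an infinite-dimensional $\g$-module alluded to above); its proof runs through the $Z(\g)$-action on the standard complex for $\n$-cohomology together with a dimension-shifting argument.

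Granting the lemma, the conclusion follows quickly. If a weight $\mu\in\h^*$ occurs as an $\l$-highest weight in $H^k(\n,V)$, then the finite-dimensional irreducible $\l$-module $F^\l_\mu$ is a constituent of $H^k(\n,V)$, and it carries the $\l$-infinitesimal character $\mu+\delta(\l)$, so $\phi(z)$ acts on it by $\mathrm{ev}_{\mu+\delta(\l)}(\gamma_\l(\phi(z)))$. By the Casselman--Osborne lemma this scalar equals $\mathrm{ev}_\lambda(\gamma_\g(z))$ for all $z\in Z(\g)$, hence $\mathrm{ev}_{\mu+\delta(\l)}\circ\gamma_\l\circ\phi=\mathrm{ev}_\lambda\circ\gamma_\g$ on $Z(\g)$; applying the compatibility with $\nu=\mu+\delta(\l)$ rewrites the left-hand side as $\mathrm{ev}_{\mu+\delta(\g)}\circ\gamma_\g$, so $\mathrm{ev}_{\mu+\delta(\g)}=\mathrm{ev}_\lambda$ as characters of $U(\h)^{W(\g,\h)}$. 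As recalled in this section, this holds exactly when $\mu+\delta(\g)$ and $\lambda$ lie in the same $W(\g,\h)$-orbit, which is the assertion.

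I expect the real content, and the main obstacle, to be the Casselman--Osborne lemma itself --- that the $Z(\g)$-action coming from the coefficient module and the residual $\l$-symmetry of $\n$-cohomology are intertwined by the parabolic Harish--Chandra homomorphism; establishing this from scratch would mean redoing the Casselman--Osborne analysis of the standard complex. The ancillary points --- the $\l$-finiteness bookkeeping and keeping the $\delta(\g)$ and $\delta(\l)$ normalizations aligned (which is where the shift by $\delta(\g)-\delta(\l)$ enters) --- are routine; the only subtlety worth noting is that ``$\l$-infinitesimal character'' must be read as a character of $Z(\l)=Z(\l_{\mathrm{ss}})\otimes U(\mathfrak z)$ including the central factor, on which $\delta(\l)$ vanishes.
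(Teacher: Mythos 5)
The paper offers no proof of this statement---it is quoted verbatim from \cite{Vogan} (Corollary 3.1.6) as a consequence of the Casselman--Osborne theorem---and your sketch reconstructs exactly the standard derivation behind that citation: local $\l$-finiteness of the complex $\Lambda^\bullet\n^*\otimes V$ for the first assertion, and the Casselman--Osborne identification of the $Z(\g)$-action with the image of the parabolic projection in $Z(\l)$ for the second, with the $\delta(\g)-\delta(\l)$ shift in your compatibility relation correctly aligned with the paper's normalization of the Harish--Chandra map (it reproduces Theorem \ref{thm:Kostant} on finite-dimensional modules). Since you take the Casselman--Osborne lemma itself as a cited input, just as the paper implicitly does, your proposal is correct and follows the intended route.
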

When $V$ is finite dimensional, using this theorem, one can completely determine $H^*(\n, V)$.
For $w \in W$, the smallest number $\mathrm{length}(w) = n$ such that $w$ is a product of $n$ reflections in simple roots is called the \textit{length} of $w$.

\begin{thm}[Kostant, see \cite{Vogan} Theorem 3.2.3.]\label{thm:Kostant}
Let $\g$ be a complex semisimple Lie algebra with the decomposition $\g = \n_- \oplus \mathfrak{l} \oplus \n$ and $\mathrm{F}^\g_\lambda$ be the irreducible finite-dimensional representation of $\g$ with the highest weight $\lambda$.
Then
\[
H^r(\n, \mathrm{F}^\g_\lambda) = \bigoplus _\mu \mathrm{F}^\l_\mu 
\]
as an $\l$-module, where the sum is taken over $\mu = w(\lambda + \delta(\g)) - \delta(\g)$ for $ w \in W, \,r = \mathrm{length}(w)$.
\end{thm}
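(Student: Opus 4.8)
\emph{Proof sketch (proposal).} The plan is to combine Theorem~\ref{thm:Vogan} with an Euler-characteristic computation in the Grothendieck group of finite-dimensional $\l$-modules, and then to pin down the cohomological degrees by Kostant's Hodge-theoretic argument.

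First I would cut down the list of possible $\l$-highest weights. Since $\mathrm{F}^\g_\lambda$ is finite dimensional it is $\l$-finite, and it has infinitesimal character $\lambda+\delta(\g)$. As $\lambda$ is dominant, $\lambda+\delta(\g)$ is strictly dominant, hence regular, so its $W$-orbit is $\{w(\lambda+\delta(\g)) : w\in W\}$ with all $|W|$ elements distinct. Theorem~\ref{thm:Vogan} then says that $H^*(\n,\mathrm{F}^\g_\lambda)$ is a sum of finite-dimensional $\l$-modules and that the highest weight $\mu$ of any constituent equals $\mu_w:=w(\lambda+\delta(\g))-\delta(\g)$ for a unique $w\in W$. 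Let $W^1\subset W$ be the set of minimal-length representatives of $W_\l\backslash W$, where $W_\l=W(\l,\h)$. A standard computation shows $\mu_w$ is $\l$-dominant precisely when $w\in W^1$: for $w\in W^1$ and a positive root $\a$ of $\l$ one has $w^{-1}\a\in\Delta^+(\g,\h)$, hence $\langle w(\lambda+\delta(\g)),\a^\vee\rangle>0$, while $W_\l$ fixes $\delta(\n):=\delta(\g)-\delta(\l)$ so this form is orthogonal to all roots of $\l$; thus $\mu_w+\delta(\l)$ is $\l$-strictly-dominant. Hence the constituents of $H^*(\n,\mathrm{F}^\g_\lambda)$ lie among the pairwise distinct $\mathrm{F}^\l_{\mu_w}$, $w\in W^1$.

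Next I would compute the Euler characteristic. Since the Chevalley--Eilenberg differential is $\l$-equivariant,
\[
\sum_r (-1)^r [H^r(\n,\mathrm{F}^\g_\lambda)] = \sum_r(-1)^r[\Lambda^r\n^*\otimes\mathrm{F}^\g_\lambda] = [\mathrm{F}^\g_\lambda]\cdot\sum_r(-1)^r[\Lambda^r\n^*]
\]
in the Grothendieck group of finite-dimensional $\l$-modules. Passing to characters, $\sum_r(-1)^r\operatorname{ch}\Lambda^r\n^*=\prod_{\a\in\Delta(\n,\h)}(1-e^{-\a})$, while $\operatorname{ch}\mathrm{F}^\g_\lambda$ is given by the Weyl character formula. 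Using the factorization of the Weyl denominator of $\g$ along $\Delta^+(\g,\h)=\Delta(\n,\h)\sqcup\Delta^+(\l,\h)$ to cancel the $\Delta(\n,\h)$-contribution, the $W_\l$-invariance of $\delta(\n)$, and the decomposition $W=\bigsqcup_{w\in W^1}W_\l\,w$ with $\mathrm{length}(vw)=\mathrm{length}(v)+\mathrm{length}(w)$ for $v\in W_\l$, the right-hand side collapses to
\[
\sum_{w\in W^1}(-1)^{\mathrm{length}(w)}\frac{\sum_{v\in W_\l}(-1)^{\mathrm{length}(v)}e^{v(\mu_w+\delta(\l))}}{\sum_{v\in W_\l}(-1)^{\mathrm{length}(v)}e^{v\delta(\l)}} = \sum_{w\in W^1}(-1)^{\mathrm{length}(w)}\operatorname{ch}\mathrm{F}^\l_{\mu_w},
\]
the last step being the Weyl character formula for $\l$ applied to the $\l$-dominant weights $\mu_w$. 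Hence $\sum_r(-1)^r[H^r(\n,\mathrm{F}^\g_\lambda)]=\sum_{w\in W^1}(-1)^{\mathrm{length}(w)}[\mathrm{F}^\l_{\mu_w}]$.

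Finally, writing $H^r(\n,\mathrm{F}^\g_\lambda)=\bigoplus_{w\in W^1}(\mathrm{F}^\l_{\mu_w})^{\oplus m_r(w)}$ and using that the $\mu_w$ are distinct (so the Grothendieck-group identity has no cancellation), the previous step gives $\sum_r(-1)^r m_r(w)=(-1)^{\mathrm{length}(w)}$ for every $w\in W^1$. The remaining task — that $m_r(w)=1$ for $r=\mathrm{length}(w)$ and $m_r(w)=0$ otherwise — is the crux, and I expect it to be the main obstacle: Euler-characteristic bookkeeping alone does not suffice, since $\mathrm{F}^\l_{\mu_w}$ genuinely can occur in $\Lambda^r\n^*\otimes\mathrm{F}^\g_\lambda$ for several $r$. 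Here I would run Kostant's harmonic-forms argument: choose a compact real form of $\g$, equip $\Lambda^*\n^*\otimes\mathrm{F}^\g_\lambda$ with the associated Hermitian inner product, and identify $H^r$ with the space of $\square$-harmonic forms for $\square=dd^*+d^*d$; Kostant's formula expresses $2\square$ on $\Lambda^r\n^*\otimes\mathrm{F}^\g_\lambda$ through Casimir eigenvalues, which forces a harmonic form of $\l$-highest weight $\mu_w=w\lambda-\sum_{\a\in N(w)}\a$ (with $N(w)=\Delta^+(\g,\h)\cap w^{-1}(-\Delta^+(\g,\h))\subseteq\Delta(\n,\h)$, of cardinality $\mathrm{length}(w)$) to live in degree exactly $\mathrm{length}(w)$ and to be unique up to scalar there. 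Combined with the two previous paragraphs this yields the stated description of $H^r(\n,\mathrm{F}^\g_\lambda)$. An alternative to the Hodge-theoretic step would be to apply $H^*(\n,-)$ to a (generalized) Bernstein--Gelfand--Gelfand resolution of $\mathrm{F}^\g_\lambda$ by Verma modules, whose $\n$-cohomology is concentrated in a single degree.
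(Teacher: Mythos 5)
The paper does not prove this statement at all: Theorem \ref{thm:Kostant} is Kostant's theorem, quoted with a citation to Vogan's book (Theorem 3.2.3 there) and used as a black box in Section \ref{sect:Lie_cohom}, so there is no internal argument to compare yours with; I can only judge your sketch against the standard literature proof, which is exactly the route you follow. Your reductions are correct: Theorem \ref{thm:Vogan} limits the possible $\l$-highest weights to $\mu_w=w(\lambda+\delta(\g))-\delta(\g)$ with the $w(\lambda+\delta(\g))$ pairwise distinct by regularity; the identification of the $\l$-dominant $\mu_w$ with the minimal-length representatives $W^1$ of $W_\l\backslash W$ is right, and the orthogonality of $\delta(\g)-\delta(\l)$ to the roots of $\l$ is indeed the key point there; and the Euler-characteristic computation via the Weyl character formula, yielding $\sum_r(-1)^r m_r(w)=(-1)^{\mathrm{length}(w)}$, is the standard manipulation and is carried out correctly. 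You also correctly locate the crux: neither the infinitesimal-character constraint nor the Euler characteristic can concentrate $\mathrm{F}^\l_{\mu_w}$ in degree $\mathrm{length}(w)$ with multiplicity one, and for that you appeal to Kostant's Hodge-theoretic argument (or a BGG resolution) without carrying it out. So what you have is a correct reduction plus a pointer to the decisive step, not a complete proof; the Laplacian computation (the Casimir formula for $\square$, the identification of the harmonic vectors of weight $\mu_w$, and the multiplicity-one statement in degree $\mathrm{length}(w)$) is where essentially all the work of Kostant's theorem lies, and once it is done in full the first two steps become largely redundant.

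One convention slip to repair if you write this up: with $W^1$ taken as minimal-length representatives of $W_\l\backslash W$ and your definition $N(w)=\Delta^+(\g,\h)\cap w^{-1}(-\Delta^+(\g,\h))$, the correct identity is $\delta(\g)-w\delta(\g)=\sum_{\a\in N(w^{-1})}\a$, hence $\mu_w=w\lambda-\sum_{\a\in N(w^{-1})}\a$, and it is $N(w^{-1})$, not $N(w)$, that is contained in $\Delta(\n,\h)$ for $w\in W^1$ (for $N(w)$ this containment already fails for suitable $w$ of length $2$ in $\mathfrak{sl}(3,\C)$); alternatively, keep $N(w)$ but work with minimal-length representatives of $W/W_\l$.
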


\subsection{Classification of the simple Lie algebras of real rank one}\label{subsect:rank_one_Lie}
Let $\g$ be a real simple Lie algebra and $\g = \mathfrak{k} \oplus \mathfrak{a} \oplus \n$ be an Iwasawa decomposition.
The dimension of $\mathfrak{a}$ is called the \textit{real rank} of $\g$.
Assume the real rank of $\g$ is one.
Then there is a restricted-root decomposition 
\[
\g = \bigoplus_{i = -2}^2 \g_i
\]
such that
\[
\n = \g_1 \oplus \g_2,
\]
where $\g_i = \{X \in \g \mid [H, X] = i\a(H)X \mathrm{\;for\;}\mathrm{all\;} H \in \mathfrak{a}\}$ for some $\a \in \mathfrak{a}^*$.
Then the subalgebra
\[
\p =  \bigoplus_{i = 0}^2 \g_i
\]
is a minimal parabolic subalgebra of $\g$.
Let $\g_\C$ be the complexification of $\g$.
There exists a Cartan subalgebra $\h$ of $\g_\C$ such that $\h \subset (\g_0)_\C$.
We fix a positivity on $\h^*$ such that $\Delta(\n_\C, \h) \subset \Delta^+(\g_\C, \h)$.
Then $\p_\C = (\g_0)_\C \oplus \n_\C$ is the decomposition of the parabolic subalgebra $\p_\C$ as in Subsection \ref{subsect_ss_repr}.

By the classification of real simple Lie algebras, $\g_\C$ is isomorphic to $\mathfrak{so}(n, \C)$, $\mathfrak{sl}(n, \C)$, $\mathfrak{sp}(2n, \C)$, or $\mathfrak{f}_4$.
In each case, the system $\Delta(\g, \h)$ of roots can be expressed as follows.
Let $\langle \cdot, \cdot \rangle$ be the bilinear form on $\h^*$ induced by the Killing form on $\h$.

When $\g_\C = \mathfrak{so}(2n+1, \C)$, $n \geq 1$, there is a complex linear basis $e_1, \dots, e_n$ of the dual $\h^*$ of $\h$ with $\langle e_i, e_j \rangle = 0$ for $i\neq j$ and $|e_i|^2 = |e_j|^2$ such that
\begin{align*}
\Delta(\g_\C, \h) &= \{\pm e_i\pm e_j \mid 1 \leq i < j \leq n\} \cup \{\pm e_i\mid 1 \leq i \leq n\},\\
\Delta(\n_\C, \h) &= \{ e_1\pm e_j \mid 2 \leq  j \leq n\} \cup \{e_1\},\\
\Delta((\g_0)_\C, \h) &= \{\pm e_i\pm e_j \mid 2 \leq i < j \leq n\} \cup \{\pm e_i\mid 2 \leq i \leq n\}.
\end{align*}

When $\g_\C = \mathfrak{so}(2n, \C)$, $n \geq 1$, there is a basis $e_1, \dots, e_n$ of $\h^*$ with $\langle e_i, e_j \rangle = 0$ for $i\neq j$ and $|e_i|^2 = |e_j|^2$ such that
\begin{align*}
\Delta(\g_\C, \h) &= \{\pm e_i\pm e_j \mid 1 \leq i < j \leq n\},\\
\Delta(\n_\C, \h) &= \{ e_1\pm e_j \mid 2 \leq  j \leq n\},\\
\Delta((\g_0)_\C, \h) &= \{\pm e_i\pm e_j \mid 2 \leq i < j \leq n\}.
\end{align*}

When $\g_\C = \mathfrak{sl}(n, \C)$, $n \geq 2$, there is an $n$-dimensional vector space with a bilinear form with a basis $e_1, \dots, e_n$ satisfying $\langle e_i, e_j \rangle = 0$ for $i\neq j$ and $|e_i|^2 = |e_j|^2$ such that there is an identification of $\h^*$ with the subspace $\{\sum_i c_ie_i \mid \sum c_i = 0\}$ under which
\begin{align*}
\Delta(\g_\C, \h) &= \{\pm (e_i - e_j) \mid 1 \leq i < j \leq n\},\\
\Delta(\n_\C, \h) &= \{ e_1 - e_j \mid 1 < j \leq n\} \cup \{ e_i - e_n \mid 1 \leq i < n\},\\
\Delta((\g_0)_\C, \h) &= \{\pm (e_i - e_j )\mid 2 \leq i < j \leq n-1\}.
\end{align*}

When $\g_\C = \mathfrak{sp}(2n, \C)$ with $n \geq 3$,
\footnote{
$\mathfrak{sp}(4, \C)$ is isomorphic to $\mathfrak{so}(5, \C)$.
}
there is a basis $e_1, \dots, e_n$ of $\h^*$ with $\langle e_i, e_j \rangle = 0$ for $i\neq j$ and $|e_i|^2 = |e_j|^2$ such that
\begin{align*}
\Delta(\g_\C, \h) &= \{\pm e_i\pm e_j \mid 1 \leq i < j \leq n\} \cup \{\pm2e_i \mid 1 \leq i \leq n \},\\
\Delta(\n_\C, \h) &= \{ e_i\pm e_j \mid 3 \leq  j \leq n, \; i =1, 2\} \cup \{2e_1, e_1 +e_2, 2e_2\},\\
\Delta((\g_0)_\C, \h) &= \{\pm e_i\pm e_j \mid 3 \leq i < j \leq n\} \cup  \{\pm2e_i \mid 3 \leq i \leq n \} \cup \{\pm(e_1 - e_2)\}.
\end{align*}

When $\g_\C = \mathfrak{f}_4$, there is a basis $e_1, \dots, e_4$ of $\h^*$ with $\langle e_i, e_j \rangle = 0$ for $i\neq j$ and $|e_i|^2 = |e_j|^2$ such that
\begin{align*}
\Delta(\g_\C, \h) &= \{\pm e_1 \pm e_2 \pm e_3 \pm e_4\}\cup\{\pm 2e_i \pm 2e_j \mid 1 \leq i < j \leq 4\}\\
&\:\:\:\:\:\: \cup \{\pm2e_i \mid 1\leq i \leq 4\},\\
\Delta(\n_\C, \h) &= \{e_1 \pm e_2 \pm e_3 \pm e_4\}\cup\{ 2e_1 \pm 2e_j \mid 2 \leq j \leq 4\} \cup \{2e_1\},\\
\Delta((\g_0)_\C, \h) &= \{\pm 2e_i \pm 2e_j \mid 2 \leq i < j \leq 4\} \cup \{\pm2e_i \mid 2\leq i \leq 4\}.
\end{align*}
\subsection{Vector fields on a vector space}\label{subsect:vfvs}
Let $V$ be a finite-dimensional vector field over $\R$.
At each point $x$ of $V$, there is a natural identification of the tangent space $T_xV$ with $V$ itself.
Let $S(V) = \bigoplus_{r \geq 0} S^r(V)$ be the space of symmetric tensor products of $V$.
Consider the space $S(V^*) \otimes V$ of $V$-valued polynomial functions on $V$, where $V^*$ is the dual of $V$.
For each $f \in S(V^*) \otimes V$, we define the vector field $X_f$ on $V$ by
\[
X_f(x) = -f(x) \in V = T_xV
\]
for $ x \in V$.
Such a vector field will be called a \textit{polynomial vector field} on $V$.
A polynomial vector field corresponding to a constant function $v \in V \subset S(V^*) \otimes V$ will be called a \textit{constant vector field}.
Observe that a smooth vector field $X$ on $V$ is polynomial if and only if there exist $r \geq 0$ such that $\ad(X_1)\dots \ad(X_r)X = 0$ for any constant vector fields $X_1, \dots X_r$ on $V$. 
The Lie algebra of polynomial vector fields will be denoted by $\P(V)$.
We identify $\P(V)$ with $S(V^*) \otimes V$ by the above equation.
Under this identification,
\[
[S^p(V^*) \otimes V, S^q(V^*) \otimes V] \subset S^{p+q-1}(V^*) \otimes V
\]
for $p, q \geq 0$.
So $\P(V) = \bigoplus_{r \geq 0} S^r(V^*) \otimes V$ is naturally a graded Lie algebra.
It is  not difficult to check that if $V$ is a representation of a group $G$, then this identification is an isomorphism between $G$-modules.

The grading on $\P(V) $ is convenient to describe the structure of the group of jets.
For $r\geq 1$, let $J^r(V, 0)$ be the group of $r$-jets at $0 \in V$ of the diffeomorphism defined around $0$ and fixing $0$.
Then its Lie algebra $\j^r(V, 0)$ is naturally a quotient of the Lie algebra $\P(V, 0) = \bigoplus_{r \geq 1} S^r(V^*) \otimes V$ of polynomial vector fields vanishing at $0 \in V$.
In fact,
\[
\j^r(V, 0) = \P(V, 0)/ \bigoplus_{q \geq r+1} S^{q}(V^*) \otimes V.
\]
Thus $\j^r(V, 0)$ can be identified with $\bigoplus_{1 \leq q \leq r} S^{q}(V^*) \otimes V$ as a linear space.
When $V$ is a representation of a group $G$, this identification is an isomorphism between $G$-modules.

\subsection{The standard actions on the boundaries of rank one symmetric spaces}\label{subsect:std_boundary}
\subsubsection{The boundaries of rank one symmetric spaces}
Let $X$ be a rank one symmetric space of non-compact type and $G$ the group of orientation-preserving isometries of $X$, 
Then $X = G/K$, where $K$ is a maximal compact subgroup of $G$.
Fix an Iwasawa decomposition $G = KAN$.
Let $M = \{k \in K\mid ak = ka \;\mathrm{for\; all}\; a \in A\}$ be the centralizer of $A$ in $K$ so that $P = MAN \subset G$ is a minimal parabolic subgroup of $G$.
Then the corresponding compact manifold $G/P$, which will be called the \textit{boundary} is diffeomorphic to a sphere.
In fact, since $G$ is of real rank one, its Weyl group $W(G, A)$ consists of exactly two elements.
Thus the Bruhat decomposition assures that the left action of $P$ on $G/P$ has exactly two orbits: One is $\{eP\}$ and the other is $PgP$ for some $g \in G$ such that $gNg^{-1} = N_-$, where $N_-$ is the opposite of $N$.
Now
\[
PgP = NgP = g(g^{-1}Ng)P = gN_-P.
\]
Since the product map $N_- \times P \to G$ is a diffeomorphism onto its image, the $N_-$-orbits of $P$ in $G/P$ is diffeomorphic to $N_-$.
Thus $G/P$ as a manifold is a disjoint union of a point  and a Euclidean space, which must be a sphere.

\subsubsection{Local structure of the left action on the boundary}\label{sss:loc_left_act}
To study local structure of the left action of $G$ on $G/P$ around the point $\o = eP$, we use the homomorphism $l_*:\g = \mathrm{Lie}(G) \to \mathfrak{X}(G/P)$ defined by 
\[
l_*(X)_{gP} = \frac{d}{dt}|_{t = 0} \exp(-tX)gP
\]
for $gP \in G/P$.
This homomorphism can be rephrased as follows.
There is a natural anti-isomorphism of  $\g$ onto the algebra of the right-invariant vector fields on $G$.
This induces an anti-homomorphism of $\g$ into the space of smooth vector fields $\mathfrak{X}(G/P)$ on $G/P$.
Multiplying by $-1$, we obtain the homomorphism $l_*:\g  \to \mathfrak{X}(G/P)$.

Using the embedding $i: N_- \to G/P$ defined by $i(g) = gP$ and the diffeomorphism $\exp: \n_- = \mathrm{Lie}(N_-) \to N_-$, we obtain a homomorphism $\lambda = \exp^*\circ i^*\circ l_*$ of $\g$ into $\mathfrak{X}(\n_-)$.
In the local coordinate system $\exp \circ i:\n_- \to G/P$ around $\o \in G/P$, the homomorphism $l_*: \g \to \mathfrak{X}(G/P)$ can be described in terms of notions introduced in Subsection \ref{subsect:vfvs}.

\begin{prop}\label{prop:std_fixed_pt}
Let $\lambda: \g \to \mathfrak{X}(\n_-)$ be  the homomorphism defined as above and $\P(\n_-) \subset \mathfrak{X}(\n_-)$ be the subalgebra of polynomial vector fields on $\n_-$.
\begin{enumerate}
\item $\lambda(\n_-) \subset \P(\n_-)$.
\item Let $E \in \mathfrak{a}$ be the vector characterized by $[E, X] = rX$ for all $X \in \g_r$.
Then $\lambda(E) \in \P(\n_-)$ is the linear vector field corresponding to 
\[
\ad(E)|_{\n_-} = -2\mathrm{Id}_{\g_{-2}} - \mathrm{Id}_{\g_{-1}} \in \mathfrak{gl}(\n_-).
\]
\item $\lambda(\g) \subset \P(\n_-)$.
\end{enumerate}
\end{prop}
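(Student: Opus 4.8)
The plan is to compute $\lambda$ explicitly on each summand of the restricted-root decomposition $\g=\g_{-2}\oplus\g_{-1}\oplus\g_0\oplus\g_1\oplus\g_2$, using two structural facts. First, the algebra $\n_-=\g_{-1}\oplus\g_{-2}$ is at most $2$-step nilpotent, since $[\g_{-1},\g_{-1}]\subset\g_{-2}$ and $[\g_{-1},\g_{-2}]\subset\g_{-3}=0=[\g_{-2},\g_{-2}]$. Second, the chart $\exp\circ i\colon\n_-\to G/P$ identifies $\n_-$ with the big Bruhat cell $N_-P/P$, which is open in $G/P$; hence for every $X\in\g$ the field $l_*(X)$ pulls back to a (real-analytic) vector field $\lambda(X)$ defined on all of $\n_-$. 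Throughout I use the identification $T_Z\n_-=\n_-$ and the sign convention $X_f(x)=-f(x)$ of Subsection~\ref{subsect:vfvs}.

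For (1) I would fix $Y,Z\in\n_-$; since $\exp(-tY)$ and $\exp(Z)$ both lie in $N_-$, the Baker--Campbell--Hausdorff formula in the $2$-step nilpotent algebra $\n_-$ gives $\exp(-tY)\exp(Z)=\exp\!\big(Z-tY-\tfrac12 t[Y,Z]\big)$, so
\[
\lambda(Y)_Z=\frac{d}{dt}\Big|_{t=0}\!\big(Z-tY-\tfrac12 t[Y,Z]\big)=-Y-\tfrac12[Y,Z],
\]
an affine, hence polynomial, function of $Z$; thus $\lambda(\n_-)\subset\P(\n_-)$. For (2), since $E\in\mathfrak{a}=\mathrm{Lie}(A)$ and $A\subset P$ we have $\exp(-tE)P=\o$, and as $\mathfrak{a}$ normalizes $\n_-$,
\[
\exp(-tE)\exp(Z)P=\exp\!\big(\Ad(\exp(-tE))Z\big)\exp(-tE)P=\exp\!\big(e^{-t\,\ad(E)}Z\big)P .
\]
Differentiating at $t=0$, $\lambda(E)$ is the linear vector field $Z\mapsto-\ad(E)Z$, i.e. the one corresponding to $\ad(E)|_{\n_-}\in\mathfrak{gl}(\n_-)$; and $\ad(E)|_{\n_-}=-2\,\mathrm{Id}_{\g_{-2}}-\mathrm{Id}_{\g_{-1}}$ is immediate from the defining relation $[E,X]=rX$ on $\g_r$.

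For (3) I would exploit that $\lambda$ is a Lie algebra homomorphism. By (2) the flow $\psi_t$ of $\lambda(E)$ on $\n_-$ is the complete linear dilation $\psi_t(Z)=e^{-t\,\ad(E)}Z$, scaling $\g_{-1}$ by $e^{t}$ and $\g_{-2}$ by $e^{2t}$; and for $X\in\g_r$ the relation $[E,X]=rX$ gives $\mathcal{L}_{\lambda(E)}\lambda(X)=r\lambda(X)$, equivalently $(\psi_t)_*\lambda(X)=e^{-rt}\lambda(X)$. Writing $\lambda(X)=\sum_k W^k\,\partial_{z_k}$ in linear coordinates adapted to $\n_-=\g_{-1}\oplus\g_{-2}$ (dilation weights $w_k\in\{1,2\}$), this forces each coefficient $W^k$ to be a smooth function on $\n_-$ that is weighted-homogeneous of weight $r+w_k$. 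The crux is then the lemma: a smooth function on a vector space that is weighted-homogeneous of a fixed weight for a dilation with strictly positive weights is a polynomial — a sum of monomials of that weighted degree, hence $\equiv0$ when the weight is negative. I would prove it by differentiating the homogeneity identity, so that $\partial^{\alpha}f(0)$ can be nonzero only when $\langle w,\alpha\rangle$ equals the weight; only finitely many Taylor coefficients at $0$ then survive, giving a genuine Taylor polynomial $Q$, and the remainder $f-Q$, being flat at $0$ and still weighted-homogeneous, must vanish, since at a fixed nonzero point it would be forced simultaneously to scale like a single exponential in $t$ and to decay faster than every power as $\psi_t$ contracts that point to $0$. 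Granting this, every $\lambda(X)$ with $X\in\g_r$ is polynomial, and by linearity $\lambda(\g)=\sum_r\lambda(\g_r)\subset\P(\n_-)$.

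The main obstacle is (3), specifically the "smooth weighted-homogeneous $\Rightarrow$ polynomial" lemma, where both the genuine smoothness of $\lambda(X)$ on the whole chart (from the big Bruhat cell being open in $G/P$) and the precise form of the dilation supplied by (2) enter; parts (1) and (2) are short direct computations, the only delicate point there being to keep the sign convention $X_f(x)=-f(x)$ straight when identifying $\lambda(E)$ with the linear vector field of $\ad(E)|_{\n_-}$.
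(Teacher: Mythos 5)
Your proposal is correct and follows essentially the same route as the paper: Baker--Campbell--Hausdorff for (i), $A$-equivariance of the chart (in your case written out as the explicit conjugation computation) for (ii), and the decomposition of $\g$ into $\ad(E)$-eigenspaces together with the fact that a smooth vector field on $\n_-$ satisfying $[\lambda(E),X]=rX$ is polynomial for (iii). The only difference is that where the paper cites the proof of Proposition 7.12 of \cite{Okada} for this last fact, you supply the standard weighted-homogeneity argument (Taylor coefficients at $0$ survive only in the correct weighted degree, and the flat remainder vanishes by scaling), which is a correct and complete substitute.
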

\begin{proof}
{(i)}
By the construction, for $X \in \n_-$, $\lambda(X)$ is the pull-back of a right-invariant vector field on $N_-$ by $\exp: \n_- \to N_-$.
More explicitly, the tangent vector at $Y \in \n_-$ of $\lambda(X)$ is given by the  differential at $t = 0$ of the curve $\gamma(t)$ on $\n_-$ satisfying
\[
\exp(\gamma(t)) = \exp(-tX)\exp(Y).
\]
Since $N_-$ is nilpotent, the Baker-Campbell-Hausdorff formula assures that $\gamma(t)$ is a polynomial function of $tX$ and $Y$.
Thus the tangent vector at $Y$ is a polynomial function of $Y$.

{(ii)}
Observe that $i:N_- \to G/P$ is $A$-equivariant where the domain is equipped with the action by conjugation and the range with the left action.
In particular, $A$ acts on $N_-$ by automorphism of Lie group.
By construction of $\lambda:\g \to \mathfrak{X}(\n_-)$, the claim follows immediately.

{(iii)}
By {(ii)}, we can show that for any integer $m$, the subspace
\[
\{X \in \mathfrak{X}(\n_-) \mid [E, X] = mX\}
\]
is contained in $\P(\n_-)$.
See the proof of Proposition 7.12 in \cite{Okada}.
Since $\g = \bigoplus \g_r$, we see that $\lambda(\g) \subset \P(\n_-)$.
\end{proof}

We will use the following lemma which follows immediately from the construction of the map $\lambda$.
\begin{lem}\label{lem:center_nil}
Let $\mathfrak{z} = \mathfrak{z}_{\mathfrak{X}(\n_-)}(\lambda(\n_-))$ be the centralizer of $\lambda(\n_-)$ in $\mathfrak{X}(\n_-)$.
\begin{enumerate}
\item $\mathfrak{z}$ is isomorphic to $\n_-$ as a Lie algebra.
\item $\mathfrak{z}$ is isomorphic to $\n_-$ as a representation of $MA$.
\item $\mathfrak{z} \subset \P(\n_-)$.
\end{enumerate}
\end{lem}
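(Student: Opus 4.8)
The plan is to unwind the definition of $\lambda$ restricted to $\n_-$ and recognize $\lambda(\n_-)$ as the space of right-invariant vector fields on $N_-$, read off in the chart $\exp:\n_- \to N_-$; then $\mathfrak{z}$ becomes the space of left-invariant vector fields. Concretely, by the construction recalled in \ref{sss:loc_left_act} and in the proof of Proposition \ref{prop:std_fixed_pt}(i), for $X\in\n_-$ the field $\lambda(X)$ is $\exp^*$ of $g\mapsto\frac{d}{dt}\big|_{t=0}\exp(-tX)g$ on $N_-$, i.e. $\exp^*(-\xi^R_X)$ where $\xi^R_X$ denotes the right-invariant field with $\xi^R_X(e)=X$. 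The sign $-1$ turns $X\mapsto\lambda(X)$ into a homomorphism (since $[\xi^R_X,\xi^R_Y]=-\xi^R_{[X,Y]}$), and its image $\lambda(\n_-)$ is exactly $\exp^*$ of the space $\mathfrak{r}$ of all right-invariant vector fields on $N_-$. Since $N_-$ is a connected Lie group, a vector field commutes with every $\xi^R_X$ if and only if it is invariant under the flow $L_{\exp(tX)}$ of $\xi^R_X$ for every $X$, hence (as $\exp(\n_-)$ generates $N_-$) if and only if it is left-invariant. Therefore $\mathfrak{z}=\exp^*(\mathfrak{L})$, where $\mathfrak{L}$ is the Lie algebra of left-invariant vector fields on $N_-$.

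Parts (i) and (ii) then drop out. Evaluation at the identity, $\eta\mapsto\eta(e)\in T_eN_-=\n_-$, is a Lie algebra isomorphism $\mathfrak{L}\xrightarrow{\ \sim\ }\n_-$, and pullback by the diffeomorphism $\exp$ is a Lie algebra isomorphism on vector fields, so $\mathfrak{z}\cong\n_-$ as Lie algebras. For (ii), $MA$ normalizes $N_-$ (as $M$ centralizes $A$ it preserves the restricted root spaces, and $A$ normalizes $N_-$), so conjugation yields $MA\to\mathrm{Aut}(N_-)$, which under $\exp$ is the linear action $\Ad|_{\n_-}$; this is precisely the $MA$-module structure carried by $\n_-\subset\mathfrak{X}(\n_-)$, and $\exp^*$ intertwines the two. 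Conjugation by $ma$ sends $\xi^R_X\mapsto\xi^R_{\Ad(ma)X}$ and sends the left-invariant field of value $X$ to that of value $\Ad(ma)X$, so $MA$ preserves both $\lambda(\n_-)$ and its centralizer $\mathfrak{z}$, and evaluation at the identity is an $MA$-equivariant isomorphism $\mathfrak{z}\to\n_-$ with $MA$ acting on $\n_-$ by $\Ad$.

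For (iii) I would argue exactly as in Proposition \ref{prop:std_fixed_pt}(i): a left-invariant field $\eta^L_X$ has value at $Y\in\n_-$ equal to $\frac{d}{dt}\big|_{t=0}\gamma(t)$ where $\exp(\gamma(t))=\exp(Y)\exp(tX)$, and the Baker--Campbell--Hausdorff formula (applicable since $\n_-$ is nilpotent) expresses $\gamma(t)$ as a polynomial in $Y$ and $tX$, so its derivative at $t=0$ is a polynomial function of $Y$; hence $\mathfrak{z}=\exp^*(\mathfrak{L})\subset\P(\n_-)$. (Alternatively, $\lambda(E)$ normalizes $\lambda(\n_-)$, so $\ad(\lambda(E))$ preserves $\mathfrak{z}$ and, by (ii) together with Proposition \ref{prop:std_fixed_pt}(ii), acts on it semisimply with integer eigenvalues $-1,-2$; each eigenspace lies in $\P(\n_-)$ by the argument in the proof of Proposition 7.12 of \cite{Okada}.) There is no deep obstacle here; the only point that demands care is the bookkeeping, namely matching the $-1$ twist in $\lambda$ with the homomorphism property and checking that the centralizer computation and the $MA$-equivariance both survive the passage through the exponential coordinates $\exp:\n_-\to N_-$.
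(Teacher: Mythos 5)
Your proposal is correct and follows essentially the same route as the paper: identify $\lambda(\n_-)$ with the right-invariant vector fields on $N_-$ read through $\exp$, so that $\mathfrak{z}$ is the left-invariant fields, giving (i) and (ii) via evaluation at $e$ (the isotropy representation), with (iii) obtained either by the Baker--Campbell--Hausdorff computation or by the $\ad(\lambda(E))$-eigenvalue argument. The paper's proof is just a condensed version of this, using for (iii) the same argument as Proposition \ref{prop:std_fixed_pt}(iii), which is your stated alternative.
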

\begin{proof}
Since $i^* \circ l_*(\n_-) \subset \mathfrak{X}(N_-)$ is the space of right-invariant vector fields on $N_-$, its centralizer is the space of left-invariant vector fields.
So $\mathfrak{z}$ is isomorphic to $\n_-$ as a Lie algebra.
Moreover, they are isomorphic as representations of $MA$.
In fact, they are isomorphic to the isotropic representation at $e \in N_-$.
In particular, by the same argument for (iii) of Proposition \ref{prop:std_fixed_pt}, we see that $\mathfrak{z}\subset \P(\n_-)$.
\end{proof}

\subsubsection{The standard subgroup}\label{sss:std_subgrp}
Let $G$ be the group of orientation-preserving isometries of a rank one symmetric space of non-compact type and $G = KAN$ an Iwasawa decomposition.
We define a subgroup $\Gamma$ of $G$ to be a \textit{standard subgroup} if it is generated by a non-trivial element $a \in A$ and a lattice $\Lambda \subset N$ of $N$ satisfying $a\Lambda a^{-1} \subset \Lambda$.
We will give an explicit presentation of $\Gamma$.

Recall that $\n = \g_1 \oplus \g_2$ for the restricted-root decomposition $\g = \bigoplus_r \g_r$.
In particular, $N$ is at most 2-step nilpotent.
Thus a lattice $\Lambda$ of $N$ has a set of generators $b_1, \dots, b_{m_1}, c_1, \dots, c_{m_2}$ such that
\begin{align*}
[b_i, c_j] &= e,\\
[b_i, b_j] &\in \langle c_1, \dots, c_{m_2} \rangle,\\
[c_i, c_j] &=e,
\end{align*}
where $m_i = \dim \g_i$.
For $a \in A$, $\Ad(a)$ on $\g$ is of the form $\Ad(a)|_{\g_r} = e^{rt}\mathrm{Id}_{\g_r}$ for some $t \in R$.
Thus, if a nontrivial element $a \in A$ satisfies $a\Lambda a^{-1} \subset \Lambda$, then there exists an integer $k \geq 2$ such that  $\Ad(a)|_{\g_r} = k^r\mathrm{Id}_{\g_r}$ so that $ab_ia^{-1} = b_i^k$, $ac_ia^{-1} = c_i^{k^2}$.

\section{Cohomology of Lie algebra}\label{sect:Lie_cohom}
Let $\g = \mathfrak{k} \oplus \mathfrak{a} \oplus \n$ be an Iwasawa decomposition of a real simple Lie algebra of real rank one.
In this section, we compute certain cohomology of $\n$.
Cohomology of $\n$ with the coefficient in a finite-dimensional $\g$-module can be computed by using Theorem \ref{thm:Kostant}.
We compute $H^1(\n, \g)^\mathfrak{a}$ in Subsection \ref{subsect:Lie_cohom_fin}.
As we will see in Section \ref{sect:loc_rigid_act}, vanishing of $H^1(\n, \g)^\mathfrak{a}$ is equivalent to local rigidity of our action.
The goal of Subsection \ref{subsect:Lie_cohom_inf} is Corollary \ref{cor_0cohom} and Corollary \ref{cor_1cohom}, which will be used in Section \ref{sect:loc_rigid_jet}.
To compute such cohomology of an infinite-dimenional $\g$-module, we use Theorem \ref{thm:Vogan}.

\subsection{Cohomology of finite-dimensional modules}\label{subsect:Lie_cohom_fin}
By using Theorem \ref{thm:Kostant} and the classification in Subsection \ref{subsect:rank_one_Lie}, we obtain the following.
To reduce the notation, $\g_\C$, $\n_\C$, $(\g_0)_\C$, and $\mathfrak{a}_\C$ in Subsection \ref{subsect:rank_one_Lie} will be denoted by $\g$, $\n$, $\l$, and $\mathfrak{a}$, respectively.
\begin{lem}\label{lem:H1nga}
Let $\g$ be the complexification of a real simple Lie algebra of real rank one.
Using the notation for $\Delta(\g, \h)$ as in  Subsection \ref{subsect:rank_one_Lie},
\[
H^1(\n, \g)^\mathfrak{a} = \begin{cases}
\mathrm{F}^\l_{2e_2} \oplus \mathrm{F}^\l_{-2e_2} & (\g = \mathfrak{so}(4, \C) )\\
\mathrm{F}^\l_{2e_2} & (\g = \mathfrak{so}(m, \C), \; m\geq 5 ) \\
\mathrm{F}^\l_{-e_1 + 2e_2 - e_n} \oplus \mathrm{F}^\l_{e_1-2e_{n-1} +e_n} & (\g = \mathfrak{sl}(n, \C), \; n\geq 3 )\\
0 & (\g = \mathfrak{sp}(2n, \C) \;(n\geq 3), \; \mathfrak{f}_4)
\end{cases}
\]
as an $\l$-module, where $\mathrm{F}^\l_\lambda$ denotes the finite-dimensional irreducible $\l$-module with the highest weight $\lambda$.
\end{lem}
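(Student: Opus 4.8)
The plan is to apply Kostant's theorem (Theorem \ref{thm:Kostant}) to the adjoint representation $\g$ of each complex simple Lie algebra of real rank one, and then extract the $\mathfrak{a}$-invariant part of $H^1(\n, \g)$. Since $\g$ is the irreducible finite-dimensional $\g$-module $\mathrm{F}^\g_{\lambda_0}$ with $\lambda_0$ the highest root, Kostant gives
\[
H^1(\n, \g) = \bigoplus_{\substack{w \in W \\ \mathrm{length}(w) = 1}} \mathrm{F}^\l_{w(\lambda_0 + \delta(\g)) - \delta(\g)}.
\]
The length-one elements of $W$ are exactly the simple reflections $s_\a$, $\a$ a simple root, and $s_\a(\lambda_0 + \delta(\g)) - \delta(\g) = \lambda_0 - \langle \lambda_0 + \delta(\g), \a^\vee \rangle \,\a$. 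So the first step is, for each of the four families ($\mathfrak{so}$, $\mathfrak{sl}$, $\mathfrak{sp}$, $\mathfrak{f}_4$), to use the explicit root data recorded in Subsection \ref{subsect:rank_one_Lie} to (a) identify the highest root $\lambda_0$, (b) fix a set of simple roots compatible with the chosen positivity, (c) compute $\delta(\g)$, and (d) evaluate the weights $\mu_\a = s_\a(\lambda_0 + \delta(\g)) - \delta(\g)$.

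The second step is to select, among these summands $\mathrm{F}^\l_{\mu_\a}$, those on which $\mathfrak{a}$ acts trivially. Here $\mathfrak{a} \subset \l$ is one-dimensional, spanned by the element $E$ with $[E, X] = rX$ on $\g_r$; the weight $\mu_\a$ restricted to $\mathfrak{a}$ records the grading degree, which for a weight $\sum c_i e_i$ is read off from the coefficient structure relative to $e_1$ (the $e_1$-direction being the one that detects $\g_1, \g_2$ versus $\g_0$). Concretely, $\mathrm{F}^\l_{\mu_\a}$ is $\mathfrak{a}$-invariant precisely when $\mu_\a$ has grading degree zero, i.e. lies in the span of $e_2, \dots, e_n$ (in the $\mathfrak{so}$, $\mathfrak{sp}$, $\mathfrak{f}_4$ normalizations) or, for $\mathfrak{sl}(n)$, has vanishing pairing with the grading element. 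So for each simple reflection I would compute the grading degree of $\mu_\a$ and keep only the degree-zero ones, reading off the resulting $\l$-highest weight.

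Carrying this out: for $\g = \mathfrak{so}(m, \C)$ the highest root is $e_1 + e_2$, and the only simple reflection producing a degree-zero weight yields $2e_2$ — with the extra summand $-2e_2$ appearing only when $m = 4$, where $\mathfrak{so}(4)$ is not simple and the root system $D_2 = A_1 \times A_1$ has two simple reflections both landing in degree zero. For $\g = \mathfrak{sl}(n, \C)$, $n \geq 3$, the grading is by $\g_1 = \{e_1 - e_j\} \cup \{e_i - e_n\}$, so two simple reflections — those at the "ends" — contribute degree-zero weights, giving $-e_1 + 2e_2 - e_n$ and $e_1 - 2e_{n-1} + e_n$. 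For $\mathfrak{sp}(2n, \C)$ ($n \geq 3$) and $\mathfrak{f}_4$ one checks that every $\mu_\a$ has nonzero grading degree, so the $\mathfrak{a}$-invariant part vanishes. The main obstacle is purely bookkeeping: correctly matching the (unusual, rank-one-adapted) choice of positive system in Subsection \ref{subsect:rank_one_Lie} with a genuine set of simple roots, computing $\langle \lambda_0 + \delta(\g), \a^\vee\rangle$ for each, and being careful with the normalization $|e_i|^2$ when forming coroots — particularly for the long/short root distinctions in $\mathfrak{sp}$ and $\mathfrak{f}_4$, and for the sporadic coincidence $\mathfrak{sp}(4) \cong \mathfrak{so}(5)$ and the non-simplicity of $\mathfrak{so}(4)$, which explain the case split in the statement. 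Once the $\delta$-shifted reflections are tabulated, reading off the answer is immediate.
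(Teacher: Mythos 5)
Your proposal follows essentially the same route as the paper's proof: apply Kostant's theorem (Theorem \ref{thm:Kostant}) to the adjoint module, observe that the length-one Weyl elements are the simple reflections, tabulate $\mu_\a = r_\a(\lambda_0+\delta(\g))-\delta(\g)$ from the explicit root data of Subsection \ref{subsect:rank_one_Lie}, and keep the $\mathfrak{a}$-invariant weights. Two small corrections to your bookkeeping: for $\g=\mathfrak{sp}(2n,\C)$ the space $\mathfrak{a}^*$ is spanned by $e_1+e_2$, so the invariance test is that the $e_1$- and $e_2$-coefficients of $\mu_\a$ sum to zero, not that the $e_1$-coefficient vanishes (every candidate $\mu_\a$ happens to fail both tests, so your conclusion is unaffected); and for $\g=\mathfrak{so}(4,\C)$ the second summand $\mathrm{F}^\l_{-2e_2}$ arises because $\g$ decomposes as $\mathrm{F}^\g_{e_1+e_2}\oplus\mathrm{F}^\g_{e_1-e_2}$ and Kostant's theorem is applied to each irreducible summand separately, each contributing one degree-zero weight via a different simple reflection, rather than two simple reflections both giving degree-zero weights for a single module.
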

\begin{proof}
By Theorem \ref{thm:Kostant},
\[
H^1(\n, \mathrm{F}^\g_\lambda) = \bigoplus _\mu \mathrm{F}^\l_\mu 
\]
as an $\l$-module, where the sum is taken over $\mu = r_\a(\lambda + \delta(\g)) - \delta(\g)$ for the reflection $r_\a$ in a simple root $\a$.
We will determine the $\mathfrak{a}$-invariant summands.

When $\g = \mathfrak{so}(4, \C)$, $\g = \mathrm{F}^\g_{e_1 + e_2} \oplus \mathrm{F}^\g_{e_1 - e_2}$ as a $\g$-module.
As $\mathfrak{a}^*$ is spanned by $e_1$, a weight vector is $\mathfrak{a}$-invariant if and only if the coefficient of $e_1$ of the weight is 0.
Since the simple roots are $e_1 \pm e_2$ and $\delta(\g) = 2e_1$, it is not difficult to see that
\[
H^1(\n, \mathrm{F}^\g_{e_1 \pm e_2})^\mathfrak{a} = \bigoplus _\mu \mathrm{F}^\l_{\pm2e_2}.
\]

When  $\g = \mathfrak{so}(2n, \C)$, $n \geq 3$, $\g = \mathrm{F}^\g_{e_1 + e_2}$ as a $\g$-module.
Using the facts that $\mathfrak{a}^*$ is spanned by $e_1$, the simple roots are
\[
e_i - e_{i+1}, (i = 1, \dots, n-1), \;e_{n-1} + e_n,
\]
and 
\[
\delta(\g) = \sum_{i =1} ^n (n - i)e_i,
\]
it is easy to see that a summand $\mathrm{F}^\l_\mu$, $\mu = r_{\a}(e_1 + e_2 + \delta(\g)) - \delta(\g)$ is $\mathfrak{a}$-invariant only if $\a = e_1 -e_2$.
Since
\[
r_{e_1 - e_2}(e_1 + e_2 + \delta(\g)) - \delta(\g)= 2e_2,
\]
the claim follows.

When  $\g = \mathfrak{so}(2n+1, \C)$, $n \geq 2$, $\g = \mathrm{F}^\g_{e_1 + e_2}$ as a $\g$-module.
Since $\mathfrak{a}^*$ is spanned by $e_1$, the simple roots are
\[
e_i - e_{i+1}, (i = 1, \dots, n-1),\; e_n,
\]
and
\[
\delta(\g) = \sum_{i =1} ^n \frac{2n+1 - 2i}{2}e_i,
\]
the claim follows from the same argument as the case $\g = \mathfrak{so}(2n, \C)$.

When $\g = \mathfrak{sl}(n, \C), \; n\geq 3$,  $\g = \mathrm{F}^\g_{e_1 - e_n}$ as a $\g$-module.
As $\mathfrak{a}^*$ is spanned by $e_1 - e_n$, a weight vector is $\mathfrak{a}$-invariant if and only if the coefficients of $e_1$ and $e_n$ of the weight is the same.
Since the simple roots are
\[
e_i - e_{i+1}, (i = 1, \dots, n-1)
\]
and 
\[
\delta(\g) = \sum_{i =1} ^n \frac{n +1 - 2i}{2}e_i,
\]
it is easy to see that a summand $\mathrm{F}^\l_\mu$, $\mu = r_{\a}(e_1 - e_n + \delta(\g)) - \delta(\g)$ is $\mathfrak{a}$-invariant only if $\a = e_1 -e_2, e_{n-1} - e_n$.
Since
\[
r_{e_1 - e_2}(e_1 - e_n + \delta(\g)) - \delta(\g)= -e_1 + 2e_2 - e_n
\]
and
\[
r_{e_{n-1} - e_n}(e_1 - e_n + \delta(\g)) - \delta(\g)= e_1 - 2e_{n-1} + e_n,
\]
the claim follows.

When $\g = \mathfrak{sp}(2n, \C), \; n\geq 3$,  $\g = \mathrm{F}^\g_{2e_1}$ as a $\g$-module.
As $\mathfrak{a}^*$ is spanned by $e_1 +e_2$, a weight vector is $\mathfrak{a}$-invariant if and only if the sum of the coefficients of $e_1$ and $e_2$ of the weight is 0.
Since the simple roots are
\[
e_i - e_{i+1}, (i = 1, \dots, n-1), 2e_n
\]
and 
\[
\delta(\g) = \sum_{i =1} ^n (n +1 - i)e_i,
\]
it is easy to see that a summand $\mathrm{F}^\l_\mu$, $\mu = r_{\a}(e_1 + e_2 + \delta(\g)) - \delta(\g)$ is $\mathfrak{a}$-invariant only if $\a = e_2 - e_3$.
Since
\[
r_{e_2 - e_2}(e_1 + e_2 + \delta(\g)) - \delta(\g)= 2e_1 - e_2+ e_3,
\]
there is no $\mathfrak{a}$-invariant summands and the claim follows.

When $\g = \mathfrak{f}_4$,  $\g = \mathrm{F}^\g_{2e_1+ 2e_2}$ as a $\g$-module.
As $\mathfrak{a}^*$ is spanned by $e_1$, a weight vector is $\mathfrak{a}$-invariant if and only if the coefficient of $e_1$ of the weight is 0.
Since the simple roots are
\[
e_1 - e_2 -e_3 -e_4, 2e_2 -2e_3, 2e_3 - 2e_4, 2e_4,
\]
and 
\[
\delta(\g) = 11e_1 + 5e_2 + 3e_3 + e_4,
\]
it is easy to see that a summand $\mathrm{F}^\l_\mu$, $\mu = r_{\a}(2e_1 + 2e_2 + \delta(\g)) - \delta(\g)$ is $\mathfrak{a}$-invariant only if $\a = e_1 - e_2 -e_3 -e_4$.
Since
\[
r_{e_1 - e_2 -e_3 -e_4}(2e_1 + 2e_2 + \delta(\g)) - \delta(\g)= -e_1 - e_2+ e_3 + e_4,
\]
there is no $\mathfrak{a}$-invariant summands and the claim follows.

\end{proof}
\subsection{Cohomology of infinite-dimensional modules}\label{subsect:Lie_cohom_inf}
Let $\g$ be the complexification of a real simple Lie algebra of real rank one.
We will use the same notation for $\n$, $\l$, and $\mathfrak{a}$ as in Subsection \ref{subsect:Lie_cohom_fin}.
A weight $\lambda \in \h^*$ is \textit{orthogonal} to $\mathfrak{a}^*$ if $\langle \lambda, \mu \rangle = 0$ for all $\mu \in \mathfrak{a}^*$, where  $\langle \cdot, \cdot \rangle$ is the bilinear form on $\h^*$ induced by the Killing form on $\h$, and is $\l$-\textit{dominant} if $\langle \lambda, \mu \rangle > 0$ for all $\mu \in \Delta^+(\l, \h)$.
\begin{lem}\label{lem:cohominft}
Let $\lambda \in \h^*$ be the weight of an $\l$-lowest weight vector in $\n_-$ which is not the weight of a $\g$-lowest weight vector in $\g$.
Then for $w \in W(\g, \h)$, 
\[
\mu = w(\lambda  - \delta(\g)) - \delta(\g)
\]
is not orthogonal to $\mathfrak{a}^*$ if $\mu$ is $\l$-dominant.
\end{lem}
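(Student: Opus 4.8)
\emph{Plan.}
The plan is to reduce the statement to a short computation with the explicit root data of Subsection~\ref{subsect:rank_one_Lie}, in fact to the stronger assertion that $\mu:=w(\lambda-\delta(\g))-\delta(\g)$ is \emph{never} orthogonal to $\mathfrak{a}^*$ for $\lambda$ as in the hypothesis; the $\l$-dominance assumption, convenient for applying Theorem~\ref{thm:Vogan}, then plays no further role (if some $w$ did produce $\mu\perp\mathfrak{a}^*$, one would fall back on it, but the computation below shows this does not happen).

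\emph{The case $\g=\mathfrak{so}(m,\C)$ is vacuous.}
Here $\g_2=0$, so $\n_-=\g_{-1}$, which as an $\l$-module is the standard representation of the semisimple part of $\l$ (irreducible for $m\ge5$, and a sum of two weight lines when $\g=\mathfrak{so}(4,\C)$, in which case $\g$ is a sum of two simple ideals). Inspecting weights, every $\l$-lowest weight of $\n_-$ is the lowest weight $-\theta$ of the adjoint module $\g=\mathrm{F}^\g_\theta$, or, for $\mathfrak{so}(4,\C)$, one of the two lowest weights of the two simple summands of $\g$. Hence the hypothesis excludes all $\lambda$ and there is nothing to prove.

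\emph{Reduction for the remaining cases.}
Let $\g\in\{\mathfrak{sl}(n,\C)\ (n\ge3),\ \mathfrak{sp}(2n,\C)\ (n\ge3),\ \mathfrak{f}_4\}$, so $\g_2\neq0$. One checks from the root data that the $\l$-lowest weight of $\g_{-2}$ is again $-\theta$, so $\lambda$ must be an $\l$-lowest weight of $\g_{-1}$; using the $\l$-module structure of $\g_{-1}$ (a standard representation and its dual in type $A$; an irreducible tensor product in type $C$; the $\mathfrak{so}(7)$-spin module for $\mathfrak{f}_4$) this gives
\[
\lambda=e_{n-1}-e_1\ \text{or}\ e_n-e_2\quad(\mathfrak{sl}(n,\C));\qquad \lambda=-e_1-e_3\quad(\mathfrak{sp}(2n,\C));\qquad \lambda=-e_1-e_2-e_3-e_4\quad(\mathfrak{f}_4).
\]
Now $\mathfrak{a}^*$ is one-dimensional and is spanned by a root $\beta_0$ of $\g$ (the unique root lying in $\mathfrak{a}^*$: $\beta_0=e_1-e_n$, resp.\ $e_1+e_2$, resp.\ $2e_1$), and $\delta(\g)-\delta(\l)$ is a positive multiple of $\beta_0$ while $\delta(\l)\perp\mathfrak{a}^*$; so, writing $\xi=\lambda-\delta(\g)$,
\[
\mu\perp\mathfrak{a}^*\iff\langle\xi,\,w^{-1}\beta_0\rangle=\langle\delta(\g),\,\beta_0\rangle ,
\]
and $w^{-1}\beta_0$ ranges over a $W(\g,\h)$-orbit of roots. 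Thus it suffices to show that no root $\gamma$ of $\g$ satisfies $\langle\xi,\gamma\rangle=\langle\delta(\g),\beta_0\rangle$.

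\emph{The computation and the main obstacle.}
Computing $\xi$ in coordinates (from the expressions for $\delta(\g)$ in the proof of Lemma~\ref{lem:H1nga}), the right-hand side equals $n-1$, resp.\ $2n-1$, resp.\ $22$, while $\langle\xi,\gamma\rangle$ runs over the differences $\xi_i-\xi_j$ in type $A$, over $\pm\xi_i\pm\xi_j$ and $\pm2\xi_i$ in type $C$, and over $\pm12\pm6\pm4\pm2$, $\pm2\xi_i\pm2\xi_j$, $\pm2\xi_i$ for $\mathfrak{f}_4$. In each family a direct inspection closes the argument: the multiset of coordinates of $\xi$ has a gap exactly at the value that would be needed. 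For example in type $A$ the coordinates of $\xi$ are, up to order, $-\tfrac{n+1}{2}$, the run $-\tfrac{n-3}{2},\dots,\tfrac{n-5}{2}$, and $\tfrac{n-1}{2}$ (twice), which omits $\tfrac{n-3}{2}$ and $-\tfrac{n-1}{2}$, so no two coordinates differ by $n-1$; the second value of $\lambda$ in type $A$ follows by the diagram automorphism of $\mathfrak{sl}(n,\C)$, and types $C$ and $\mathfrak{f}_4$ are analogous (all coordinates of $\xi$ negative, with the decisive value just missing from the list of pairwise sums). The main obstacle is therefore not conceptual but computational: (a) correctly reading off the $\l$-structure of $\g_{-1}$ to pin down $\lambda$, and (b) the elementary case-by-case arithmetic confirming the gap. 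A uniform argument splitting $\gamma$ into its $\mathfrak{a}^*$- and $\mathfrak{t}^*$-parts and estimating $\langle\xi,\gamma_{\mathfrak{t}^*}\rangle$ against the $\mathfrak{a}^*$-term seems to require the same explicit data, so I would simply treat the three cases in turn.
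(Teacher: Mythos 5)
Your proposal is correct and follows essentially the same route as the paper: identify the admissible $\lambda$ from the explicit rank-one root data of Subsection~\ref{subsect:rank_one_Lie} (the $\mathfrak{so}$ case being vacuous) and rule out $\mu \perp \mathfrak{a}^*$ case by case by comparing the coordinate multiset of $\xi = \lambda - \delta(\g)$ with the relevant coefficients of $\delta(\g)$ — your reformulation via $\langle \xi, w^{-1}\beta_0\rangle = \langle \delta(\g), \beta_0\rangle$ with $\beta_0$ the root spanning $\mathfrak{a}^*$ is just a repackaging of that comparison. The only divergence is that you prove the stronger statement without the $\l$-dominance hypothesis, which the paper invokes only in the $\mathfrak{sp}(2n,\C)$ case to pin down the $e_1$-coefficient of $w(\xi)$; your gap argument there does check out ($2n-1$ is not realizable as $\pm|\xi_i|\pm|\xi_j|$ or $2|\xi_i|$ with entries from $\{n+1, n-1, n-1, n-3, \dots, 1\}$), so this is a harmless strengthening rather than a gap.
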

\begin{proof}
When $\g = \mathfrak{so}(n, \C)$, $n \geq 3$, an $\l$-lowest weight vector in $\n_-$ is a $\g$-lowest weight vector in $\g$, so there is no $\lambda$ satisfying the assumption.
The remaining cases are $\g = \mathfrak{sl}(n, \C)$ $(n \geq 3)$, $\mathfrak{sp}(2n, \C)$ $(n \geq 3)$, and $\mathfrak{f}_4$.

When $\g = \mathfrak{sl}(n, \C)$ $(n \geq 3)$, $\lambda = -e_1 +e_{n-1}$ or $-e_2 + e_n$.
We assume for simplicity $\lambda = -e_1 +e_{n-1}$.
Then
\[
\lambda - \delta(\g) = -\frac{n + 1}{2}e_1 - \frac{n-3}{2}e_2 - \dots + \frac{n-1}{2}e_{n-1} + \frac{n-1}{2}e_n.
\]
Since $\mathfrak{a}^*$ is spanned by $e_1 - e_2$, a weight $\mu$ is orthogonal to $\mathfrak{a}^*$ if and only if the coefficients of $e_1$ and $e_n$ are the same.
On the other hand, the difference between coefficients of $e_1$ and $e_n$ in $\delta(\g)$ is $n-1$.  
It follows that the coefficients of $e_1$ and $e_n$ in $\mu = w(\lambda - \delta(\g)) - \delta(\g)$ coincides for some $w \in W(\g, \h)$ only if the set of coefficients in $\lambda - \delta(\g)$ contains two elements which differ by $n-1$.
We see that there is no such two elements.
Thus $\mu$ is not orthogonal to $\mathfrak{a}$.
The case $\lambda = -e_2 + e_n$ follows by the same argument.

When $\mathfrak{sp}(2n, \C)$ $(n \geq 3)$, $\lambda = -e_1 -e_3$.
So
\[
\lambda - \delta(\g) = -(n+ 1)e_1 - (n-1)e_2 -(n-1)e_3 - (n-3)e_4 \dots - e_n.
\]
Since $\mathfrak{a}^*$ is spanned by $e_1 + e_2$, a weight $\mu$ is orthogonal to $\mathfrak{a}^*$ if and only if the sum of the coefficients of $e_1$ and $e_2$ is $0$.
Moreover, if such a weight $\mu$ is $\l$-dominant, the coefficient of $e_1$ is non-negative.
\footnote{
The assumption of $\l$-dominance is used only here.
}
Since the coefficient of $e_1$ in $\delta(\g)$ is $n$, if the coefficient of $e_1$ in $\mu = w(\lambda - \delta(\g)) - \delta(\g)$ is non-negative, it must be $1$.
But as the coefficient of $e_2$ in $\delta(\g)$ is $n-1$, the coefficient of $e_2$ in $\mu$ can not be $-1$.
Thus $\mu$ is not orthogonal to $\mathfrak{a}^*$.

When $\g = \mathfrak{f}_4$, $\lambda = -e_1-e_2-e_3-e_4$.
So
\[
\lambda -\delta(\g) = -12e_1 -6e_2 -4e_3 -2e_4.
\]
Since $\mathfrak{a}^*$ is spanned by $e_1$, a weight $\mu$ is orthogonal to $\mathfrak{a}^*$ if and only if the coefficient of $e_1$ is $0$.
Since the action of $W(\g, \h)$ preserves the bilinear form on $\h^*$, the set $\{\pm e_1 \pm e_2 \pm e_3 \pm e_4\} \cup \{\pm2e_i \mid 1\leq i \leq 4\}$ is invariant.
Observe that for each $\a$ in this set, $\langle \lambda -\delta(\g), \a \rangle$ is an integer multiple of $4|e_i|^2$.
Thus this is also true for $w(\lambda -\delta(\g))$.
In particular, the coefficient of $e_1$ in $w(\lambda -\delta(\g))$ is even.
Thus $w(\lambda -\delta(\g)) - \delta(\g)$ is not orthogonal to $\mathfrak{a}^*$.
\end{proof}

\begin{cor}\label{cor_cohominft}
Let $\lambda \in \h^*$ be the weight of an $\l$-lowest weight vector in $\n_-$ which is not the weight of a $\g$-lowest weight vector in $\g$.
If $V$ is an $\l$-finite $\g$-module with an infinitesimal character $\lambda - \delta(\g)$, then $H^*(\n, V)^\mathfrak{a} = 0$.
\end{cor}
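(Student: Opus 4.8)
The plan is to derive the corollary directly from Theorem~\ref{thm:Vogan} and Lemma~\ref{lem:cohominft}, arguing by contradiction. So I would begin by assuming $H^*(\n, V)^{\mathfrak{a}} \neq 0$. Since $V$ is $\l$-finite and admits the infinitesimal character $\lambda - \delta(\g)$, Theorem~\ref{thm:Vogan} applies and shows that $H^*(\n, V)$ decomposes as a direct sum of finite-dimensional $\l$-modules, and that any weight $\mu$ occurring as an $\l$-highest weight of a summand satisfies $\mu + \delta(\g) = w(\lambda - \delta(\g))$ for some $w \in W(\g, \h)$; equivalently $\mu = w(\lambda - \delta(\g)) - \delta(\g)$.

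The next step is to locate the $\mathfrak{a}$-invariants inside this decomposition. Since $\mathfrak{a}$ is central in $\l$ (it commutes with $\mathfrak{m}$, and $\l$ is the complexification of $\mathfrak{m} \oplus \mathfrak{a}$), on any finite-dimensional irreducible $\l$-module $\mathrm{F}^\l_\mu$ the subalgebra $\mathfrak{a}$ acts by the scalar $\mu|_{\mathfrak{a}}$, by Schur's lemma. Hence a nonzero $\mathfrak{a}$-invariant vector in $H^*(\n, V)$ produces — after passing to an irreducible $\l$-submodule of the (nonzero, $\l$-stable) generalized $0$-eigenspace of $\mathfrak{a}$ — an $\l$-highest weight $\mu$ with $\mu|_{\mathfrak{a}} = 0$, which by non-degeneracy of the Killing form means precisely that $\mu$ is orthogonal to $\mathfrak{a}^*$. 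Combining this with the previous step, $\mu$ is an $\l$-dominant weight of the form $w(\lambda - \delta(\g)) - \delta(\g)$ which is orthogonal to $\mathfrak{a}^*$ — exactly the configuration that Lemma~\ref{lem:cohominft} rules out. This contradiction gives $H^*(\n, V)^{\mathfrak{a}} = 0$.

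Because Lemma~\ref{lem:cohominft} is already available, the main difficulty — the case-by-case analysis of the rank one root systems — is entirely behind us, and here I only need to be careful about two bookkeeping points. First, Theorem~\ref{thm:Vogan} asserts a decomposition into finite-dimensional, not necessarily irreducible, $\l$-modules; but $H^*(\n, V)$ is then a locally finite $\l$-module, so any nonzero $\l$-submodule (such as the generalized $0$-eigenspace of $\mathfrak{a}$) contains an irreducible submodule, on which $\mathfrak{a}$ acts by a scalar by Schur, and that scalar must be $0$. Second, the dominance hypothesis of Lemma~\ref{lem:cohominft} must be met by a highest weight of a finite-dimensional $\l$-module; inspecting the proof of Lemma~\ref{lem:cohominft}, the dominance is used only to force a single coefficient to be non-negative, so the weak dominance automatically satisfied by such highest weights suffices. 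With these observations the argument closes.
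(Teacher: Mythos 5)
Your proof is correct and takes essentially the same route as the paper's: apply Theorem \ref{thm:Vogan} to force any $\l$-highest weight of $H^*(\n, V)$ to be of the form $w(\lambda-\delta(\g))-\delta(\g)$, observe that $\mathfrak{a}$-invariance (via centrality of $\mathfrak{a}$ in $\l$) yields such a highest weight orthogonal to $\mathfrak{a}^*$ and dominant, and conclude by Lemma \ref{lem:cohominft}. Your two bookkeeping remarks are consistent with the paper: the paper's proof cites Theorem \ref{thm:Kostant}, evidently a slip for Theorem \ref{thm:Vogan}, which is what you invoke, and your observation that weak dominance suffices matches the footnote in the proof of Lemma \ref{lem:cohominft}.
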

\begin{proof}
By Theorem \ref{thm:Kostant}, the weight $\mu$ of an $\l$-highest weight vector in $H^*(\n, V)$ is of the form $\mu = w(\lambda  - \delta(\g)) - \delta(\g)$ for some $w \in W(\g, \h)$.
Observe that a weight vector is $\mathfrak{a}$-invariant if and only if its weight is othogonal to $\mathfrak{a}^*$ and that the weight of an $\l$-highest weight vector in a finite-dimensional $\l$-module is $\l$-dominant.
Thus the claim is immediate from Lemma \ref{lem:cohominft}.
\end{proof}
\begin{lem}\label{lem:0cohominft}
Let $\lambda \in \h^*$ be the weight of a $\g$-lowest weight vector in $\g$.
Assume a weight $\mu \in \h^*$ is $\l$-dominant, orthogonal to $\mathfrak{a}^*$, and
\[
\mu = w(\lambda  - \delta(\g)) - \delta(\g)
\]
for some $w \in W(\g, \h)$.
Then 
\[
\mu = 
\begin{cases}
2e_2 & (\g = \mathfrak{so}(n, \C),n\geq 4,\lambda = -e_1-e_2)\\
-2e_2 & (\g = \mathfrak{so}(4, \C),\lambda = -e_1+e_2) \\
-e_1+2e_2-e_n, e_1 - 2e_{n-1} +e_n&(\g = \mathfrak{sl}(n, \C),n\geq 3)\\
2e_1-2e_2+e_3+e_4&(\g = \mathfrak{sp}(2n, \C),n\geq3)\\
4e_2 + e_3 + e_4&(\g = \mathfrak{f}_4)
\end{cases}
\]
\end{lem}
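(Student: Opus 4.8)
The plan is to prove this by a direct case-by-case computation, entirely parallel to the proofs of Lemma~\ref{lem:H1nga} and Lemma~\ref{lem:cohominft}, using Theorem~\ref{thm:Kostant} together with the explicit root data of Subsection~\ref{subsect:rank_one_Lie}. The guiding observation is that the adjoint representation of $\g$ is self-dual via the Killing form (and, when $\g=\mathfrak{so}(4,\C)$, so is each of its two irreducible summands); hence, if $\lambda$ is the weight of a $\g$-lowest weight vector in an irreducible summand of $\g$ and $\nu$ the corresponding $\g$-highest weight, then $\lambda-\delta(\g)=w_0(\nu+\delta(\g))$ for the longest element $w_0\in W(\g,\h)$, because $w_0\delta(\g)=-\delta(\g)$. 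Consequently $\{\,w(\lambda-\delta(\g))-\delta(\g):w\in W(\g,\h)\,\}=\{\,w(\nu+\delta(\g))-\delta(\g):w\in W(\g,\h)\,\}$, and by Kostant's theorem the $\l$-dominant members of this set are exactly the $\l$-highest weights occurring in $H^*(\n,\mathrm{F}^\g_\nu)$. Thus the weights $\mu$ described in the statement are precisely the $\l$-highest weights of $H^*(\n,\mathrm{F}^\g_\nu)$ that are orthogonal to $\mathfrak{a}^*$, and the lemma reduces to extracting these from Kostant's formula. (Unlike in Lemma~\ref{lem:H1nga}, one cannot restrict to degree one: for $\mathfrak{so}$ and $\mathfrak{sl}$ the orthogonal weights all sit in $H^1$, but for $\mathfrak{sp}(2n,\C)$ and $\mathfrak{f}_4$ the relevant Weyl element has length $>1$, so all of $H^*$ must be inspected.)

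To carry this out, for each of $\mathfrak{so}(2n+1,\C)$, $\mathfrak{so}(2n,\C)$ with $n\ge 3$, $\mathfrak{so}(4,\C)$, $\mathfrak{sl}(n,\C)$, $\mathfrak{sp}(2n,\C)$, $\mathfrak{f}_4$, I would first write out the vector $\lambda-\delta(\g)$ in the basis $e_1,\dots$ of Subsection~\ref{subsect:rank_one_Lie}: here $\lambda=-e_1-e_2$ for $\mathfrak{so}(m,\C)$ with $m\ge 5$, $\lambda=-e_1+e_n$ for $\mathfrak{sl}(n,\C)$, $\lambda=-2e_1$ for $\mathfrak{sp}(2n,\C)$, $\lambda=-2e_1-2e_2$ for $\mathfrak{f}_4$, and $\lambda=-e_1\mp e_2$ for the two summands of $\mathfrak{so}(4,\C)$. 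For the classical types the orbit $W(\g,\h)\cdot(\lambda-\delta(\g))$ is just the set of signed permutations of the coordinate multiset of $\lambda-\delta(\g)$, subject to the $D_n$ parity constraint when $\g=\mathfrak{so}(2n,\C)$ and to $\sum_i c_i=0$ when $\g=\mathfrak{sl}(n,\C)$. Writing an orbit element as $v$ and $\mu=v-\delta(\g)$, the requirement that $\mu$ be $\l$-dominant forces the coordinates of $v$ attached to the simple roots of $\l$ into the strictly decreasing pattern those simple roots dictate (with the appropriate sign condition on the last coordinate according to the type of $\l$), while orthogonality of $\mu$ to $\mathfrak{a}^*$ imposes a single linear condition on $v$ (equality of the $e_1$-coordinate of $v$ with that of $\delta(\g)$ for $\mathfrak{so}$ and $\mathfrak{f}_4$; the analogous condition on the $e_1$--$e_n$ difference for $\mathfrak{sl}$, and on the $e_1$--$e_2$ sum for $\mathfrak{sp}$). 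Matching these constraints against the coordinate multiset pins down $v$, uniquely except for the twofold choice that produces the two listed weights in the $\mathfrak{sl}(n,\C)$ and $\mathfrak{so}(4,\C)$ rows, and $\mu=v-\delta(\g)$ then gives the stated value. (For $\mathfrak{so}(2n,\C)$, for instance, orthogonality forces $v_1=n-1$, then $\l$-dominance forces $(v_2,\dots,v_{n-1})=(n,n-3,n-4,\dots,1)$ and $v_n=0$, whence $\mu=2e_2$; the other types are handled the same way.)

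I expect the $\mathfrak{f}_4$ case to be the main obstacle, since there $W(\g,\h)$ is the full $F_4$ Weyl group rather than a group of signed permutations, so the orbit of $\lambda-\delta(\g)$ is not immediately transparent. There I would argue exactly as in the $\mathfrak{f}_4$ part of the proof of Lemma~\ref{lem:cohominft}: use that $W(\g,\h)$ preserves the bilinear form $\langle\cdot,\cdot\rangle$ --- hence the norm $|v|^2=|\lambda-\delta(\g)|^2$ --- together with the integrality and parity constraints it imposes on the coordinates of $v$; combine these with the $\l$-dominance ordering $\mu_2\ge\mu_3\ge\mu_4\ge 0$ and the orthogonality condition $\mu_1=0$; solve the resulting finite Diophantine system for $v$; and finally confirm that the solution really lies in the orbit by exhibiting a Weyl-group element carrying $\lambda-\delta(\g)$ to it. The remaining bookkeeping --- the even/odd split for $\mathfrak{so}$, the trace-zero normalization for $\mathfrak{sl}$, and the decomposition $\mathfrak{so}(4,\C)=\mathrm{F}^\g_{e_1+e_2}\oplus\mathrm{F}^\g_{e_1-e_2}$ with its two lowest weights --- is routine but must be carried out carefully so as not to miss or double-count a weight.
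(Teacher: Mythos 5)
Your plan is, in substance, the paper's own proof: the paper also argues case by case from the explicit root data of Subsection \ref{subsect:rank_one_Lie}, writes out $\lambda-\delta(\g)$, reads orthogonality to $\mathfrak{a}^*$ as a condition on the relevant coordinate (or coordinate sum/difference) of $w(\lambda-\delta(\g))$, and lets $\l$-dominance pin down the orbit element; for $\mathfrak{f}_4$ it uses exactly the norm-preservation, integrality/parity, and invariant-root-set arguments you describe. Your preliminary detour through self-duality of the adjoint representation and Kostant's theorem is harmless but not needed, since the lemma is a purely combinatorial statement about the shifted Weyl orbit, and in the end you solve the same constraint system the paper does; your sample $\mathfrak{so}(2n,\C)$ computation is correct and agrees with the paper. (For $\mathfrak{sp}(6,\C)$, i.e.\ $n=3$, no $\mu$ satisfies the hypotheses at all, so that case is vacuous and your ``matching'' step should be run for $n\ge 4$.)

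There is, however, one concrete point where a faithful execution of your plan will not deliver the statement as printed: the $\mathfrak{f}_4$ row. One has $\lambda-\delta(\g)=-13e_1-7e_2-3e_3-e_4$, of squared length $13^2+7^2+3^2+1^2=228$; orthogonality forces the $e_1$-coefficient of $w(\lambda-\delta(\g))$ to be $11$, the norm bound makes $11$ the largest coefficient, and the long-root argument gives the second largest equal to $9$, so the remaining two coefficients satisfy $c_3^2+c_4^2=228-121-81=26$, i.e.\ $\{|c_3|,|c_4|\}=\{5,1\}$ --- not $\{4,2\}$ (the paper's proof at this step effectively uses $3$ in place of $3^2$, i.e.\ total $222$ instead of $228$). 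The multiset $\{11,9,5,1\}$ really does occur in the orbit (apply the reflection in $e_1-e_2-e_3-e_4$ to $(13,7,3,-1)$, which sends it to $(11,9,5,1)$), and $\l$-dominance then forces $w(\lambda-\delta(\g))=11e_1+9e_2+5e_3+e_4$, hence $\mu=4e_2+2e_3$; by contrast the printed value $4e_2+e_3+e_4$ corresponds to $(11,9,4,2)$, of squared length $222$, which is not in the orbit at all. So your Diophantine step is the right one, but it will produce $4e_2+2e_3$ and you will not be able to confirm the stated $\mathfrak{f}_4$ entry; that entry (and its subsequent use in the $\mathfrak{f}_4$ case of Proposition \ref{prop:1cohominft_2}) needs to be corrected accordingly, or your proof must explicitly record the discrepancy rather than claim the lemma as written.
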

\begin{proof}
When $\g = \mathfrak{so}(4, \C)$, $\lambda = -e_1\pm e_2$.
Let us first consider the case $\lambda = -e_1 - e_2$.
Then $\lambda - \delta(\g) = -2e_1 - e_2$.
So $\mu =  w(\lambda  - \delta(\g)) - \delta(\g)$ is orthogonal to $\mathfrak{a}$ only if $w(\lambda  - \delta(\g)) = e_1 +2e_2$ and $\mu = 2e_2$.
When $\lambda = -e_1 -e_2$, the claim follows from the above argument with $e_2$ replaced by $-e_2$.

When $\g = \mathfrak{so}(2n+1, \C)$ $(n \geq 2)$, $\lambda = -e_1 - e_2$.
So
\[
\lambda  - \delta(\g) = -\frac{2n+1}{2}e_1 -\frac{2n-1}{2}e_2 - \frac{2n-5}{2}e_3 -\dots - \frac{1}{2}e_n.
\]
Since the coefficient of $e_1$ in $\delta(\g)$ is $\frac{2n-1}{2}$,  $\mu$ is orthogonal to $\mathfrak{a}$ if and only if $w(e_2) = -e_1$.
It is not difficult to check that $\mu$ is $\l$-dominant only if 
\[
w(\lambda  - \delta(\g)) = \frac{2n-1}{2}e_1 +\frac{2n+1}{2}e_2 + \frac{2n-5}{2}e_3 +\dots + \frac{1}{2}e_n
\]
and $\mu = 2e_2$.

The case $\g = \mathfrak{so}(2n, \C)$ $(n \geq 3)$ is similar as above.
In this case, $\lambda = -e_1 - e_2$.
Comparing the coefficients of $\lambda  - \delta(\g)$ and $\delta(\g)$, we can show that $\mu$ is orthogonal to $\mathfrak{a}$ if and only if $w(e_2) = -e_1$.
By the $\l$-dominance, $\mu = 2e_2$.

When $\g = \mathfrak{sl}(n, \C)$ $(n \geq 3)$, $\lambda = -e_1 + e_n$ and
\[
\lambda - \delta(\g) = -\frac{n+1}{2}e_1 - \frac{n-3}{2}e_2 - \dots +\frac{n-3}{2}e_{n-1} + \frac{n+1}{2}e_n.
\]
As the difference between coefficients of $e_1$ and $e_n$ in $\delta(\g)$ is $n-1$, we see that $\mu$ is orthogonal to $\mathfrak{a}^*$ if and only if $w(e_1) = e_n$, $w(e_{n-1}) = e_1$ or $w(e_2) = e_n$, $w(e_n) = e_1$.
When  $w(e_1) = e_n$ and $w(e_{n-1}) = e_1$, $\mu$ is $\l$-dominant only if
\[
w(\lambda - \delta(\g)) = \frac{n-3}{2}e_1  + \frac{n+1}{2}e_2 + \frac{n-5}{2}e_3 + \dots -\frac{n-3}{2}e_{n-1} -\frac{n+1}{2}e_n
\]
and $\mu = -e_1 +2e_2 -e_n$.
When $w(e_2) = e_n$, $w(e_n) = e_1$, $\mu$ is $\l$-dominant only if
\[
w(\lambda - \delta(\g)) = \frac{n+1}{2}e_1  + \frac{n-3}{2}e_2 +\dots - \frac{n-5}{2}e_{n-2}-\frac{n-3}{2}e_{n-1}-\frac{n-3}{2}e_n
\]
and $\mu = e_1 -2e_{n-1} +e_n$.

When $\g = \mathfrak{sp}(2n, \C)$ $(n \geq 3)$, $\lambda = -2e_1$ and
\[
\lambda - \delta(\g) = -(n+2)e_1 - (n-1)e_2 - \dots -e_n.
\]
As the sum of the coefficients of $e_1$ and $e_2$ in $\delta(\g)$ is $2n-1$, we see that $\mu$ is orthogonal to $\mathfrak{a}^*$ if and only if $w$ maps $\{e_1, e_4\}$ onto $\{-e_1, -e_2\}$.
When $\mu$ is $\l$-dominant, the coefficient of $e_1$ in $\mu$ is non-negative.
Thus $w(e_1) = -e_1$, $w(e_4) = -e_2$.
Then $\mu$ is $\l$-dominant only if
\[
w(\lambda - \delta(\g)) = (n+2)e_1  + (n-3)e_2 + (n-1)e_3 + (n-2)e_4 + (n-4)e_5 +\dots +e_n
\]
and $\mu = 2e_1 -2e_2 +e_3 + e_4$.

When $\g = \mathfrak{f}_4$, $\lambda = -2e_1 -2e_2$ and
\[
\lambda - \delta(\g) = -13e_1 -7e_2 -3e_3 -e_4.
\]
Assume $\mu$ is orthogonal to $\mathfrak{a}^*$.
Then the coefficient of $e_1$ in $w(\lambda - \delta(\g))$ is $11$.
Let $\{c_1, c_2, c_3, c_4\}$ be the set of coefficients of $w(\lambda - \delta(\g))$ with $|c_i| \geq |c_{i+1}|$.
Since $W(\g, \h)$ preserves the bilinear form on $\h^*$, $13^2 + 7^2 + 3^1 + 1^2 = \sum_i c_i^2$.
It follows that $c_1 = 11$.
Using the fact that $\{\pm e_1 \pm e_2 \pm e_3 \pm e_4\} \cup \{\pm2e_i \mid 1\leq i \leq 4\}$ is $W(\g, \h)$-invariant and that $\langle \lambda - \delta(\g), \a \rangle$ is an integer multiple of $2|e_i|^2$ for each element $\a$ of this set, we see that the coefficients $c_i$ of $w(\lambda - \delta(\g))$ are integers.
Since $\{\pm 2e_i \pm 2e_j \mid 1 \leq i < j \leq 4\}$ is $W(\g, \h)$-invariant and the maximal value of $\langle \lambda - \delta(\g), \a\rangle$ for $\a \in \{\pm 2e_i \pm 2e_j \mid 1 \leq i < j \leq 4\}$ is $2(13 + 7) $, we see $|c_1| + |c_2| = 13 + 7$.
Thus $|c_2| = 9$.
By the equation $13^2 + 7^2 + 3^1 + 1^2 = \sum_i c_i^2$, we obtain $|c_3| = 4$ and $|c_4| = 2$.
Now it is easy to check that $\mu$ is $\l$-dominant only if
\[
w(\lambda - \delta(\g)) = 11e_1 + 9e_2 + 4e_3 + 2e_4
\]
and $\mu = 4e_2 + e_3 + e_4$.
\end{proof}

Thus when $V$ is a $\g$-module with the same infinitesimal character as that of $\g$, unlike Corollary \ref{cor_cohominft}, $H^*(\n, V)^\mathfrak{a}$ does not necessarily vanish.
In fact, when $V = \g$, as we have seen in Subsection \ref{subsect:Lie_cohom_fin}, $H^1(\n, \g)^\mathfrak{a} \neq 0$ if $\g = \mathfrak{so}(n, \C)$ or $\mathfrak{sl}(n, \C)$.

We define a $\g$-module $V$ to be \textit{$\mathfrak{a}$-bounded below} if for all positive weights $\mu \in \mathfrak{a}^*$, the set $\{ \langle \mu, \lambda \rangle \mid \lambda \in \mathrm{wt}(V)\} \subset \R$ is bounded below, where $\mathrm{wt}(V)$ denotes the set of weights in $V$.
\begin{cor}\label{cor_0cohominft}
Let $\lambda \in \h^*$ be the weight of a $\g$-lowest weight vector in $\g$.
If $V$ is an $\l$-finite $\g$-module with an infinitesimal character $\lambda-\delta(\g)$ which is $\mathfrak{a}$-bounded below, then $H^0(\n, V)^\mathfrak{a}= 0$.
\end{cor}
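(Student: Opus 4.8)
The plan is to show that a nonzero class in $H^0(\n, V)^{\mathfrak{a}} = (V^{\n})^{\mathfrak{a}}$ would force $V$ to possess weights that are arbitrarily negative in the $\mathfrak{a}$-direction, contradicting the hypothesis. Since $V$ is $\l$-finite with infinitesimal character $\lambda - \delta(\g)$, Theorem \ref{thm:Vogan} (applied with $* = 0$) shows that $H^0(\n, V)$ is a sum of finite-dimensional irreducible $\l$-modules and that every weight $\chi$ occurring there as an $\l$-highest weight satisfies $\chi + \delta(\g) \in W(\g, \h)(\lambda - \delta(\g))$, i.e. $\chi = w(\lambda - \delta(\g)) - \delta(\g)$ for some $w \in W(\g, \h)$. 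Because $\mathfrak{a}$ is central in $\l$, it acts on each summand $\mathrm{F}^{\l}_\chi$ by the scalar given by $\chi|_{\mathfrak{a}}$, so $H^0(\n, V)^{\mathfrak{a}} \neq 0$ would produce a summand $\mathrm{F}^{\l}_\chi \subset V^{\n}$ whose highest weight $\chi$ is $\l$-dominant, orthogonal to $\mathfrak{a}^*$, and of the above form. By Lemma \ref{lem:0cohominft}, $\chi$ must then be one of the finitely many explicitly listed weights; unlike the situation of Corollary \ref{cor_cohominft}, such weights genuinely occur, which is precisely why the hypothesis that $V$ is $\mathfrak{a}$-bounded below is needed.

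The second step is to observe that an $\l$-highest weight vector $v$ of $\mathrm{F}^{\l}_\chi$ is in fact a $\g$-highest weight vector: it lies in $V^{\n}$ and is annihilated by the root spaces $\g_\beta$ for $\beta \in \Delta^+(\l, \h)$, hence by all of $\g_+ = \bigoplus_{\beta \in \Delta^+(\g, \h)} \g_\beta$, since $\Delta^+(\g, \h) = \Delta^+(\l, \h) \sqcup \Delta(\n, \h)$. I then go through the list of Lemma \ref{lem:0cohominft}, using the root data of Subsection \ref{subsect:rank_one_Lie}, and in each case select a root $\beta_0 \in \Delta(\n, \h)$ for which $\langle \chi, \beta_0^\vee \rangle$ is a negative integer, in fact equal to $-2$ or $-3$: for example $\beta_0 = e_1 - e_2$ when $\chi = 2e_2$, $\beta_0 = e_1 + e_2$ when $\chi = -2e_2$, $\beta_0 \in \{e_1 - e_2,\, e_{n-1} - e_n\}$ in the $\mathfrak{sl}(n, \C)$ cases, $\beta_0 = e_2 - e_3$ when $\g = \mathfrak{sp}(2n, \C)$, and $\beta_0 = e_1 - e_2 - e_3 - e_4$ when $\g = \mathfrak{f}_4$. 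Let $\{e_{\beta_0}, h_{\beta_0}, f_{\beta_0}\}$ be the corresponding $\mathfrak{sl}(2, \C)$-triple. Then $e_{\beta_0} v = 0$ while $h_{\beta_0} v = \langle \chi, \beta_0^\vee \rangle v$ with $\langle \chi, \beta_0^\vee \rangle$ a negative integer, so $v$ generates an infinite-dimensional $\mathfrak{sl}(2, \C)$-module, namely the Verma module of highest weight $\langle \chi, \beta_0^\vee \rangle$, which is irreducible because that weight is not a non-negative integer; in particular $f_{\beta_0}^k v \neq 0$ for every $k \geq 0$.

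Finally, $f_{\beta_0}^k v$ is a weight vector of $V$ of weight $\chi - k\beta_0$. Restricting to $\mathfrak{a}$ we have $\chi|_{\mathfrak{a}} = 0$ (as $\chi \perp \mathfrak{a}^*$), while $\beta_0 \in \Delta(\n, \h)$ restricts on $\mathfrak{a}$ to a positive multiple of the restricted root $\a$, whence $\langle \a, \beta_0 \rangle > 0$ (the relevant orthogonality being that $\h$ decomposes as the $\mathfrak{a}$-part plus the $\mathfrak{m}$-part, orthogonally for the Killing form). Hence $\langle \a, \chi - k\beta_0 \rangle = -k\langle \a, \beta_0 \rangle \to -\infty$ as $k \to \infty$, and since each $\chi - k\beta_0$ lies in $\mathrm{wt}(V)$ this contradicts $V$ being $\mathfrak{a}$-bounded below. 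Therefore $H^0(\n, V)^{\mathfrak{a}} = 0$. The only non-formal ingredient is the case-by-case choice of $\beta_0$ in the second step, which I expect to be the main bookkeeping but an entirely routine one given Lemma \ref{lem:0cohominft} and the explicit root systems; the conceptual content is simply that every $\l$-highest weight permitted by Lemma \ref{lem:0cohominft} fails $\g$-dominance along a root of $\n$ by an integral amount, which makes the $\mathfrak{sl}(2, \C)$-string through $v$ both infinite and visible to $\mathfrak{a}$.
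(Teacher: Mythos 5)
Your proposal is correct and takes essentially the same route as the paper: Theorem \ref{thm:Vogan} together with Lemma \ref{lem:0cohominft} pins down the possible weight, the $\l$-highest weight vector in $H^0(\n,V)^{\mathfrak{a}}$ is observed to be a $\g$-highest weight vector, and the $\mathfrak{a}$-bounded-below hypothesis then excludes every weight on the list. The only difference is in the final step: the paper argues abstractly that $\mathfrak{a}$-boundedness below forces such a vector to generate a finite-dimensional $\g$-submodule, so its weight would have to be $\g$-dominant (which the listed weights are not), whereas you make the same failure of dominance explicit by choosing a root $\beta_0 \in \Delta(\n,\h)$ with $\langle \mu, \beta_0^\vee\rangle \in \{-2,-3\}$ and running the $\mathfrak{sl}(2,\C)$-string, which is a correct, slightly more hands-on rendering of the identical mechanism.
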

\begin{proof}
By Theorem \ref{thm:Vogan}, the weight $\mu$ of an $\l$-highest weight vector in $H^0(\n, V)^\mathfrak{a}$ must be $\mu$ as in Lemma \ref{lem:0cohominft}.
On the other hand, an $\l$-highest weight vector in $H^0(\n, V)^\mathfrak{a}$ is a $\g$-highest weight vector in $V$.
Since $V$ is $\mathfrak{a}$-bounded below, a $\g$-highest weight vetor in $V$ must be a highest weight vector of a finite-dimensional $\g$-submodule.
Thus its weight $\mu$ must be $\g$-dominant.
But weights $\mu$ in in Lemma \ref{lem:0cohominft} are not $\g$-dominant.
Thus $H^0(\n, V)^\mathfrak{a} = 0$.
\end{proof}

We define a weight $\lambda \in \h^*$ to be $\mathfrak{a}^*$-\textit{nonnegative} if $\langle \mu, \lambda \rangle \leq 0$ for all positive weights $\mu \in \mathfrak{a}^*$.

\begin{prop}\label{prop:1cohominft_1}
Assume $\g = \mathfrak{so}(n, \C)$, $n \geq 4$ or $\mathfrak{sl}(n, \C)$, $n \geq 3$.
Let $\lambda \in \h^*$ be the weight of a $\g$-lowest weight vector in $\g$, and $V$ an $\l$-finite $\g$-module with an infinitesimal character $\lambda - \delta(\g)$.
Assume the weights of $V$ are $\mathfrak{a}^*$-nonnegative.
Then $H^1(\n, V)^\mathfrak{a}= 0$.
\end{prop}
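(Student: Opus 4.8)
The plan is to run the argument of Corollary \ref{cor_0cohominft} one degree up. First, since $V$ is $\l$-finite and has infinitesimal character $\lambda-\delta(\g)$, Theorem \ref{thm:Vogan} shows that $H^1(\n,V)$ decomposes into finite-dimensional $\l$-modules, and that a weight $\mu\in\h^*$ can be the highest weight of such an $\l$-summand only if $\mu+\delta(\g)$ lies in the $W(\g,\h)$-orbit of $\lambda-\delta(\g)$, i.e. $\mu=w(\lambda-\delta(\g))-\delta(\g)$ for some $w\in W(\g,\h)$. Next, the roots of $\l$ are orthogonal to $\mathfrak a^*$, so $\mathfrak a$ is central in $\l$ and acts on an irreducible $\l$-module $\mathrm{F}^\l_\mu$ by the scalar $\mu|_{\mathfrak a}$; hence $H^1(\n,V)^{\mathfrak a}$ is the sum of those summands whose highest weight $\mu$ is orthogonal to $\mathfrak a^*$ (and automatically $\l$-dominant). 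For $\g=\mathfrak{so}(n,\C)$ $(n\ge4)$ or $\mathfrak{sl}(n,\C)$ $(n\ge3)$, Lemma \ref{lem:0cohominft} then restricts $\mu$ to the explicit finite list appearing there: $\mu=2e_2$ (and also $-2e_2$ for $\mathfrak{so}(4,\C)$), respectively $\mu\in\{-e_1+2e_2-e_n,\ e_1-2e_{n-1}+e_n\}$. These are precisely the weights occurring in $H^1(\n,\g)^{\mathfrak a}$ in Lemma \ref{lem:H1nga}, so the infinitesimal character alone does not kill $H^1(\n,V)^{\mathfrak a}$, and it is the $\mathfrak a^*$-nonnegativity of the weights of $V$ that must be brought in to rule out these finitely many $\mu$.

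To exclude each candidate $\mu$, I would extract from a nonzero class a forbidden highest-weight vector of $V$, in the spirit of Corollary \ref{cor_0cohominft}. Represent a putative $\l$-highest weight vector of $H^1(\n,V)^{\mathfrak a}$ of weight $\mu$ by a cochain $c\in\mathrm{Hom}(\n,V)$ that is an $\l$-weight vector of weight $\mu$; since $\mathfrak a$ is central in $\l$ and $\mu\perp\mathfrak a^*$, the cochain $c$ is $\mathfrak a$-invariant, so it carries $\g_i$ into the $\mathfrak a$-weight space $V_{i\alpha}$ ($i=1,2$, with $\alpha$ the restricted root and $\n=\g_1\oplus\g_2$). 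The hypothesis that the weights of $V$ are $\mathfrak a^*$-nonnegative tightly constrains which of these $\mathfrak a$-weight spaces are nonzero, and hence which $\mathfrak a$-homogeneous layers of $\mathrm{Hom}(\n,V)$ can support $c$. On such a layer the cocycle condition $dc=0$ — using that $\n=\g_1$ is abelian for $\g=\mathfrak{so}(n,\C)$, and that $\g_2$ is the one-dimensional centre of the Heisenberg algebra $\n$ for $\g=\mathfrak{sl}(n,\C)$ — together with the $\l$-highest-weight property should pin $c$, modulo coboundaries, to a cochain built from a $\g$-highest weight vector $v\in V$ of weight $w(\lambda-\delta(\g))-\delta(\g)$, which by Lemma \ref{lem:0cohominft} is not $\g$-dominant. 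But $V$ is $\l$-finite and $\mathfrak a^*$-nonnegative, so a $\g$-highest weight vector of $V$ must generate a finite-dimensional $\g$-submodule, forcing its weight to be $\g$-dominant — a contradiction, exactly as in the proof of Corollary \ref{cor_0cohominft}. Hence $c$ is a coboundary and $H^1(\n,V)^{\mathfrak a}=0$.

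The technical heart, and the step I expect to be the main obstacle, is carrying out this reduction in the non-abelian case $\g=\mathfrak{sl}(n,\C)$. There I would use the Hochschild--Serre spectral sequence for the central ideal $\g_2\subset\n$, whose five-term exact sequence
\[
0\to H^1(\g_1,V^{\g_2})\to H^1(\n,V)\to H^1(\g_2,V)^{\g_1}\xrightarrow{\ d_2\ }H^2(\g_1,V^{\g_2})
\]
stays exact after applying $(\cdot)^{\mathfrak a}$, since $\mathfrak a$ acts semisimply. This reduces the degree-one computation over $\n$ to a degree-one computation over the abelian $\g_1$ with coefficients in $V^{\g_2}$ and a degree-zero computation over $\g_2$, via $H^1(\g_2,V)=V/\g_2V$. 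The modules $V^{\g_2}$ and $V/\g_2V$ remain $\l$-finite and $\mathfrak a^*$-nonnegative, so the non-$\g$-dominance obstruction used in Corollary \ref{cor_0cohominft} applies to the degree-zero term; the degree-one term over the abelian $\g_1$ I would treat with a direct Koszul-complex computation — the same one that settles $\g=\mathfrak{so}(n,\C)$, where $\n$ is already abelian. What has to be checked carefully throughout is the $\mathfrak a$-degree bookkeeping: matching the $\mathfrak a$-weights of $\n^*$ against those permitted in $V$ by $\mathfrak a^*$-nonnegativity, and verifying that the transgression $d_2$ creates no new $\mathfrak a$-invariant classes, which is where the candidate weights $\mu$ above are finally eliminated.
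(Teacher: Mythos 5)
Your first step (Theorem \ref{thm:Vogan} plus Lemma \ref{lem:0cohominft} to cut the possible $\l$-highest weights of $H^1(\n,V)^{\mathfrak a}$ down to the finite list $\mu=2e_2$, $-2e_2$, $-e_1+2e_2-e_n$, $e_1-2e_{n-1}+e_n$) agrees with the paper, and your diagnosis that the infinitesimal character alone cannot finish the job is also right. But the heart of the proof is missing. Your proposed mechanism -- that the cocycle condition and the $\l$-highest-weight property ``should pin $c$, modulo coboundaries, to a cochain built from a $\g$-highest weight vector $v\in V$ of weight $\mu=w(\lambda-\delta(\g))-\delta(\g)$'' -- is asserted, not proved, and it is not how the contradiction actually arises. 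A degree-one class is represented by $f:\n\to V$ whose values have weights $\mu+\alpha$, $\alpha\in\Delta(\n,\h)$, not $\mu$, and there is no reason (and no argument given) why a nonzero class forces $V$ to contain a weight-$\mu$ vector, let alone a $\g$-highest one; the degree-zero argument of Corollary \ref{cor_0cohominft} simply does not ``run one degree up.'' What the paper does instead is: take a nonzero $\l$-highest cocycle $f$ of weight $\mu$, show it is determined by its restriction to a single root space such as $\g_{e_1-e_2}$ (for $\mathfrak{sl}(n,\C)$ this already needs the $\mathfrak a^*$-nonnegativity, via finite-dimensionality of $\mathfrak{s}_{e_1-e_2}$-submodules), restrict to a proper regular subalgebra $\g'$ (spanned by $\pm e_1\pm e_2$, resp.\ the $\mathfrak{sl}(n-1)$ block) and to the $\g'$-submodule $V'$ generated by $f(\g_{e_1-e_2})$, prove the restricted class is still nonzero in $H^1(\n',V')$, apply Theorem \ref{thm:Vogan} a second time to $(\g',\n',V')$ to pin its infinitesimal character as $\mu+\delta(\g')$, and then derive a weight contradiction by analysing the possible $\g'$-lowest weight vectors that $\mathfrak a^*$-nonnegativity and $\l$-finiteness force to exist in the $\g'_-$-submodule generated by $f(\g_{e_1-e_2})$. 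None of this reduction is present in your sketch.

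The Hochschild--Serre fallback for $\mathfrak{sl}(n,\C)$ has a structural problem as well: the coefficient modules $V^{\g_2}$ and $V/\g_2V$ appearing in the five-term sequence are no longer $\g$-modules, so the infinitesimal-character machinery (Theorems \ref{thm:Vogan} and \ref{thm:Kostant}) that all of Section \ref{sect:Lie_cohom} relies on is unavailable for them, and the remaining $H^1$ over the abelian $\g_1$ cannot be dismissed as ``a direct Koszul-complex computation -- the same one that settles $\mathfrak{so}(n,\C)$'': the $\mathfrak{so}(m,\C)$, $m\geq5$, case is itself settled in the paper only by the nontrivial subalgebra-and-submodule reduction described above, not by a direct Koszul computation (only $\mathfrak{so}(4,\C)$ admits a hands-on treatment). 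So as it stands the proposal identifies the right candidate weights but does not contain the idea that eliminates them, and the steps you flag as ``expected obstacles'' are exactly the unproven core.
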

Assume $H^1(\n, V)^\mathfrak{a} \neq 0$.
We will again use the explicit description of the root system $\Delta(\g, \h)$ as in Subsection \ref{subsect:rank_one_Lie} to obtain a contradiction.

\begin{proof}[Proof in the case $\g = \mathfrak{so}(4, \C)$]
The weight $\lambda$ of a $\g$-lowest weight vector in $\g$ is $\lambda = -e_1\pm e_2$.
When $\lambda = -e_1 - e_2$, by Lemma \ref{lem:0cohominft}, $H^1(\n, V)$ has an $\l$-highest weight vector of weight $2e_2$.
Let $f:\n \to V$ be a non-zero cocycle of weight $2e_2$.
Then $f(\g_{e_1+e_2})$ or $f(\g_{e_1-e_2})$ is non-zero.
So $V$ contains a weight vector of weight $e_1+3e_2$ or $e_1+e_2$.

We will show $V$ does not contain a weight vector of weight $e_1+3e_2$ or $e_1+e_2$.
Since $V$ is an $\l$-finite $\g$-module with $\mathfrak{a}^*$-nonnegative weights, a vector in $V$ generates a $\g_-$-submodule which contains a $\g$-lowest weight.
On the other hand, the weight of a $\g$-lowest weight vector in $V$ is of the form $w(\lambda-\delta(\g)) + \delta(\g)$.
The weights appears in the $\g_-$-submodule of $V$ generated by a weight vector of weight $e_1+3e_2$ are $e_1+3e_2, 4e_2, 2e_2$, while that of weight $e_1+e_2$ are $e_1+e_2, 4e_2, 2e_2$.
It is easy to see that none of them are of the form  $w(\lambda-\delta(\g)) + \delta(\g)$.

When $\lambda = -e_1 + e_2$, the claim follows from the above argument with $e_2$ replaced by $-e_2$.
\end{proof}

\begin{proof}[Proof in the case $\g = \mathfrak{so}(m, \C), m \geq 5$]
By Lemma \ref{lem:0cohominft}, $H^1(\n, V)$ has an $\l$-highest weight vector of weight $\mu = 2e_2$.
Let $f:\n \to V$ be a non-trivial $\l$-highest cocycle of weight $\mu$.
Since $\g_{e_1-e_2}$ generates $\n$ as an $\l_+$-module, an $\l$-highest cocycle $f$ is determined by $f|_{\g_{e_1-e_2}}$.

Let $\g' = \h \oplus \bigoplus_{\a \in \Delta(\g', \h)} \g_\a$ be the subalgebra of $\g$ where 
\[
\Delta(\g', \h) = \{ \pm e_1 \pm e_2\}.
\]
Put $\n' = \n \cap \g'$, and $\l' = \l \cap \g'$.
Now we will show that the restriction of $f$ to $\n'$ gives a non-zero $\l'$-highest weight vector in $H^1(\n', V')$ of weight $\mu$, where $V'$ denotes the $\g'$-subalgebra of $V$ generated by $f(\g_{e_1 - e_2})$.
It suffices to show that $f|_{\n'}$ is not a boundary.
If $f|_{\n'}$ is a boundary, there exists $v \in V$ with $f|_{\n'} = dv$.
Then $v$ is a weight vector of weight $\mu$.
Such a weight vector is $\l$-highest: as $\mu$ is orthogonal to $\mathfrak{a}$, the $\l$-submodule generated by $v$ admits an $\l$-highest weight vector of weight orthogonal to $\mathfrak{a}$.
By Lemma \ref{lem:0cohominft}, the weight of the $\l$-highest weight vector is $\mu$.
Thus $v$ is $\l$-highest.
Since $v$ is $\l$-highest, $f-dv$ is an $\l$-highest cocycle.
By $(f-dv)(\g_{e_1-e_2}) = 0$, we obtain $f - dv =0$.
So $f$ is a boundary, which contradicts to the assumption.

Replacing $f$ if necessary, we may assume $V'$ admits a $\g'$-infinitesimal character.
By Theorem \ref{thm:Vogan}, the infinitesimal character must be $\mu + \delta(\g') = e_1 + 2e_2$.
By the same argument as in the proof of the case $\g = \mathfrak{so}(4, \C)$, $V'$ does not contain a weight vector of weight $e_1 + e_2$.
This contradicts to the fact that $f(\g_{e_1-e_2})$ is of weight $e_1 -e_2 + \mu = e_1 +e_2$.
\end{proof}

\begin{proof}[Proof in the case $\g = \mathfrak{sl}(n, \C), n\geq 3$]
By Lemma \ref{lem:0cohominft}, $H^1(\n, V)$ has an $\l$-highest weight vector of weight $\mu = -e_1+2e_2-e_n, e_1 - 2e_{n-1} +e_n$.
Let us first consider the case $\mu = -e_1+2e_2-e_n$.
Let $f:\n \to V$ be a non-trivial $\l$-highest cocycle of weight $\mu$.

We will show $f = 0$ if $f|_{\g_{e_1-e_2}} = 0$.
Assume $f(\g_{e_1-e_2}) =0$.
By $[\g_{e_1-e_2}, \g_{e_1-e_n}] = 0$ and the cocycle equation, $f(\g_{e_1-e_n})$ is annihilated by $\g_{e_1-e_2}$.
If $f(\g_{e_1-e_n}) \neq 0$, this is a weight vector of weight $\mu + e_1-e_n = 2e_2-2e_n$.
Put $\mathfrak{s}_{e_1-e_2} = \h \oplus \g_{e_1-e_2} \oplus \g_{-e_1+e_2}$
As the weights of $V$ are $\mathfrak{a}$-nonnegative, the $\mathfrak{s}_{e_1-e_2}$-module generated by $f(\g_{e_1-e_n})$ is finite dimensional with a $\mathfrak{s}_{e_1-e_2}$-highest weight vector in $f(\g_{e_1-e_n})$.
But its weight $2e_2-2e_n$ is not $\mathfrak{s}_{e_1-e_2}$-dominant, which is a contradiction.
Thus $f(\g_{e_1-e_n}) = 0$.
Then by $[\g_{e_1-e_2}, \g_{e_{n-1}-e_n}] \subset \g_{e_1-e_n}$ and the cocycle equation, $f(\g_{e_{n-1}-e_n})$ is annihilated by $\g_{e_1-e_2}$.
Since the weight $\mu + e_{n-1}-e_n = -e_1+2e_2+e_{n-1}-2e_n$ is not $\mathfrak{s}_{e_1-e_2}$-dominant, by the same argument as above, we obtain $f(\g_{e_{n-1}-e_n}) = 0$.
As $\g_{e_1-e_2}$, $\g_{e_{n-1}-e_n}$, and $\g_{e_1-e_n}$ generates $\n$ as an $\l_+$-module, we see $f = 0$.

Let $\g' = \h \oplus \bigoplus_{\a \in \Delta(\g', \h)} \g_\a$ be the subalgebra of $\g$ where 
\[
\Delta(\g', \h) = \{ \pm (e_i  -e_j) \mid 1 \leq i < j \leq n-1\}
\]
and put $\n' = \n \cap \g'$, and $\l' = \l \cap \g' = \l$.
By the same argument as in the case of $\g = \mathfrak{so}(m, \C)$, we obtain an $\l'$-highest weight vector in $H^1(\n', V')$ of weight $\mu$, where $V'$ denotes the $\g'$-subalgebra of $V$ generated by $f(\g_{e_1 - e_2})$.

Replacing $f$ if necessary, we may assume $V'$ admits a $\g'$-infinitesimal character.
Observe that
\[
\delta(\g' ) = \frac{n-2}{2}e_1 + \frac{n-4}{2}e_2 + \dots -\frac{n-4}{2}e_{n-2} - \frac{n-2}{2}e_{n -1}.
\]
By Theorem \ref{thm:Vogan}, the infinitesimal character must be
\[
\mu + \delta(\g') = \frac{n-4}{2}e_1 + \frac{n}{2}e_2 + \dots - \frac{n-2}{2}e_{n -1} - e_n.
\]
Thus the weight of a $\g'$-lowest weight vector is of the form $w(\mu + \delta(\g')) + \delta(\g')$ for some $w \in W(\g', \h)$.
We see that the weight of this form appears in the $\g'_-$-submodule generated by $f(\g_{e_1 - e_2})$ only if $w(\mu + \delta(\g')) + \delta(\g') = e_{n-1} - e_n$.
But $V'$ does not contain a $\g'$-lowest weight vector of weight $e_{n-1} - e_n$.
In fact, if there is such a weight vector, $V$ also has a $\g'$-lowest weight vector of weight $e_{n-1} - e_n$.
Considering the $\g$-infinitesimal character of $V$, we see that the weight vector is not $\g$-lowest.
So it is not annihilated by $\g_{-e_{n-1} +e_n}$.
Then applying $\g_{-e_{n-1} +e_n}$, we obtain a $\g'$-lowest weight vector in $V$ of weight $e_{n-1} - e_n -e_{n-1} +e_n = 0$.
Considering the $\g$-infinitesimal character of $V$ again, this is a contradiction.
\end{proof}
\begin{prop}\label{prop:1cohominft_2}
Assume $\g = \mathfrak{sp}(2n, \C)$, $n \geq 3$ or $\mathfrak{f}_4$.
Let $\lambda \in \h^*$ be the weight of a $\g$-lowest weight vector in $\g$, and $V$ an $\l$-finite $\g$-module with an infinitesimal character $\lambda - \delta(\g)$ which is $\mathfrak{a}$-bounded below.
Then $H^1(\n, V)^\mathfrak{a}= 0$.
\end{prop}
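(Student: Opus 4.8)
The plan is to mimic the argument for Proposition~\ref{prop:1cohominft_1}: assume for contradiction that $H^1(\n,V)^{\mathfrak{a}}\neq 0$, produce a distinguished $\l$-highest cocycle, restrict it to the nilradical of a small rank-one subalgebra of $\g$, and derive an incompatibility between the infinitesimal character that Theorem~\ref{thm:Vogan} forces on the restricted module and the infinitesimal character $\lambda-\delta(\g)$ of $V$.

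\emph{Step 1 (the cocycle).} Suppose $H^1(\n,V)^{\mathfrak{a}}\neq 0$. Since $V$ is $\l$-finite with infinitesimal character $\lambda-\delta(\g)$, Theorem~\ref{thm:Vogan} says that every $\l$-highest weight of $H^1(\n,V)$ has the form $w(\lambda-\delta(\g))-\delta(\g)$; such a weight is $\l$-dominant, and on the $\mathfrak{a}$-invariant part it is orthogonal to $\mathfrak{a}^*$, so Lemma~\ref{lem:0cohominft} pins it down to $\mu=2e_1-2e_2+e_3+e_4$ when $\g=\mathfrak{sp}(2n,\C)$ (for $n\geq 4$; for $\mathfrak{sp}(6,\C)$ no such $\mu$ exists and the statement is already vacuous) and to $\mu=4e_2+e_3+e_4$ when $\g=\mathfrak{f}_4$. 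Hence there is a nonzero $\l$-highest cocycle $f\colon\n\to V$ of weight $\mu$; replacing $V$ by the $\g$-submodule generated by $f(\n)$, which is again $\l$-finite, has the same infinitesimal character, and is still $\mathfrak{a}$-bounded below, I may assume $V=U(\g)f(\n)$.

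\emph{Step 2 (localizing $f$).} Using the cocycle identity, the bracket relations in $\n$, and the explicit root systems of Subsection~\ref{subsect:rank_one_Lie}, I would show that $f$ is determined by its restriction to a single root space $\g_{\a_0}$, for a suitable $\a_0\in\Delta(\n,\h)$ chosen as the lowest weight of an $\l$-irreducible constituent of $\n$, so that in particular $f(\g_{\a_0})\neq 0$. As in the $\mathfrak{sl}(n,\C)$ case of Proposition~\ref{prop:1cohominft_1}, one assumes $f(\g_{\a_0})=0$ and propagates this vanishing: for the remaining root spaces $\g_\a$ in an $\l_+$-generating set of $\n$, the relations $[\g_{\a_0},\g_\a]=0$ (or $[\g_{\a_0},\g_\a]=\g_{\a+\a_0}$ with $f$ already known to vanish on $\g_{\a+\a_0}$) together with the cocycle identity force $f(\g_\a)$ to be a highest weight vector for the straddling subalgebra $\mathfrak{s}_{\a_0}=\h\oplus\g_{\a_0}\oplus\g_{-\a_0}$; since $\a_0$ has positive $\mathfrak{a}$-degree and $V$ is $\mathfrak{a}$-bounded below, the chain obtained by repeatedly applying $\g_{-\a_0}$ is finite, so the $\mathfrak{s}_{\a_0}$-module generated by $f(\g_\a)$ is finite dimensional and its highest weight $\mu+\a$ must be $\mathfrak{s}_{\a_0}$-dominant; a direct check with the root data shows that it is not, whence $f(\g_\a)=0$. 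As these $\g_\a$ together with $\g_{\a_0}$ generate $\n$ as an $\l_+$-module, $f=0$, a contradiction.

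\emph{Step 3 (restriction).} Choose a subalgebra $\g'=\h\oplus\bigoplus_{\a\in\Delta(\g',\h)}\g_\a$ with $\n'=\n\cap\g'\ni\g_{\a_0}$, $\l'=\l\cap\g'$, so that $\g'$ with the induced parabolic is again one of the algebras of Subsection~\ref{subsect:rank_one_Lie} — a copy of $\mathfrak{sp}(6,\C)$ inside $\mathfrak{sp}(2n,\C)$, and (for $\mathfrak{f}_4$) the rank-one subalgebra generated by $\a_0$ together with a root of $\l$. As in Proposition~\ref{prop:1cohominft_1}, $f|_{\n'}$ is not a coboundary: if $f|_{\n'}=dv$, then $v$ is a weight vector of weight $\mu$, which is $\l$-highest by Lemma~\ref{lem:0cohominft}, so $f-dv$ is an $\l$-highest cocycle vanishing on $\g_{\a_0}$, hence $f-dv=0$ by Step~2 and $f$ would itself be a coboundary. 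Thus $f|_{\n'}$ represents a nonzero class in $H^1(\n',V')$, where $V'$ is the $\g'$-submodule of $V$ generated by $f(\g_{\a_0})$; replacing $f$ if necessary, $V'$ has a $\g'$-infinitesimal character, which by Theorem~\ref{thm:Vogan} must equal $\mu+\delta(\g')$.

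\emph{Step 4 (the contradiction, the hard part).} Computing $\delta(\g')$ and $\mu+\delta(\g')$ explicitly, the weight of any $\g'$-lowest weight vector of $V'$ has the form $w(\mu+\delta(\g'))+\delta(\g')$ for some $w\in W(\g',\h)$. One checks that such a weight can be reached inside the $\g'_-$-submodule generated by $f(\g_{\a_0})$ for exactly one value $\nu$, and then that $V'$ — hence $V$ — cannot contain a $\g'$-lowest weight vector of weight $\nu$: if it did, then, since by the $\g$-infinitesimal character $\lambda-\delta(\g)$ this vector is not $\g$-lowest, applying a suitable root vector from $\g_-$ produces a $\g'$-lowest weight vector whose weight (typically $0$, or a root of $\l$) is again incompatible with the $\g$-infinitesimal character of $V$. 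The $\mathfrak{a}$-bounded-below hypothesis is exactly what makes the relevant $\g_-$- and $\mathfrak{s}_{\a_0}$-submodules locally finite, so that these infinitesimal-character computations are legitimate. I expect Step~4, with its case-by-case weight bookkeeping through $W(\g',\h)$ and $W(\g,\h)$ for $\mathfrak{sp}(2n,\C)$ and $\mathfrak{f}_4$, to be the main obstacle; reassuringly, the outcome is consistent with $H^1(\n,\g)^{\mathfrak{a}}=0$ from Lemma~\ref{lem:H1nga}.
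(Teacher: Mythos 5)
Your Steps 1--3 do follow the paper's route: Theorem \ref{thm:Vogan} together with Lemma \ref{lem:0cohominft} pins down the candidate $\l$-highest weight $\mu$, the $\l$-highest cocycle $f$ is reduced to its restriction to a single root space $\g_{\a_0}$ (with $\a_0=e_2-e_3$ for $\mathfrak{sp}(2n,\C)$ and $\a_0=e_1-e_2-e_3-e_4$ for $\mathfrak{f}_4$), and your non-coboundary argument for $f|_{\n'}$ is exactly the one in the paper. Two side remarks: the propagation machinery in your Step 2 is unnecessary here, since, unlike the $\mathfrak{sl}(n,\C)$ case, the $\l_+$-submodule generated by $\g_{\a_0}$ already generates $\n$ as a Lie algebra, so an $\l$-highest cocycle vanishing on $\g_{\a_0}$ vanishes identically; and your observation that for $\mathfrak{sp}(6,\C)$ there is no admissible $\mu$, so that $H^1(\n,V)^{\mathfrak{a}}=0$ follows at once, is a correct point which the paper's statement of Lemma \ref{lem:0cohominft} glosses over.

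The genuine gap is your Step 4, which is precisely the part that has to be computed and which you only promise (``one checks''), with a mechanism transplanted from the $\mathfrak{sl}(n,\C)$ case (reach a forbidden weight such as $0$ by applying a further root vector from $\g_-$) that is not how these two cases close, and with choices of $\g'$ that are not the ones making the check feasible. In the paper, for $\mathfrak{sp}(2n,\C)$ one takes $\g'$ of type $C_2$ with $\Delta(\g',\h)=\{\pm e_2\pm e_3,\pm2e_2,\pm2e_3\}$, not a copy of $\mathfrak{sp}(6,\C)$: then $\delta(\g')=2e_2+e_3$, the $\g'$-infinitesimal character of $V'$ is $\mu+\delta(\g')=2e_1+2e_3+e_4$, every candidate $\g'$-lowest weight $w(\mu+\delta(\g'))+\delta(\g')$ has non-negative $e_2$-coefficient, while every weight of the $\g'_-$-module generated by $f(\g_{e_2-e_3})$, whose weight is $2e_1-e_2+e_4$, has $e_2$-coefficient at most $-1$; this is the contradiction. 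For $\mathfrak{f}_4$ one takes $\g'$ with $\Delta(\g',\h)=\{\pm(e_1-e_2-e_3)\pm e_4\}\cup\{\pm2e_4\}$, whose semisimple part is $\mathfrak{sl}(3,\C)$ (so not a ``rank-one subalgebra'' as you describe): here $\mu+\delta(\g')=e_1+3e_2+2e_4$, the weights reachable from $f(\g_{\a_0})$ (of weight $e_1+3e_2$) under $\g'_-$ are $(1-k)e_1+(3+k)e_2+ke_3+le_4$ with $k\geq0$, and the contradiction is a norm comparison $|\nu-\delta(\g')|>|\mu+\delta(\g')|$ using the $W(\g',\h)$-invariance of the bilinear form; no ``weight $0$'' step or second application of a $\g_-$ root vector occurs, and the $\mathfrak{a}$-bounded-below hypothesis is not what drives this part. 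Since your plan neither specifies subalgebras for which such a computation closes nor carries the computation out, the proof is incomplete at its essential point.
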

\begin{proof}[Proof in the case $\g = \mathfrak{sp}(2n, \C), n\geq 3$]
By Lemma \ref{lem:0cohominft}, $H^1(\n, V)$ has an $\l$-highest weight vector of weight $\mu =2e_1-2e_2+e_3+e_4$.
Let $f:\n \to V$ be a non-trivial $\l$-highest cocycle of weight $\mu$.
Since $\n$ is generated by the $\l_+$-submodule generated by $\g_{e_2 - e_3}$ as Lie algebra, $f = 0$ if $f|_{\g_{e_2-e_3}} = 0$.

Let $\g' = \h \oplus \bigoplus_{\a \in \Delta(\g', \h)} \g_\a$ be the subalgebra of $\g$ where 
\[
\Delta(\g', \h) = \{ \pm e_2 \pm e_3 , \pm 2e_2, \pm 2e_3\}
\]
and put $\n' = \n \cap \g'$, and $\l' = \l \cap \g'$.
By the same argument as in the case of $\g = \mathfrak{so}(m, \C)$, we obtain an $\l'$-highest weight vector in $H^1(\n', V')$ of weight $\mu$, where $V'$ denotes the $\g'$-subalgebra of $V$ generated by $f(\g_{e_2 - e_3})$.

Replacing $f$ if necessary, we may assume $V'$ admits a $\g'$-infinitesimal character.
By Theorem \ref{thm:Vogan}, the infinitesimal character must be
\[
\mu + \delta(\g') = (2e_1-2e_2+e_3+e_4) + (2e_2 + e_3) = 2e_1 + 2e_3+e_4.
\]
Thus the weight of $\g'$-lowest weight vector is of the form $w(\mu + \delta(\g')) + \delta(\g')$ for some $w \in W(\g', \h)$.
Observe that the coefficient of $e_2$ in $w(\mu + \delta(\g')) + \delta(\g')$ is non-negative.
On the other hand, the weights in the $\g'_-$-module generated by $f(\g_{e_2 - e_3})$, the weight of which is $e_2 - e_3 + \mu = 2e_1-e_2+e_4$, have negative coefficients for $e_2$.
This is a contradiction.
\end{proof}

\begin{proof}[Proof in the case $\g = \mathfrak{f}_4$]
By Lemma \ref{lem:0cohominft}, $H^1(\n, V)$ has an $\l$-highest weight vector of weight $\mu =4e_2+e_3+e_4$.
Since $\n$ is generated by the $\l_+$-submodule generated by $\g_{e_1 -e_2 - e_3 -e_4}$ as Lie algebra, $f = 0$ if $f|_{\g_{e_1 -e_2 - e_3 -e_4}} = 0$.

Let $\g' = \h \oplus \bigoplus_{\a \in \Delta(\g', \h)} \g_\a$ be the subalgebra of $\g$ where
\[
\Delta(\g', \h) = \{ \pm (e_1 -e_2 - e_3) \pm e_4\} \cup \{\pm2e_4\}
\]
and put $\n' = \n \cap \g'$, and $\l' = \l \cap \g'$.
Observe that $\g'$ is a reductive Lie algebra with its semisimple part isomorphic to $\mathfrak{sl}(3, \C)$.
By the same argument as in the case of $\g = \mathfrak{so}(m, \C)$, we obtain an $\l'$-highest weight vector in $H^1(\n', V')$ of weight $\mu$, where $V'$ denotes the $\g'$-subalgebra of $V$ generated by $f(\g_{e_1 -e_2 - e_3 -e_4})$.

Replacing $f$ if necessary, we may assume $V'$ admits a $\g'$-infinitesimal character.
By Theorem \ref{thm:Vogan}, the infinitesimal character must be
\[
\mu + \delta(\g') = (4e_2+e_3+e_4) + (e_1 -e_2-e_3+ e_4 ) = e_1 +3e_2+ 2e_4.
\]
Thus the weight $\nu$ of a $\g'$-lowest weight vector satisfies $|\mu + \delta(\g')| =|\nu- \delta(\g')|$.
On the other hand, the weights in the $\g'_-$-module generated by $f(\g_{e_1 -e_2 - e_3 -e_4})$, the weight of which is $e_1 -e_2 - e_3 -e_4 + \mu = e_1 + 3e_2$, are of the form $(1-k)e_1 +(3 +k)e_2 + ke_3 +le_4$ for a non-negative integer $k$ and an integer $l$.
So
\[
\nu - \delta(\g') =  -ke_1 +(k + 4)e_2 + (k + 1) e_3 +le_4
\]
for a non-negative integer $k$ and an integer $l$.
Thus $|\mu + \delta(\g')| < |\nu- \delta(\g')|$, which is a contradiction.
\end{proof}
Let $G$ be the group of orientation-preserving isometries of a rank one symmetric space of non-compact type,  $G = KAN$ an Iwasawa decomposition, and $M$ the centralizer of $A$ in $K$.
Then $P = MAN$  is a minimal parabolic subgroup.
Recall that we defined the homomorphism $\lambda:\g \to \P(\n_-)$ in Subsection \ref{subsect:std_boundary}.
Consider the representation of $\g$ on $\P(\n_-)$ via $\lambda:\g \to \P(\n_-)$.
Let $\l \subset \g$ be the subalgebra corresponding to $MA \subset G$.

\begin{lem}\label{lem_pinft}
The $\g_\C$-module $V = \mathrm{Poly}(\n_-)_\C$ admits a decomposition $V = \bigoplus V_{\a_i}$ into a finite sum of $\g_\C$-submodules, where the sum is taken over the set $\{\a_i\}_i$ of weights of $\l_\C$-lowest weight vectors in $(\n_-)_\C$ and $V_{\a_i}$ is a $\g_\C$-submodule with an infinitesimal character $\a_i -\delta(\g_\C)$.
\end{lem}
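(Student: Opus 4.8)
The plan is to study $V:=\P(\n_-)_\C$ through the grading it carries under $\ad(\lambda(E))$, where $E\in\mathfrak{a}$ is the element of Proposition~\ref{prop:std_fixed_pt}. Since $\lambda(E)$ and all of $\lambda(\l)$ are linear vector fields, the polynomial-degree decomposition $V=\bigoplus_{r\geq 0}S^r((\n_-)_\C^*)\otimes(\n_-)_\C$ is a decomposition into finite-dimensional $\l_\C$-modules, so $V$ is $\l$-finite. Moreover $\ad(\lambda(E))$ acts on $S^r((\n_-)_\C^*)\otimes(\n_-)_\C$ with eigenvalues in the interval $[r-2,2r-1]$ (the factor $(\n_-)_\C=(\g_{-1})_\C\oplus(\g_{-2})_\C$ contributes $-1$ or $-2$, and $S^r((\n_-)_\C^*)$ contributes between $r$ and $2r$); hence if $V[m]$ denotes the $m$-eigenspace of $\ad(\lambda(E))$ then $V=\bigoplus_{m\geq -2}V[m]$ with each $V[m]$ finite-dimensional. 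Because $Z(\g_\C)$ commutes with $\l_\C$, hence with $\lambda(E)$, it preserves every $V[m]$ and so acts locally finitely on $V$; therefore $V=\bigoplus_\chi V_{(\chi)}$ decomposes into generalized infinitesimal-character subspaces, each of them a graded $\g_\C$-submodule with eigenvalues bounded below.

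Next I would pin down which characters occur, using Lemma~\ref{lem:center_nil}. Complexifying, the centralizer $\mathfrak{z}_\C\subseteq V$ of $\lambda((\n_-)_\C)$ is exactly the set of vectors of $V$ killed by $\lambda((\n_-)_\C)$, and it is isomorphic to $(\n_-)_\C$ as an $\l_\C$-module. Write $(\n_-)_\C=\bigoplus_i U_i$ into irreducible $\l_\C$-modules, $U_i$ of $\l_\C$-lowest weight $\a_i$, with $\mathfrak{z}_\C=\bigoplus_i\mathfrak{z}_i$ correspondingly. An $\l_\C$-lowest weight vector $v_i\in\mathfrak{z}_i$ is annihilated both by $\lambda((\n_-)_\C)$ and by the negative root spaces of $\l_\C$, so it is a lowest weight vector of $\g_\C$ of weight $\a_i$; since $U(\g_\C)v_i=U(\n_\C)\,U_i$ and $\lambda(\n_\C)$ strictly raises the $\ad(\lambda(E))$-degree, $\langle v_i\rangle:=U(\g_\C)v_i$ is a cyclic lowest weight module and hence has infinitesimal character $\a_i-\delta(\g_\C)$. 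Conversely, for any nonzero $V_{(\chi)}$ its lowest-degree component is killed by $\lambda((\n_-)_\C)$ (which lowers the degree by $1$ or $2$), so it lies in $\mathfrak{z}_\C$ and hence contains some $v_i$; then $\chi=\chi_i$, the character of $\a_i-\delta(\g_\C)$. The weights $\a_i$ are pairwise distinct (within each $\g_{-r}$ the $\l_\C$-summands have distinct weights, and $\g_{-1}$ and $\g_{-2}$ have different $\mathfrak{a}$-weights), so the $U_i$ are pairwise non-isomorphic; and a short inspection of the root systems of Subsection~\ref{subsect:rank_one_Lie} shows the $\a_i-\delta(\g_\C)$ lie in pairwise distinct $W(\g_\C,\h)$-orbits. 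Thus $V=\bigoplus_i V_{(\chi_i)}$, and one sets $V_{\a_i}:=V_{(\chi_i)}$.

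The final step — which I expect to be the real point — is to upgrade from a generalized to an honest infinitesimal character on each $V_{\a_i}$. Here the key observation is that the lowest-degree component of $V_{\a_i}$ is exactly $\mathfrak{z}_i\cong U_i$: it lies in $\mathfrak{z}_\C=\bigoplus_j\mathfrak{z}_j$, and $\mathfrak{z}_j\subseteq\langle v_j\rangle\subseteq V_{(\chi_j)}$ meets $V_{\a_i}=V_{(\chi_i)}$ trivially for $j\neq i$. Suppose some $z\in Z(\g_\C)$ did not act by the scalar $\chi_i(z)$ on $V_{\a_i}$. Then $D:=z-\chi_i(z)$ is a nonzero $\g_\C$-endomorphism of $V_{\a_i}$ preserving the $\ad(\lambda(E))$-grading, so $\operatorname{im}D$ is a nonzero graded $\g_\C$-submodule; its lowest-degree component, being killed by $\lambda((\n_-)_\C)$, lies in $\mathfrak{z}_\C$ and hence contains some $v_j$, forcing $j=i$. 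Therefore $D$ is nonzero on $V_{\a_i}[m_i]=\mathfrak{z}_i\cong U_i$, where $m_i=\a_i(E)$; but $U_i$ is irreducible over the reductive Lie algebra $\l_\C$ and $D$ commutes with $\l_\C$, so by Schur's lemma $D|_{U_i}$ is a scalar, which must vanish because $Dv_i=zv_i-\chi_i(z)v_i=0$ — a contradiction. Hence $Z(\g_\C)$ acts on $V_{\a_i}$ by $\chi_i$, i.e.\ $V_{\a_i}$ has infinitesimal character $\a_i-\delta(\g_\C)$, completing the proof.

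One is tempted to shortcut the last step by identifying $\P(\n_-)_\C$ with a module induced from the opposite parabolic, but the $\g_\C$-module structure on $\P(\n_-)_\C$ is not literally such an induced module — already for $\mathrm{PSL}(2,\R)$ it is a nonsplit extension that is neither a generalized Verma module nor its dual — so ruling out a nilpotent part of the $Z(\g_\C)$-action really does seem to require combining the boundedness of the $\ad(\lambda(E))$-grading with the identification $\mathfrak{z}_\C\cong(\n_-)_\C$ of Lemma~\ref{lem:center_nil} and Schur's lemma as above.
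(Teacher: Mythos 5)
Your proof is correct and follows essentially the same route as the paper: the heart of both arguments is Lemma \ref{lem:center_nil}, which identifies $V^{(\n_-)_\C}$ with $(\n_-)_\C$ as an $\l_\C$-module and hence forces every $\g_\C$-lowest weight vector of $V$ to have weight some $\a_i$, so that the summands carry the infinitesimal characters $\a_i-\delta(\g_\C)$. The paper compresses the remaining bookkeeping into its opening ``it suffices to show'' step; your $\ad(\lambda(E))$-grading with finite-dimensional, bounded-below graded pieces, the resulting $Z(\g_\C)$-primary decomposition, and the Schur-lemma argument on the lowest-degree component upgrading a generalized to an honest infinitesimal character are precisely the details left implicit there, and they check out (including the asserted distinctness of the $W(\g_\C,\h)$-orbits of the $\a_i-\delta(\g_\C)$, which one verifies case by case from Subsection \ref{subsect:rank_one_Lie}).
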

\begin{proof}
It suffices to show that the weight of a weight vector in $V = \mathrm{Poly}(\n_-)_\C$ annihilated by the sum $(\g_\C)_-$ of negative root spaces is $\a_i$.
Since $(\g_\C)_- = (\n_-)_\C \oplus (\l_\C)_-$, $V^{(\g_\C)_-} = V^{(\n_-)_\C} \cap V^{ (\l_\C)_-}$.
By Lemma \ref{lem:center_nil}, the centralizer of $\lambda(\n_-)$ in $\P(\n_-)$ is an $\l$-submodule which is isomorphic to $\n_-$.
Thus $V^{(\n_-)_\C}$ is isomorphic to $(\n_-)_\C$ as an $\l_\C$-module.
Thus the weight of an $\l_\C$-lowest wight vector is $\a_i$.
\end{proof}

As we mentioned in Subsection \ref{subsect:vfvs}, since $\n_-$ is an $\l$-module,
\[
\P(\n_-) = S(\n_-^*) \otimes \n_-
\]
as an $\l$-module.
Thus $\P(\n_-)_\C$ is $\l_\C$-finite as a $\g_\C$-module.
\begin{cor}\label{cor_0cohom}
$H^0(\n, \P(\n_-))^\mathfrak{a} = 0$.
\end{cor}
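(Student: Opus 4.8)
The plan is to complexify and reduce to the two preceding corollaries via the decomposition of Lemma~\ref{lem_pinft}. Since $\n$ and $\mathfrak{a}$ act on $\P(\n_-)$ through real forms of $\n_\C$ and $\mathfrak{a}_\C$, one has $H^0(\n,\P(\n_-))^{\mathfrak{a}}\otimes_{\R}\C = H^0(\n_\C,\P(\n_-)_\C)^{\mathfrak{a}_\C}$, so it suffices to prove that the right-hand side vanishes. By Lemma~\ref{lem_pinft} there is a finite decomposition $\P(\n_-)_\C=\bigoplus_i V_{\a_i}$ into $\g_\C$-submodules, where $\a_i$ ranges over the weights of $\l_\C$-lowest weight vectors in $(\n_-)_\C$ and $V_{\a_i}$ has infinitesimal character $\a_i-\delta(\g_\C)$. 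Because $V\mapsto H^0(\n_\C,V)^{\mathfrak{a}_\C}$ commutes with finite direct sums of $\g_\C$-modules, it is enough to show $H^0(\n_\C,V_{\a_i})^{\mathfrak{a}_\C}=0$ for every $i$. Moreover $\P(\n_-)_\C=\bigoplus_{r\geq 0}S^r((\n_-)_\C^*)\otimes(\n_-)_\C$ is a sum of finite-dimensional $\l_\C$-modules, so each $\g_\C$-submodule $V_{\a_i}$ is again $\l_\C$-finite.

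Next I would split the indices according to whether $\a_i$ is or is not the weight of a $\g_\C$-lowest weight vector in $\g_\C$; since the lowest weight space of the adjoint module is one-dimensional, this is a genuine dichotomy. If $\a_i$ is not such a weight, then $V_{\a_i}$ meets the hypotheses of Corollary~\ref{cor_cohominft} with $\lambda=\a_i$, and that corollary gives $H^*(\n_\C,V_{\a_i})^{\mathfrak{a}_\C}=0$, in particular in degree $0$. If $\a_i$ is the weight of a $\g_\C$-lowest weight vector in $\g_\C$, I would instead invoke Corollary~\ref{cor_0cohominft} with $\lambda=\a_i$, for which the only remaining point is that $V_{\a_i}$ is $\mathfrak{a}$-bounded below. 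This follows from the grading: by Proposition~\ref{prop:std_fixed_pt}(ii) the element $E\in\mathfrak{a}$ acts on $\g_{-1}$ and $\g_{-2}$ with eigenvalues $-1$ and $-2$, hence on $(\n_-)_\C^*$ with eigenvalues $1$ and $2$, so every weight of $S^r((\n_-)_\C^*)\otimes(\n_-)_\C$ has $E$-eigenvalue at least $r-2\geq -2$. As $\mathfrak{a}^*$ is one-dimensional, every positive weight $\mu\in\mathfrak{a}^*$ is a positive multiple of the restricted root, so $\langle\mu,\lambda\rangle$ is bounded below as $\lambda$ runs over $\mathrm{wt}(\P(\n_-)_\C)$, a fortiori over $\mathrm{wt}(V_{\a_i})$. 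Thus Corollary~\ref{cor_0cohominft} applies and gives $H^0(\n_\C,V_{\a_i})^{\mathfrak{a}_\C}=0$ in this case as well. Summing over $i$ yields $H^0(\n_\C,\P(\n_-)_\C)^{\mathfrak{a}_\C}=0$, hence $H^0(\n,\P(\n_-))^{\mathfrak{a}}=0$.

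The substance of the argument is carried by Lemma~\ref{lem_pinft} together with Corollaries~\ref{cor_cohominft} and~\ref{cor_0cohominft}; what is left here is only the classification of the summands $V_{\a_i}$ into the two cases and the verification that $\P(\n_-)_\C$ is $\mathfrak{a}$-bounded below. I expect the $\mathfrak{a}$-bounded-below property — immediate from the $\mathfrak{a}$-grading of $\P(\n_-)$ — to be the only place where even a short computation is required, and the sorting of the summands to follow directly from the explicit root data recalled in Subsection~\ref{subsect:rank_one_Lie}.
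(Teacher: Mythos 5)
Your proposal is correct and follows essentially the same route as the paper: complexify, decompose $\P(\n_-)_\C$ by Lemma~\ref{lem_pinft}, and dispose of each summand $V_{\a_i}$ by Corollary~\ref{cor_cohominft} or Corollary~\ref{cor_0cohominft} according to whether $\a_i$ is the weight of a $\g_\C$-lowest weight vector of $\g_\C$. Your explicit verification that $\P(\n_-)_\C$ is $\mathfrak{a}$-bounded below (via the $\mathfrak{a}$-grading of $S(\n_-^*)\otimes\n_-$) is a detail the paper only records later, in the proof of Corollary~\ref{cor_1cohom}, but it is the same observation.
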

\begin{proof}
We will show the complexification $H^0(\n_\C, \P(\n_-)_\C)^{\mathfrak{a}_\C}$ is vanished.
By Lemma \ref{lem_pinft}, it suffices to show $H^0(\n_\C, V_{\a_i})^{\mathfrak{a}_\C} = 0$ for all $\a_i$.
Since $V_{\a_i}$ admits an infinitesimal character $\alpha_i - \delta(\g)$, this is immediate from Corollary \ref{cor_cohominft} and Corollary \ref{cor_0cohominft}.
\end{proof}
\begin{cor}\label{cor_1cohom}
The map
\[
H^1(\n, \g)^\mathfrak{a} \to H^1(\n, \P(\n_-))^\mathfrak{a}
\]
induced by $\lambda:\g \to \P(\n_-)$ is an isomorphism.
\end{cor}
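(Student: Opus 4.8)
The plan is to feed the homomorphism $\lambda$ into a short exact sequence of $\g$-modules, pass to the long exact sequence in $\n$-cohomology, take $\mathfrak{a}$-invariants, and annihilate every term except the two in the statement using the vanishing results of Subsections~\ref{subsect:Lie_cohom_fin}--\ref{subsect:Lie_cohom_inf}. First I would note that $\lambda\colon\g\to\P(\n_-)$ is injective: $\g$ is simple and the $G$-action on $G/P$ is effective near $\o$, so the $\g$-module map $\lambda$ has no kernel. Set $Q=\P(\n_-)/\lambda(\g)$, so that $0\to\g\xrightarrow{\lambda}\P(\n_-)\to Q\to0$ is exact. Applying $H^*(\n,-)$ and then the functor $V\mapsto V^\mathfrak{a}$ — which is exact, since $\mathfrak{a}$ (equivalently the element $E$) acts semisimply on every module involved, each being the sum of its $\mathfrak{a}$-weight spaces — yields a long exact sequence whose relevant portion is
\[
0\to H^0(\n,\P(\n_-))^\mathfrak{a}\to H^0(\n,Q)^\mathfrak{a}\to H^1(\n,\g)^\mathfrak{a}\to H^1(\n,\P(\n_-))^\mathfrak{a}\xrightarrow{\ \beta\ }H^1(\n,Q)^\mathfrak{a}.
\]
By Corollary~\ref{cor_0cohom} the first term vanishes, so the map in the statement is injective as soon as $H^0(\n,Q)^\mathfrak{a}=0$ and surjective as soon as $\beta=0$, in particular as soon as $H^1(\n,Q)^\mathfrak{a}=0$.

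Complexifying, Lemma~\ref{lem_pinft} gives $\P(\n_-)_\C=\bigoplus_iV_{\a_i}$, with $V_{\a_i}$ of infinitesimal character $\a_i-\delta(\g_\C)$ and the $\a_i$ the weights of $\l_\C$-lowest weight vectors in $(\n_-)_\C$. The weight $\a_0$ of the $\g_\C$-lowest weight vector of $\g_\C$ occurs among them, and $\lambda(\g_\C)$, being a simple submodule of infinitesimal character $\a_0-\delta(\g_\C)$, lies in $V_{\a_0}$. Hence $Q_\C=(V_{\a_0}/\lambda(\g_\C))\oplus\bigoplus_{i\ne0}V_{\a_i}$, and for $i\ne0$ the weight $\a_i$ is not that of a $\g$-lowest weight vector of $\g$, so $H^*(\n_\C,V_{\a_i})^{\mathfrak{a}_\C}=0$ by Corollary~\ref{cor_cohominft} (each $V_{\a_i}$ is $\l$-finite, being a summand of the $\l$-finite module $\P(\n_-)_\C$). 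Writing $W=V_{\a_0}/\lambda(\g_\C)$, we are reduced to proving $H^0(\n_\C,W)^{\mathfrak{a}_\C}=H^1(\n_\C,W)^{\mathfrak{a}_\C}=0$. Now $W$ is $\l$-finite with infinitesimal character $\a_0-\delta(\g_\C)$, and it is $\mathfrak{a}$-bounded below, since the $E$-weights of $\P(\n_-)_\C=S((\n_-)_\C^*)\otimes(\n_-)_\C$ — hence of $W$ — are $\ge-2$; so Corollary~\ref{cor_0cohominft} yields $H^0(\n_\C,W)^{\mathfrak{a}_\C}=0$, and therefore the map in the statement is injective in all cases.

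When $\g_\C=\mathfrak{sp}(2n,\C)$ $(n\ge3)$ or $\mathfrak{f}_4$, Proposition~\ref{prop:1cohominft_2} applies directly to $W$ and gives $H^1(\n_\C,W)^{\mathfrak{a}_\C}=0$; hence $\beta=0$ and the proof is complete (both groups in fact vanish, in accordance with Lemma~\ref{lem:H1nga}). The decisive case is $\g_\C=\mathfrak{so}(m,\C)$ $(m\ge4)$ or $\mathfrak{sl}(n,\C)$ $(n\ge3)$: here $H^1(\n,\g)^\mathfrak{a}\ne0$, and $W$ is \emph{not} $\mathfrak{a}^*$-nonnegative — it contains polynomial vector fields of arbitrarily high degree, hence weights with arbitrarily large pairing against the positive root of $\mathfrak{a}^*$ — so Proposition~\ref{prop:1cohominft_1} is not available and $H^1(\n,Q)^\mathfrak{a}$ need not vanish. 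Instead I would establish surjectivity by a multiplicity count. By Theorem~\ref{thm:Vogan} together with Lemma~\ref{lem:0cohominft}, every $\l$-highest weight occurring in $H^1(\n_\C,V_{\a_0})^{\mathfrak{a}_\C}$ lies in the finite list $2e_2$ (for $\mathfrak{so}(m,\C)$, $m\ge5$), $\pm2e_2$ (for $\mathfrak{so}(4,\C)$), or $-e_1+2e_2-e_n$ and $e_1-2e_{n-1}+e_n$ (for $\mathfrak{sl}(n,\C)$); by Theorem~\ref{thm:Kostant} (cf. Lemma~\ref{lem:H1nga}) this same list, each weight with multiplicity one, describes $H^1(\n_\C,\g_\C)^{\mathfrak{a}_\C}$. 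It therefore suffices to bound the multiplicity of each such $\l$-type in $H^1(\n_\C,V_{\a_0})^{\mathfrak{a}_\C}$ by one, for together with the injectivity already shown this forces the map to be an isomorphism.

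For that multiplicity bound I would reuse the rank-reduction device from the proofs of Propositions~\ref{prop:1cohominft_1}--\ref{prop:1cohominft_2}: an $\l$-highest cocycle $f\colon\n_\C\to V_{\a_0}$ of a prescribed weight $\mu$ is determined by its value on the $\l_+$-generator of $\n_\C$, and restricting $f$ to a suitable rank-$\le2$ subalgebra $\g'$ produces a nonzero class in $H^1(\n',V')$ in which, by Theorem~\ref{thm:Vogan}, $V'$ must have $\g'$-infinitesimal character $\mu+\delta(\g')$; identifying $V'$ and the single weight space of $\P(\n_-)_\C$ into which $f$ can map then pins $f$ down up to scalar. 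I expect this step to be the main obstacle: in Propositions~\ref{prop:1cohominft_1}--\ref{prop:1cohominft_2} the sign hypothesis produced an outright contradiction, whereas here one must keep precise track of which weight vectors of $\P(\n_-)_\C$ actually survive in the infinitesimal-character block $V_{\a_0}$ and push the rank-reduction bookkeeping far enough to obtain the sharp multiplicity-one bound rather than mere non-vanishing.
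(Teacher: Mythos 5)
Your overall architecture is the same as the paper's: the short exact sequence $0\to\g\xrightarrow{\lambda}\P(\n_-)\to Q\to 0$, the decomposition of $\P(\n_-)_\C$ into infinitesimal-character blocks via Lemma \ref{lem_pinft}, the vanishing of $H^*(\n_\C,V_{\alpha_i})^{\mathfrak{a}_\C}$ for $\alpha_i\neq\alpha_0$ by Corollary \ref{cor_cohominft}, the vanishing of $H^0$ of the block at $\alpha_0$ via $\mathfrak{a}$-boundedness and Corollary \ref{cor_0cohominft}, and the use of Proposition \ref{prop:1cohominft_2} for $\mathfrak{sp}(2n,\C)$ and $\mathfrak{f}_4$ all match. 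The problem is the decisive case $\g_\C=\mathfrak{so}(m,\C)$ or $\mathfrak{sl}(n,\C)$: there you replace the paper's argument by a multiplicity-one bound for the $\l$-types of Lemma \ref{lem:0cohominft} inside $H^1(\n_\C,V_{\alpha_0})^{\mathfrak{a}_\C}$, and you do not prove that bound — you only sketch a hoped-for rank-reduction and yourself identify it as the main obstacle. Since this is exactly where the content of the corollary lies (it is the case in which $H^1(\n,\g)^{\mathfrak{a}}\neq0$), the proposal as written does not establish surjectivity, so it has a genuine gap.

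The detour was forced on you by a misreading of ``$\mathfrak{a}^*$-nonnegative''. As the notion is actually used — in the proof of Proposition \ref{prop:1cohominft_1}, where weights pairing negatively with the positive element of $\mathfrak{a}^*$ are excluded so that $\g_-$- and $\mathfrak{s}$-submodules terminate in lowest (resp.\ finite-dimensional highest-weight) vectors, and in the paper's own proof of this corollary, where the hypothesis ``the weights of $V'_{\alpha_0}$ are $\mathfrak{a}^*$-nonnegative'' is verified by showing that $V'_{\alpha_0}$ has no $\mathfrak{a}^*$-negative weights — it means $\langle\mu,\lambda\rangle\geq 0$ for positive $\mu\in\mathfrak{a}^*$; the inequality in the displayed definition is a sign slip. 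With the correct reading, high-degree polynomial vector fields are harmless: the span of the $\mathfrak{a}^*$-negative weight vectors of $\P(\n_-)=S(\n_-^*)\otimes\n_-$ is only the finite-dimensional space $\g_{-2}\oplus\g_{-1}\oplus(\g_{-1}^*\otimes\g_{-2})$. The paper then checks, by the dimension count based on Lemma \ref{lem:center_nil} (for $\mathfrak{so}$ one may take $\g_{-2}=0$, so this space is $\lambda(\n_-)$; for $\mathfrak{sl}$ it is contained in $\lambda(\n_-)+Z_{\P(\n_-)}(\lambda(\n_-))$), that all such vectors die in the quotient block $V'_{\alpha_0}$, so Proposition \ref{prop:1cohominft_1} applies to $V'_{\alpha_0}$ and yields $H^1(\n_\C,V'_{\alpha_0})^{\mathfrak{a}_\C}=0$ directly, with no multiplicity computation. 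To repair your proof, either carry out in full the multiplicity-one bookkeeping you postponed, or, more simply, verify the nonnegativity hypothesis for the quotient block as above and quote Proposition \ref{prop:1cohominft_1}.
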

\begin{proof}
We will show the complexification $H^1(\n_\C, \g_\C)^{\mathfrak{a}_\C} \to H^1(\n_\C, \P(\n_-)_\C)^{\mathfrak{a}_\C}$ is an isomorphism.
Since $\lambda:\g \to \P(\n_-)$ is injective, we have a short exact sequence
\[
0 \to \g \to \P(\n_-) \to \P(\n_-)/\lambda(\g) \to 0.
\]
Thus it suffices to show that $H^i(\n_\C, \P(\n_-)_\C/\lambda(\g)_\C)^{\mathfrak{a}_\C} = 0$ for $i = 0, 1$.
By Lemma \ref{lem_pinft}, $V = \mathrm{Poly}(\n_-)_\C$ admits a decomposition $V = \bigoplus V_{\a_i}$ into $\g_\C$-submodules $\bigoplus V_{\a_i}$ with an infinitesimal character $\a_i -\delta(\g_\C)$.
So its quotient $V' = \P(\n_-)_\C/\lambda(\g)_\C$ also admits a  decomposition $V' = \bigoplus V'_{\a_i}$, where $\bigoplus V'_{\a_i}$ has an infinitesimal character $\a_i -\delta(\g_\C)$.
Let $\a_0$ be the weight of a $\g_\C$-lowest weight vector in $\g_\C$.
By Corollary \ref{cor_cohominft},  $H^*(\n_\C, V'_{\a_i})^{\mathfrak{a}_\C} = 0$ for $\a_i \neq \a_0$.
Moreover, since $\P(\n_-) = S(\n_-^*) \otimes \n_-$ as $\l$-modules, $\P(\n_-)$ is $\mathfrak{a}$-bounded below.
By Corollary \ref{cor_0cohominft}, $H^0(\n_\C, V'_{\a_0})^{\mathfrak{a}_\C} = 0$.
Thus it remains to show $H^1(\n_\C, V'_{\a_0})^{\mathfrak{a}_\C} = 0$.

When $\g_\C = \mathfrak{sp}(2n, \C)$, $n \geq 3$ or $\mathfrak{f}_4$, by Proposition \ref{prop:1cohominft_2}, $H^1(\n_\C, V'_{\a_0})^{\mathfrak{a}_\C} = 0$.
So we may assume $\g_\C = \mathfrak{so}(n, \C)$, $n \geq 4$ or $\mathfrak{sl}(n, \C)$, $n \geq 3$.
By Proposition \ref{prop:1cohominft_1},  it suffices to show that the weights of $V'_{\a_0}$  are $\mathfrak{a}_\C^*$-nonnegative.

Under the isomorphism $\P(\n_-) = S(\n_-^*) \otimes \n_-$ as $\l$-modules, the subspace spanned by weight vectors in $\P(\n_-)$ of $\mathfrak{a}^*$-negative weights is
\[
\g_{-2} \oplus \g_{-1} \oplus (\g_{-1}^* \otimes \g_{-2}) \subset S(\n_-^*) \otimes \n_-.\]
On the other hand, the weights in $\lambda(\n_-)$ are $\mathfrak{a}^*$-negative and by Lemma \ref{lem:center_nil}, the weights in $Z(\lambda(\n_-)) = Z_{\P(\n_-)}(\lambda(\n_-))$ are also $\mathfrak{a}^*$-negative.

When $\g_\C = \mathfrak{so}(n, \C)$, we may assume $\g_{-2} = 0$.
Then $\lambda(\n_-) = Z(\lambda(\n_-))$ is of dimension equal to $\n_- = \g_{-1}$.
Thus $\P(\n_-)/\lambda(\g)$ does not have $\mathfrak{a}^*$-negative weights.

When $\mathfrak{sl}(n, \C)$, $n \geq 3$, $\g_{-2}$ is of dimension one.
Since $Z_{\n_-}(\n_-) = \g_{-2}$, $\lambda(\n_-)$ and $Z(\lambda(\n_-))$ span a subspace of dimension $2 \dim\g_{-1} + \dim\g_{-2}$, which is euqal to the dimension of $\g_{-2} \oplus \g_{-1} \oplus (\g_{-1}^* \otimes \g_{-2})$.
Thus weight vectors in $\P(\n_-)$ of $\mathfrak{a}^*$-negative weights are contained in $\lambda(\n_-) + Z(\lambda(\n_-))$.
So $V'_{\a_0} = V_{\a_0}/\lambda(\g_\C)$ does not have $\mathfrak{a}_\C^*$-negative weights.
\end{proof}

Using Corollary \ref{cor_0cohom}, we can show the following lemma which will be used in the proof of Proposition \ref{prop:nonloc_rigid}.
\begin{lem}\label{lem:nonlocrigid}
Assume $\g = \mathfrak{su}(n, 1)$, $n \geq 3$.
Let $N_{\P(\n_-)}(\lambda(\n))$ be the normalizer of $\lambda(\n)$ in $\P(\n_-)$ and $Z_{\P(\n_-)}(\lambda(\mathfrak{a})) = \P(\n_-)^\mathfrak{a}$ the centralizer of $\lambda(\mathfrak{a})$ in $\P(\n_-)$.
Then $N_{\P(\n_-)}(\lambda(\n)) \cap Z_{\P(\n_-)}(\lambda(\mathfrak{a}))  = \lambda(\g^\mathfrak{a})$.
\end{lem}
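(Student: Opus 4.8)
The inclusion $\lambda(\g^\mathfrak{a}) \subseteq N_{\P(\n_-)}(\lambda(\n)) \cap Z_{\P(\n_-)}(\lambda(\mathfrak{a}))$ is immediate. Since $\g$ has real rank one, $\g^\mathfrak{a} = \g_0$ in the restricted-root decomposition $\g = \bigoplus_r \g_r$; from $[\g_0, \n] \subseteq \n$ we get that $\lambda(\g_0)$ normalizes $\lambda(\n)$, and from $[\g_0, \mathfrak{a}] = 0$ (as $\mathfrak{a}$ is central in $\g_0 = \g^\mathfrak{a}$) we get $[\lambda(\g_0), \lambda(\mathfrak{a})] = \lambda([\g_0, \mathfrak{a}]) = 0$.

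For the reverse inclusion the plan is to reduce to Corollary \ref{cor_0cohom}. Let $X \in N_{\P(\n_-)}(\lambda(\n)) \cap \P(\n_-)^\mathfrak{a}$. It suffices to produce $X_0 \in \g_0$ with $\ad(X)|_{\lambda(\n)} = \ad(\lambda(X_0))|_{\lambda(\n)}$: then $X - \lambda(X_0)$ centralizes $\lambda(\n)$, and as $\lambda(\g_0) \subseteq \P(\n_-)^\mathfrak{a}$ it also centralizes $\lambda(\mathfrak{a})$, so
\[
X - \lambda(X_0) \in Z_{\P(\n_-)}(\lambda(\n)) \cap \P(\n_-)^\mathfrak{a} = H^0(\n, \P(\n_-))^\mathfrak{a},
\]
which vanishes by Corollary \ref{cor_0cohom}; hence $X = \lambda(X_0) \in \lambda(\g^\mathfrak{a})$. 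Thus the whole problem is to show that the derivation of $\lambda(\n) \cong \n$ induced by $\ad(X)$ lies in $\ad(\g_0)|_\n$ (this automatically disposes of any higher-order part of $X$).

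Next I would record the zero-weight picture. Let $E \in \mathfrak{a}$ be the element of Proposition \ref{prop:std_fixed_pt}(ii). Centralizing $\lambda(\mathfrak{a}) = \R\lambda(E)$ puts $X$ into the zero-weight space of the grading of $\P(\n_-) = S(\n_-^*) \otimes \n_-$ by $\ad(\lambda(E))$, and since $\n_- = \g_{-1} \oplus \g_{-2}$ with $\dim \g_{-2} = 1$ for $\g = \mathfrak{su}(n,1)$, this space is
\[
\P(\n_-)^\mathfrak{a} = \mathfrak{gl}(\g_{-1}) \oplus \mathfrak{gl}(\g_{-2}) \oplus \bigl(S^2(\g_{-1}^*) \otimes \g_{-2}\bigr),
\]
i.e. $X$ has a linear part in $\mathfrak{gl}(\g_{-1}) \oplus \mathfrak{gl}(\g_{-2})$ and a part quadratic in the $\g_{-1}$-coordinates and pointing along $\g_{-2}$. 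Because $X$ has weight $0$ and $\lambda(\g_r) \subseteq \P(\n_-)$ has weight $r$, the map $\ad(X)$ preserves $\lambda(\g_1)$ and $\lambda(\g_2)$ separately, so $\ad(X)|_{\lambda(\n)}$ is a zero-weight derivation of $\n \cong \g_1 \oplus \g_2$; the space of such derivations is $\mathfrak{csp}(\g_1, \omega) = \mathfrak{sp}(\g_1) \oplus \R\,\mathrm{Id}$, where $\omega \colon \g_1 \times \g_1 \to \g_2$ is the Heisenberg bracket, while $\ad(\g_0)|_\n$ is the proper subspace $\mathfrak{u}(n-1) \oplus \R\,\mathrm{Id}$ of the elements commuting with the complex structure of $\g_1 = \C^{n-1}$.

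The remaining step is the one I expect to be the real obstacle: showing that $\ad(X)|_{\lambda(\n)}$ in fact commutes with that complex structure, i.e. lies in $\ad(\g_0)|_\n$. For this one needs $\lambda(\g_1)$ and $\lambda(\g_2)$ written out explicitly as polynomial vector fields on the Heisenberg group $\n_- \cong \C^{n-1} \times \R$, which is obtained from the root-system description in Subsection \ref{subsect:rank_one_Lie}; by Lemma \ref{lem:center_nil} and Proposition \ref{prop:std_fixed_pt} their lowest-order parts are governed by $\ad$, but it is the higher-order (quadratic and cubic) terms that encode the CR structure of the boundary sphere. Imposing $[X, \lambda(\g_1)] \subseteq \lambda(\g_1)$ and $[X, \lambda(\g_2)] \subseteq \lambda(\g_2)$ and matching homogeneous components degree by degree then forces the induced derivation to respect that structure: the ``non-complex'' symplectic directions of $\mathfrak{csp}(\g_1,\omega)$, and likewise a nonzero quadratic part of $X$, produce vector fields that cannot be absorbed into $\lambda(\n)$. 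Once $\ad(X)|_{\lambda(\n)} \in \ad(\g_0)|_\n$ is established, choosing $X_0 \in \g_0$ with $\ad(\lambda(X_0))|_{\lambda(\n)} = \ad(X)|_{\lambda(\n)}$ (possible since $\ad(\g_0)|_\n$ is exactly the set of such derivations) and invoking the reduction above via Corollary \ref{cor_0cohom} completes the proof.
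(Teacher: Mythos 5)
Your setup is sound and matches the paper's mechanism for the ``easy'' part: the inclusion $\lambda(\g^\mathfrak{a})\subseteq \mathfrak{q}:=N_{\P(\n_-)}(\lambda(\n))\cap\P(\n_-)^\mathfrak{a}$, the description of the zero-weight space $\P(\n_-)^\mathfrak{a}=\mathfrak{gl}(\g_{-1})\oplus\mathfrak{gl}(\g_{-2})\oplus\bigl(S^2(\g_{-1}^*)\otimes\g_{-2}\bigr)$, and the reduction via Corollary \ref{cor_0cohom} (if $\ad(X)|_{\lambda(\n)}=\ad(\lambda(X_0))|_{\lambda(\n)}$ for some $X_0\in\g_0$, then $X-\lambda(X_0)$ lies in $H^0(\n,\P(\n_-))^\mathfrak{a}=0$) are all correct and are essentially the paper's injectivity argument $\mathfrak{q}/\lambda(\g^\mathfrak{a})\hookrightarrow H^1(\n,\g)^\mathfrak{a}$. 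But the decisive step is missing, and you say so yourself: you assert, without proof, that imposing $[X,\lambda(\g_1)]\subseteq\lambda(\g_1)$ and $[X,\lambda(\g_2)]\subseteq\lambda(\g_2)$ and ``matching homogeneous components'' forces the induced grading-preserving derivation of $\n$ to land in $\ad(\g_0)|_\n$ rather than merely in $\mathfrak{csp}(\g_1,\omega)$. That assertion is not a soft consequence of anything you have set up: precisely for $\g=\mathfrak{su}(n,1)$ one has $H^1(\n,\g)^\mathfrak{a}\neq 0$ (Lemma \ref{lem:H1nga}), so at the level of abstract $\mathfrak{a}$-invariant cocycles the normalizer condition alone cannot exclude the ``non-complex'' symplectic directions; one must actually use that $X$ is a polynomial vector field on $\n_-$ and extract an additional constraint. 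Your sketch names the computation that would do this (explicit formulas for $\lambda(\g_1),\lambda(\g_2)$ on the Heisenberg group up to cubic terms) but does not carry it out, and this computation is the entire content of the lemma.

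For comparison, the paper closes this gap differently: it first shows that every $X\in\mathfrak{q}$ also normalizes $\lambda(\n_-)$, using that the $\mathfrak{a}$-weight $-2$ subspace of $\P(\n_-)$ is exactly $\lambda(\g_{-2})$ together with $[\g_{-2},\g_1]=\g_{-1}$ and the Jacobi identity; then it complexifies, regards $\mathfrak{q}_\C/\lambda((\g_\C)^{\mathfrak{a}_\C})$ as an $\l_\C$-submodule of $H^1(\n_\C,\g_\C)^{\mathfrak{a}_\C}=\mathrm{F}^\l_{-e_1+2e_2-e_n}\oplus\mathrm{F}^\l_{e_1-2e_{n-1}+e_n}$, picks a putative highest weight vector $X$ of one of these two weights, and derives a contradiction from $[X,\lambda(\g_{e_1-e_2})]=\lambda(\g_{e_2-e_n})$ combined with the extra constraint $[X,\lambda(\g_{-e_1+e_2})]\in\lambda((\n_-)_\C)$ supplied by the preliminary step. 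So the missing ingredient in your plan is precisely an input of this kind (how $X$ interacts with $\lambda(\n_-)$, or equivalently the higher-order terms of $\lambda$), and until either that representation-theoretic argument or your coordinate computation is actually executed, the proof is incomplete at its crucial point.
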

\begin{proof}
Put $\mathfrak{q} = N_{\P(\n_-)}(\lambda(\n)) \cap Z_{\P(\n_-)}(\lambda(\mathfrak{a}))$.
We will first show $\mathfrak{q} \subset N_{\P(\n_-)}(\lambda(\n))$.
Fix $X \in \mathfrak{q}$.
Since $[\lambda(\g_{-2}), \P(\n_-)^\mathfrak{a}] \subset \lambda(\g_{-2})$, $X \in  N_{\lambda(\g_{-2})}(\P(\n_-))$.
Fix $Y \in \g_{-2}\setminus\{0\}$.
Then $[Y, \g_{1}] = \g_{-1}$.
Applying $\ad(\lambda(Y))^2$ to $[X, \lambda(\g_1)] \subset \lambda(\g_1)$, we see that $X \in  N_{\P(\n_-)}(\lambda(\n))$.

Identifying $\g$ with its image $\lambda(\g)$ by $\lambda$, we see that for $X \in \mathfrak{q}$, $\ad(X)|_{\lambda(\n)}$ defines an $\mathfrak{a}$-invariant cocycle on $\n$ with its value in $\g$.
By Corollary \ref{cor_0cohom}, the map $\mathfrak{q} \to Z^1(\n, \g)^\mathfrak{a}$ is injective, where $ Z^1(\n, \g)^\mathfrak{a}$ denotes the space of $\mathfrak{a}$-invariant cocycles.
This induces the injective map  $\mathfrak{q}/\lambda(\g^\mathfrak{a}) \to H^1(\n, \g)^\mathfrak{a}$.
We will show the complexification $\mathfrak{q}_\C/\lambda((\g_\C)^{\mathfrak{a}_\C})$ is vanished.
If $\mathfrak{q}_\C/\lambda((\g_\C)^{\mathfrak{a}_\C}) \neq 0$, by Lemma \ref{lem:H1nga}, the weight of an $\l_\C$-highest weght vector is $-e_1 +2e_2 -e_n$ or $e_1 -2e_{n-1} +e_n$.
Assume the weight is $-e_1 +2e_2 -e_n$.
Then there is a weight vector $X \in \mathfrak{q}_\C$ of weight $-e_1 +2e_2 -e_n$.
Since $\ad(X)|_\lambda(\n) \neq 0$, we see $[X, \lambda(\g_{e_1-e_2})] = \lambda(\g_{e_2 - e_n})$.
Applying $\ad(\lambda(\g_{-e_1+e_2}))$ to this equation and using $[X, \lambda(\g_{-e_1+e_2})] \in \lambda((\n_-)_\C)$, we obtain $X \in \g$ which is a contradiction.
The argument for $e_1 -2e_{n-1} +e_n$ is the same.
We proved $\mathfrak{q}_\C/\lambda((\g_\C)^{\mathfrak{a}_\C}) = 0$.
\end{proof}

\section{Cohomology of the standard subgroup}\label{section:vanish}
Let $G$ be a group of orientation-preserving isometries of a rank one symmetric space of non-compact type, and $\Gamma$ its standard subgroup.
The goal of this section is to prove \ref{prop:vanish}.
Recall that $\Gamma$ has a finite generating set $a, b_1, \dots, b_{m_1}, c_1, \dots, c_{m_2}$ as in \ref{sss:std_subgrp}.
\begin{lem}\label{lem:cohom_isotro}
Let $V$ be a vector space, and $\rho_s:\Gamma \to \mathrm{GL}(V)$ the representation defined by $\rho_s(\Lambda) = \{\id_V\}$ and $\rho_s(a) = s\id_V$ for a constant $s > 0$.
Then
\begin{enumerate}
\item $H^0(\Gamma, V) =  0$ if $s \neq 1$.
\item $H^1(\Gamma, V) =  0$ if $s \neq 1, k, k^2$.
\end{enumerate}
\end{lem}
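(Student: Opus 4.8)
The plan is to compute directly with inhomogeneous $1$-cocycles, using only the finite generating set $a, b_1, \dots, b_{m_1}, c_1, \dots, c_{m_2}$ of $\Gamma$ together with the ``twisting'' relations $ab_ia^{-1} = b_i^k$ and $ac_ja^{-1} = c_j^{k^2}$ recorded in \ref{sss:std_subgrp}; no complete presentation of $\Gamma$ is needed, since for a vanishing statement it only helps to impose fewer relations (this enlarges the space of cocycles). First I would record two elementary observations about a $1$-cocycle $u \colon \Gamma \to V$ for $\rho_s$: it satisfies $u(\gamma\gamma') = u(\gamma) + \rho_s(\gamma)u(\gamma')$ and $u(a^{-1}) = -s^{-1}u(a)$; and since $\rho_s|_\Lambda = \id_V$, the restriction $u|_\Lambda$ is an ordinary homomorphism $\Lambda \to (V,+)$, hence is determined by the generators $b_i, c_j$ and vanishes as soon as it vanishes on them.

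Part (i) is then immediate: $H^0(\Gamma, V) = V^\Gamma \subseteq \ker(\rho_s(a) - \id_V) = \ker((s-1)\id_V)$, which is $0$ when $s \neq 1$. For part (ii) I would take an arbitrary $1$-cocycle $u$ and evaluate it on $ab_ia^{-1} = b_i^k$. Using the cocycle identity together with $\rho_s(b_i) = \id_V$, the left-hand side is $u(a) + s\,u(b_i) + s\,u(a^{-1}) = s\,u(b_i)$, while the right-hand side is $\sum_{j=0}^{k-1}\rho_s(b_i)^j u(b_i) = k\,u(b_i)$; hence $(s-k)u(b_i) = 0$, so $u(b_i) = 0$ because $s \neq k$. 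The identical computation applied to $ac_ja^{-1} = c_j^{k^2}$ gives $(s-k^2)u(c_j) = 0$, so $u(c_j) = 0$ because $s \neq k^2$. As the $b_i$ and $c_j$ generate $\Lambda$, this forces $u|_\Lambda = 0$. Finally, since $s \neq 1$ the scalar $s-1$ is invertible on $V$, so the vector $v = (s-1)^{-1}u(a)$ has the property that the coboundary $\gamma \mapsto \rho_s(\gamma)v - v$ agrees with $u$ at $a$ and vanishes on $\Lambda$ just as $u$ does; two cocycles agreeing on a generating set coincide, so $u$ is a coboundary and $H^1(\Gamma, V) = 0$.

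There is no serious obstacle in this argument. The only things to be careful about are that the cocycle identity must be used for the scalar action $\rho_s$, not for the trivial action, which is what makes the factors $s$, $k$, $k^2$ appear, and that one keeps track of which exceptional value is used where. The three excluded values are exactly the candidates among the eigenvalues of $\rho_s(a)$ that would obstruct, respectively, solving the coboundary equation at $a$ (value $1$), killing the $b_i$-components of $u$ (value $k$), and killing the $c_j$-components of $u$ (value $k^2$).
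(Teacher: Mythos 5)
Your argument is correct and is essentially the paper's own proof: part (i) is the identification $H^0(\Gamma,V)=V^\Gamma$, and part (ii) evaluates a $1$-cocycle on the relations $ab_ia^{-1}=b_i^k$ and $ac_ja^{-1}=c_j^{k^2}$ to get $(s-k)u(b_i)=0$ and $(s-k^2)u(c_j)=0$, then uses $s\neq 1$ to absorb $u(a)$ into a coboundary. The only difference is cosmetic ordering (the paper normalizes $\alpha(a)=0$ first, you adjust at $a$ last), so the two proofs coincide in substance.
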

\begin{proof}
{(i)}
Since $H^0(\Gamma, V)$ can be identified with the space $V^\Gamma$ of $\Gamma$-fixed vectors in $V$, the claim is immediate.

{(ii)}
Let $\beta_v(g) = v - \rho(g)v$ $(g \in \Gamma)$ be the coboundary given by $v \in V$.
Given a cocycle $\alpha:\Gamma \to V$,
Since $s \neq 1$, there is a unique $v \in V$ satisfying $\alpha(a) = \beta_v(a)$.
So we may assume $\alpha(a) = 0$.
For any $b_i \in \ b_1, \dots, b_{m_1} \}$, as $\a$ is a cocycle, 
\[
\alpha(ab_i) = s\alpha(b_i) + \alpha(a).
\]
On the other hand, using the relation $ab_i = b_i^{k}a$, 
\[
\alpha(ab_i) = \alpha(b_i^{k}a) =  \alpha(a) + \alpha(b_i^{k}) = \alpha(a) + k\alpha(b_i).
\]
Thus
\[
(s -k)\alpha(b_i) = 0.
\]
Since $s \neq k$, we see $\alpha(b_i) = 0$.
Similarly, for any $c_i \in \{ c_1, \dots, c_{m_1} \}$, using the relation $ac_i = c_i^{k^2}a$ and the assumption $s \neq k^2$, we obtain $\alpha(c_i) = 0$.
Thus the claim follows.
\end{proof}

Since $\Gamma \subset P \subset G$, the subalgebra $\p \subset \g $ is invariant under the adjoint representation of $\Gamma$ on $\g$.
The induced representation of $\Gamma$ on $\g/\p$ will also be called the adjoint representation.
\begin{prop}\label{prop:vanish}
Consider the adjoint representation of a standard subgroup $\Gamma $ on $\g/\p$.
Then $H^1(\Gamma, \g/\p) = 0$.
\end{prop}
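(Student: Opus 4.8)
The plan is to reduce Proposition \ref{prop:vanish} to Lemma \ref{lem:cohom_isotro} by filtering $\g/\p$ using the pieces of the restricted-root grading.

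First I would record the module structure. Write $\g = \bigoplus_{i=-2}^{2}\g_i$ for the restricted-root decomposition, so that $\p = \g_0\oplus\g_1\oplus\g_2$ and, as a vector space, $\g/\p \cong \n_- = \g_{-1}\oplus\g_{-2}$. The subspace $\mathfrak{q} = \g_{-1}\oplus\p = \g_{-1}\oplus\g_0\oplus\g_1\oplus\g_2$ is a sum of $\Ad(A)$-eigenspaces, is preserved by $\Ad(M)$ since $M$ centralizes $A$, and satisfies $[\n,\mathfrak{q}]\subset\mathfrak{q}$ because $[\n,\g_{-1}]\subset\g_0\oplus\g_1\subset\p$ and $[\n,\p]\subset\p$; hence $\mathfrak{q}$ is $\Ad(P)$-invariant, and in particular $\Ad(\Gamma)$-invariant. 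This produces a short exact sequence of $\Gamma$-modules
\[
0 \longrightarrow \mathfrak{q}/\p \longrightarrow \g/\p \longrightarrow \g/\mathfrak{q} \longrightarrow 0,
\]
under which $\mathfrak{q}/\p$ is $\Gamma$-isomorphic to $\g_{-1}$ and $\g/\mathfrak{q}$ to $\g_{-2}$.

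Next I would identify each graded piece with one of the modules $\rho_s$ of Lemma \ref{lem:cohom_isotro}. On $\g_{-1}\cong\mathfrak{q}/\p$ the lattice $\Lambda\subset N$ acts trivially, because $[\n,\g_{-1}]\subset\p$, and $a$ acts by the scalar $\Ad(a)|_{\g_{-1}} = k^{-1}$, using $\Ad(a)|_{\g_r} = k^{r}\id$ from Subsection \ref{sss:std_subgrp}; hence $\mathfrak{q}/\p$ is the module $\rho_s$ with $s = k^{-1}$. Likewise $\Lambda$ acts trivially on $\g_{-2}\cong\g/\mathfrak{q}$ since $[\n,\g_{-2}]\subset\g_{-1}\oplus\g_0\subset\mathfrak{q}$, and $a$ acts by $k^{-2}$, so $\g/\mathfrak{q}$ is $\rho_s$ with $s = k^{-2}$. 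Since $k\geq 2$, both $k^{-1}$ and $k^{-2}$ lie in $(0,1)$, so in particular they differ from $1$, $k$ and $k^{2}$; Lemma \ref{lem:cohom_isotro}(ii) then gives $H^1(\Gamma,\g_{-1}) = H^1(\Gamma,\g_{-2}) = 0$ (when $\g_{-2}=0$, i.e. $G=\mathrm{SO}_0(n+1,1)$, the second group vanishes trivially). Feeding the short exact sequence into the long exact sequence in group cohomology,
\[
\cdots \longrightarrow H^1(\Gamma,\g_{-1}) \longrightarrow H^1(\Gamma,\g/\p) \longrightarrow H^1(\Gamma,\g_{-2}) \longrightarrow \cdots,
\]
squeezes $H^1(\Gamma,\g/\p)$ between two vanishing groups, so $H^1(\Gamma,\g/\p)=0$.

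The one point requiring care — and essentially the only obstacle — is getting the $\Gamma$-module structure right: the vector-space identification $\g/\p\cong\g_{-1}\oplus\g_{-2}$ is \emph{not} $\Gamma$-equivariant (the $\g_1$-part of $N$ carries $\g_{-2}$ into $\g_{-1}$), so one genuinely has to pass through the filtration $\p\subset\mathfrak{q}\subset\g$; and it is important that the exponents occurring in $\Ad(a)$ on $\g_{-1},\g_{-2}$ are negative, since $s=k$ and $s=k^{2}$ are exactly the values excluded in Lemma \ref{lem:cohom_isotro}(ii).
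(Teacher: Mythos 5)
Your proof is correct and is essentially the paper's own argument: the paper filters $\g/\p$ by $W=(\g_{-1}\oplus\p)/\p$ (your $\mathfrak{q}/\p$) with quotient $\g/(\g_{-1}\oplus\p)$, observes that $\Lambda$ acts trivially and $a$ acts by $k^{-1}$ resp.\ $k^{-2}$ on these pieces, and concludes by Lemma \ref{lem:cohom_isotro}. Your closing remark about the non-equivariance of the naive splitting $\g/\p\cong\g_{-1}\oplus\g_{-2}$ is exactly the point that makes the filtration necessary, so nothing is missing.
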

\begin{proof}
Recall that $\g$ is graded $\g = \bigoplus_{i = -2}^2 \g_i$ so that $\p = \bigoplus_{i \geq 0} \g_i$.
Put $V = \g/\p$, and $W = (\bigoplus_{i \geq -1} \g_i)/\p$.
Then $V/W = \g/(\oplus_{i \geq -1} \g_i)$.
To prove $H^1(\Gamma, V) = 0$, it suffices to show $H^1(\Gamma, W) = 0$ and $H^1(\Gamma, V/W) = 0$.

Since $[\g_i, \g_j] \subset \g_{i+j}$, the adjoint representation of $\n = \g_1 \oplus \g_2$ on $V/W$ and $V$ are trivial.
Thus the representations of $\Lambda \subset N$ on $W$ and $V/W$ are also trivial.
Since $a$ acts on $W$ by $k^{-1}\id_W$ and on $V/W$ by $k^{-2}\id_{V/W}$, by Lemma \ref{lem:cohom_isotro}, $H^1(\Gamma, W) = H^1(\Gamma, V/W) = 0$.
\end{proof}

\section{Local rigidity of the homomorphism into the group of jets}\label{sect:loc_rigid_jet}
In this section, using the results obtained in Section \ref{sect:Lie_cohom}, we will show Proposition \ref{prop:jet_rigid} which claims  local rigidity in a weak sense of the homomorphism of the standard subgroup into the group of jets.
Let $J^r(G/P, \o)$, $r \geq 0$ be the group of $r$-jets at $\o \in G/P$.
The $C^s$-topology $(s \geq 0)$ on $\mathrm{Diff}(G/P)$ induces a topology on $J^r(G/P, \o)$ which will also be called the $C^s$-topology.
When $r \leq s$, the topology is the same as that as a Lie group, while when $s < r$, the topology is not Hausdorff. 
The statement of the following proposition is obviously weaker that that of our main theorem.

\begin{prop}\label{prop:jet_rigid}
Let $G$ be the group of orientation-preserving isometries of a  rank one symmetric space of non-compact type, $\Gamma$ a standard subgroup of $G$, and $l:P \to J^3(G/P, \o)$ the homomorphism into the group of $3$-jets at $\o \in G/P$ induced by the action of $P$ on $G/P$ by left translations.
If $\rho: \Gamma \to  J^3(G/P, \o)$ is a homomorphism $C^2$-close to $l|_\Gamma$, then there is an embedding $\iota$ of $\Gamma$ into $G$ as a standard subgroup and $h \in J^3(G/P, \o)$ such that 
\[
\rho(g) = h \circ l(\iota(g)) \circ h^{-1}
\]
for all $g \in \Gamma$.
\end{prop}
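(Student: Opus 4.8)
The plan is to linearize the problem and reduce it, via a sequence of standard reductions, to the cohomology computations already carried out in Section~\ref{sect:Lie_cohom}. First I would analyze the induced homomorphism $\rho_1 : \Gamma \to J^1(G/P, \o)$. Since $J^1(G/P, \o)$ is a Lie group (indeed the linear isotropy group, essentially $\mathrm{GL}$ of the tangent space at $\o$), and $\rho$ is $C^2$-close to $l|_\Gamma$, the $1$-jet part $\rho_1$ is close to $l_1|_\Gamma$. The group $\Gamma$ is finitely generated with a presentation as in Subsection~\ref{sss:std_subgrp}, so near $l_1|_\Gamma$ the space of homomorphisms into the Lie group $J^1(G/P,\o)$ is controlled by $H^1$ of $\Gamma$ with coefficients in the Lie algebra $\mathfrak{j}^1(G/P,\o)$ with the adjoint-type action; the relevant piece after subtracting the part coming from conjugation is governed by $H^1(\Gamma, \g/\p)$ (the complement of $\p$ inside $\g$ inside $\mathfrak{j}^1$), which vanishes by Proposition~\ref{prop:vanish}. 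Conjugating by an element of $J^1(G/P,\o)$ close to the identity, I may therefore assume $\rho_1 = l_1|_\Gamma$ exactly — in particular the image of $a$ under $\rho$ has the same linear part as $l(a)$, namely the diagonalizable map with eigenvalues $k^{-1}$ (on $\g_{-1}$) and $k^{-2}$ (on $\g_{-2}$).

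Next I would apply a theorem of Malcev: the closure $\langle a\rangle N$ of $\Gamma$ in $G$ is the Zariski closure (as a real algebraic group $\overline{\Gamma}$ has $N$ as the closure of $\Lambda$ and $a$ generating a one-parameter-like piece), and $\Gamma$ is a lattice in it which is "Zariski dense" in the appropriate sense, so a homomorphism of $\Gamma$ into an algebraic group close to the restriction of an algebraic homomorphism extends to the closure. Concretely, this lets me reduce local rigidity of $\rho: \Gamma \to J^3(G/P,\o)$ up to embedding-into-$G$ to local rigidity of a homomorphism of $\langle a\rangle N$ into $J^3(G/P,\o)$, and since $\langle a\rangle N$ has Lie algebra $\mathfrak{a}\oplus\n$ (up to finite index / the generator $a$ acting as $\exp$ of a multiple of $E$), the deformation theory is governed by the Lie algebra cohomology $H^1(\n, \mathfrak{j}^3(G/P,\o))^{\langle a\rangle} = H^1(\n, \mathfrak{j}^3(G/P,\o))^{\mathfrak{a}}$. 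Here I use that, by the identifications in Subsection~\ref{subsect:vfvs}, $\mathfrak{j}^3(G/P,\o) = \bigoplus_{1\le q\le 3} S^q(\n_-^*)\otimes \n_-$ sits inside $\P(\n_-)$, and $\lambda : \g \to \P(\n_-)$ realizes $\g$ inside this (the relevant jets of the left action land in degree $\le 3$ — one must check this degree bound, which follows from Proposition~\ref{prop:std_fixed_pt} and the structure of $\lambda$). Then Corollary~\ref{cor_1cohom} and Corollary~\ref{cor_0cohom} give that the inclusion $\g \hookrightarrow \P(\n_-)$ induces an isomorphism on $H^1(\n, -)^{\mathfrak{a}}$ and a vanishing of $H^0(\n, \P(\n_-))^{\mathfrak{a}}$; passing to the quotient $\mathfrak{j}^3$ of $\P(\n_-)$ and using that the higher-degree part $\bigoplus_{q\ge 4} S^q(\n_-^*)\otimes\n_-$ has appropriate $\mathfrak{a}$-weights, one deduces $H^1(\n, \mathfrak{j}^3(G/P,\o))^{\mathfrak{a}} = H^1(\n, \g)^{\mathfrak{a}}$ and $H^0(\n, \mathfrak{j}^3(G/P,\o)/\g)^{\mathfrak{a}} = 0$. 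The upshot: every small deformation of $l|_{\langle a\rangle N}$ into $J^3(G/P,\o)$ is, after conjugation by an element of $J^3(G/P,\o)$, of the form $l \circ \iota'$ for an embedding $\iota'$ of $\langle a\rangle N$ into $G$ as (the closure of) a standard subgroup; restricting to $\Gamma$ gives the desired $\iota$ and $h$.

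The main obstacle I expect is making the passage from the cohomological vanishing to the actual conjugacy rigorous, i.e. running the implicit-function-theorem / iteration argument in the non-compact, non-unitary setting where $J^3(G/P,\o)$ is a nonabelian group and the $C^2$-topology on it is genuinely weaker than its Lie-group topology when restricted from $C^2$-close actions. One must be careful that (a) $C^2$-closeness of $\rho$ to $l|_\Gamma$ actually controls the $3$-jet data — this is where the hypothesis is being used efficiently, since a $3$-jet has a meaningful $C^2$-component only up to order $2$, so strictly the statement is about the group $J^3(G/P,\o)$ with the (non-Hausdorff) $C^2$-topology, and one needs Proposition~\ref{prop:std_fixed_pt}(ii) — the eigenvalues $k^{-1}, k^{-2}$ of $\mathrm{ad}(E)|_{\n_-}$ being $\neq 1, k, k^2$ in absolute value after the relevant normalization — to invoke Lemma~\ref{lem:cohom_isotro} and kill the degree-$2$ and degree-$3$ coboundary obstructions that are not seen in $C^2$; and (b) the Malcev extension step requires the perturbed $\rho$ restricted to $\Lambda$ to still generate a group whose closure is a copy of $N$, which again follows from $\rho_1 = l_1|_\Gamma$ together with the structure of unipotent groups. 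A secondary technical point is verifying the degree bound "$\le 3$" is exactly right — that the $3$-jet of the left $G$-action at $\o$ already determines the germ up to the resonance order in Sternberg's sense — but this is asserted in the introduction and rests on the explicit form of $\lambda(\g)$ from Proposition~\ref{prop:std_fixed_pt}(iii).
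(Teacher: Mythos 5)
Your overall skeleton (normalize the $1$-jet part, extend $\rho|_\Lambda$ to $N$ by Malcev, pass to $\bar\rho:\langle a\rangle N\to J^3$, and finish with Corollaries \ref{cor_0cohom} and \ref{cor_1cohom}) is the same as the paper's, but your first step contains a genuine error. You claim that, because $H^1(\Gamma,\g/\p)=0$ (Proposition \ref{prop:vanish}), one may conjugate so that the $1$-jet homomorphism satisfies $\rho_1=l_1|_\Gamma$ exactly, and in particular that $\rho(a)$ has the same linear part as $l(a)$. This cannot be right: deformations of $\rho_1$ genuinely exist (perturb only the linear part of $\rho(a)$ inside the diagonal, keeping $\rho_1(\Lambda)$ trivial; the relation $ab_ia^{-1}=b_i^k$ is still satisfied), which is exactly why the conclusion of the proposition allows a new embedding $\iota$. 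Moreover $\g/\p$ is not the coefficient module that governs deformations of homomorphisms $\Gamma\to \mathrm{GL}(\n_-)$; Weil-type rigidity there would require $H^1(\Gamma,\mathfrak{gl}(\n_-))=0$, which fails. What the paper actually does is weaker and suffices: it Sternberg-normalizes $\rho(a)$ into the centralizer $Z$ of $l(a)$ in $J^3(\n_-,0)$, then applies Stowe's theorem to the induced $\Gamma$-action on $\mathrm{GL}(\n_-)/H$ ($H$ the block upper triangular subgroup), with the vanishing coming from Lemma \ref{lem:cohom_isotro} (Lemma \ref{lem:1-jet_rigid}), and finally quotes Lemma 2.2 of \cite{Asaoka1} to conclude that $\pi\circ\rho(\Lambda)$ is unipotent. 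That unipotency (not $\rho_1=l_1$) is what makes $\rho(\Lambda)$ land in a connected simply connected nilpotent subgroup of $J^3$ (Lemma \ref{lem:nilp1conn}) so that Theorem \ref{nilp_ext} applies; Zariski density alone, as you invoke it, is not a valid extension principle.

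The second gap is the one you yourself flag as ``the main obstacle'' but do not resolve: $C^2$-closeness of $\rho$ to $l|_\Gamma$ only controls the $2$-jet part of $\bar\rho_*$, while the cohomological argument via $H^1(\n,\j^3(\n_-,0))^{\mathfrak a}\cong H^1(\n,\p)^{\mathfrak a}$ needs $\bar\rho_*:\n\to\j^3(\n_-,0)$ to be close to $l_*|_\n$ in the full $3$-jet. The paper closes this by first proving that $\rho(a)\in Z$ forces $\bar\rho_*$ to be $\Ad(a)$-equivariant, hence weight-preserving (Lemma \ref{lem:decomp_rigid}), and then running an explicit bracket argument (using $[\g_1,\g_2]=0$, $[\g_1,\g_1]\subset\g_2$, and $[\g_{-2},\g_1]=\g_{-1}$) to recover closeness of the degree-$3$ components from closeness of the lower ones; without some such argument your reduction to the cohomology computation does not go through. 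Finally, after conjugating $\bar\rho(N)$ onto $l(N)$ you still must show $\bar\rho(a)=l(a)$ (so that the image lies in $l(\langle a\rangle N)$); the paper does this by showing $z_0=\bar\rho(a)l(a)^{-1}\in Z$ centralizes $l_*(\n)$ and invoking $H^0(\n,\P(\n_-))^{\mathfrak a}=0$ (Corollary \ref{cor_0cohom}), a step your proposal only gestures at.
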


Using the local coordinate system $i \circ \exp:\n_- \to G/P$ around $\o \in G/P$ introduced in \ref{sss:loc_left_act}, the group $J^3(G/P, \o)$ can be identified with $J^3(\n_-, 0)$.
The induced homomorphism will also be denoted by $l:P \to J^3(\n_-, 0)$.

Assume the standard subgroup $\Gamma$ is generated by $a \in A$ and a lattice $\Lambda \subset N$.
Let
\[
Z = Z_{J^3(\n_-, 0)}(l(a))
\]
be the centralizer of $l(a)$ in $J^3(\n_-, 0)$.
Recall that $\Ad(a)|_{\g_r} = k^r\mathrm{Id}_{\g_r}$ for an integer $k \geq 2$.
By Proposition \ref{prop:std_fixed_pt} (ii), we see that the action of $a$ around $0 \in \n_-$ is the linear transformation corresponding to $k^{-2}\mathrm{Id}_{\g_{-2}} \oplus k^{-1} \mathrm{Id}_{\g_{-1}} \in \mathrm{GL}(\n_-)$.
By Sternberg's normalization \cite{Sternberg}, we see that an element $C^2$-close to $l(a)$ is conjugate to an element in $Z$.
So to prove Proposition \ref{prop:jet_rigid}, we may assume $\rho(a) \in Z$.

Let $\pi:J^3(\n_-, 0) \to J^1(\n_-, 0) = \mathrm{GL}(\n_-)$ be the natural projection.
Let us first consider the homomorphism $\pi \circ l: P \to \mathrm{GL}(\n_-)$.
Since
\[
\pi \circ l(a) = \left(
\begin{array}{cc}
k^{-1}\mathrm{Id}_{\g_{-1}} & 0\\
0 & k^{-2}\mathrm{Id}_{\g_{-2}}
\end{array} \right) \in \mathrm{GL}(\n_-) = \mathrm{GL}(\g_{-1} \oplus \g_{-2})
\]
and $\Ad(a)|_\n = k\mathrm{Id}_{\g_1} \oplus k^2\mathrm{Id}_{\g_2}$, it is easy to see that
\[
\pi \circ l(N) \subset \left\{ \left(
\begin{array}{cc}
\mathrm{Id}_{\g_{-1}} &*\\
0 & \mathrm{Id}_{\g_{-2}}
\end{array} \right)  \right\}.
\]
Let $H \subset \mathrm{GL}(\n_-)$ be the subgroup defined by
\[
H = \left\{
\left(
\begin{array}{cc}
 *& *\\
0 & *
\end{array} \right)\right\}.
\]

\begin{lem}\label{lem:1-jet_rigid}
If $f:\Gamma \to \mathrm{GL}(\n_-)$ is a homomorphism close to $\pi \circ l|_\Gamma$, then $f$ is conjugate to a homomorphism $f'$ such that $f'(\Gamma) \subset H$.
\end{lem}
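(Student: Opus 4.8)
The plan is to identify $H$ with the stabilizer of a point in a Grassmannian and to produce the required conjugating element via Stowe's theorem. Write $m_1=\dim\g_{-1}$ and let $Y=\mathrm{Gr}_{m_1}(\n_-)$ be the Grassmannian of $m_1$-planes in $\n_-$, a compact $C^\infty$ manifold carrying the natural algebraic action of $\mathrm{GL}(\n_-)$. Since $\pi\circ l(a)$ is block diagonal with the two scalar blocks $k^{-1}\mathrm{Id}_{\g_{-1}}$ and $k^{-2}\mathrm{Id}_{\g_{-2}}$, and $\pi\circ l(N)$ consists of block upper triangular unipotent matrices (as recorded just above the statement), the subspace $\g_{-1}\subset\n_-$ is $\pi\circ l(\Gamma)$-invariant; indeed $H$ is precisely the $\mathrm{GL}(\n_-)$-stabilizer of the point $p_0=[\g_{-1}]\in Y$. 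Thus $\pi\circ l|_\Gamma$ induces a $\Gamma$-action on $Y$ fixing $p_0$, and a homomorphism $f\colon\Gamma\to\mathrm{GL}(\n_-)$ close to $\pi\circ l|_\Gamma$ on the finite generating set $\{a,b_i,c_i\}$ induces a $\Gamma$-action on $Y$ that is $C^1$-close to the former. Granting that this perturbed action has a fixed point $[E]$ near $p_0$, the subspace $E\subset\n_-$ is $f(\Gamma)$-invariant and close to $\g_{-1}$; choosing $P\in\mathrm{GL}(\n_-)$ close to the identity with $P(E)=\g_{-1}$ and setting $f'=PfP^{-1}$ gives $f'(\Gamma)\subset H$.

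To obtain the fixed point near $p_0$ I would apply Stowe's theorem, whose hypothesis is the vanishing of $H^1(\Gamma,T_{p_0}Y)$ for the isotropy representation. We have $T_{p_0}Y\cong\mathrm{Hom}(\g_{-1},\n_-/\g_{-1})=\g_{-1}^*\otimes\g_{-2}$ with the induced $\mathrm{GL}(\n_-)$-action. The element $a$ acts on this space by $\pi\circ l(a)$, that is, by the scalar $k\cdot k^{-2}=k^{-1}$ (the factor $k$ coming from the dual action on $\g_{-1}^*$). On the other hand any block upper triangular unipotent element fixes $p_0$ and, writing nearby planes as graphs of maps $\g_{-1}\to\g_{-2}$, one checks in one line that its derivative on $T_{p_0}Y$ is the identity; hence $\pi\circ l(\Lambda)\subset\pi\circ l(N)$ acts trivially on $T_{p_0}Y$. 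Therefore the isotropy representation is exactly the representation $\rho_{k^{-1}}$ of Lemma~\ref{lem:cohom_isotro}, and since $k\ge 2$ forces $k^{-1}\notin\{1,k,k^2\}$, that lemma gives $H^1(\Gamma,T_{p_0}Y)=0$.

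The parts that need care are the two items being granted above: that the $\mathrm{GL}(\n_-)$-action on $Y$ is $C^\infty$ and that a $\Gamma$-action $C^1$-close to a given one on a generating set is $C^1$-close as an action on the compact manifold $Y$, so that Stowe's theorem genuinely applies; and the computation of the isotropy representation, in particular the triviality of the $\Lambda$-part, which rests on the unipotent block upper triangular form of $\pi\circ l(N)$. I expect the latter to be the only genuinely computational point. For a Stowe-free variant one can argue directly instead: the relations $ab_ia^{-1}=b_i^{k}$ and $ac_ia^{-1}=c_i^{k^2}$ make $f(b_i)$ (resp.\ $f(c_i)$) conjugate to its own $k$-th (resp.\ $k^2$-th) power, so its eigenvalues form a finite multiset permuted by $\mu\mapsto\mu^{k}$ (resp.\ $\mu\mapsto\mu^{k^2}$), hence are roots of unity of order dividing a fixed integer and close to $1$, hence all equal to $1$; thus $f(b_i),f(c_i)$ are unipotent. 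Taking $E$ to be the $f(a)$-invariant spectral subspace for the eigenvalue cluster near $k^{-1}$ — well defined and close to $\g_{-1}$ because $k^{-1}$ and $k^{-2}$ are separated — one uses $f(a)^nf(b_i)f(a)^{-n}=f(b_i)^{k^n}$ together with unipotence to conclude that the block of $f(b_i)$ mapping $E$ into its $f(a)$-invariant complement must vanish, whence $E$ is $f(\Gamma)$-invariant, and one finishes as before.
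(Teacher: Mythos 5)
Your argument is correct and is essentially the paper's proof: the Grassmannian $\mathrm{Gr}_{m_1}(\n_-)$ with base point $[\g_{-1}]$ is exactly the homogeneous space $\mathrm{GL}(\n_-)/H$ the paper uses, and you apply Stowe's theorem with the same isotropy computation ($a$ acting by $k^{-1}$ on $\g_{-1}^*\otimes\g_{-2}$, $\Lambda$ acting trivially) and the same appeal to Lemma~\ref{lem:cohom_isotro}. The Stowe-free spectral-subspace variant you sketch at the end is a nice extra, but the main line of reasoning coincides with the paper's.
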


This lemma can be shown easily by using the following theorem of Stowe:
\begin{thm}[\cite{Stowe}]\label{thm:Stowe}
Let $\Gamma$ be a finitely generated group, and $\rho$ a smooth action of $\Gamma$ on a manifold $M$ with a common fixed point $x_\o \in M$.
Then an action $C^2$-close to $\rho$ admits a common fixed point $x$ close to $x_\o$ if  the first cohomology $H^1(\Gamma, T_{x_\o}M)$ with respect to the isotropic representation of $\rho$ at $x_\o$ is vanished.
\end{thm}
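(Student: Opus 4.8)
The plan is to realize $H$ as the stabilizer of a point in a Grassmannian and to invoke Theorem \ref{thm:Stowe}. Let $d = \dim \g_{-1}$ and let $\mathrm{Gr}$ be the Grassmannian of $d$-dimensional linear subspaces of $\n_- = \g_{-1} \oplus \g_{-2}$; this is a compact manifold on which $\mathrm{GL}(\n_-)$ acts smoothly, and $H$ is precisely the stabilizer of the point $x_\o = [\g_{-1}] \in \mathrm{Gr}$. Since $\pi \circ l(a)$ is block diagonal and $\pi \circ l(N)$ consists of block upper-triangular matrices with identity diagonal blocks, the action of $\Gamma$ on $\mathrm{Gr}$ through $\pi \circ l|_\Gamma$ fixes $x_\o$. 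A homomorphism $f : \Gamma \to \mathrm{GL}(\n_-)$ close to $\pi \circ l|_\Gamma$ induces an action of $\Gamma$ on $\mathrm{Gr}$ which, since the $\mathrm{GL}(\n_-)$-action on $\mathrm{Gr}$ is smooth and $\Gamma$ is finitely generated, is $C^2$-close to the one through $\pi \circ l|_\Gamma$. If this perturbed action admits a common fixed point $W' \in \mathrm{Gr}$, then picking $g \in \mathrm{GL}(\n_-)$ with $g(W') = \g_{-1}$, the conjugate $f' = g\, f(\cdot)\, g^{-1}$ satisfies $f'(\Gamma) \subset \mathrm{Stab}(\g_{-1}) = H$, which is exactly the desired conclusion.

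By Theorem \ref{thm:Stowe}, it therefore suffices to verify that $H^1(\Gamma, T_{x_\o}\mathrm{Gr}) = 0$ for the isotropy representation of $\pi \circ l|_\Gamma$ at $x_\o$. There is a canonical identification $T_{x_\o}\mathrm{Gr} = \mathrm{Hom}(\g_{-1}, \n_-/\g_{-1}) = \mathrm{Hom}(\g_{-1}, \g_{-2})$, and writing nearby subspaces as graphs of linear maps $\g_{-1} \to \g_{-2}$ one checks that an element $\left(\begin{smallmatrix} A & B \\ 0 & D \end{smallmatrix}\right) \in H$ acts on $\mathrm{Hom}(\g_{-1}, \g_{-2})$ by $\phi \mapsto D \phi A^{-1}$, the off-diagonal block $B$ contributing nothing to first order. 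Applied to $\pi \circ l(a) = \mathrm{diag}(k^{-1}\mathrm{Id}_{\g_{-1}}, k^{-2}\mathrm{Id}_{\g_{-2}})$ this gives multiplication by the scalar $k^{-2}\cdot k = k^{-1}$, while every element of $\pi \circ l(\Lambda)$ has $A = D = \mathrm{Id}$ and hence acts trivially on $\mathrm{Hom}(\g_{-1},\g_{-2})$. Thus the isotropy representation of $\Gamma$ on $T_{x_\o}\mathrm{Gr}$ is exactly the representation $\rho_s$ of Lemma \ref{lem:cohom_isotro} with $s = k^{-1}$.

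Since $k \geq 2$ we have $k^{-1} \notin \{1, k, k^2\}$, so Lemma \ref{lem:cohom_isotro}(ii) yields $H^1(\Gamma, T_{x_\o}\mathrm{Gr}) = 0$, and Theorem \ref{thm:Stowe} produces the common fixed point $W'$ near $\g_{-1}$ for the action through $f$; the conjugation described in the first paragraph then finishes the proof. The only point requiring any care is the identification of the isotropy representation on $T_{x_\o}\mathrm{Gr}$ with $\rho_{k^{-1}}$; once that is in hand, the statement reduces entirely to the already-established Lemma \ref{lem:cohom_isotro}, and I expect no genuine obstacle — this is precisely the straightforward application of Stowe's theorem alluded to in the text.
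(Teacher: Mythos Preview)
Your proposal is not a proof of the quoted statement: Theorem~\ref{thm:Stowe} is Stowe's theorem, which the paper merely cites and does not prove. What you have written is a proof of Lemma~\ref{lem:1-jet_rigid}, the lemma stated immediately before Stowe's theorem in the text. Assuming that was the intended target, your argument is correct and essentially identical to the paper's. The paper applies Theorem~\ref{thm:Stowe} to the homogeneous space $M = \mathrm{GL}(\n_-)/H$ with basepoint $eH$; you apply it to the Grassmannian of $(\dim\g_{-1})$-planes in $\n_-$ with basepoint $[\g_{-1}]$. Since $H$ is precisely the stabilizer of the subspace $\g_{-1}$, these are the same $\mathrm{GL}(\n_-)$-space, and both arguments reduce to the same isotropy computation ($a$ acting by $k^{-1}$, $\Lambda$ trivially) and the same appeal to Lemma~\ref{lem:cohom_isotro}.
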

\begin{proof}[Proof of Lemma \ref{lem:1-jet_rigid}]
As $\mathrm{GL}(\n_-)$ acts on $M = GL(\n_-)/H$ by left translations, a homomorphism of $\Gamma$ into $\mathrm{GL}(\n_-)$ induces an action of $\Gamma$ on $M$.
Let $\sigma$ be the the action induced by $\pi \circ l|_\Gamma$.
Since $\pi \circ l(\Gamma) \subset H$, $\sigma$ has the common fixed point $x_\o = eH \in M$.
Given a homomorphism $f:\Gamma \to \mathrm{GL}(\n_-)$ close to $\pi \circ l|_\Gamma$, the induced action of $\Gamma$ on $M$ is close to $\sigma$.

By the above observation on $\pi \circ l: P \to \mathrm{GL}(\n_-)$ , we see that, in the isotropic representation of $\sigma$ at $x_\o$, $a$ acts on $T_{x_\o}M = \mathfrak{gl}(\n_-)/\h$ by multiplication by $k^{-1}$ and $\Lambda$ acts trivially.
Thus by Lemma \ref{lem:cohom_isotro}, $H^1(\Gamma, T_{x_\o}M) = 0$.
By Theorem \ref{thm:Stowe}, the action of $\Gamma$ on $M$ induced by $f$ also admits a common fixed point.
Replacing $f$ with its conjugate, we may assume $f$ fixes $x_\o$.
This is equivalent to $f(\Gamma) \subset H$.
\end{proof}

By Lemma \ref{lem:1-jet_rigid}, to prove Proposition \ref{prop:jet_rigid} we may assume $\pi \circ \rho(\Gamma) \subset H$.
Then $\pi \circ \rho: \Gamma \to \mathrm{GL}(\n_-)$ induces homomorphisms of $\Gamma$ into $\mathrm{GL}(\g_{-1})$ and $\mathrm{GL}(\g_{-2})$.
By Lemma 2.2 of \cite{Asaoka1}, we see that 
\[
\pi \circ \rho(\Lambda) \subset \left\{ \left(
\begin{array}{cc}
\mathrm{Id}_{\g_{-1}} &*\\
0 & \mathrm{Id}_{\g_{-2}}
\end{array} \right)  \right\}.
\]
The proof of the following lemma is left to the reader:
\begin{lem}\label{lem:nilp1conn}
Let $L \subset \mathrm{GL}(n, \R) = J^1(\R^n, \o)$ be the group of upper triangular matrices with diagonal entries $1$ and $\pi^1_r: J^r(\R^n, \o) \to J^1(\R^n, \o)$ be the natural projection.
Then $(\pi^0_r)^{-1}(L) \subset  J^r(\R^n, \o)$ is a connected simply-connected nilpotent Lie group.
\end{lem}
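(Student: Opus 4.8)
The plan is to reduce the statement to a fact about the Lie algebra of $L_r := (\pi^1_r)^{-1}(L)$ together with the homotopy exact sequence of a fibration of Lie groups. Write $\mathfrak{g}_r$ for the Lie algebra of $L_r$. Under the identifications $\j^r(\R^n,\o) = \bigoplus_{q=1}^r S^q((\R^n)^*)\otimes\R^n$ from Subsection \ref{subsect:vfvs} and $J^1(\R^n,\o) = \mathrm{GL}(n,\R) = S^1((\R^n)^*)\otimes\R^n$, the differential of $\pi^1_r$ is the projection onto the degree-one summand, so
\[
\mathfrak{g}_r = \mathfrak{l} \oplus \bigoplus_{q=2}^r S^q((\R^n)^*)\otimes\R^n,
\]
where $\mathfrak{l}\subset\mathrm{GL}(n,\R)$ is the Lie algebra of $L$, i.e. the strictly upper-triangular matrices; here $L_r$ is a closed subgroup of the Lie group $J^r(\R^n,\o)$, hence a Lie subgroup with this Lie algebra. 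The plan is: (1) show $\mathfrak{g}_r$ is nilpotent; (2) show $L_r$ is connected and simply connected. The lemma then follows, since a connected simply connected Lie group with nilpotent Lie algebra is precisely the simply connected nilpotent Lie group attached to that algebra.

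For step (1) I would exhibit a faithful representation of $\mathfrak{g}_r$ by strictly upper-triangular matrices on a finite-dimensional space; then $\mathfrak{g}_r$ is a Lie subalgebra of a nilpotent Lie algebra, hence nilpotent. A convenient space is $V := R/\mathfrak{m}^{r+1}$ with $R = \R[x_1,\dots,x_n]$ and $\mathfrak{m} = (x_1,\dots,x_n)$, on which polynomial vector fields vanishing at $\o$ act by Lie derivative; a homogeneous such vector field of degree $q\ge1$ carries $\mathfrak{m}^s$ into $\mathfrak{m}^{s+q-1}$, so the action descends to $V$, and it is faithful on $\j^r(\R^n,\o)$ because a vector field of degree $\le r$ vanishing at $\o$ is determined by the classes of its actions on the coordinates $x_i$. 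Order a homogeneous basis of $V = \bigoplus_{s=0}^r \mathfrak{m}^s/\mathfrak{m}^{s+1}$ by \emph{decreasing} $s$. A degree-$q$ vector field with $q\ge2$ sends $\mathfrak{m}^s/\mathfrak{m}^{s+1}$ into $\mathfrak{m}^{s+q-1}/\mathfrak{m}^{s+q}$ with $s+q-1>s$, hence acts strictly upper-triangularly across blocks; an element of $\mathfrak{l}$ preserves each $\mathfrak{m}^s/\mathfrak{m}^{s+1} = S^s((\R^n)^*)$ and acts there as the $s$-th symmetric power of a nilpotent operator, again nilpotent, so with a suitable basis inside each block it too is strictly upper-triangular. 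Thus every element of $\mathfrak{g}_r$ is strictly upper-triangular on $V$. (An alternative avoiding representations: assign to the monomial vector field $x_{i_1}\cdots x_{i_q}\,\partial_{x_j}$ the integer $\sum_{t=1}^q (n+i_t)-(n+j)$; one checks it is $\ge1$ on every element of $\mathfrak{g}_r$, bounded above, and superadditive under the Lie bracket, so the subspaces $F^k\subset\mathfrak{g}_r$ spanned by monomials of value $\ge k$ form a decreasing ideal filtration with $[\mathfrak{g}_r,F^k]\subseteq F^{k+1}$ and $F^N = 0$, yielding nilpotency directly.)

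For step (2), put $N_r = \ker\pi^1_r$, the group of $r$-jets tangent to the identity to first order. Then $N_r$ is a connected, simply connected nilpotent Lie group: its Lie algebra $\bigoplus_{q=2}^r S^q((\R^n)^*)\otimes\R^n$ is nilpotent (positively graded after the shift $q\mapsto q-1$, since the bracket of degrees $p,q$ lands in degree $p+q-1$), and $N_r$ is diffeomorphic, via its free Taylor coefficients of orders $2,\dots,r$, to this vector space. Likewise $L$, the unipotent upper-triangular group, is connected and simply connected. Applying the homotopy exact sequence to the locally trivial fibration
\[
1 \to N_r \to L_r \xrightarrow{\pi^1_r} L \to 1
\]
and using $\pi_0(N_r) = \pi_0(L) = \pi_1(N_r) = \pi_1(L) = 0$, we obtain $\pi_0(L_r) = \pi_1(L_r) = 0$, i.e. $L_r$ is connected and simply connected. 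Together with step (1), $L_r$ is the simply connected nilpotent Lie group with Lie algebra $\mathfrak{g}_r$, which is the assertion.

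I expect the only genuine difficulty to be step (1): $\mathfrak{g}_r$ is an extension of the nilpotent algebra $\mathfrak{l}$ by the nilpotent ideal $\bigoplus_{q\ge2} S^q((\R^n)^*)\otimes\R^n$, and an extension of nilpotent by nilpotent is only solvable in general, so one must genuinely use that $\mathfrak{l}$ acts on the ideal (and the ideal on itself) by \emph{nilpotent} operators — which is exactly what the strictly-upper-triangular realization on $V$, or the weight filtration, encodes.
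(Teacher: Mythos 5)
Your argument is correct. Note, though, that the paper offers no proof of this lemma to compare with — it is explicitly left to the reader — so all I can assess is the proposal itself. Both halves are sound: the identification of the Lie algebra of $(\pi^1_r)^{-1}(L)$ as $\mathfrak{l}\oplus\bigoplus_{q=2}^r S^q((\R^n)^*)\otimes\R^n$, the faithfulness of the Lie-derivative action on $\R[x_1,\dots,x_n]/\mathfrak{m}^{r+1}$, and the fibration (indeed, split exact sequence) over the unipotent group $L$ with fibre $\ker\pi^1_r\cong\bigoplus_{q=2}^r S^q((\R^n)^*)\otimes\R^n$ all check out, and connected plus simply connected plus nilpotent Lie algebra does give a connected simply-connected nilpotent Lie group. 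The only step you gloss is the \emph{simultaneous} strict triangularization of the $\mathfrak{l}$-action inside each block $\mathfrak{m}^s/\mathfrak{m}^{s+1}$: nilpotency of each individual symmetric-power operator is not quite enough by itself, and you should either invoke Engel's theorem for the image of $\mathfrak{l}$ in $\mathfrak{gl}(S^s((\R^n)^*))$ or order the monomial basis by a weight such as $\sum_t i_t$; your parenthetical weight-filtration variant already supplies exactly this and is arguably the cleanest route, so the gap is cosmetic rather than genuine.
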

Thus the image $\rho(\Lambda)$ is contained in a connected simply-connected nilpotent Lie group.
We will use the following:
\begin{thm}[\cite{Raghunathan}]\label{nilp_ext}
Let $N$ and $V$ be connected simply-connected nilpotent Lie groups, and $H$ a uniform subgroup of $N$.
Then any continuous homomorphism $f:H\to V$ can be extended uniquely to a continuous homomorphism $\bar{f}:N \to V$.
\end{thm}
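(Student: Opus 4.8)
The statement is a theorem of Malcev in the form recorded by Raghunathan; here is the route I would take. Write $\mathfrak{n}$ and $\mathfrak{v}$ for the Lie algebras of $N$ and $V$. Since $N$ and $V$ are connected, simply connected and nilpotent, the exponential maps $\exp\colon\mathfrak{n}\to N$ and $\exp\colon\mathfrak{v}\to V$ are diffeomorphisms; moreover a continuous homomorphism between Lie groups is automatically smooth, and because $N$ is simply connected such homomorphisms correspond bijectively to Lie algebra homomorphisms $\mathfrak{n}\to\mathfrak{v}$ via $F=\exp\circ\,dF_e\circ\log$. Hence the theorem is equivalent to the assertion that there is a unique Lie algebra homomorphism $\phi\colon\mathfrak{n}\to\mathfrak{v}$ with $\exp(\phi(\log h))=f(h)$ for every $h\in H$. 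Recall that the uniform subgroup $H$ is then discrete, cocompact, finitely generated, torsion free and nilpotent.

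For uniqueness, recall Malcev's structure theory: if $H$ is a lattice in $N$, then $\mathfrak{n}_{\mathbb Q}:=\operatorname{span}_{\mathbb Q}(\log H)$ is a rational form of $\mathfrak{n}$ --- in particular a $\mathbb Q$-subalgebra whose $\R$-span is all of $\mathfrak{n}$ --- and $H\subset\exp(\mathfrak{n}_{\mathbb Q})$. If $\phi$ and $\phi'$ both solve the problem, then applying $\log$ gives $\phi(\log h)=\log f(h)=\phi'(\log h)$ for all $h\in H$, so $\phi$ and $\phi'$ agree on $\log H$, hence on $\mathfrak{n}_{\mathbb Q}$, hence on $\mathfrak{n}$.

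For existence I would first extend $f$ over the rational Malcev completion and then pass to the real points. Since $H$ is finitely generated, torsion free and nilpotent, Malcev's theorem embeds it into its rational completion $H^{\mathbb Q}\cong\exp(\mathfrak{n}_{\mathbb Q})$, a torsion free radicable nilpotent group in which every element has a power lying in $H$; and $V$ is itself torsion free and radicable, because $w\mapsto w^n$ corresponds to $X\mapsto nX$ under $\exp\colon\mathfrak{v}\to V$. By the universal property of the rational completion, $f$ extends uniquely to a homomorphism $g\colon H^{\mathbb Q}\to V$ (explicitly, $g(h)$ is the unique $n$-th root in $V$ of $f(h^n)$ for any $n$ with $h^n\in H$). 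Now set $\phi_0\colon\mathfrak{n}_{\mathbb Q}\to\mathfrak{v}$, $\phi_0(X)=\log g(\exp X)$. Expressing the products in $\exp(\mathfrak{n}_{\mathbb Q})$ and in $\exp(\mathfrak{v})$ by the Baker--Campbell--Hausdorff polynomial $\mathrm{BCH}(X,Y)=X+Y+\tfrac12[X,Y]+\cdots$ --- a finite sum, by nilpotence --- the fact that $g$ is a homomorphism becomes
\[
\mathrm{BCH}\bigl(\phi_0(X),\phi_0(Y)\bigr)=\phi_0\bigl(\mathrm{BCH}(X,Y)\bigr)\qquad(X,Y\in\mathfrak{n}_{\mathbb Q}),
\]
while $\exp(nX)=\exp(X)^n$ forces $\phi_0(qX)=q\,\phi_0(X)$ for $q\in\mathbb Q$. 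The remaining task is to deduce from these that $\phi_0$ is a homomorphism of $\mathbb Q$-Lie algebras: one inducts on the nilpotency class of $\mathfrak{n}$ (equivalently, works up the lower central series), the abelian base case following because the elements $\exp\phi_0(X)$ pairwise commute in $V$ --- their images generate an abelian subgroup --- and in a simply connected nilpotent Lie group commuting elements have commuting logarithms, so on the abelian subalgebra they generate the displayed identity collapses to additivity, which with $\mathbb Q$-homogeneity gives $\mathbb Q$-linearity; the inductive step peels off the last term of the lower central series and matches the $\mathrm{BCH}$ identity degree by degree. Once $\phi_0$ is a $\mathbb Q$-Lie algebra homomorphism, it extends $\R$-linearly to $\phi\colon\mathfrak{n}\to\mathfrak{v}$, which is still bracket preserving because that condition is $\R$-linear in the structure constants and already holds on a $\mathbb Q$-basis. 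Finally $\bar f:=\exp\circ\phi\circ\log\colon N\to V$ is a continuous (indeed analytic) homomorphism, and for $h\in H$, where $\log h\in\mathfrak{n}_{\mathbb Q}$, one has $\bar f(h)=\exp(\phi_0(\log h))=g(h)=f(h)$; together with the uniqueness above, this proves the theorem.

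The soft parts --- the $\exp/\log$ dictionary, uniqueness, and the algebraic extension of $f$ over $H^{\mathbb Q}$ --- are routine. The heart of the matter is the inductive argument that $\phi_0$ respects the linear and bracket structure, which is exactly the part of Malcev's theorem asserting that a uniform lattice already determines the ambient real Lie algebra; the only genuine care needed there is in bookkeeping the Baker--Campbell--Hausdorff terms along the lower central series.
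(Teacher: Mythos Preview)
The paper does not prove this theorem: it is quoted verbatim from Raghunathan's book and used as a black box, with no argument supplied. So there is no ``paper's own proof'' to compare against.

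Your sketch is the standard Malcev argument and is essentially what one finds in Raghunathan. The reduction to a Lie algebra statement via the $\exp/\log$ diffeomorphisms, the uniqueness via the rational span $\mathfrak{n}_{\mathbb Q}=\operatorname{span}_{\mathbb Q}(\log H)$, and the existence via the rational Malcev completion together with the Baker--Campbell--Hausdorff polynomial are all correct in outline. The one place where you are noticeably terse is the inductive step showing that $\phi_0$ is $\mathbb Q$-linear and bracket-preserving: ``peels off the last term of the lower central series and matches the BCH identity degree by degree'' is the right idea, but in a full write-up you would want to spell out that modulo the last nonzero term of the lower central series the induction hypothesis gives linearity and bracket-preservation, and then the degree-two part of BCH, namely $\phi_0([X,Y])=[\phi_0(X),\phi_0(Y)]$ on the top commutator layer, falls out because the higher BCH terms land in the already-controlled part. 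None of this is a gap so much as an omitted routine verification; the proof is sound.
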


It follows that a homomorphism $\rho:\Gamma \to J^3(\n_-, 0)$ satisfying
\[
\pi \circ \rho(\Lambda) \subset \left\{ \left(
\begin{array}{cc}
\mathrm{Id}_{\g_{-1}} &*\\
0 & \mathrm{Id}_{\g_{-2}}
\end{array} \right)  \right\}
\]
extends uniquely to a continuous homomorphism $\bar{\rho}:\langle a \rangle N \to J^3(\n_-, 0)$, where $\langle a \rangle N$ is the closure of $\Gamma$ in $AN \subset G = KAN$.
In fact, by Theorem \ref{nilp_ext}, the restriction $\rho|_\Lambda : \Lambda \to J^3(\n_-, 0)$ can be extended to a continuous homomorphism $\overline{\rho|_\Lambda}:N \to J^3(\n_-, 0)$.
Moreover, using the uniqueness of the extension, we see that $\overline{\rho|_\Lambda}$ is  a continuous extension of $\rho|_{\tilde{\Lambda}}$, where $\tilde{\Lambda} = \bigcup_{n \in \mathbb{Z}} a^{n}\Lambda a^{-n}$.
Define a map $\bar{\rho}:\langle a \rangle N \to J^3(\n_-, 0)$ by $\bar{\rho}(a^ng) = \rho(a^n)\overline{\rho|_{\Lambda}}(g)$ for $n \in \mathbb{Z}$, $g \in \tilde{\Lambda}$.
Since $\bar{\rho}$ is a continuous map which is an extension of $\rho$, $\bar{\rho}$ is also a group homomorphism.

Let $\mathfrak{j}^3(\n_-, 0)$ be the Lie algebra of $J^3(\n_-, 0)$.
Since $\Ad(a)$ on $\n_-$ is diagonal with eigenvalues $k^{-1}$ and $k^{-2}$, $\Ad(a)$ on
\[
\mathfrak{j}^3(\n_-, 0) = \bigoplus_{1 \leq r \leq 3}(S^r(\n_-^*) \otimes \n_-)
\]
is diagonal with eigenvalues $k^i$, $-1 \leq i \leq 5$.
Let $\mathfrak{j}^3(\n_-, 0)_i$ be the eigenspace corresponding to $k^i$.
\begin{lem}\label{lem:decomp_rigid}
Let $\bar{\rho}:\langle a \rangle N \to J^3(\n_-, 0)$ be a continuous homomorphism such that $\rho(a) \in Z$ is sufficiently close to $l(a)$ and $\bar{\rho}_*: \n \to \mathfrak{j}^3(\n_-, 0)$ its differentiation at $e \in \langle a \rangle N$.
Then
\[
\bar{\rho}_* (\g_i) \subset \mathfrak{j}^3(\n_-, 0)_i.
\]
In other words, $\bar{\rho}_*$ is $\mathfrak{a}$-invariant.
\end{lem}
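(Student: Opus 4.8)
The plan is to differentiate the equivariance built into the homomorphism $\bar\rho$. For $X \in \n$ and $g = \exp(tX) \in N$ we have $aga^{-1} = \exp(t\,\Ad(a)X)$, so differentiating $\bar\rho(aga^{-1}) = \rho(a)\,\bar\rho(g)\,\rho(a)^{-1}$ at $t = 0$ gives
\[
\bar\rho_*\bigl(\Ad(a)X\bigr) = \Ad(\rho(a))\,\bar\rho_*(X), \qquad X \in \n ,
\]
where $\Ad(\rho(a))$ denotes the adjoint action of $\rho(a) \in J^3(\n_-, 0)$ on $\mathfrak{j}^3(\n_-, 0)$. Since $\Ad(a)|_\n$ is semisimple with $\g_i$ its $k^i$-eigenspace for $i = 1, 2$, this means $\bar\rho_*(\g_i)$ consists of $k^i$-eigenvectors of $\Ad(\rho(a))$. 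Everything thus reduces to showing that the $k^i$-eigenspace of $\Ad(\rho(a))$ is contained in the $\Ad(l(a))$-weight space $\mathfrak{j}^3(\n_-, 0)_i$.

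Because $\rho(a) \in Z = Z_{J^3(\n_-, 0)}(l(a))$ commutes with $l(a)$, the operator $\Ad(\rho(a))$ commutes with $\Ad(l(a))$; as $\Ad(l(a))$ is semisimple with eigenspaces $\mathfrak{j}^3(\n_-, 0)_j$ $(-1 \le j \le 5)$ and distinct eigenvalues $k^j$, the operator $\Ad(\rho(a))$ preserves each $\mathfrak{j}^3(\n_-, 0)_j$. The main point — and the step I expect to require the most care — is that, once $\rho(a)$ is close enough to $l(a)$, all eigenvalues of $\Ad(\rho(a))|_{\mathfrak{j}^3(\n_-, 0)_j}$ lie in an arbitrarily prescribed neighbourhood of $k^j$. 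Here the difficulty is that the $C^2$-topology does not control the third-order part of $\rho(a)$; the resolution is that $\Ad(\rho(a))$ respects the filtration of $\mathfrak{j}^3(\n_-, 0)$ by jet-degree (the parts of $\rho(a)$ of jet-degree $\ge 2$ act on the finite-dimensional space $\mathfrak{j}^3(\n_-, 0)$ by strictly raising that degree, hence nilpotently, so the corresponding unipotent elements do not move the spectrum), and that on the associated graded $\Ad(\rho(a))$ acts through the $1$-jet $\pi \circ \rho(a) \in \mathrm{GL}(\n_-)$, which is $C^2$-close to $\pi \circ l(a)$. Choosing a basis of $\mathfrak{j}^3(\n_-, 0)_j$ adapted to the jet-degree, $\Ad(\rho(a))|_{\mathfrak{j}^3(\n_-, 0)_j}$ is block-triangular with diagonal blocks the corresponding restrictions of $\Ad(\pi \circ \rho(a))$, each close to $k^j\,\mathrm{Id}$; hence its spectrum clusters near $k^j$.

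Finally, since $k \ge 2$ one has $|k^i - k^j| \ge 1$ whenever $i \in \{1, 2\}$ and $-1 \le j \le 5$ with $j \ne i$, so for $\rho(a)$ sufficiently close to $l(a)$ the scalar $k^i$ is not an eigenvalue of $\Ad(\rho(a))|_{\mathfrak{j}^3(\n_-, 0)_j}$ when $j \ne i$. For $X \in \g_i$, writing $\bar\rho_*(X) = \sum_j v_j$ with $v_j \in \mathfrak{j}^3(\n_-, 0)_j$, the identity $\Ad(\rho(a))\bar\rho_*(X) = k^i \bar\rho_*(X)$ together with the invariance of the summands forces each $v_j$ to be a $k^i$-eigenvector of $\Ad(\rho(a))|_{\mathfrak{j}^3(\n_-, 0)_j}$, so $v_j = 0$ for $j \ne i$ and $\bar\rho_*(X) \in \mathfrak{j}^3(\n_-, 0)_i$. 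Thus $\bar\rho_*(\g_i) \subset \mathfrak{j}^3(\n_-, 0)_i$ for $i = 1, 2$. Since $\mathfrak{a} = \R E$ acts on $\g_i$ by the scalar $i$ and, through $\lambda$, on $\mathfrak{j}^3(\n_-, 0)_i$ by the same scalar $i$ (this is Proposition \ref{prop:std_fixed_pt}(ii) carried over to $3$-jets), these inclusions are precisely the statement that $\bar\rho_* \colon \n \to \mathfrak{j}^3(\n_-, 0)$ is a morphism of $\mathfrak{a}$-modules, that is, $\mathfrak{a}$-invariant.
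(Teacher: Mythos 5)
Your argument is correct and takes essentially the same route as the paper's own proof: the same intertwining relation $\Ad(\rho(a))\circ\bar{\rho}_* = \bar{\rho}_*\circ\Ad(a)$ coming from the homomorphism property, the same use of $\rho(a)\in Z$ to see that $\Ad(\rho(a))$ preserves the $\Ad(l(a))$-weight decomposition, and the same spectral-separation conclusion once $\rho(a)$ is close to $l(a)$. The only difference is that you spell out the last step (why closeness in the controlled jet range forces the spectrum of $\Ad(\rho(a))$ on each $\mathfrak{j}^3(\n_-,0)_j$ to cluster near $k^j$, via the jet-degree filtration and the $1$-jet), which the paper leaves as a one-line assertion.
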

\begin{proof}
For any $h \in Z$, $\Ad(h)$ preserves the decomposition $\mathfrak{j}^3(\n_-, 0) = \bigoplus_i \mathfrak{j}^3(\n_-, 0)_i$.
Since $\bar{\rho}:\langle a \rangle N \to J^3$ is a group homomorphism,
\[
\Ad(\bar{\rho}(a)) \circ \bar{\rho}_* = \bar{\rho}_* \circ \Ad(a).
\]
In particular, $ \bar{\rho}_*(\g_i)$ is contained in the eigenspace of $\Ad(\bar{\rho}(a))$ for eigenvalue $k^i$.
As $\bar{\rho}(a) \in Z$ is close to $l(a)$, we see that $\bar{\rho}_* (\g_i) \subset \mathfrak{j}^3(\n_-, 0)_i.$
\end{proof}

Let $\rho:\Gamma \to J^3(\n_-, 0)$ be a homomorphism $C^2$-close to $l|_\Gamma$ such that $\rho(a) \in Z$ and $\bar{\rho}:\langle a \rangle N \to J^3(\n_-, 0)$ its continuous extension.
Since $\rho|_\Lambda$ is $C^2$-close to $l|_\Lambda$, we see that
\[
\pi \circ \bar{\rho}_*: \n \to \mathfrak{j}^2(\n_-, 0)
\]
is close to $\pi\circ l_*|_\n: \n \to \mathfrak{j}^2(\n_-, 0)$, where $\pi:\mathfrak{j}^3(\n_-, 0) \to \mathfrak{j}^2(\n_-, 0)$ is the natural projection and $l_*:\p \to \mathfrak{j}^3(\n_-, 0)$ is the differentiation of $l$.
By Lemma \ref{lem:decomp_rigid}, $\bar{\rho}$ is $\mathfrak{a}$-invariant.
While $\rho$ is only $C^2$-close (not $C^3$-close) to $l$, using the  $\mathfrak{a}$-invariance, we can show that $\bar{\rho}_*:\n \to \mathfrak{j}^3(\n_-, 0)$ is close to $l_*|_\n:\n \to \mathfrak{j}^3(\n_-, 0)$:
\begin{lem}
Let $f:\n \to \mathfrak{j}^3(\n_-, 0)$ be an  $\mathfrak{a}$-invariant homomorphism of Lie algebras such that $\pi \circ f:\n \to \mathfrak{j}^2(\n_-, 0)$ is close to $\pi \circ l$.
Then $f$ is close to $l$.
\end{lem}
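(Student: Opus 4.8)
The plan is to exploit the $\mathfrak{a}$-invariance to isolate the top-degree component of $f$ and then to pin it down through the homomorphism identity. Recall that $\mathfrak{j}^3(\n_-, 0) = \bigoplus_{1 \le r \le 3} S^r(\n_-^*) \otimes \n_-$ as a graded Lie algebra and as an $MA$-module, that $\pi$ is the projection killing the summand $r = 3$, and put $K := \ker \pi = S^3(\n_-^*) \otimes \n_-$. Write $f = f^1 + f^2 + f^3$ and $l = l^1 + l^2 + l^3$ for the graded components, so that $\pi \circ f = f^1 + f^2$ and $\pi \circ l = l^1 + l^2$; thus the hypothesis says exactly that $f^1$ is close to $l^1$ and $f^2$ to $l^2$, and we must show $f^3$ is close to $l^3$. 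Since $f$ and $l$ are $\mathfrak{a}$-invariant, $f^r(\g_i)$ and $l^r(\g_i)$ lie in the $\Ad(a)$-eigenspace for $k^i$ in $S^r(\n_-^*) \otimes \n_-$; in particular $f^1|_{\g_2} = l^1|_{\g_2} = 0$, because $\n_-^* \otimes \n_-$ has no weight-$2$ vector, and $f^3(\g_1) \subset S^3(\g_{-1}^*) \otimes \g_{-2}$, the unique weight-$1$ summand of $K$. If $\g_{-2} = 0$, that is in the real hyperbolic case, then $f^3(\g_1) = 0$ and $\n = \g_1$, so $f = \pi \circ f$ and the statement is trivial; from now on assume $\g_{-2} \neq 0$.

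Next I would extract from the homomorphism property the relations that determine $f^3$. Because $[S^p, S^q] \subset S^{p+q-1}$ and $S^{\ge 4} = 0$ in $\mathfrak{j}^3(\n_-, 0)$, comparing degree-$3$ parts in $[f(X), f(W)] = f([X,W]) = 0$ for $X \in \g_1$, $W \in \g_2$, and in $[f(X), f(Y)] = f([X,Y])$ for $X, Y \in \g_1$, and using $f^1|_{\g_2} = 0$, gives
\[
[f^1(X), f^3(W)] = -[f^2(X), f^2(W)], \qquad [f^1(X), f^3(Y)] - [f^1(Y), f^3(X)] = f^3([X,Y]) - [f^2(X), f^2(Y)].
\]
The first family of relations determines $f^3|_{\g_2}$ as soon as the linear map $v \mapsto \big(X \mapsto [l^1(X), v]\big)$ is injective on the weight-$2$ part of $K$; and once $f^3|_{\g_2}$ is known, the second family has the shape $D_{f^1}(f^3|_{\g_1}) = \rho(f^1, f^2)$, where $D_{f^1}$ sends $g \colon \g_1 \to S^3(\g_{-1}^*) \otimes \g_{-2}$ to the antisymmetric form $(X, Y) \mapsto [f^1(X), g(Y)] - [f^1(Y), g(X)]$ and $\rho$ is explicit, and this determines $f^3|_{\g_1}$ as soon as $D_{l^1}$ is injective. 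Both systems are solvable because $f$ itself is a solution.

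The heart of the matter, and what I expect to be the main obstacle, is the two injectivity statements just invoked for the model $l$: (a) the only $v$ in the weight-$2$ part of $K$ with $[l^1(X), v] = 0$ for all $X \in \g_1$ is $v = 0$; and (b) the only $g \colon \g_1 \to S^3(\g_{-1}^*) \otimes \g_{-2}$ for which $(X, Y) \mapsto [l^1(X), g(Y)]$ is symmetric is $g = 0$. Both are vanishing statements of $H^0$-type for $\n$ acting on the finite-dimensional $\mathfrak{l}$-module $K$ through $l^1$; I would prove them either by a short direct computation in each of the three cases $\mathfrak{su}(n,1)$, $\mathfrak{sp}(n,1)$, $F_4^{-20}$, using the explicit restricted root spaces (the computation is short because $\g_{-2}$ is small), or by the representation-theoretic methods of Section \ref{sect:Lie_cohom}, where the fact that every weight of $K$ is a positive multiple of the restricted root already discards most of the candidate contributions, leaving a finite check against the root data of Subsection \ref{subsect:rank_one_Lie}. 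Granting (a) and (b), injectivity is an open condition, so the same maps with $f^1$ in place of $l^1$ stay injective for $f^1$ near $l^1$ and admit left inverses that depend continuously on $f^1$ there; hence $f^3|_{\g_2}$, and then $f^3|_{\g_1}$, depend continuously on $(f^1, f^2)$ near $(l^1, l^2)$. Therefore $f^3$ is close to $l^3$, that is, $f$ is close to $l$.
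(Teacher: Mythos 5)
Your argument is essentially the paper's proof: the same bidegree decomposition of $\j^3(\n_-,0)$ by polynomial degree and $\Ad(a)$-eigenvalue, the same use of $\mathfrak{a}$-invariance to reduce everything to the cubic components on $\g_2$ and then on $\g_1$, the same two relations extracted from $[\g_1,\g_2]=0$ and $[\g_1,\g_1]\subset\g_2$ in that order, and the same conclusion via injectivity of the maps built from the linear parts $l(X)_{1,0}$ together with the openness/continuity of that injectivity. The only divergence is that you defer your injectivity claims (a) and (b) to a case-by-case or representation-theoretic check, whereas the paper gets them uniformly in a few lines from the structural facts $[\g_1,\g_{-2}]=\g_{-1}$ and $[\g_1,z]=\g_{-1}$ for every nonzero $z\in\g_{-2}$ (together with the explicit description of the weight-$k^2$ cubic piece), so no case analysis and none of the machinery of Section \ref{sect:Lie_cohom} is needed.
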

\begin{proof}
Put $\j_{i, j} = \j^3(\n_-, 0)_i \cap S^{j+1}(\n_-^*) \otimes \n_-$ so that $[\j_{i, j}, \j_{i', j'}] \subset \j_{i + i', j + j'}$.
Then
\[
\begin{array}{rlll}
\j^3(\n_-, 0)_1 &=& \j_{1, 0} &\oplus \j_{1, 1} \oplus \j_{1, 2},\\
\j^3(\n_-, 0)_2 &=&  &\oplus \j_{2, 1} \oplus \j_{2, 2}.
\end{array}
\]
By assumption, for $X \in \g_i$ ($i =1,2$), $f(X) \in \j^3(\n_-, 0)_i$ and its $\j_{i, j}$-component  b of $l(X)$.

We will first show that $f|_{\g_2}$ is close to $l|_{\g_2}$.
Fix $X \in \g_2$.
It suffices to show that $f(X)_{2,2}$ is close to $l(X)_{2,2}$.
Since $[\g_1, \g_2] = 0$, $[f(Y), f(X)] = 0$ for all $Y \in \g_1$.
The $\j_{3, 2}$-component of this equation is
\[
[f(Y)_{1,0}, f(X)_{2,2}] + [f(Y)_{1,1}, f(X)_{2,1}] = 0.
\]
Since $f(X)_{i, j}$ ($j \leq 1$) is close to $l(X)_{i, j}$, we see that
\[
[l(Y)_{1,0}, f(X)_{2,2}-l(X)_{2,2}]
\]
is close to $0$.
Using the fact that $[\g_{-2}, \g_1] = \g_{-1}$, we see that 
\[
l(\g_1)_{1,0}(\g_{-2}) = \g_{-1}
\]
under the identification $\j_{1,0} = \g_{-2}^* \otimes \g_{-1}$.
By $\j_{2,2} = (S^3(\g_{-1}^*)\otimes\g_1) \oplus (S^2(\g_{-1}^*)\otimes \g_{-2} \otimes \g_2)$ and this observation, we see that $ f(X)_{2,2}-l(X)_{2,2}$ is close to $0$.

It remains to show that $f|_{\g_1}$ is close to $l|_{\g_1}$.
Fix $X \in \g_1$.
It suffices to show that $f(X)_{1,2}$ is close to $l(X)_{1,2}$.
Since $[\g_1, \g_1]  \subset \g_2$ and $f|_{\g_2}$ is close to $l|_{\g_2}$, we see that
\[
[l(X)_{1,0}, f(Y)_{1,2} -l(Y)_{1,2}] + [l(Y)_{1,0}, f(X)_{1,2} -l(X)_{1,2}]
\]
is close to $0$ for all $Y \in \g_1$.
Using the fact that $[X, \g_1] = \g_{-1}$ for any $X \in \g_{-2}\setminus\{0\}$ and identifying $\j_{1,2}$ with $S^3(\g_{-1}^*) \otimes \g_{-2}$, it is not difficult to check that $ f(X)_{1,2}-l(X)_{1,2}$ is close to $0$.
\end{proof}
The next step of the proof is to show that there is $h \in Z$ such that $h\bar{\rho}(N)h^{-1} = l(N)$.
In other words,
\[
\Ad(h) \circ \bar{\rho}_*(\n) = l_*(\n).
\]
As $\bar{\rho}_*: \n \to \j^3(\n_-, 0)$ is close to $l_*|_\n$, the existence of such $h \in Z$ is an immediate consequence of the following.
\begin{lem}\label{lem:H1(np)a_to_H1(nj3)a}
The map $H^1(\n, \p)^\mathfrak{a} \to H^1(\n, \j^3(\n_-, 0))^\mathfrak{a}$ induced by $l_*: \p \to \j^3(\n_-, 0)$ is an isomorphism.
\end{lem}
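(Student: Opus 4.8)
The plan is to compare $l_*\colon\p\to\j^3(\n_-,0)$ with $\lambda\colon\g\to\P(\n_-)$ and to deduce the statement from Corollary~\ref{cor_0cohom} and Corollary~\ref{cor_1cohom} together with an elementary vanishing for a ``tail'' submodule. Throughout, $\n$ acts on $\P(\n_-)$ through $\lambda$; all modules in sight are $\mathfrak{a}$-semisimple, and $H^*(\n,-)^\mathfrak{a}$ — the zero weight part of the $\mathfrak{a}$-module $H^*(\n,-)$ — is an exact $\delta$-functor, so short exact sequences of such modules yield long exact sequences.

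First I would set $K=\bigoplus_{r\ge 4}S^r(\n_-^*)\otimes\n_-$. Since $\lambda(\n)\subset\P(\n_-,0)=\bigoplus_{r\ge1}S^r(\n_-^*)\otimes\n_-$ (as $\n$ fixes $\o$) and $[S^p(\n_-^*)\otimes\n_-,\,S^q(\n_-^*)\otimes\n_-]\subset S^{p+q-1}(\n_-^*)\otimes\n_-$, both $\P(\n_-,0)$ and $K$ are $\n$-submodules of $\P(\n_-)$, and $\P(\n_-,0)/K\cong\j^3(\n_-,0)$ as $\n$-modules. Because $\p$ is exactly the isotropy subalgebra of $\o$, one has $\lambda^{-1}(\P(\n_-,0))=\p$, so $\lambda$ induces an injection $\g/\p\hookrightarrow\P(\n_-)/\P(\n_-,0)$ which, by a dimension count, is an isomorphism $\bar\lambda$ of $\n$-modules onto the space of constant vector fields $\n_-$. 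Since $K\subset\P(\n_-,0)$, the homomorphism $\lambda$ then yields a morphism of short exact sequences of $\n$-modules (with $\mathfrak{a}$-action), from $0\to\p\to\g\to\g/\p\to0$ to $0\to\j^3(\n_-,0)\to\P(\n_-)/K\to\P(\n_-)/\P(\n_-,0)\to0$, whose left vertical arrow is $l_*$ and whose right vertical arrow $\bar\lambda$ is an isomorphism.

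Next I would apply the five lemma to the induced morphism of long exact sequences around $H^1(\n,\p)^\mathfrak{a}$. Using $H^0(\n,\g)^\mathfrak{a}=(\g^\n)^\mathfrak{a}=0$ (as $\g^\n=Z(\n)\subset\n$ has only positive $A$-weights) and Corollary~\ref{cor_1cohom}, which gives $H^1(\n,\g)^\mathfrak{a}\xrightarrow{\ \sim\ }H^1(\n,\P(\n_-))^\mathfrak{a}$, the claim reduces to: $H^1(\n,\P(\n_-))^\mathfrak{a}\to H^1(\n,\P(\n_-)/K)^\mathfrak{a}$ is an isomorphism and $H^0(\n,\P(\n_-)/K)^\mathfrak{a}=0$. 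From the long exact sequence of $0\to K\to\P(\n_-)\to\P(\n_-)/K\to0$ and the vanishing $H^0(\n,\P(\n_-))^\mathfrak{a}=0$ of Corollary~\ref{cor_0cohom}, both of these follow once one shows
\[
H^1(\n,K)^\mathfrak{a}=0\qquad\text{and}\qquad H^2(\n,K)^\mathfrak{a}=0.
\]

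Finally I would prove these two vanishings. The generator of $A$ acts on $S^p(\g_{-1}^*)\otimes S^q(\g_{-2}^*)\otimes\g_{-c}$ by $k^{p+2q-c}$, and for symmetric degree $p+q=r\ge 4$ and $c\in\{1,2\}$ the equation $p+2q-c=1$ has no solution, so $K$ has no weight-$k$ subspace for $A$; hence every $\mathfrak{a}$-invariant cochain $\n\to K$ annihilates $\g_1$, and the cocycle identity on $\Lambda^2\g_1$ then forces a $1$-cocycle to annihilate $[\g_1,\g_1]=\g_2$ as well (for $\g=\mathfrak{so}(n,1)$ one has $\g_2=0$), giving $H^1(\n,K)^\mathfrak{a}=0$. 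For $H^2$, the same bookkeeping shows that the only $A$-weight space of $K$ occurring in the $\g_1$-direction of $C^2(\n,K)^\mathfrak{a}$ is $S^4(\g_{-1}^*)\otimes\g_{-2}$, and that the cocycle identities, applied to triples from $\g_1\oplus\g_2$ and using that $\g_2$ is central in $\n$, express an $\mathfrak{a}$-invariant $2$-cocycle $f$ in terms of $f|_{\Lambda^2\g_1}\colon\Lambda^2\g_1\to S^4(\g_{-1}^*)\otimes\g_{-2}$; since the coboundary of an $\mathfrak{a}$-invariant map $\g_2\to S^4(\g_{-1}^*)\otimes\g_{-2}$ restricts on $\Lambda^2\g_1$ to its composition with the (surjective) bracket $\Lambda^2\g_1\to\g_2$, one is reduced to showing that $f|_{\Lambda^2\g_1}$ always factors through this bracket, i.e.\ vanishes on its kernel. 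I expect this last verification — carried out case by case with the explicit restricted roots of Subsection~\ref{subsect:rank_one_Lie}, via the cocycle identity for triples in $\g_1$ and the $\g_0$-module structure of $S^4(\g_{-1}^*)\otimes\g_{-2}$ — to be the main obstacle: it is immediate for $\mathfrak{so}(n,1)$ (where $\g_2=0$) and essentially trivial for small $\mathfrak{su}(n,1)$, but requires genuine work for $\mathfrak{sp}(n+1,1)$, $\mathfrak{f}_4^{-20}$, and $\mathfrak{su}(n,1)$ with $n\ge 3$.
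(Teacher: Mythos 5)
Your set-up is fine as far as it goes: the morphism of short exact sequences, the five-lemma reduction using Corollaries \ref{cor_0cohom} and \ref{cor_1cohom}, and the proof that $H^1(\n,K)^\mathfrak{a}=0$ (no $\Ad(a)$-eigenvalue $k$ in $K$, then the cocycle identity on $\Lambda^2\g_1$ kills the $\g_2$-component because $\n$ is generated by $\g_1$) are all correct and in the spirit of the paper's own bookkeeping. But there is a genuine gap, and it sits exactly at the load-bearing point: your argument requires $H^2(\n,K)^\mathfrak{a}=0$, and you do not prove it. The weight count that disposes of $H^1(\n,K)^\mathfrak{a}$ cannot dispose of this: whenever $\g_2\neq 0$ (i.e.\ in all cases except $\mathfrak{so}(n,1)$) the space $S^4(\g_{-1}^*)\otimes\g_{-2}\subset K$ is a nonzero $\Ad(a)$-eigenspace of eigenvalue $k^2$, so invariant $2$-cochains do not vanish for trivial reasons, and the kernel of the bracket $\Lambda^2\g_1\to\g_2$ is large precisely in the cases $\mathfrak{su}(n,1)$ $(n\geq 3)$, $\mathfrak{sp}(n+1,1)$, $\mathfrak{f}_4^{-20}$. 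Your text explicitly defers this verification (``requires genuine work''), and moreover even your preliminary reduction --- that an invariant $2$-cocycle is controlled by its $\Lambda^2\g_1$-component via the cocycle identities --- is only sketched. Since surjectivity of $H^1(\n,\p)^\mathfrak{a}\to H^1(\n,\j^3(\n_-,0))^\mathfrak{a}$ is what yields $H^1(\n,\j^3(\n_-,0))^\mathfrak{a}=0$ for $\mathrm{Sp}(n+1,1)$ and $F_4^{-20}$, hence Corollary \ref{cor:jet_loc_rigid} and Corollary \ref{cor:main}, the missing step is not a peripheral technicality. Note also that you demand more than is needed: for surjectivity it suffices that the connecting homomorphism $H^1(\n,\j^3(\n_-,0))^\mathfrak{a}\to H^2(\n,K)^\mathfrak{a}$ vanish on the relevant classes, and it is not clear a priori that the full vanishing $H^2(\n,K)^\mathfrak{a}=0$ even holds; insisting on it may make your remaining task strictly harder.

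For comparison, the paper does not introduce $K$-valued $H^2$ at all: it proves the lemma as a chain of $H^1$-level identifications, $H^1(\n,\p)^\mathfrak{a}\cong H^1(\n,\g)^\mathfrak{a}\cong H^1(\n,\P(\n_-))^\mathfrak{a}\cong H^1(\n,\P(\n_-,0))^\mathfrak{a}\cong H^1(\n,\j^3(\n_-,0))^\mathfrak{a}$, in which all the substantive representation theory is concentrated in Corollary \ref{cor_1cohom}, and the other links are justified by eigenvalue considerations of exactly the type you used for $H^1(\n,K)^\mathfrak{a}$ (a module with no $\Ad(a)$-eigenvalue $1$, resp.\ $k$, contributes no invariant $H^0$, resp.\ $H^1$, since $\n$ is generated by $\g_1$); in particular the final comparison of $\P(\n_-,0)$ with $\j^3(\n_-,0)$ is asserted from the fact that the eigenvalues on $K$ are $k^i$, $i\geq 2$, without any explicit treatment of a second cohomology group or of the connecting map. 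So your five-lemma arrangement is essentially a repackaging of the paper's chain, except that it converts the last comparison into an $H^2$-vanishing statement which you then cannot discharge, and which you also cannot simply cite from the paper, since the paper never formulates it. To complete your proof you must either supply the vanishing of $H^2(\n,K)^\mathfrak{a}$ (or at least of the connecting map on the image of the invariant $H^1$), e.g.\ by showing that an $\mathfrak{a}$-invariant cocycle $\n\to\j^3(\n_-,0)$ lifted weightwise to $\P(\n_-,0)$ can be corrected by an invariant $K$-valued cochain, carried out with the explicit structure of $\g_{\pm1},\g_{\pm2}$ in the quaternionic, octonionic and higher complex cases, or reorganize the argument so that only $H^0$- and $H^1$-type eigenvalue conditions are invoked.
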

\begin{proof}
Recall that we obtained the isomorphism 
\[
H^1(\n, \g)^\mathfrak{a} \to H^1(\n, \P(\n_-))^\mathfrak{a}
\]
in Corollary \ref{cor_1cohom}.

For an $AN$-module $V$ , $H^0(\n, V)^\mathfrak{a} \neq 0$ only if $V^\mathfrak{a} \neq 0$.
In general, since the Lie algebra $\n$ is generated by $\g_1$ on which $\Ad(a)$ acts by multiplication by $k$, $H^i(\n, V)^\mathfrak{a} \neq 0$ only if $\Ad(a)$ on $V$ has an eigenvalue $k^{i}$.

We see that $\p$ is an $\n$-submodule of $\g$ and that $\Ad(a)$ acts diagonally on $\g/\p$ with eigenvalues $k^{-1}$, $k^{-2}$.
Thus $H^1(\n, \p)^\mathfrak{a}$ is isomorphic to $H^1(\n, \g)^\mathfrak{a}$.

Recall that $\j^3(\n_-, 0) = \P(\n_-, 0)/ \bigoplus_{r\geq4}(S^r(\n_-^*)\otimes \n_-)$ under the identification $\P(\n_-) = \bigoplus_{r\geq0}(S^r(\n_-^*)\otimes \n_-)$, where $\P(\n_-, 0) = \bigoplus_{r\geq1}(S^r(\n_-^*)\otimes \n_-)$ is the polynomial vector fields vanishing at $0 \in V$.
Since $\Ad(a)$ acts diagonally on $\P(\n_-)/\P(\n_-, 0)$ with eigenvalues $k^{-1}$, $k^{-2}$, we see that $ H^1(\n, \P(\n_-))^\mathfrak{a}$ is isomorphic to $ H^1(\n, \P(\n_-, 0))^\mathfrak{a}$.
Moreover, as $\Ad(a)$ acts diagonally on $\bigoplus_{r\geq4}(S^r(\n_-^*)\otimes \n_-)$ with eigenvalues $k^i$ ($i \geq 2$), we see that $ H^1(\n, \P(\n_-, 0))^\mathfrak{a}$ is isomorphic to $ H^1(\n, \mathfrak{j}^3(\n_-, 0))^\mathfrak{a}$.
\end{proof}

Replacing $\rho$ with its conjugate, we may further assume that $\bar{\rho}(N) = l(N)$.
It remains to show that a continuous homomorphism $\bar{\rho}:\langle a \rangle N \to J^3(\n_-, 0)$ close to $l|_{\langle a \rangle N}$ with $\bar{\rho}(a) \in Z$ and $\bar{\rho}(N) = l(N)$ satisfies $\bar{\rho}(a) = l(a)$.
In fact, if this claim holds, the image of the homomorphism $\rho:\Gamma \to J^3(\n_-, 0)$ is contained in $l(\langle a \rangle N)$.
As $l:P \to  J^3(\n_-, 0)$ is an isomorphism onto its image, Proposition \ref{prop:jet_rigid} follows immediately.

Let  $\bar{\rho}:\langle a \rangle N \to J^3(\n_-, 0)$ be a continuous homomorphism close to $l|_{\langle a \rangle N}$ with $\bar{\rho}(a) \in Z$ and $\bar{\rho}(N) = l(N)$.
We will show the element $z_0 = \bar{\rho}(a)l(a)^{-1} \in Z$ close to the identity $e \in Z$ is in fact exactly $e$.
By the above assumption and the equation $\Ad(\bar{\rho}(a)) \circ \bar{\rho}_* = \bar{\rho}_* \circ \Ad(a)$, we see that $\Ad(z_0)$ fixes each element of $l_*(\n)$.
Thus to show $z_0 = e$, it suffices to show that $H^0(\n, \mathfrak{z}) = 0$, where $\mathfrak{z} = \mathrm{Lie}(Z)$.
Since $H^0(\n, \mathfrak{z}) = H^0(\n, \j^3(G/P, \o)^\mathfrak{a}) = H^0(\n, \j^3(G/P, \o))^\mathfrak{a}$, by the same argument as the proof of Lemma \ref{lem:H1(np)a_to_H1(nj3)a}, we can show $H^0(\n, \mathfrak{z}) = H^0(\n, \P(\n_-))^\mathfrak{a}$.
By Corollary \ref{cor_0cohom}, $H^0(\n, \mathfrak{z}) = 0$.
We finished the proof of Proposition \ref{prop:jet_rigid}.

When $G = \mathrm{Sp}(n+1, 1)$, $n \geq 2$ or $F_4^{-20}$, using $H^1(\n, \g)^\mathfrak{a} = 0$, we can show local rigidity in the strict sense.
\begin{cor}\label{cor:jet_loc_rigid}
When $G = \mathrm{Sp}(n+1, 1)$, $n \geq 2$ or $F_4^{-20}$, the homomorphism $l|_\Gamma:\Gamma \to J^3(G/P, \o)$ is $C^2$-locally rigid.
\end{cor}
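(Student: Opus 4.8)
The plan is to rerun the proof of Proposition \ref{prop:jet_rigid} and to observe that, in these two cases, the embedding $\iota$ can be dispensed with. Recall where $\iota$ entered: after the reductions of that proof, a homomorphism $\rho\colon\Gamma\to J^3(\n_-,0)$ that is $C^2$-close to $l|_\Gamma$ was brought, after conjugation, to the form $\rho(a)\in Z$, was extended to a continuous homomorphism $\bar\rho\colon\langle a\rangle N\to J^3(\n_-,0)$ whose differential $\bar\rho_*$ is $\mathfrak a$-invariant (Lemma \ref{lem:decomp_rigid}), and was further conjugated so that $\bar\rho(N)=l(N)$ and $\bar\rho(a)=l(a)$. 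The conclusion was stated only up to an embedding because the step producing $h\in Z$ with $h\bar\rho(N)h^{-1}=l(N)$ controls the subgroup $\bar\rho(N)$ but not the homomorphism $\bar\rho|_N$ itself, so that $l^{-1}\circ\bar\rho|_N$ may be a nontrivial automorphism of $N$ commuting with conjugation by $a$. Removing this automorphism is exactly what the vanishing of $H^1(\n,\g)^{\mathfrak a}$ buys.

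By Lemma \ref{lem:H1nga}, $H^1(\n,\g)^{\mathfrak a}=0$ precisely when $G=\mathrm{Sp}(n+1,1)$ $(n\ge 2)$ or $F_4^{-20}$. The proof of Lemma \ref{lem:H1(np)a_to_H1(nj3)a} exhibits isomorphisms $H^1(\n,\j^3(\n_-,0))^{\mathfrak a}\cong H^1(\n,\p)^{\mathfrak a}\cong H^1(\n,\g)^{\mathfrak a}$, so in these cases $H^1(\n,\j^3(\n_-,0))^{\mathfrak a}=0$; and $H^0(\n,\mathfrak z)=0$ holds unconditionally by Corollary \ref{cor_0cohom} (cf.\ the end of the proof of Proposition \ref{prop:jet_rigid}), where $\mathfrak z=\mathrm{Lie}(Z)=\j^3(\n_-,0)^{\mathfrak a}$. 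Since $\bar\rho_*|_\n\colon\n\to\j^3(\n_-,0)$ is an $\mathfrak a$-invariant Lie algebra homomorphism close to $l_*|_\n$, the rigidity principle already used in the proof of Proposition \ref{prop:jet_rigid} — now with $H^1$ vanishing rather than merely mapping isomorphically — yields $z\in Z$ close to $e$ with $\Ad(z)\circ\bar\rho_*|_\n=l_*|_\n$. Replacing $\bar\rho$ by $z\bar\rho(\cdot)z^{-1}$ we may assume $\bar\rho_*|_\n=l_*|_\n$; since $N$ is connected and simply connected this forces $\bar\rho|_N=l|_N$. Then the final step of the proof of Proposition \ref{prop:jet_rigid} applies: for $X\in\g_i$ one has $\Ad(\bar\rho(a))l_*(X)=\bar\rho_*(\Ad(a)X)=k^i l_*(X)=\Ad(l(a))l_*(X)$, so $z_0:=\bar\rho(a)l(a)^{-1}\in Z$ fixes $l_*(\n)$ pointwise; the closed subgroup $\{z\in Z:\Ad(z)|_{l_*(\n)}=\id\}$ has Lie algebra $H^0(\n,\mathfrak z)=0$ and is therefore discrete, so $z_0=e$. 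Hence $\bar\rho=l|_{\langle a\rangle N}$, and composing all the conjugations used gives a single $h\in J^3(G/P,\o)$ with $\rho(g)=h\,l(g)\,h^{-1}$ for all $g\in\Gamma$, which is $C^2$-local rigidity of $l|_\Gamma$.

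I do not expect a substantive obstacle beyond the bookkeeping inherited from Proposition \ref{prop:jet_rigid}; the one point to watch is that the conjugating element $z$ be taken inside $Z=Z_{J^3(\n_-,0)}(l(a))$, so that the final normalization $\bar\rho(a)=l(a)$ — which uses both that $z$ commutes with $\bar\rho(a)$ and that $H^0(\n,\mathfrak z)=0$ — still goes through; this is automatic, as $Z$ is the centralizer used throughout. If one wishes to make the rigidity principle itself explicit, one observes that the $Z$-orbit of $l_*|_\n$ inside the analytic variety of $\mathfrak a$-invariant Lie algebra homomorphisms $\n\to\j^3(\n_-,0)$ has tangent space $\{\,dv:v\in\mathfrak z\,\}$, that this coincides with $B^1\cap Z^1(\n,\j^3(\n_-,0))^{\mathfrak a}$ because a coboundary is $\mathfrak a$-equivariant only if it arises from $\mathfrak z$ (using that $\g_1$ generates $\n$), and hence that $H^1(\n,\j^3(\n_-,0))^{\mathfrak a}=0$ makes the orbit a neighborhood of $l_*|_\n$; every nearby $\mathfrak a$-invariant homomorphism is therefore $\Ad(Z)$-conjugate to $l_*|_\n$.
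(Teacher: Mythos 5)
Your proof is correct and takes essentially the same route as the paper: combine Lemma \ref{lem:H1nga} with Lemma \ref{lem:H1(np)a_to_H1(nj3)a} to get $H^1(\n, \j^3(\n_-,0))^{\mathfrak{a}} = 0$, which upgrades the conjugation of the subgroup $\bar{\rho}(N)$ to a conjugation of the homomorphism itself, i.e.\ some $h \in Z$ with $h\bar{\rho}(g)h^{-1} = l(g)$ for $g \in N$, after which the endgame of Proposition \ref{prop:jet_rigid} (using $H^0(\n,\mathfrak{z}) = 0$ to force $\bar{\rho}(a) = l(a)$) applies unchanged. Your explicit sketch of the Weil-type rigidity principle matches what the paper leaves implicit.
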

\begin{proof}
By Lemma \ref{lem:H1nga} and Lemma \ref{lem:H1(np)a_to_H1(nj3)a},  we see $H^1(\n, \j^3(\n_-, 0))^\mathfrak{a} = 0$.
Thus for a continuous homomorphism $\bar{\rho}:\langle a \rangle N \to J^3(\n_-, 0)$ close to $l|_{\langle a \rangle N}$ with $\bar{\rho}(a) \in Z$, there is $h \in Z$ such that $h\bar{\rho}(g)h^{-1} = l(g)$ for $g \in N$.
By the same argument as  the proof of Proposition \ref{prop:jet_rigid}, the claim follows.
\end{proof}
\section{Local rigidity of the homomorphism into the group of formal transformations}\label{sect:loc_rigid_formal}
Let $\F(M, p)$ be the set of equivalence classes of diffeomorphisms defined around a point $p$ of a manifold $M$ and fixing $p \in M$ where two diffeomorphisms are equivalent if and only if their Taylor expansions at $p \in M$ are the same.
As $\F(M, p)$ has a natural group structure as a quotient of the group $\G(M, p)$ of germs at $p$ of diffeomorphisms, we call $\F(M, p)$ the \textit{group of formal transformations} at $p \in M$.
The goal of this section is to show the following weak local rigidity of a homomorphism into the group of formal transformations.

\begin{prop}\label{prop:formal_rigid}
Let $G$ be the group of orientation-preserving isometries of a rank one symmetric space of non-compact type, $\Gamma$ a standard subgroup of $G$, and $l:P \to \F(G/P, \o)$ the homomorphism into the group of formal transformations at $\o \in G/P$ induced by the action of $P$ on $G/P$ by left translations.
If $\rho: \Gamma \to  \F(G/P, \o)$ is a homomorphism $C^2$-close to $l|_\Gamma$, then there is an embedding $\iota$ of $\Gamma$ into $G$ as a standard subgroup and $h \in \F(G/P, \o)$ such that 
\[
\rho(g) = h \circ l(\iota(g)) \circ h^{-1}
\]
for all $g \in \Gamma$.
\end{prop}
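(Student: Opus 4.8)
The plan is to run the argument of Section~\ref{sect:loc_rigid_jet} essentially verbatim, replacing the group of $3$-jets throughout by the group of formal transformations, regarded as the inverse limit $\F(G/P, \o) = \varprojlim_r J^r(G/P, \o)$; thus a homomorphism $\rho \colon \Gamma \to \F(G/P, \o)$ is the same as a compatible family $\rho_r = \pi_r \circ \rho \colon \Gamma \to J^r(G/P, \o)$, where $\pi_r \colon \F(G/P, \o) \to J^r(G/P, \o)$ is the truncation. Using the chart $i \circ \exp \colon \n_- \to G/P$ I identify $\F(G/P, \o)$ with $\F(\n_-, 0)$ and its Lie algebra with $\mathfrak{f} = \prod_{r \geq 1} S^r(\n_-^*) \otimes \n_-$, carrying the $\g$-module structure induced by $\lambda$ as in Subsection~\ref{subsect:std_boundary}. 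First I would apply Proposition~\ref{prop:jet_rigid} to $\rho_3$, conjugate $\rho$ by an arbitrary lift to $\F(G/P, \o)$ of the resulting element of $J^3(G/P, \o)$, and replace $\Gamma$ by its image under the embedding produced there, so as to reduce to the case $\rho_3 = l_3|_\Gamma$; in particular $\rho(a)$ then has linear part equal to that of $l(a)$, namely $k^{-1}\mathrm{Id}_{\g_{-1}} \oplus k^{-2}\mathrm{Id}_{\g_{-2}}$. Since $l(a)$ is itself a linear transformation, Sternberg's normalization~\cite{Sternberg}, carried out order by order with conjugators lying in $\ker(\F(G/P, \o) \to J^3(G/P, \o))$ so as not to disturb the already normalized $3$-jet, lets me further assume that $\rho(a)$ lies in the centralizer $Z$ of $l(a)$ in $\F(G/P, \o)$.

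Next, since $\rho_1(\Lambda) = l_1(\Lambda)$ is unipotent, Lemma~\ref{lem:nilp1conn} shows that each $\rho_r(\Lambda)$ lies in a connected, simply connected nilpotent Lie group, so by Raghunathan's theorem (Theorem~\ref{nilp_ext}) each $\rho_r|_\Lambda$ extends uniquely to a continuous homomorphism of $N$; by uniqueness these extensions are compatible and, exactly as in the paragraph preceding Lemma~\ref{lem:decomp_rigid}, assemble into a continuous homomorphism $\bar\rho \colon \langle a \rangle N \to \F(G/P, \o)$ extending $\rho$, with $\bar\rho_3 = l_3|_{\langle a \rangle N}$ by uniqueness of the extension. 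The proof of Lemma~\ref{lem:decomp_rigid} then gives that $\bar\rho_* \colon \n \to \mathfrak{f}$ is $\mathfrak{a}$-invariant, and the estimate lemma of Section~\ref{sect:loc_rigid_jet}, applied at each jet level together with this $\mathfrak{a}$-invariance, shows that $\bar\rho_*|_\n$ is close to $l_*|_\n$.

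The one genuinely new input is that the cohomological computations of Sections~\ref{sect:Lie_cohom} and~\ref{sect:loc_rigid_jet} carry over from $\j^3(\n_-, 0)$ to $\mathfrak{f}$. Indeed, for $r \geq 4$ the operator $\Ad(a)$ on $S^r(\n_-^*) \otimes \n_-$ has only eigenvalues $k^{j}$ with $j \geq 2$, since a monomial in $S^p(\g_{-1}^*) \otimes S^q(\g_{-2}^*) \otimes \g_{-i}$ with $p + q = r$ has eigenvalue $k^{p + 2q - i}$ and $p + 2q - i \geq r - 2 \geq 2$; as $\n$ is generated by $\g_1$, on which $\Ad(a)$ acts by $k$, the argument used in the proof of Lemma~\ref{lem:H1(np)a_to_H1(nj3)a} gives $H^0(\n, S^r(\n_-^*) \otimes \n_-)^{\mathfrak{a}} = H^1(\n, S^r(\n_-^*) \otimes \n_-)^{\mathfrak{a}} = 0$ for all $r \geq 4$. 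Since $\n$-cohomology and $\mathfrak{a}$-invariants commute with the product $\mathfrak{f} = \prod_{r} S^r(\n_-^*) \otimes \n_-$ and all but finitely many factors are killed, this yields $H^i(\n, \mathfrak{f})^{\mathfrak{a}} \cong H^i(\n, \j^3(\n_-, 0))^{\mathfrak{a}}$ for $i = 0, 1$, whence
\[
H^1(\n, \mathfrak{f})^{\mathfrak{a}} \cong H^1(\n, \g)^{\mathfrak{a}}, \qquad H^0(\n, \mathfrak{f})^{\mathfrak{a}} = 0
\]
by Lemma~\ref{lem:H1(np)a_to_H1(nj3)a} together with Corollaries~\ref{cor_1cohom} and~\ref{cor_0cohom}. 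Now I would repeat the endgame of Section~\ref{sect:loc_rigid_jet}: the first isomorphism, combined with the closeness of $\bar\rho_*|_\n$ to $l_*|_\n$, produces $h \in Z$ with $h \bar\rho(N) h^{-1} = l(N)$, so after conjugation $\bar\rho(N) = l(N)$; then $z_0 = \bar\rho(a) l(a)^{-1} \in Z$ has $\Ad(z_0)$ fixing $l_*(\n)$ pointwise, and since the Lie algebra $\mathfrak{z}$ of $Z$ equals $\mathfrak{f}^{\mathfrak{a}}$ and $H^0(\n, \mathfrak{z}) = H^0(\n, \mathfrak{f})^{\mathfrak{a}} = 0$, this forces $z_0 = e$. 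Hence $\rho(\Gamma) \subset l(\langle a \rangle N)$; as $l$ is injective on $P$, undoing the conjugations exhibits $\rho$ in the form $\rho(g) = h \circ l(\iota(g)) \circ h^{-1}$, and $\iota(\Gamma)$ is a standard subgroup because it is generated by the element $a \in A$ together with the image of $\Lambda$, a lattice of $N$.

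No new idea is needed beyond what already appears in Sections~\ref{sect:Lie_cohom}--\ref{sect:loc_rigid_jet}, so I expect the main difficulty to lie in the bookkeeping of the passage to the inverse limit: making precise the topologies on $\F(G/P, \o)$ and $\mathfrak{f}$, verifying that Sternberg's theorem, Raghunathan's theorem and the estimate lemma survive at the level of $\varprojlim_r J^r(G/P, \o)$ --- in particular that the successively chosen conjugators, having ever higher order of tangency to the identity, have a well-defined infinite product in $\F(G/P, \o)$ --- and tracking the embedding of $\Gamma$ through all the conjugations so that the image is recognized as a standard subgroup at the end. The only cohomological point to isolate is the vanishing $H^{0}(\n, S^r(\n_-^*) \otimes \n_-)^{\mathfrak{a}} = H^{1}(\n, S^r(\n_-^*) \otimes \n_-)^{\mathfrak{a}} = 0$ for $r \geq 4$ used in the third paragraph, which is immediate from the eigenvalue bound already exploited for the truncation $\j^3(\n_-, 0)$.
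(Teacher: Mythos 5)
Your overall architecture is the same as the paper's: reduce to exact $3$-jet equality via Proposition \ref{prop:jet_rigid}, then upgrade to the formal level using Sternberg's normalization, Raghunathan's extension to $\langle a \rangle N$, $\mathfrak{a}$-invariance of $\bar\rho_*$, and the fact that $\Ad(a)$ on $S^r(\n_-^*)\otimes\n_-$ for $r \geq 4$ has only eigenvalues $k^j$ with $j \geq 2$. The paper packages the upgrade as Proposition \ref{prop:jet_to_formal}, an induction on the jet order that uses exactly this eigenvalue observation, and your degree-$\geq 4$ vanishing of $H^0$ and $H^1$ of the $\mathfrak{a}$-invariant part is correct and is the cohomological form of that same input. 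However, two steps of your endgame are not justified as written. First, the assertion that $\bar\rho_*|_\n$ is close to $l_*|_\n$ ``by the estimate lemma applied at each jet level'' does not hold up: $C^2$-closeness of $\rho$ controls nothing above order $2$, the unnamed estimate lemma of Section \ref{sect:loc_rigid_jet} is a special computation recovering only the order-$3$ part, and after your normalizations the degree-$\geq 4$ components of $\bar\rho_*$ are of completely uncontrolled size. Second, ``$H^0(\n,\mathfrak{f})^{\mathfrak{a}}=0$ forces $z_0=e$'' is not a valid deduction on its own: vanishing of the Lie-algebra centralizer only makes the relevant subgroup of $Z$ zero-dimensional, and the paper's jet-level argument additionally uses that $z_0$ is close to $e$, which you do not have for the higher-order part of $z_0$ at the formal level. (Relatedly, your appeal to Lemma \ref{lem:decomp_rigid} for $\mathfrak{a}$-invariance is slightly off, since its proof uses closeness of $\rho(a)$ to $l(a)$, again unavailable beyond order $3$ under your weaker normalization $\rho(a)\in Z$.)

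Both gaps are repairable, and in fact both evaporate if you follow the paper's route for the upgrade. Since the eigenvalues $k^{-1}, k^{-2}$ of $l(a)$ have resonances only in polynomial degree $\leq 2$ and the $3$-jet of $\rho(a)$ is already the linear map $l(a)$, Sternberg lets you normalize $\rho(a)=l(a)$ exactly (conjugators trivial to order $3$, so the $3$-jets of $\rho|_\Lambda$ are untouched); then $z_0=e$ is automatic and $\mathfrak{a}$-invariance of $\bar\rho_*$ is immediate. With exact $3$-jet equality and $\mathfrak{a}$-invariance, the $k^1$-eigenspace of $\Ad(l(a))$ in $\prod_{r\geq 1} S^r(\n_-^*)\otimes\n_-$ lies in degrees $\leq 3$, so $\bar\rho_* = l_*$ exactly on $\g_1$, hence on $\n$ because $\n$ is generated by $\g_1$; no closeness, no $H^1$-to-conjugacy step, and no centralizer argument are needed. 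This is precisely the induction in Proposition \ref{prop:jet_to_formal}, so the honest difference between your proposal and the paper is that you route through cohomological rigidity of $\n \to \mathfrak{f}$ (which would have to be carried out level by level in the pro-unipotent kernel, without any smallness hypothesis) where the paper gets exact equality order by order for free from the eigenvalue argument.
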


As we proved Proposition \ref{prop:jet_rigid}, the rigidity of a homomorphism into the group of jets, Proposition \ref{prop:formal_rigid} is an immediate consequence of the following proposition.
Moreover, when $G = \mathrm{Sp}(n+1, 1)$, $n \geq 2$ or $F_4^{-20}$, by Corollary \ref{cor:jet_loc_rigid}, we obtain local rigidity of $l|_\Gamma: \Gamma \to J^3(G/P, \o)$.
Let $\pi^r:\F(M, p) \to J^r(M, p)$ $(r \geq 0)$ be the natural projection from the group of formal transformations onto the group of $r$-jets of diffeomorphisms.

\begin{prop}\label{prop:jet_to_formal}
Let $\Gamma \subset P \subset G$ be a standard subgroup and $l:P \to \F(G/P, \o)$ be the homomorphism induced by the left action of $P$ on $G/P$.
If $\rho:\Gamma \to \F(G/P, \o)$ is a group homomorphism such that $\pi^3 \circ \rho = \pi^3\circ l|_\Gamma: \Gamma \to J^3(G/P, \o)$, then $\rho, l|_\Gamma:\Gamma \to \F(G/P, \o)$ are conjugate.
\end{prop}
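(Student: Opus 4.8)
The plan is to produce the conjugating formal transformation by successive approximation along the descending filtration of $\F(G/P,\o)$ by the normal subgroups $\F_r$ of formal transformations tangent to the identity to order $r$. In the coordinates of \ref{sss:loc_left_act} there is a canonical identification $\F_{r-1}/\F_r = V_r := S^r(\n_-^*)\otimes\n_-$, and the conjugation action of $\F(G/P,\o)$ on $V_r$ factors through $\pi^1\colon\F(G/P,\o)\to\mathrm{GL}(\n_-)$, becoming the natural action of $\mathrm{GL}(\n_-)$ on $S^r(\n_-^*)\otimes\n_-$. I will show, by induction on $q\geq 4$, that after replacing $\rho$ by a conjugate one has $\pi^{q-1}\circ\rho=\pi^{q-1}\circ l|_\Gamma$; the case $q=4$ is the hypothesis $\pi^3\circ\rho=\pi^3\circ l|_\Gamma$. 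At the $q$-th step the conjugating element will be chosen in $\F_{q-1}$, so that it does not affect agreement of $(q-1)$-jets; since for each fixed $r$ only finitely many steps change $\pi^r$, the infinite composition of these conjugacies is a well-defined element of $\F(G/P,\o)$ conjugating $\rho$ to $l|_\Gamma$.

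For the inductive step assume $\pi^{q-1}\rho=\pi^{q-1}l|_\Gamma$, so that $\rho(g)\,l(g)^{-1}\in\F_{q-1}$ for every $g\in\Gamma$; let $\psi_g\in V_q$ be its class. Using that conjugation on $V_q$ factors through $\pi^1$, the homomorphism identities for $\rho$ and $l$ give $\psi_{gg'}=\psi_g+\pi^1l(g)\cdot\psi_{g'}$, so $g\mapsto\psi_g$ is a $1$-cocycle with values in $V_q$ regarded as a $\Gamma$-module via $\pi^1\circ l|_\Gamma\colon\Gamma\to\mathrm{GL}(\n_-)$; moreover conjugating $\rho$ by an element of $\F_{q-1}$ with class $\xi\in V_q$ alters this cocycle by the coboundary $g\mapsto\xi-\pi^1l(g)\cdot\xi$. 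Hence it suffices to prove $H^1(\Gamma,V_q)=0$ for all $q\geq 4$.

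For this I follow the reduction used in Section \ref{sect:loc_rigid_jet} for Proposition \ref{prop:jet_rigid}. Since $\pi^1l(a)=k^{-1}\mathrm{Id}_{\g_{-1}}\oplus k^{-2}\mathrm{Id}_{\g_{-2}}$ and $\pi^1l(\Lambda)$ is unipotent, $a$ acts on $V_q=S^q(\n_-^*)\otimes\n_-$ semisimply with eigenvalues of the form $k^m$, $m\geq q-2$; as $q\geq 4$ this forces $m\geq 2$, so neither $1$ nor $k$ is an eigenvalue of $a$ on $V_q$. In particular $V_q^{\langle a\rangle}=0$, so the cocycle can be normalized so that $\psi_a=0$. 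Because $\pi^1\rho(\Lambda)=\pi^1l(\Lambda)$ is unipotent, $\rho(\Lambda)$ lies in the appropriate connected simply-connected nilpotent group given by Lemma \ref{lem:nilp1conn}, so by Theorem \ref{nilp_ext} (applied at each jet level, exactly as for $J^3$ in Section \ref{sect:loc_rigid_jet}) $\rho|_\Lambda$, and hence $\rho$, extends to a continuous homomorphism $\bar\rho\colon\langle a\rangle N\to\F(G/P,\o)$ with $\pi^{q-1}\bar\rho=\pi^{q-1}l|_{\langle a\rangle N}$ (by density of $\Gamma$ in $\langle a\rangle N$); this extends $\psi$ to a continuous cocycle $\bar\psi\colon\langle a\rangle N\to V_q$ with $\bar\psi_a=0$. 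From $\bar\psi_a=0$ we get $\bar\psi(axa^{-1})=\pi^1l(a)\cdot\bar\psi(x)$, and differentiating at $e$ yields $\bar\psi_*(\Ad(a)X)=\pi^1l(a)\cdot\bar\psi_*(X)$; since $\Ad(a)=k^i$ on $\g_i$, this puts $\bar\psi_*(\g_i)$ in the $k^i$-eigenspace of $a$ on $V_q$, i.e. $\bar\psi_*\colon\n\to V_q$ is $\mathfrak{a}$-invariant and $[\psi]$ is the image of a class in $H^1(\n,V_q)^{\mathfrak{a}}$. But $\n$ is generated by $\g_1$, on which $a$ acts by $k$, so a nonzero class in $H^1(\n,V_q)^{\mathfrak{a}}$ would require $k$ to be an eigenvalue of $a$ on $V_q$ — which it is not. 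Hence $\bar\psi_*=\partial v$ for some $v\in V_q$ of weight $0$, which is therefore fixed by $a$; unwinding the extension (a continuous cocycle on the connected group $N$ with vanishing differential is trivial, and $v$ is $a$-fixed) shows $\psi$ is a coboundary on $\langle a\rangle N$, hence on $\Gamma$. Thus $H^1(\Gamma,V_q)=0$ and the induction closes.

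I expect the cohomological step to be the crux; but, in contrast to Section \ref{sect:loc_rigid_jet}, where the infinite-dimensional computations of Section \ref{sect:Lie_cohom} were needed to control the critical degree-$3$ layer, here the $\Ad(a)$-eigenvalue count alone rules out every obstruction in degrees $\geq 4$, so the remaining work is the routine (if somewhat fiddly) verification that the degree-$q$ discrepancy is a cocycle, that conjugation by $\F_{q-1}$ acts by coboundaries, and that the extension to $\langle a\rangle N$ and the passage between group and Lie algebra cohomology are compatible. A more elementary substitute for the Malcev step would be to compute $H^1(\Gamma,V_q)$ directly from the presentation of $\Gamma$, using not only the relations $ab_ia^{-1}=b_i^k$, $ac_ia^{-1}=c_i^{k^2}$ but also $[b_i,b_j]\in\langle c_1,\dots,c_{m_2}\rangle$, the latter being precisely what is needed to dispose of the subquotients of $V_q$ on which $a$ acts by the scalar $k^2$; the argument above, parallel to Section \ref{sect:loc_rigid_jet}, seems cleaner.
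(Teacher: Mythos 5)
Your argument is correct, and its engine is the one the paper uses elsewhere (Malcev extension of $\rho|_\Lambda$ to $N$ via Theorem \ref{nilp_ext} and Lemma \ref{lem:nilp1conn}, differentiation, the $\Ad(a)$-eigenvalue count on the degree-$q$ layer, and the fact that $\n$ is generated by $\g_1$), but the packaging differs from the paper's proof of Proposition \ref{prop:jet_to_formal}. The paper first applies Sternberg's normalization to the single element $a$ (its $3$-jet already pins down its formal conjugacy class), so that after one conjugation $\rho(a)=l(a)$ exactly; it then proves by induction on the jet level that $\rho$ \emph{equals} $\pi^{r+1}\circ l|_\Gamma$, by noting that $\bar\rho_*-(\pi^{r+1}\circ l)_*$ maps $\n$ into the kernel of $(\pi^r_{r+1})_*$, intertwines $\Ad(a)$ with $\Ad(l(a))$, and hence vanishes on $\g_1$ and therefore on $\n$. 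You instead avoid Sternberg at this step and run the classical graded successive-conjugation scheme, reducing each layer to $H^1(\Gamma, S^q(\n_-^*)\otimes\n_-)=0$ for $q\geq 4$, handling the generator $a$ by invertibility of $\mathrm{id}-a$ on $V_q$ (no eigenvalue $1$) and the lattice by the same Malcev-plus-eigenvalue argument; you also have to check (and correctly do) that conjugation on $\F_{q-1}/\F_q$ factors through $\pi^1$, that $\F_{q-1}$-conjugation acts by coboundaries, and that the infinite composition of conjugators converges in $\F(G/P,\o)$. What each buys: the paper's route is shorter because fixing $\rho(a)=l(a)$ turns ``conjugate'' into ``equal'' and eliminates the cocycle/coboundary bookkeeping; yours is self-contained (no normal-form theorem needed here) and makes explicit that every obstruction space in degree $q\geq 4$ dies for purely numerical reasons. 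One cosmetic point: at the end of your cohomological step it is cleaner to argue directly that $\bar\psi_*(\g_1)$ lies in the $k$-eigenspace of $a$ on $V_q$, which is zero, so the cocycle relation and the generation of $\n$ by $\g_1$ force $\bar\psi_*\equiv 0$; the detour through ``$\bar\psi_*=\partial v$ with $v$ of weight $0$'' needs the extra remark that $v$ can be chosen $\mathfrak{a}$-invariant and then $v=0$, which is exactly the same eigenvalue fact.
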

\begin{proof}
Since $\pi^3 \circ \rho(a) = \pi^3\circ l(a)$, by Sternberg's normalization, $\rho(a)$ and $ l(a)$ are conjugate.
So we may assume $\rho(a) = l(a)$.
Under this assumption, we will show $\rho = l|_\Gamma$.
By induction, it suffices to show that for $r \geq 3$, a group homomorphism $\rho:\Gamma \to J^{r+1}(G/P, \o)$ such that $\pi_{r+1}^r \circ \rho = \pi^r \circ l|_\Gamma$ and $\rho(a) = \pi^r \circ l(a)$ satisfies $\rho = \pi^{r+1} \circ l|_\Gamma$, where $\pi_s^r:J^s(G/P, \o) \to J^r(G/P, \o)$ $(s > r)$ denotes the natural projection from the group of $s$-jets to the group of $r$-jets.
Using Theorem \ref{nilp_ext} and Lemma \ref{lem:nilp1conn}, by the same argument as before, we see that $\rho$ extends to a continuous homomorphism $\bar{\rho}:\langle a \rangle N \to J^{r+1}(G/P, \o)$.

To prove the proposition, it suffices to show that the differential $\bar{\rho}_*:\n \to \j^{r+1}(G/P, \o)$ of $\bar{\rho}$ at $e \in \langle a \rangle N$ is equal to $(\pi^{r+1} \circ l)_*|_\n$, where $(\pi^{r+1} \circ l)_*:\p \to \j^{r+1}(G/P, \o)$ is the differential of $\pi^{r+1} \circ l: P \to J^{r+1}(G/P, \o)$.
Let $(\pi_{r+1}^r)_*$ be the differential at $e$ of $\pi_{r+1}^r:J^{r+1}(G/P, \o) \to J^r(G/P, \o)$.
As the projrctions onto $r$-jets of $\bar{\rho}$ and $\pi^{r+1} \circ l$ coincide, the image of $\bar{\rho}_* - (\pi^{r+1} \circ l)_*:\n \to \j^{r+1}(G/P, \o)$ is contained in the kernel of $(\pi_{r+1}^r)_*: \j^{r+1} (G/P, \o)\to \j^r(G/P, \o)$.
By the equations
\[
\bar{\rho}_* \circ \Ad(a)= \Ad(\rho(a)) \circ \bar{\rho}_* = \Ad(l(a)) \circ \bar{\rho}_*
\]
and $l_* \circ \Ad(a)= \Ad(l(a)) \circ l_*$, and the fact that $\Ad(l(a))$ on the kernel of $(\pi_{r+1}^r)_*$ does not have an eigenvalue $k$, we see that $\bar{\rho}_* - (\pi^{r+1} \circ l)_*|_\n$  is vanished on $\g_1$ which is the eigenspace of $\Ad(l(a))$ on $\g$ for the eigenvalue $k$.
As the Lie algebra $\n$ is generated by $\g_1$, we obtain $\bar{\rho}_* = (\pi^{r+1} \circ l)_*|_\n$.
\end{proof}
\section{Local rigidity of local actions}\label{sect:loc_rigid_loc}
Let $\G(G/P, \o)$ be the group of germs at $\o = eP \in G/P$ of diffeomorphisms in $\mathrm{Diff}(G/P, \o)$.
As $P$ is the stabilizer at $\o$ of the action of $G$ on $G/P$ by left translations, we obtain a group homomorphism of $P$ into $\G(G/P, \o)$, which will also be denoted by $l:P \to \G(G/P, \o)$.
The goal of this section is to show weak local rigidity of local actions of standard subgroups:

\begin{prop}\label{prop:locrigidgerm}
Let $G$ be the group of orientation-preserving isometries of a rank one symmetric space of non-compact type, $\Gamma$ a standard subgroup of $G$, and $l:P \to \G(G/P, \o)$ the homomorphism into the group of germs at $\o \in G/P$ of diffeomorphisms around $\o \in G/P$ induced by the action of $P$ on $G/P$ by left translations.
If $\rho: \Gamma \to  \G(G/P, \o)$ is a homomorphism $C^2$-close to $l|_\Gamma$, then there is an embedding $\iota$ of $\Gamma$ into $G$ and $h \in \G(G/P, \o)$ such that 
\[
\rho(g) = h \circ l(\iota(g)) \circ h^{-1}
\]
for all $g \in \Gamma$.
\end{prop}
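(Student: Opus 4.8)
The strategy follows the pattern of Section \ref{sect:loc_rigid_formal}: I will deduce the statement from its analogue for formal transformations, Proposition \ref{prop:formal_rigid}, by showing that the kernel $\mathcal{K}=\ker\!\big(\G(G/P,\o)\to\F(G/P,\o)\big)$ of germs tangent to the identity to infinite order at $\o$ carries no obstruction to the conjugacy. Composing $\rho$ with the projection $\G(G/P,\o)\to\F(G/P,\o)$ and applying Proposition \ref{prop:formal_rigid}, I obtain an embedding $\iota$ of $\Gamma$ into $G$ as a standard subgroup and an element $\bar h\in\F(G/P,\o)$ conjugating the induced homomorphism $\Gamma\to\F(G/P,\o)$ to the one induced by $l\circ\iota$. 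By Borel's lemma $\bar h$ is realized by a germ $h\in\G(G/P,\o)$; after replacing $\rho$ by $h^{-1}\rho(\cdot)h$ I may assume that, with $\rho_0:=l\circ\iota:\Gamma\to\G(G/P,\o)$, the germs $\rho(g)$ and $\rho_0(g)$ have the same Taylor expansion at $\o$ for every $g\in\Gamma$, and it remains to prove $\rho=\rho_0$.

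Consider the hyperbolic generator $a$. In the coordinates $i\circ\exp:\n_-\to G/P$, Proposition \ref{prop:std_fixed_pt}(ii) identifies $\rho_0(a)=l(\iota(a))$ with the linear contraction $k^{-1}\id_{\g_{-1}}\oplus k^{-2}\id_{\g_{-2}}$ of $\n_-=\g_{-1}\oplus\g_{-2}$, while $\rho(a)$ is a germ of a contraction with the same Taylor expansion. By Sternberg's theory of hyperbolic fixed points \cite{Sternberg}, two $C^\infty$ germs of contractions with the same Taylor expansion at the fixed point are conjugate by a germ tangent to the identity to infinite order: the linearized conjugacy equation is solved in flat vector fields by the geometric series provided by the contraction, and a Newton scheme upgrades this to the nonlinear equation. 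Conjugating $\rho$ by such a germ changes no Taylor expansion, so I may also assume $\rho(a)=\rho_0(a)$.

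Now $\theta(g):=\rho(g)\circ\rho_0(g)^{-1}$ lies in $\mathcal{K}$, and $\theta$ is a $1$-cocycle for $\Gamma$ acting on $\mathcal{K}$ through $\rho_0$ by conjugation, with $\theta(a)=\id$. Since $\{\,g:\theta(g)=\id\,\}$ is a subgroup of $\Gamma$ containing $a$, it suffices to show $\theta(b_i)=\id$ and $\theta(c_j)=\id$ for the generators of $\Lambda$. Substituting these unknowns into the relations $ab_ia^{-1}=b_i^{\,k}$, $ac_ja^{-1}=c_j^{\,k^2}$, $[b_i,c_j]=e$ and the rest produces a system of equations in $\mathcal{K}$ whose trivial solution I want to show is isolated. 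Let $\mathcal{V}$ denote the space of germs at $0$ of vector fields on $\n_-$ flat at $0$, the ``Lie algebra'' of $\mathcal{K}$. The linearization of the system at the trivial solution computes the $1$-cocycles of $\Gamma$ with values in $\mathcal{V}$ subject to vanishing on $a$; an infinitesimal deformation preserving $\theta(a)=\id$ is a coboundary $\delta u$ with $u$ in the $\mathfrak{a}$-invariants of $\mathcal{V}$. The key point is that no nonzero flat vector field commutes with the linear vector field $\lambda(E)$ of Proposition \ref{prop:std_fixed_pt}(ii), whose nonzero eigenvalues are $-1$ and $-2$; hence $\mathcal{V}^{\mathfrak{a}}=0$, and by the same reasoning the $k$-eigenspace of $\Ad(\rho_0(a))$ on $\mathcal{V}$ vanishes, so an $\mathfrak{a}$-invariant $1$-cocycle of $\n$ in $\mathcal{V}$ is zero on $\g_1$ and therefore on all of $\n$. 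Carrying out the Malcev and Lie-algebra-cohomology reduction of Sections \ref{sect:loc_rigid_jet}--\ref{sect:loc_rigid_formal} for the $AN$-module $\mathcal{V}$ then yields $H^1(\n,\mathcal{V})^{\mathfrak{a}}=0$ and $H^0(\n,\mathcal{V})^{\mathfrak{a}}=0$, so the linearized system has only the zero solution; an iteration argument as in the proof of Proposition \ref{prop:jet_to_formal} forces $\theta\equiv\id$, i.e. $\rho=\rho_0=l\circ\iota$.

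I expect the main obstacle to be this last step: turning the vanishing of $H^{0}(\n,\mathcal{V})^{\mathfrak{a}}$ and $H^{1}(\n,\mathcal{V})^{\mathfrak{a}}$ into a genuine rigidity statement inside the infinite-dimensional, non-Lie group $\mathcal{K}$ of flat germs — one must organize the iteration so that the flat discrepancies, controlled only at each finite order by the cohomology computation, are eliminated in the limit — together with the bookkeeping needed to propagate the $C^2$-closeness hypothesis through the Sternberg normalizations and through the choice of the lift $h$ of $\bar h$. The Sternberg-type rigidity of a contraction up to conjugacy by flat germs and the vanishing $H^{0}(\n,\mathcal{V})^{\mathfrak{a}}=H^{1}(\n,\mathcal{V})^{\mathfrak{a}}=0$ are each clean; it is their combination that is delicate.
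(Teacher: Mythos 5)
Your reduction steps mirror the paper's: projecting to $\F(G/P,\o)$, applying Proposition \ref{prop:formal_rigid}, lifting $\bar h$ to a germ, and using Sternberg's normalization to arrange $\rho(a)=l(\iota(a))$ is exactly how the paper reduces Proposition \ref{prop:locrigidgerm} to Proposition \ref{prop:formal_to_local}. The genuine gap is in your final step, which is the actual content of that proposition: showing that $\theta(g)=\rho(g)\rho_0(g)^{-1}$, a germ tangent to the identity to infinite order, is trivial on the generators of $\Lambda$. Your plan --- compute $H^0(\n,\mathcal V)^{\mathfrak a}=H^1(\n,\mathcal V)^{\mathfrak a}=0$ for the space $\mathcal V$ of flat vector-field germs and then run ``the Malcev and Lie-algebra-cohomology reduction of Sections \ref{sect:loc_rigid_jet}--\ref{sect:loc_rigid_formal}'' and ``an iteration as in Proposition \ref{prop:jet_to_formal}'' --- does not go through. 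Theorem \ref{nilp_ext} and Lemma \ref{lem:nilp1conn} require the target to be a finite-dimensional connected simply-connected nilpotent Lie group, so $\rho|_\Lambda$ admits no extension to $N$ inside $\G(G/P,\o)$ or inside the group $\mathcal K$ of flat germs; the induction in Proposition \ref{prop:jet_to_formal} is an induction over jet orders, and once all jets agree there is no grading left to induct on --- that is precisely the difficulty this proposition must overcome; and in this Fr\'echet, non-Banach setting the vanishing of the linearized obstruction space does not yield rigidity of the solution without a hard implicit-function-type scheme (nor would ``the trivial solution is isolated'' suffice, since $\theta$ is not connected to it by any path you control, and $\theta$ carries no smallness on a fixed neighborhood). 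Your infinitesimal assertions are correct --- no nonzero flat vector field is an eigenvector of $\ad(\lambda(E))$, so $\mathcal V^{\mathfrak a}=0$ and $\mathfrak a$-invariant cocycles vanish on $\g_1$ --- but they are only the linearization of what is needed.

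What the paper does at this point is a different, genuinely nonlinear argument. Using the simple transitivity of $l(N)$ on $(G/P)\setminus\{\o\}$ it introduces the $\n$-valued $l(N)$-invariant $1$-form $\omega$ (pullback of the Maurer--Cartan form of $N$), observes that a diffeomorphism lies in $l(N)$ iff it preserves $\omega$, and that $\omega$ is rational in the chart $\exp\circ\, i$ by Proposition \ref{prop:std_fixed_pt}, so for $g\in\tilde\Lambda$ the infinite-order contact of $\rho(g)$ with $l(g)$ makes $\Phi(g)=\rho(g)^*\omega-\omega$ a germ of a smooth $1$-form at $\o$. Then $\Phi$ is an $\langle a\rangle$-equivariant cocycle, and the relation $aga^{-1}=g^k$ combined with the contraction rates $k^{-1},k^{-2}$ of $l(a)$ gives the sup-norm estimate
\[
\sup_{x\in U}\|F(x)\|\;\leq\;(k^{-1}+\epsilon)\,\sup_{x\in U}\|F(x)\|,
\]
forcing $\Phi\equiv 0$ and hence $\rho|_{\tilde\Lambda}=l|_{\tilde\Lambda}$. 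Some quantitative contraction estimate of this kind, carried out at the level of germs rather than jets, is exactly what your outline is missing; as written, the proof is incomplete at its decisive step.
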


By Proposition \ref{prop:formal_rigid}, to prove Proposition \ref{prop:locrigidgerm} it suffices to show the following proposition which claims that a local action close to $l|_\Gamma$ is determined by its Taylor expansions at $\o \in G/P$.
Moreover, when $G = \mathrm{Sp}(n+1, 1)$, $n \geq 2$ or $F_4^{-20}$, as we mentioned in Section \ref{sect:loc_rigid_formal}, the homomorphism of $\Gamma$ into $\F(G/P, \o)$ is locally rigid in the strict sense.
Thus using the following proposition, we see that the homomorphism of $\Gamma$ into $\G(G/P, \o)$ is also locally rigid in the strict sense.
\begin{prop}\label{prop:formal_to_local}
Let $\Gamma$ be a standard subgroup of $P$, $\rho:\Gamma \to \mathcal{G}(G/P, \o)$ a homomorphism, and $\pi:\G(G/P, \o) \to \F(G/P, \o)$ the natural projection.
If $\pi \circ \rho = \pi \circ l|_\Gamma:\Gamma \to \F(G/P, \o)$, then $\rho, l|_\Gamma \in \mathrm{Hom}(\Gamma, \mathcal{G}(G/P, \o))$ are conjugate.
\end{prop}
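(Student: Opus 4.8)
The statement is, for the group action, the analogue of Sternberg's first step: a germ near an attracting fixed point is determined by its Taylor expansion. Write $\mathcal{N}=\ker\bigl(\pi\colon\G(G/P,\o)\to\F(G/P,\o)\bigr)$ for the normal subgroup of germs whose Taylor expansion at $\o$ is the identity, and work in the chart $i\circ\exp\colon\n_-\to G/P$ of \ref{sss:loc_left_act}, in which, by Proposition \ref{prop:std_fixed_pt}(ii), $l(a)$ is the \emph{linear} map $\alpha=k^{-1}\mathrm{Id}_{\g_{-1}}\oplus k^{-2}\mathrm{Id}_{\g_{-2}}$ of $\n_-$, a contraction since $k\ge 2$. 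The basic estimate I would use is: if $g\in\mathcal{N}$, then on a fixed ball $B$ the germs $\alpha^{-n}\circ g\circ\alpha^{n}$ converge to $\mathrm{id}$, faster than any geometric rate, in every $C^{r}$-norm. Indeed $g(z)=z+\phi(z)$ with $\phi$ flat at $0$, so $\alpha^{-n}\phi(\alpha^{n}z)$ is bounded by $k^{2n}\cdot C_{N}\lvert\alpha^{n}z\rvert^{N}\le C_{N}k^{-n(N-2)}\lvert z\rvert^{N}$ for every $N$ — infinite flatness of $\phi$ beats the at-most-$k^{2n}$ expansion of $\alpha^{-n}$ — and likewise for derivatives (a derivative costs a further bounded power of $k$). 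Applying this with $g=\rho(a)l(a)^{-1}\in\mathcal{N}$ (same Taylor expansion, since $\pi\circ\rho=\pi\circ l|_\Gamma$), the maps $g_{n}:=l(a)^{-n}\circ\rho(a)^{n}$ satisfy $g_{n+1}g_{n}^{-1}=\alpha^{-(n+1)}\bigl(\rho(a)l(a)^{-1}\bigr)\alpha^{\,n+1}\to\mathrm{id}$ summably fast, so $h_{0}:=\lim_{n}g_{n}$ exists, conjugates $\rho(a)$ to $l(a)$ (this is Sternberg's uniqueness statement for contractions, \cite{Sternberg}), and lies in $\mathcal{N}$ because its Taylor expansion is $\lim_{n}\pi(l(a))^{-n}\pi(\rho(a))^{n}=\mathrm{id}$. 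Replacing $\rho$ by $h_{0}\rho h_{0}^{-1}$ — which does not change Taylor expansions, since $h_0\in\mathcal N$ — I may assume $\rho(a)=l(a)=\alpha$ with $\pi\circ\rho=\pi\circ l|_\Gamma$ still holding.

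Now set $\psi\colon\Gamma\to\mathcal{N}$, $\psi(\gamma)=\rho(\gamma)\circ l(\gamma)^{-1}$; this is a cocycle, $\psi(\gamma_{1}\gamma_{2})=\psi(\gamma_{1})\cdot\bigl(l(\gamma_{1})\,\psi(\gamma_{2})\,l(\gamma_{1})^{-1}\bigr)$, with $\psi(a)=\mathrm{id}$, hence $\psi(a^{-1})=\mathrm{id}$ and, by induction on the basic relation $\psi(a\gamma a^{-1})=\alpha\,\psi(\gamma)\,\alpha^{-1}$, also $\psi(a^{m}\gamma a^{-m})=\alpha^{m}\psi(\gamma)\alpha^{-m}$ for all $m\in\mathbb{Z}$. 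It remains to prove $\psi\equiv\mathrm{id}$, for which it suffices to treat the lattice generators $b_{i}$ and $c_{j}$. Fix $\gamma=b_{i}$ and put $\delta_{m}:=a^{-m}\gamma a^{m}\in\Gamma$; from $ab_{i}a^{-1}=b_{i}^{k}$ one gets $\delta_{m}^{k^{m}}=\gamma$, so the cocycle identity for a power gives
\[
\psi(\gamma)=\prod_{j=0}^{k^{m}-1} l(\delta_{m})^{\,j}\circ\psi(\delta_{m})\circ l(\delta_{m})^{-j},\qquad \psi(\delta_{m})=\alpha^{-m}\psi(\gamma)\alpha^{m}.
\]
Writing $\gamma=\exp(Y)$, $Y\in\n$, the elements $\delta_{m}^{\,j}=\exp\bigl(\Ad(a^{-m})(jY)\bigr)$ for $0\le j<k^{m}$ all lie in $\exp(K_{0})$ for a fixed compact $K_{0}\subset\n$ (the $\g_{1}$- and $\g_{2}$-components of $\Ad(a^{-m})(jY)$ are $jk^{-m}$ and $jk^{-2m}$ times fixed vectors, hence bounded), so by continuity of $l|_N$ the germs $l(\delta_{m}^{\,j})$ form a compact family; after shrinking to a fixed ball they and their inverses are defined there with uniform $C^{r+1}$-bounds. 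By the basic estimate $\psi(\delta_{m})\to\mathrm{id}$ faster than $k^{-mN}$ for every $N$; conjugating by the uniformly bounded $l(\delta_{m}^{\,j})$ keeps each of the $k^{m}$ factors within $O(k^{-mN})$ of $\mathrm{id}$ in $C^{r}$, so the product lies within $O(k^{m}\cdot k^{-mN})=O(k^{-m(N-1)})$ of $\mathrm{id}$. Since the left-hand side $\psi(\gamma)$ is independent of $m$, letting $m\to\infty$ forces $\psi(\gamma)=\mathrm{id}$. Hence $\psi\equiv\mathrm{id}$, i.e. $\rho=l|_\Gamma$ after the conjugation above, which proves the proposition.

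The main obstacle is the convergence bookkeeping of the last paragraph: one must fix, once and for all, a single ball on which $\alpha^{\pm 1}$, the finitely many $\psi(b_{i}),\psi(c_{j})$, and the whole compact families $\{l(\exp W):W\in K_{0}\}$ together with their inverses are all defined with uniform estimates, and then check that the composition estimates for products of up to $k^{m}$ (resp. $k^{2m}$) near-identity $C^{r}$-germs are uniform in $m$. The conceptual point that makes this run is that infinite flatness of the elements of $\mathcal{N}$ dominates the exponential expansion of $\alpha^{-m}$, so that $\alpha^{-m}g\alpha^{m}\to\mathrm{id}$; combined with the fact that $\Gamma$ contains the $a$-conjugation "roots" $\delta_{m}$ of its lattice generators and that $l|_N$ acts by germs of bounded complexity near $\o$ (Proposition \ref{prop:std_fixed_pt}), this replaces Sternberg's classical argument in the group setting. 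Finally, the case $\gamma=c_{j}$ is identical with $k^{m}$ replaced by $k^{2m}$, once one notes that $c_{j}$, being central in $\Lambda$ and $\Lambda$ Zariski-dense in $N$, lies in the center $\exp(\g_{2})$ of $N$, so that the exponents $jk^{-2m}$ with $0\le j<k^{2m}$ in $\Ad(a^{-m})(jY)$ stay bounded; otherwise the argument is unchanged.
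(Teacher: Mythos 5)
Your reduction to $\rho(a)=l(a)$ and the cocycle $\psi(\gamma)=\rho(\gamma)l(\gamma)^{-1}$ with values in the flat germs is fine and parallels the paper's first step (Sternberg normalization). The genuine gap is in the final limiting step. The identity
\[
\psi(\gamma)=\prod_{j=0}^{k^m-1} l(\delta_m)^j\circ\psi(\delta_m)\circ l(\delta_m)^{-j},\qquad \psi(\delta_m)=\alpha^{-m}\psi(\gamma)\alpha^m,
\]
is obtained from the relations $\delta_m^{k^m}=\gamma$ and $\delta_m=a^{-m}\gamma a^m$ \emph{through the homomorphism $\rho$ into $\G(G/P,\o)$}, so it is only an equality of germs: representatives agree on \emph{some} neighborhood $V_m$ of $\o$, and nothing bounds $V_m$ from below as $m\to\infty$. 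Your estimates do show that the right-hand side, with its canonical representatives, tends to the identity in $C^r$ on a fixed ball $B$; but the left-hand side is only known to coincide with it on $V_m$, which may shrink to $\{\o\}$. Hence "letting $m\to\infty$ forces $\psi(\gamma)=\mathrm{id}$" does not follow: what you can extract in the limit is information at the point $\o$ only, i.e.\ flatness of $\psi(\gamma)$, which was the hypothesis. Your "bookkeeping" paragraph addresses uniform domains and $C^r$-bounds for the \emph{representatives}, but not this more basic issue that germ identities carry no quantitative domain on which they hold; since $\rho$ is an abstract homomorphism into germs, there is no way to choose representatives of the infinitely many germs $\rho(\delta_m)$ making the depth-$k^m$ identities hold on a fixed ball.

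The paper's proof is organized precisely to avoid iterating germ identities to unbounded depth: it uses only the single relation $a\gamma a^{-1}=\gamma^k$ (so only finitely many germs occur, and the identity holds on one fixed neighborhood), and it linearizes the problem by means of the $l(N)$-invariant $\n$-valued $1$-form $\omega$ on $(G/P)\setminus\{\o\}$, which is rational at $\o$; the matching Taylor expansions are used exactly once, to guarantee that $\Phi(\gamma)=\rho(\gamma)^*\omega-\omega$ is the germ of a \emph{smooth} form. The cocycle and $\langle a\rangle$-equivariance then give a self-referential linear functional equation for the fixed smooth representative $F$ of $\Phi(\gamma)$ on a single neighborhood $U$ chosen invariant under the finitely many contractions $\rho(\gamma^j a)$, and the estimate $\sup_U\|F\|\le (k^{-1}+\epsilon)\sup_U\|F\|$ kills $F$ on that fixed $U$. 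Your scheme could probably be repaired in the same spirit: fix $m=1$, so that the functional equation $\psi(\gamma)=\prod_{j<k}l(\delta_1)^j(\alpha^{-1}\psi(\gamma)\alpha)l(\delta_1)^{-j}$ holds at the level of functions on one fixed neighborhood, and then iterate \emph{that equation} inside a single ball invariant under the relevant contractions, using flatness to close the estimate; but that is essentially the paper's contraction argument, and as written your $m\to\infty$ argument does not establish the conclusion.
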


The remaining of this section is devoted to the proof of Proposition \ref{prop:formal_to_local}.
Since $\pi \circ \rho(a) = \pi \circ l(a) \in \F(G/P, \o)$, by Sternberg's normalization \cite{Sternberg}, we may assume $l(a) = \rho(a)$.
Put $\tilde{\Lambda} = \Gamma \cap N = \bigcup_{i \in \mathbb{Z}} a^i\Lambda a^{-i} $.
We will show $l|_{\tilde{\Lambda}} = \rho|_{\tilde{\Lambda}}$.

As $\o \in G/P$ is the common fixed point of $l(P)$, $l$ induces an action of $P$ on the complement $X = (G/P)\setminus\{\o\}$.
Using the simply transitivity of the left action of $N$ on $X$, we can define an $\n = \mathrm{Lie}(N)$-valued $l(N)$-invariant 1-form $\omega$ on $X$ as follows.
Let $x \in X$ be the fixed point of the action of $a$ on $X$.
Then under the identification $N \to X$, $g \mapsto gx$ of $N$ with $X$, we define the $\n$-valued 1-form $\omega$ on $X$ to be the pull-back of the Maurer-Cartan form on $N$.
Observe that a diffeomorphism $f$ of $X$ is contained in $l(N)$ if and only if $f^* \omega = \omega$.
Moreover, for $g_1, g_2 \in N$, the Taylor expansions at $\o G/P$ of $l(g_1)$ and $l(g_2)$ coincide if and only if $g_1 = g_2$.
So $l|_{\tilde{\Lambda}}= \rho|_{\tilde{\Lambda}}$ if and only if $\rho(\tilde{\Lambda})$ preserves $\omega$.

While the 1-from $\omega \in \Omega^1(X; \n)$ cannot be extended smoothly on $G/P$, by Proposition \ref{prop:std_fixed_pt} (iii), it is rational around $o \in G/P$ in the local coordinate $\exp \circ i: \n_- \to G/P$ around $o \in G/P$.
In particular, for any diffeomorphisms $f, g$ defined around $\o \in G/P$ fixing $\o$, if the Taylor expansions at $\o \in G/P$ of $f$ and $g$ are the same, then $f^*\omega - g^*\omega$ is a smooth 1-form around $\o \in G/P$.
Thus for each $g \in \tilde{\Lambda}$, 
\[
\Phi(g) = \rho(g)^*\omega - l(g)^*\omega
\]
is a germ of a smooth 1-form defined around $\o \in G/P$.
Since $\omega$ is $l(N)$-invariant, $\Phi(g) = \rho(g)^*\omega - \omega$.
So $\omega$ is $\rho(\tilde{\Lambda})$-invariant if and only if $\Phi(\tilde{\Lambda}) = 0$.
Observe that for $g, h \in \tilde{\Lambda}$,
\[
\Phi(gh) = \rho(h)^*(\rho(g)^*\omega - \omega) + \rho(h)^*\omega- \omega = \rho(h)^*\Phi(g) + \Phi(h).
\]
Thus $\Phi:\tilde{\Lambda} \to \Omega^1(G/P, \o; \n)$ is a cocycle, where $\Omega^1(G/P, \o; \n)$ denotes the space of germs at  $\o \in G/P$ of $\n$-valued 1-forms defined around $\o \in G/P$.
Moreover, for $g \in \tilde{\Lambda}$,
\begin{align*}
\Phi(aga^{-1}) &= l(a^{-1})^*\rho(g)^*l(a)^*\omega - \omega\\
&=  l(a^{-1})^*\rho(g)^*( \Ad(a) \circ \omega) - \omega\\
&= l(a^{-1})^*( \Ad(a) \circ \rho(g)^*\omega) - \omega\\
&= l(a^{-1})^*( \Ad(a) \circ \rho(g)^*\omega - \Ad(a) \circ \omega)\\
&=  l(a^{-1})^* \Ad(a) \circ\Phi(g).
\end{align*}
Now Proposition \ref{prop:formal_to_local} is a consequence of the following lemma.
\begin{lem}
A $\langle a \rangle$-equivariant cocycle $\Phi:\tilde{\Lambda} \to \Omega^1(G/P, \o; \n)$ is vanished.
\footnote{Since $\Omega^1(G/P, \o; \n)^{\langle a \rangle} = 0$, this is equivalent to $H^1(\tilde{\Lambda}, \Omega^1(G/P, \o; \n))^{\langle a \rangle} = 0$.}
\end{lem}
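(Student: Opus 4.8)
The plan is to reduce the claim to the cohomological assertion in the footnote, namely $H^1(\tilde\Lambda,\Omega^1(G/P,\o;\n))^{\langle a\rangle}=0$, and to prove that by extending the cocycle to $N$, where the Lie algebra computations of Section \ref{sect:Lie_cohom} (for the polynomial part) and a Sternberg-type contraction estimate (for the flat part) take over.

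First I would record two elementary reductions. From $\Phi(gh)=\rho(h)^*\Phi(g)+\Phi(h)$ the set $\{g\in\tilde\Lambda:\Phi(g)=0\}$ is a subgroup of $\tilde\Lambda$, and from $\Phi(aga^{-1})=l(a^{-1})^*\Ad(a)\,\Phi(g)$ it is invariant under conjugation by $a$; since $\tilde\Lambda=\bigcup_{i\in\mathbb{Z}}a^i\Lambda a^{-i}$ and $\Lambda$ is finitely generated, it therefore suffices to prove that $\Phi$ vanishes on a fixed finite generating set of $\Lambda$. Next, because $\rho$ and $l$ agree to infinite order at $\o$ and $\omega$ is rational near $\o$ by Proposition \ref{prop:std_fixed_pt} (iii), every $\Phi(g)=\rho(g)^*\omega-\omega$ is a germ of a smooth $1$-form that is flat at $\o$; thus $\Phi$ takes values in the submodule $V_{\mathrm{flat}}\subset\Omega^1(G/P,\o;\n)$ of flat germs. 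On $V_{\mathrm{flat}}$ the $\langle a\rangle$-action is strongly contracting: with $\theta=l(a^{-1})^*\circ\Ad(a)$ (so that $\Phi(a^nga^{-n})=\theta^n\Phi(g)$), the fact that $l(a)$ acts near $\o$ as the linear contraction $k^{-1}\mathrm{Id}_{\g_{-1}}\oplus k^{-2}\mathrm{Id}_{\g_{-2}}$ (Proposition \ref{prop:std_fixed_pt} (ii)) yields, for every $N$ and every ball $B$ about $\o$ in the common domain, a bound $\|\theta^{-n}\phi\|_{C^0(B)}\le C_N k^{-(N+2)n}$ for all flat $\phi$. This is exactly the estimate behind the footnote $\Omega^1(G/P,\o;\n)^{\langle a\rangle}=0$; in fact it shows $\theta$ has no eigenvector at all on $V_{\mathrm{flat}}$. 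Combined with the weight bookkeeping of Subsection \ref{subsect:vfvs} — the grading element $E\in\mathfrak{a}$ acts on the polynomial part $S(\n_-^*)\otimes\n_-^*\otimes\n$ of $\Omega^1(G/P,\o;\n)$ with integer eigenvalues $\ge 2$ — one concludes that $E$ acts on $\Omega^1(G/P,\o;\n)$ without the eigenvalues $0$ and $1$.

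Then I would extend $\Phi|_\Lambda$ to a continuous, $\langle a\rangle$-equivariant cocycle $\hat\Phi\colon N\to\Omega^1(G/P,\o;\n)$, following the scheme already used in the proofs of Propositions \ref{prop:jet_rigid}, \ref{prop:jet_to_formal} and \ref{prop:formal_to_local}: pass to the jet quotients, where Lemma \ref{lem:nilp1conn} puts the relevant groups inside connected simply connected nilpotent Lie groups so that Theorem \ref{nilp_ext} gives a unique extension at each jet order, and take the limit over the order, using the contraction estimate to control the flat remainder at each stage. Uniqueness forces $\hat\Phi$ to restrict to $\Phi$ on all of $\tilde\Lambda$ and to stay $A$-equivariant, so its derivative $\hat\Phi_*\colon\n\to\Omega^1(G/P,\o;\n)$ at $e\in N$ is an $\mathfrak{a}$-invariant Lie algebra $1$-cocycle. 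Now $\mathfrak{a}$-invariance forces $\hat\Phi_*(\g_1)$ into the eigenvalue-$1$ eigenspace of $E$, which is trivial by the previous paragraph; since $\n$ is generated as a Lie algebra by $\g_1$, this gives $\hat\Phi_*=0$, hence $\hat\Phi=0$ on the connected simply connected group $N$, hence $\Phi|_\Lambda=0$, and finally $\Phi=0$ on $\tilde\Lambda$ by the first reduction. (Equivalently the argument shows $H^0(\n,\Omega^1(G/P,\o;\n))^{\mathfrak{a}}=H^1(\n,\Omega^1(G/P,\o;\n))^{\mathfrak{a}}=0$; for the polynomial part one could instead invoke Lemma \ref{lem_pinft}, Corollaries \ref{cor_cohominft}, \ref{cor_0cohominft} and Propositions \ref{prop:1cohominft_1}, \ref{prop:1cohominft_2}, exactly as in the proof of Corollary \ref{cor_1cohom}.)

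The main obstacle should be the extension step. The module $\Omega^1(G/P,\o;\n)$ is infinite-dimensional, its flat part is not exhausted by finite-dimensional submodules, and $\Phi$ is a priori only a set-theoretic cocycle on $\tilde\Lambda$; so carrying out the jet-by-jet Malcev--Raghunathan extension and verifying that the flat error terms introduced along the way converge requires genuine care, and this is precisely the point where the linear contraction property of $l(a)$ — the hyperbolicity built into a standard subgroup — is indispensable. Everything after the extension is formal.
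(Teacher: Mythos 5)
There is a genuine gap, and it sits exactly where you placed your ``main obstacle'': the extension of the germ-valued cocycle from $\tilde{\Lambda}$ to $N$ is asserted but not constructed, and the tools you cite cannot produce it. Theorem \ref{nilp_ext} extends continuous \emph{homomorphisms} of a uniform subgroup of $N$ into a connected simply-connected \emph{finite-dimensional} nilpotent Lie group; here $\Phi$ is a cocycle with values in the infinite-dimensional space $\Omega^1(G/P, \o; \n)$ of germs, twisted by the $\rho(g)^*$-action, and there is no finite-dimensional nilpotent group in sight to which Lemma \ref{lem:nilp1conn} applies. Worse, your own (correct) observation that each $\Phi(g)=\rho(g)^*\omega-\omega$ is \emph{flat} at $\o$ kills the proposed jet-by-jet scheme: every jet truncation of $\Phi$ is identically zero, so the ``extension at each jet order'' carries no information whatsoever about $\Phi$, and ``taking the limit over the order, controlling the flat remainder'' is precisely the whole problem restated, not solved. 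There is no candidate formula for $\hat{\Phi}$ on $N\setminus\tilde{\Lambda}$, no continuity/differentiability of $\hat{\Phi}$ with which to form the Lie-algebra cocycle $\hat{\Phi}_*$, and the Lie-algebra cohomology machinery of Section \ref{sect:Lie_cohom} (Lemma \ref{lem_pinft}, Corollaries \ref{cor_cohominft}, \ref{cor_0cohominft}, etc.) is set up for $\l$-finite polynomial modules such as $\P(\n_-)$ and says nothing about the flat part, which is where all of $\Phi$ lives. Your preliminary reductions (the vanishing set of $\Phi$ is an $a$-conjugation-invariant subgroup, so it suffices to treat a finite generating set of $\Lambda$; the spectral bookkeeping for $E$ on the polynomial part) are fine, but they do not close this hole.

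For comparison, the paper avoids any extension to $N$ and any cohomological input at this stage. For a generator $g$ with $aga^{-1}=g^k$ it applies the cocycle identity to $g^k$ and the $\langle a\rangle$-equivariance to get
\[
l(a^{-1})^*\Ad(a)\circ\Phi(g)=\sum_{j=0}^{k-1}\rho(g^j)^*\Phi(g),
\]
rewrites $\Phi(g)$ as an $\n_-^*\otimes\n$-valued function $F$ in the fixed chart, and runs a one-line $C^0$ contraction estimate on a small neighborhood $U$ of $\o$: $\|\Ad(a)^{-1}\|=k^{-1}$, $\|T_x\rho(g^ja)\|<k^{-1}+\epsilon$, and $\rho(g^ja)(U)\subset U$ give $\sup_U\|F\|\le(k^{-1}+\epsilon)\sup_U\|F\|$, hence $F\equiv0$. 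This needs only boundedness of the germ near $\o$ (not flatness, not any module structure), which is exactly the kind of direct hyperbolicity argument your sketch gestures at in the estimate $\|\theta^{-n}\phi\|_{C^0(B)}\le C_Nk^{-(N+2)n}$ but never converts into a proof of the lemma. If you want to salvage your route, you would have to either construct the extension of the flat part of the cocycle to $N$ by hand (with convergence estimates replacing Raghunathan), or abandon the extension and argue generator by generator as the paper does.
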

\begin{proof}
Let $\Phi:\tilde{\Lambda} \to \Omega^1(G/P, \o; \n)$ be a $\langle a \rangle$-equivariant cocycle.
Recall that the lattice $\Lambda$ has a set of generators $b_1, \dots, b_{m_1}, c_1, \dots, c_{m_2}$ such that $[b_i, c_j] = e$, $[b_i, b_j] \in \langle c_1, \dots, c_{m_2} \rangle $,  $[c_i, c_j] =e$, $ab_ia^{-1} = b_i^k$, $ac_ia^{-1} = c_i^{k^2}$.
Thus, to prove $\Phi = 0$, by the $\langle a \rangle$-equivariance, it suffices to show that $\Phi(g) = 0$ for all $g \in \tilde{\Lambda}$ with $aga^{-1} = g^k$.
Fix $g \in \tilde{\Lambda}$ with $aga^{-1} = g^k$.
Then
\[
l(a^{-1})^*\Ad(a)\circ\Phi(g) = \sum_{j = 0}^{k-1} \rho(g^j)^*\Phi(g).
\]
Recall that we have the local coordinate system $\exp \circ\; i: \n_- \to G/P$ around $\o \in G/P$ in which the differential $T_\o l(a) $ at $\o$ of $l(a)$ is of the form $k^{-1}\id_{\g_{-1}} \oplus k^{-2}\id_{\g_{-2}}$ and that of $l(g^ja)$ is of the form
\[
\left( \begin{array}{cc}
k^{-1}\id_{\g_{-1}} & u_j\\
0 & k^{-2}\id_{\g_{-2}}
\end{array} \right)
\]
for some $u_j:\g_{-2} \to \g_{-1}$.
As we fixed a local coordinate system around $\o \in G/P$, an $\mathfrak{n}$-valued 1-form around $\o \in G/P$ can be considered as an $\n_-^*\otimes\mathfrak{n}$-valued smooth function around $\o \in G/P$.
So to prove $\Phi(g) = 0$, it suffices to show that an $\n_-^*\otimes\mathfrak{n}$-valued smooth function $F$ defined around $\o$ satisfying
\[
\Ad(a) F(x) = \sum_{j = 0}^{k-1}F(\rho(g^ja)(x)) T_x \rho(g^ja)
\]
is vanished around $\o \in G/P$.
Fix a norm on $\n_-$.
There is a neighborhood $U$ of $\o \in G/P$ such that 
\begin{itemize}
\item $F$ is defined on $U$, 
\item $\rho(g^ja)x \in U$ for $j = 0, \dots, k-1$ and $x \in U$, and
\item $\|T_x \rho(g^ja)\| < k^{-1} + \epsilon$ for $j = 0, \dots, k-1$ and $x \in U$,
\end{itemize}
where $\|A\| = \sup_{v \in \n_-} \|Av\|/\|v\|$ denotes the operator norm.
Moreover, fixing a norm on $\n$, since $\Ad(a)$ on $\n$ is diagonal with eigenvalues $k, k^2$, $\|\Ad(a)^{-1}\| = k^{-1}$ with respect to the induced norm on $\mathfrak{gl}(\n)$.
Moreover, we obtain the induced norm on $\n_-^*\otimes\mathfrak{n}$.
Then
\begin{align*}
\sup_{x \in U}\|F(x)\| &= \sup_{x \in U}\|\sum_{j = 0}^{k-1}\Ad(a)^{-1}F(\rho(b^ja)(x)) T_x \rho(b^ja)\|\\
&\leq  \sum_{j = 0}^{k-1}\|\Ad(a)^{-1}\|\sup_{x \in U}\|F(\rho(b^ja)(x))\|\sup_{x \in U}\| T_x \rho(b^ja)\|\\
&\leq  \sum_{j = 0}^{k-1}k^{-1}\sup_{x \in U}\|F(x)\| (k^{-1} + \epsilon)\\
&= (k^{-1} + \epsilon) \sup_{x \in U}\|F(x)\|.
\end{align*}
It follows that $\sup_{x \in U}\|F(x)\| = 0$.
Thus $\Phi(g) = 0$.
\end{proof}

\section{Local rigidity of group actions}\label{sect:loc_rigid_act}
Let $G$ be a group of orientation-preserving isometries of a rank one symmetric space of non-compact type with an Iwasawa decomposition $G = KAN$, $P$ a minimal parabolic subgroup of $G$ containing $AN$, $l:G \to \mathrm{Diff}(G/P)$ the action by the left multiplication, $\Gamma = \langle a, \Lambda \rangle$ the standard subgroup generated by $a \in A$ and a lattice $\Lambda \subset N$ with $a\Lambda a^{-1} \subset \Lambda$.

Let $\rho$ be an action of $\Gamma$ on $G/P$ sufficiently close to $l|_\Gamma$.
Since $\Gamma \subset P$, the original action $l|_\Gamma$ admits a common fixed point $\o = P \in G/P$.
We will use Theorem \ref{thm:Stowe} to show that $\rho$ also admits a common fixed point close to $\o$.
It suffices to show that the first cohomology with respect to the isotropic representation $dl|_\Gamma: \Gamma \to \mathrm{GL}(T_\o (G/P))$ is vanished.
Under the natural identification of $T_\o (G/P)$ with $\g/\p$, the isotropic representation at $\o \in G/P$ of the left action is identified with the adjoint representation of $\Gamma$ on $\g/\p$.
By Proposition \ref{prop:vanish}, the cohomology is vanished.
Thus $\rho$ admits a common fixed point close to $\o$.

Conjugating $\rho$ by a diffeomorphism of $G/P$ which maps the common fixed point of $\rho$ to $\o$, we may assume that $\rho$ has a common fixed point $\o$.
By Proposition \ref{prop:locrigidgerm}, we may assume that for each $g\in \Gamma$, the germs at $\o \in G/P$ of $\rho(g)$ and $l(g)$ are the same.
To prove Theorem \ref{thm:main}, it remains to show the following proposition.
Moreover, when $G = \mathrm{Sp}(n+1, 1)$, $n \geq 2$ or $F_4^{-20}$, Corollary \ref{cor:main} also follows from this proposition.

\begin{prop}\label{prop:local_to_global}
Assume $G \neq \mathrm{PSL}(2, \R)$.
Let $\rho:\Gamma \to \mathrm{Diff}(G/P, \o)$ be an action of $\Gamma$ on $G/P$ with a common fixed point $\o$ whose germs at $\o$ coincides with that of $l|_\Gamma:\Gamma \to \mathrm{Diff}(G/P, \o)$.
Then $\rho, l|_\Gamma \in \mathrm{Hom}(\Gamma, \mathrm{Diff}(G/P, \o))$ are conjugate.
\end{prop}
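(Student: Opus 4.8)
The strategy is to produce the conjugating diffeomorphism $h$ directly as a map that is the identity near $\o$, by gluing together translates of the identification of $\rho$ with $l|_\Gamma$ on a fixed small neighbourhood of $\o$. Write $\Gamma=\langle a,\Lambda\rangle$ with the generators $a,b_1,\dots,b_{m_1},c_1,\dots,c_{m_2}$ of Subsection~\ref{sss:std_subgrp} (write $\lambda$ for a typical element of $\{b_i,c_i\}$), put $\tilde\Lambda=\Gamma\cap N$, and recall that $\mathrm{Ad}(a)|_{\g_r}=k^r\mathrm{Id}$ forces $a\Lambda a^{-1}\subsetneq\Lambda$, so that $\tilde\Lambda=\bigcup_{m\ge0}a^{-m}\Lambda a^m$ is a dense subgroup of $N$. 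Since $\rho$ and $l|_\Gamma$ have the same germ at $\o$ and $\Gamma$ is finitely generated, there is an open ball $U\ni\o$ in the chart $\exp\circ i\colon\n_-\to G/P$, which we may take with $l(a)(\overline U)\subset U$, such that $\rho(g)=l(g)$ on $U$ for every $g$ in the chosen generating set and its inverses. Because $N$ acts simply transitively on $X=(G/P)\setminus\{\o\}$ and $\tilde\Lambda$ is dense in $N$, the translates $l(\lambda)(U)$, $\lambda\in\tilde\Lambda$, already cover $X$, so $\{l(\gamma)(U)\colon\gamma\in\Gamma\}$ is an open cover of $G/P$.

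The heart of the argument is to show that $\rho(\gamma)$ and $l(\gamma)$ agree on $U\cap l(\gamma)^{-1}(U)$ for every $\gamma\in\Gamma$; this is exactly what is needed for the assignment $h=\rho(\gamma)\circ l(\gamma)^{-1}$ on $l(\gamma)(U)$ to be well defined. The germ hypothesis gives this near $\o$ only, and I would propagate it outward using the contracting dynamics of $a$, in the spirit of Section~\ref{sect:loc_rigid_loc}. In the chart, $l(a)$ is the linear contraction $k^{-1}\mathrm{Id}_{\g_{-1}}\oplus k^{-2}\mathrm{Id}_{\g_{-2}}$ of Proposition~\ref{prop:std_fixed_pt}, so $\o$ is an attracting fixed point of $l(a)$ on $G/P$ whose basin is the complement of the opposite (repelling) fixed point $q$, that is, $i(\n_-)$. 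Using the relations $ab_ia^{-1}=b_i^k$, $ac_ia^{-1}=c_i^{k^2}$ one sees that the set on which $\rho(\gamma)$ and $l(\gamma)$ coincide is carried by $l(a)^{-1}$ into the corresponding set for $a^{-1}\gamma a$; iterating and using that $l(a)^{-m}(U)$ exhausts $i(\n_-)$, the coincidence set for each $a^{-m}\lambda a^m$ grows to fill $i(\n_-)$. A given $\mu\in\tilde\Lambda$, written for $m$ large as $a^{-m}\nu a^m$ with $\nu\in\Lambda$, is then treated by transporting the near-$\o$ coincidences of the letters of $\nu$ and comparing $\rho$ and $l$ through the sup-norm contraction estimate of the lemma in Section~\ref{sect:loc_rigid_loc}, the nonuniformity caused by the growing word length of $\nu$ being dominated by the exponential contraction of $l(a)$; the remaining elements of $\Gamma$ are products of such $\mu$ with powers of $a$, for which $\rho(a)=l(a)$ near $\o$ together with Sternberg's uniqueness \cite{Sternberg} finishes the argument. (Points whose trajectory meets $q$ form a nowhere dense set, where the coincidence then follows by continuity.)

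Granting this, $h$ is a well-defined $C^\infty$ map on $G/P$, equal to $\mathrm{id}$ on $U$; it is a local diffeomorphism everywhere, since each $l(\gamma)$ and $\rho(\gamma)$ is, and it conjugates $l|_\Gamma$ to $\rho|_\Gamma$: for $g,\gamma\in\Gamma$ and $x\in l(\gamma)(U)$ one has $l(g)x\in l(g\gamma)(U)$ and $h(l(g)x)=\rho(g\gamma)l(g\gamma)^{-1}l(g)x=\rho(g)\rho(\gamma)l(\gamma)^{-1}x=\rho(g)(h(x))$. Finally $h$ is bijective: it carries the common fixed point $\o$ of $l|_\Gamma$ to that of $\rho|_\Gamma$, and, being a conjugacy between the two $\Gamma$-actions with $\tilde\Lambda$ dense in $N$, it is injective and surjective, hence a diffeomorphism of $G/P$. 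This proves Proposition~\ref{prop:local_to_global}, and therefore — this being the last reduction — Theorem~\ref{thm:main}; when $G=\mathrm{Sp}(n+1,1)$ $(n\ge2)$ or $F_4^{-20}$ the same argument, fed by the strict local rigidity of the homomorphism into $\F(G/P,\o)$, gives Corollary~\ref{cor:main}.

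The step I expect to be the real obstacle is the second paragraph: pushing the coincidence $\rho(\gamma)=l(\gamma)$ out from a neighbourhood of $\o$ to all of $U\cap l(\gamma)^{-1}(U)$. A naive propagation loses uniformity as the word length of $\gamma$ in the generators of $\tilde\Lambda$ grows; the point is to organise the induction so that every such $\gamma$ is presented as a word whose letters all lie in a single family $\{a^{-m}b_ia^m,\,a^{-m}c_ia^m\}$ for one large $m$, for which the coincidence set has already been globalised, and then to invoke the sup-norm contraction estimate of Section~\ref{sect:loc_rigid_loc}. By comparison, the smooth gluing across the repelling fixed point $q$ is routine.
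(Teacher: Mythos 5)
The gap is exactly where you flag it, and it is not a technicality that the indicated tools can repair: the well-definedness of $h$ requires $\rho(\gamma)=l(\gamma)$ on $U\cap l(\gamma)^{-1}(U)$ for \emph{every} $\gamma\in\Gamma$ with one fixed $U$, and since every $\gamma\in\Gamma$ fixes $\o$, each overlap $U\cap l(\gamma)^{-1}(U)$ is again a neighbourhood of $\o$ containing points at a definite distance from $\o$ independent of $\gamma$; the germ hypothesis only gives coincidence on a $\gamma$-dependent neighbourhood $V_\gamma$ of $\o$, with no uniformity. Your propagation mechanism cannot supply this uniformity because it runs in the wrong direction: you know $\rho(a)=l(a)$ only near $\o$, so conjugating coincidence sets transports them by $l(a)$ \emph{towards} $\o$ (shrinking them), whereas carrying them by $l(a)^{-1}$ away from $\o$ would require knowing $\rho(a)=l(a)$ at points far from $\o$, which is not part of the hypothesis. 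In fact the assertion that ``the coincidence set for each $a^{-m}b_ia^m$ grows to fill $i(\n_-)$'' is false in general: take $\rho=h\circ l|_\Gamma\circ h^{-1}$ with $h$ a diffeomorphism equal to the identity only on a small neighbourhood $U_0$ of $\o$ (such a $\rho$ satisfies the hypotheses of the proposition); then the coincidence set of any $\gamma$ is essentially $U_0\cap l(\gamma)^{-1}(U_0)$ and never spreads beyond the identity locus of $h$. So proving your overlap identity amounts to producing a conjugacy that is the identity on all of $U$, i.e.\ essentially the proposition itself, and nothing in the sketch does this; already for $\gamma=b_i^m$ one meets the concrete obstruction of points of $U\cap l(b_i^m)^{-1}(U)$ whose intermediate $l(b_i)$-orbit leaves $U$, where nothing is known about $\rho(b_i)$.

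Two further points. First, your bijectivity argument (``being a conjugacy\,\dots\,it is injective and surjective'') is not a proof; the paper gets this by extending the map by $\o\mapsto\o$, observing it is a covering of $G/P\cong \mathrm{S}^n$, and using simple connectedness for $n\geq 2$ --- this is where the hypothesis $G\neq\mathrm{PSL}(2,\R)$ actually enters, and your proposal never uses it. Second, for contrast: the paper's proof avoids any coincidence statement away from $\o$ by working on $X=(G/P)\setminus\{\o\}\cong N$, where germ coincidence at $\o$ becomes coincidence \emph{outside a compact set} for each group element; it then takes $U\subset X$ to be a small open set meeting only finitely many of its $\Lambda$-translates (the opposite of your large neighbourhood of $\o$), and glues the identity map to a globally $\Lambda$-equivariant map via Lemmas \ref{lem:equiv_ext_} and \ref{lem:equiv_ext}, whose hypotheses (one end, infinite centre of $\Lambda$, proper discontinuity and cocompactness of the $\Lambda$-action on $X$) are available precisely because $N\cong\R^n$ with $n\geq2$; the $\langle a\rangle$-equivariance is then recovered from $a\Lambda a^{-1}\subset\Lambda$ by pushing points close to $\o$. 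If you want to salvage your construction, you would need to replace your $U$ by such a set with the finite-intersection property, at which point you are reproducing the paper's argument.
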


The outline of the proof can be described as follows.
The left action of $N$ on $G/P$ has a unique common fixed point $\o$, while its action on the complement $(G/P)\setminus\{\o\}$ is simply transitive.
Thus if we fix a point $x \neq \o \in G/P$, we obtain a natural identification of $(G/P)\setminus\{\o\}$ with $N$.
Then the conjugacy around $\o \in G/P$ can be considered as a $\Gamma$-equivariant function ``at infinity'' of $N$.
Lemma \ref{lem:equiv_ext} implies that such a function can be extended $\Lambda$-equivariantly.
As $\Lambda$ is a normal subgroup of $\Gamma$, we can deduce the $\Gamma$-equivariance.
 
Let us begin with an easy lemma which gives a sufficient condition for the existence of an equivariant extension of a function.
\begin{lem}\label{lem:equiv_ext_}
Let $\Lambda$ be a group with a generating set $S$, and $X$, $Y$ manifolds on which $\Lambda$ acts smoothly.
Assume there is an open subset $U$ of $X$ such that
\begin{enumerate}
\item $X = \bigcup_{g \in \Lambda} gU$, and
\item for $g \in \Lambda$, $gU \cap U \neq \emptyset$ only if $g \in S$.
\end{enumerate}
If $f$ is a smooth map from $X$ into $Y$ such that $gf(x) = f(gx)$ for $x \in U$, then there is a unique $\Lambda$-equivariant smooth map $\tilde{f}$ from $X$ into $Y$ such that $\tilde{f} = f$ on $U$.
\end{lem}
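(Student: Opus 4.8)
The plan is to construct $\tilde{f}$ by transporting the restriction $f|_U$ around the orbit $\Lambda\cdot U$, which covers $X$ by (i). For $x\in X$ I would choose $g\in\Lambda$ with $g^{-1}x\in U$ and \emph{define} $\tilde{f}(x)=g\cdot f(g^{-1}x)$; the proof then reduces to checking that this is independent of the choice of $g$, is smooth, restricts to $f$ on $U$, is $\Lambda$-equivariant, and is the unique such map.

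The step I would single out as the only one carrying content is well-definedness, and it is the only place the hypotheses are used in an essential way. Suppose $g^{-1}x\in U$ and $h^{-1}x\in U$. Then $(g^{-1}h)(h^{-1}x)=g^{-1}x\in U$, so $(g^{-1}h)U\cap U\neq\emptyset$ and hence $g^{-1}h\in S$ by (ii). Applying the compatibility hypothesis on $f$ to the point $h^{-1}x\in U$ and the element $g^{-1}h$ (whose action sends $h^{-1}x$ to $g^{-1}x\in U$) gives $f(g^{-1}x)=(g^{-1}h)\cdot f(h^{-1}x)$; multiplying on the left by $g$ yields $g\cdot f(g^{-1}x)=h\cdot f(h^{-1}x)$, as needed. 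Taking $g=e$ then shows $\tilde{f}=f$ on $U$.

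The remaining points are routine bookkeeping. Smoothness holds because on the open set $gU$, which by (i) is a neighbourhood of any prescribed point, $\tilde{f}$ is given by $x\mapsto g\cdot f(g^{-1}x)$, a composition of the smooth action of $g^{-1}$, the smooth map $f$, and the smooth action of $g$. For equivariance, given $h\in\Lambda$ and $x\in X$ I pick $g$ with $g^{-1}x\in U$; since $(hg)^{-1}(hx)=g^{-1}x\in U$ one computes $\tilde{f}(hx)=(hg)\cdot f\bigl((hg)^{-1}(hx)\bigr)=h\cdot\bigl(g\cdot f(g^{-1}x)\bigr)=h\cdot\tilde{f}(x)$. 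Uniqueness is immediate, since any $\Lambda$-equivariant extension $\tilde{f}'$ with $\tilde{f}'|_U=f|_U$ satisfies $\tilde{f}'(x)=g\cdot\tilde{f}'(g^{-1}x)=g\cdot f(g^{-1}x)=\tilde{f}(x)$ for $x$ and $g$ as above. I do not expect a genuine obstacle: condition (ii) is precisely what forces the local pieces $gU$ to overlap only through elements of the generating set $S$, so the compatibility assumed on $U$ is exactly what is needed to glue them into a global $\Lambda$-equivariant map.
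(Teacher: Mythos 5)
Your proof is correct and follows essentially the same route as the paper: define $\tilde f$ on each translate of $U$ by transporting $f|_U$, and use hypothesis (ii) to reduce well-definedness on overlaps to the compatibility relation for an element of $S$ (your $g^{-1}h\in S$ is exactly the paper's $s$ with $g_2=sg_1$). The paper only writes out this well-definedness computation and leaves smoothness, equivariance, restriction to $U$, and uniqueness implicit, which you verify by the same routine bookkeeping.
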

\begin{proof}
By the assumption (i) on $U$, for any $x \in X$, there is $g \in \Lambda$ such that $gx \in U$.
Thus it suffices to show that for any $x \in X$, $\tilde{f}(x) = g^{-1}f(gx)$ does not depend on the choice of $g \in \Lambda$ with $gx \in U$.
If $g_1x, g_2x \in U$, by the assumption (ii), there is $s \in S$ with $g_2 = sg_1$.
So
\[
g_2^{-1}f(g_2x) = (sg_1)^{-1}f(sg_1x) = g_1^{-1}s^{-1}f(sg_1x) = g_1^{-1}f(g_1x).
\]
Thus the claim follows.
\end{proof}

A finitely generated group $\Lambda$ is said to have exactly one \textit{end} if the Cayley graph $\Delta = \mathrm{Cay}(\Lambda, S)$ of  $\Lambda$ with respect to a finite generating set $S$ has the following property: For any finite subgraph $F \subset \Delta$, there is a finite subgraph $F'$ containing $F$ such that the complement $\Delta \setminus F'$ is connected.
It is known that this condition does not depend on the choice of a finite generating set.

\begin{lem}\label{lem:equiv_ext}
Let $\Lambda$, $S$, $X$, $Y$, and $U$ as in Lemma \ref{lem:equiv_ext_}.
Assume further that $S$ is a finite set, $\Lambda$ has exactly one end, and the center of $\Lambda$ is infinite.
Let $f$ be a smooth map from $X$ into $Y$ such that for any $g\in\Lambda$, there is a compact subset $K_g$ of $X$ such that $gf(x) = f(gx)$ for $x \in X\setminus K_g$.
Then there is a $\Lambda$-equivariant smooth map $\tilde{f}$ from $X$ into $Y$ and a compact subset $\tilde{K}$ of $X$ such that $\tilde{f} = f$ on $X\setminus \tilde{K}$.
\end{lem}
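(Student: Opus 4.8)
The plan is to construct $\tilde f$ by pushing $f$ out to infinity along a central direction — deforming $f$ only on a compact set — and to bring in one-endedness only at the last step, to absorb the non-compactness that any one-sided push produces. We will freely use that the $\Lambda$-action on $X$ is properly discontinuous. Since $S$ is finite, $K_0:=\bigcup_{s\in S}K_s$ is a compact subset of $X$ with $sf(y)=f(sy)$ for all $s\in S$ and $y\in X\setminus K_0$. Iterating this relation along a word shows that $f(gy)=g f(y)$ whenever $g=s_1\cdots s_k$ with $s_i\in S$ and the partial orbit $y,\ s_ky,\ s_{k-1}s_ky,\ \dots,\ s_2\cdots s_ky$ avoids $K_0$. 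Fix an element $z$ of infinite order in the centre of $\Lambda$ (which is infinite, and finitely generated in our setting), a word for $z$, and let $W_z$ be the finite set of its suffixes.

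For each $x$ the set $W_z\cdot x$ is finite, hence leaves $K_0$ under the $\langle z\rangle$-action, and since $z$ is central one has $z^n(W_zx)=W_z(z^nx)$; the propagation relation then gives $z^{-(n+1)}f(z^{n+1}x)=z^{-n}f(z^nx)$ for all sufficiently large $n$ (depending on $x$). Thus $\bigl(z^{-n}f(z^nx)\bigr)_{n\ge0}$ is eventually constant, and we may define $\tilde f_+(x)$ to be its eventual value, and symmetrically $\tilde f_-(x)$ to be the eventual value of $z^{n}f(z^{-n}x)$. On any compact set the stabilization index is bounded, so $\tilde f_\pm$ coincides locally with $z^{\mp N}\circ f\circ z^{\pm N}$ for a fixed large $N$ and is therefore smooth. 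For $g\in\Lambda$, taking $n$ large and using centrality of $z$ together with the propagation relation for a word of $g$ at $z^{\pm n}x$, one gets $\tilde f_\pm(gx)=z^{\mp n}f(g\,z^{\pm n}x)=z^{\mp n}\,g\,f(z^{\pm n}x)=g\,\tilde f_\pm(x)$, so $\tilde f_+$ and $\tilde f_-$ are $\Lambda$-equivariant.

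Unwinding the definition, $\tilde f_+(x)=f(x)$ fails only if $W_z\cdot x$ meets $\bigcup_{j\ge0}z^{-j}K_0$; since $z$ is central this exceptional set equals $B_+:=\bigcup_{j\ge0}z^{-j}K_1$ with $K_1:=\bigcup_{w\in W_z}w^{-1}K_0$ compact, and likewise $\tilde f_-=f$ off $B_-:=\bigcup_{j\ge0}z^{j}K_1'$ for a compact $K_1'$. By proper discontinuity each $B_\pm$ is a locally finite, hence closed, union of translates of a compact set, and the two ``half-tubes'' run in opposite directions along the $\langle z\rangle$-orbit, so that $\tilde K:=B_+\cap B_-$ is compact. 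On the open set $V:=X\setminus(B_+\cup B_-)$ we have $\tilde f_+=f=\tilde f_-$.

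The heart of the matter is to upgrade this to $\tilde f_+=\tilde f_-$ on all of $X$. As $\tilde f_+$ and $\tilde f_-$ are $\Lambda$-equivariant and agree on $V$, it is enough to prove $\Lambda\cdot V=X$, i.e. that no $\Lambda$-orbit lies inside $B_+\cup B_-$. Suppose some orbit $\Lambda x$ were contained in $B_+\cup B_-$. Since $B_+\cup B_-\subset\langle z\rangle K$ for a compact $K$ and $\langle z\rangle$ acts properly, the image of $\Lambda x$ in the compact quotient $\langle z\rangle\backslash(\langle z\rangle K)$ would be a discrete set of cardinality $[\Lambda:\langle z\rangle\,\mathrm{Stab}(x)]$; but $\Lambda$ is one-ended, hence not virtually cyclic, so $\langle z\rangle$ has infinite index in $\Lambda$, and as $\mathrm{Stab}(x)$ is finite by proper discontinuity this cardinality is infinite — impossible for a discrete subset of a compact space. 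Hence $\Lambda V=X$, so $\tilde f:=\tilde f_+=\tilde f_-$ is a well-defined smooth $\Lambda$-equivariant map that agrees with $f$ off each of $B_+$ and $B_-$, hence off $\tilde K=B_+\cap B_-$; this is the assertion. I expect this last paragraph to be the only nonroutine step: each single-sided push is forced to differ from $f$ on an unbounded half-tube about the central orbit, and it is precisely one-endedness — through the infinite index of a central cyclic subgroup — that lets the two half-tubes cancel outside a compact set; the smoothness and equivariance of $\tilde f_\pm$ and the identification of the exceptional sets are bookkeeping once the propagation relation and properness are in hand.
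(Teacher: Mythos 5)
Your proof is essentially correct, but it takes a genuinely different route from the paper's. The paper re-applies Lemma \ref{lem:equiv_ext_}: one-endedness gives a finite set $F\subset\Lambda$ such that $SgU$ misses the bad compact set for $g\notin F$ and such that the complement of $F$ in $\mathrm{Cay}(\Lambda,S)$ is connected; one then picks a central $c\notin F$ (centrality guarantees $cU$ still satisfies (i) and (ii)), extends $f|_{cU}$ equivariantly by Lemma \ref{lem:equiv_ext_}, and propagates the equality $\tilde f=f$ across the connected complement of $F$. You instead build the extension dynamically, as the stabilized value of $z^{-n}f(z^{n}x)$ along an infinite cyclic central subgroup $\langle z\rangle$, and you use one-endedness only through ``$\Lambda$ is not virtually cyclic'' to show that the two one-sided exceptional half-tubes cannot contain a whole orbit. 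This bypasses both Lemma \ref{lem:equiv_ext_} and the Cayley-graph connectivity, at the cost of more bookkeeping; the bookkeeping itself (stabilization, smoothness via $z^{\mp N}\circ f\circ z^{\pm N}$ on compact sets, equivariance, closedness of $B_\pm$ and compactness of $B_+\cap B_-$) is correct. Your key step can even be shortened: if $\Lambda x\subset\langle z\rangle K$ with $K$ compact, then every $g\in\Lambda$ lies in $\langle z\rangle\{h\in\Lambda: hx\in K\}$, so $[\Lambda:\langle z\rangle]<\infty$ and $\Lambda$ would be two-ended, a contradiction; this direct count also repairs the small imprecision in your phrasing (an infinite discrete subset of a compact space is only impossible if it is also closed, which here must be extracted from properness).

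Two caveats. First, your construction needs a central element of \emph{infinite order}, whereas the lemma assumes only that the center is infinite; for a general finitely generated one-ended group the center could a priori be an infinite torsion group (P.~Hall showed that any countable abelian group occurs as the center of a finitely generated group), so as written you prove the lemma under a slightly stronger hypothesis. This is harmless for the application in the paper, where $\Lambda$ is a lattice in a simply connected nilpotent Lie group and its center contains a copy of $\mathbb{Z}$, and you do flag it, but note that the paper's argument needs only that the center is not contained in a given finite subset of $\Lambda$. Second, you ``freely use'' that the $\Lambda$-action on $X$ is properly discontinuous; this does follow from hypotheses (i) and (ii) of Lemma \ref{lem:equiv_ext_} together with the finiteness of $S$ (cover a compact set by finitely many translates $g_iU$, so that $\{g: gC\cap C\neq\emptyset\}\subset\bigcup_{i,j}g_jSg_i^{-1}$), but since the entire construction rests on properness, this one-line verification should be included.
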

\begin{proof}
Put $K = \bigcup_{g\in S} K_g$ so that $gf(x) = f(gx)$ for all $g \in S$ and $x \in  X\setminus K$.
By the assumption (ii) on $U$, there are at most finitely many elements $g \in \Lambda$ such that $SgU \cap K \neq \emptyset$.
As $\Lambda$ has one end, there is a finite subset $F$ of $\Lambda$ such that $SgU \cap K = \emptyset$ for $g \in \Lambda \setminus F$ and that the complement of $\mathrm{Cay}(\Lambda, S)$ for $F$ is connected.
As the center of $\Lambda$ is infinite, we may choose an element $c \in \Lambda$ in the center so that $c \not\in F$.
As $c$ commutes with any elements in $\Lambda$, $cU$ also satisfies the assumptions (i) and (ii).
By Lemma \ref{lem:equiv_ext_}, there is a unique $\Lambda$-equivariant smooth extension $\tilde{f}$ of $f|_{cU}$ to $X$.
Observe that for $g \in \Lambda$ with $\tilde{f} = f$ on $gU$, if $sg \in \Lambda \setminus F$, $s \in S$, then  $\tilde{f} = f$ on $sgU$.
Since $\tilde{f} = f$ on $cU$ and the complement of $\mathrm{Cay}(\Lambda, S)$ for $F$ is connected, we see that  $\tilde{f}$ coincides with $f$ on $\bigcup_{g \in \Lambda \setminus F} gU$.
Thus $\tilde{f} = f$ on $X\setminus \tilde{K}$ for some compact subset $\tilde{K}$ of $X$.
\end{proof}

\begin{proof}[Proof of Proposition \ref{prop:local_to_global}]
As we assume $G \neq \mathrm{PSL}(2, \R)$, the Lie group $N$ is diffeomorphic to $\R^n$ for some $n \geq 2$.
So its lattice $\Lambda$ has exactly one end.
Moreover, the center of $\Lambda$ is infinite.
As the left action of $N$ on $X = (G/P) \setminus\{\o\}$ is simply transitive, the action of $\Lambda$ on $X$ is properly discontinuous and cocompact.
So there are a finite generating set $S$ of $\Lambda$ and an open subset $U$ of $X$ satisfying the assumptions of Lemma \ref{lem:equiv_ext_}.
Since the germs at $\o$ of $l|_\Gamma$ and $\rho$ are the same, for each $g \in \Lambda$, there is a compact subset $K_g$ of $X$ such that $l(g) = \rho(g)$ on $X \setminus K_g$.
Applying Lemme \ref{lem:equiv_ext}, we obtain a $\Lambda$-equivariant smooth map $\tilde{f}:X \to X$  which is identity outside of a compact subset,  where the domain is equipped with the $\Lambda$-action induced by $l$ and the range with the action induced by $\rho$.
It remains to show that the extension $h:G/P \to G/P$ of $\tilde{f}$ by $h(\o) = \o$ is a conjugacy between $l|_\Gamma$ and $\rho$.

By the $\Lambda$-equivariance, $h$ is a covering map over $G/P$, which is diffeomorphic to the sphere $\mathrm{S}^n$, $n \geq 2$.
So $h$ is a diffeomorphism of $G/P$.
We will show the $\langle a \rangle$-equivariance of $h$.
For any $x \in X$, we can choose $g \in \Lambda$ so that $l^g(x) = l(g)(x)$ is sufficiently close to $\o$.
So we may choose $g \in \Lambda$ satisfying $h\circ l^{ag}(x) = \rho^a\circ h \circ l^g(x)$. 
By the definition of $\Gamma$, $ag^{-1}a^{-1}$ is an element of $\Lambda$.
Using the $\Lambda$-equivariance of $h$,
\[
h\circ l^a(x) = h\circ l^{ag^{-1}a^{-1}ag}(x) = \rho^{ag^{-1}a^{-1}}\circ h \circ l^{ag}(x) = \rho^{ag^{-1}}\circ h \circ l^g(x) = \rho^a \circ h(x),
\]
which shows the $\langle a \rangle$-equivariance of $h$.
So $h$ is $\Gamma$-equivariant and thus a conjugacy between $l|_\Gamma$ and $\rho$.
\end{proof}

Finally, we will show that the action $l|_\Gamma$ of $\Gamma$ on $G/P$ is not locally rigid if $G = \mathrm{SU}(n+1, 1)$, $n \geq 2$.
\begin{prop}\label{prop:nonloc_rigid}
When $G = \mathrm{SU}(n+1, 1)$, $n \geq 2$, the action $l|_\Gamma$ of a standard subgroup $\Gamma$ of $G$ on $G/P$ is not $C^2$-locally rigid.
\end{prop}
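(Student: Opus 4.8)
The plan is to exhibit a one-parameter family of actions arbitrarily $C^2$-close to $l|_\Gamma$ that is not $C^\infty$-conjugate to it, obtained by deforming the lattice $\Lambda$ inside $N$ along an outer automorphism of $N$ not induced by $MA$; the non-conjugacy will be squeezed out of Lemma \ref{lem:nonlocrigid}. Concretely: for $G = \mathrm{SU}(n+1,1)$ the algebra $\n = \g_1 \oplus \g_2$ is a Heisenberg algebra with centre $\g_2$, and after a choice of basis of $\g_2 \cong \R$ the bracket gives a symplectic form $\omega$ on $\g_1$. For $\delta \in \mathfrak{sp}(\g_1,\omega)$, extending $\delta$ by $0$ on $\g_2$ gives a derivation of $\n$ commuting with $\ad(E)|_\n$, so $\psi_t := \exp(t\delta) \in \mathrm{Aut}(N)$ commutes with conjugation by $a$; hence $\Lambda_t := \psi_t(\Lambda)$ is a lattice of $N$ with $a\Lambda_t a^{-1} \subset \Lambda_t$, and $\Gamma_t := \langle a, \Lambda_t\rangle$ is a standard subgroup isomorphic to $\Gamma$ via $\psi_t$ (which fixes $a$). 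I would set $\rho_t := l|_{\Gamma_t}\circ\psi_t \colon \Gamma \to \mathrm{Diff}(G/P)$; since $\psi_t \to \mathrm{id}$ on the generators of $\Gamma$ and $l$ is continuous, $\rho_t$ is $C^\infty$-close to $l|_\Gamma$ for small $t$. Inner derivations of $\n$ act trivially on $\g_1$, and $\ad(\g^\mathfrak{a})|_\n$ induces on $\g_1$ only $\mathfrak{u}(n) \oplus \R\,\mathrm{Id}$, which meets $\mathfrak{sp}(\g_1,\omega)$ in $\mathfrak{u}(n)$; so I would choose $\delta \in \mathfrak{sp}(\g_1,\omega)\setminus\mathfrak{u}(n)$ and then fix a small $t \neq 0$ in the cofinite set of parameters for which $\psi_t \notin \mathrm{Ad}(MA)|_N$.

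Next I would analyze a hypothetical conjugacy. Suppose $h \in \mathrm{Diff}(G/P)$ satisfies $h\circ\rho_t(g)\circ h^{-1} = l(g)$ for all $g \in \Gamma$ (if no such $h$ exists we are done). Since $\rho_t(\Gamma), l(\Gamma) \subset l(P)$ fix $\o$, which is their unique common fixed point because the $N$-part acts freely on $(G/P)\setminus\{\o\}$, we get $h(\o) = \o$. Identifying $(G/P)\setminus\{\o\}$ with $N$ via $g \mapsto gx$, where $x$ is the common fixed point of $l(a) = \rho_t(a)$ on this set, $l(g)$ becomes left translation by $g$ for $g \in N$ and $l(a)$ becomes conjugation by $a$. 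Reading the relations in these coordinates — and using that $\tilde\Lambda := \bigcup_m a^{-m}\Lambda a^{m}$ is dense in $N$ — the relations with $g \in N$ force $h(m) = \psi_t^{-1}(m)g_1$ with $g_1 = h(e)$, and the relation with $g = a$, combined with $Z_N(a) = \{e\}$ (the eigenvalues of $\Ad(a)|_\n$ are $k, k^2 > 1$), forces $g_1 = e$. So $h$ is the extension to $G/P$ of the automorphism $\psi_t^{-1}$ of $N$; in particular $h$ is uniquely determined, and for small $t$ its germ $H$ at $\o$ is $C^\infty$-close to the identity.

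To finish I would pass to the germ at $\o$. In the coordinate $\exp\circ\, i \colon \n_- \to G/P$, the germ $H$ commutes with the germ of $l(a)$, which by Proposition \ref{prop:std_fixed_pt}(ii) is a linear hyperbolic contraction with eigenvalues $k^{-1}, k^{-2}$; by the resonance analysis in Sternberg's theory \cite{Sternberg}, $H$ is then a polynomial diffeomorphism lying in the finite-dimensional Lie group of germs commuting with $l(a)$, whose Lie algebra is $Z_{\P(\n_-)}(\lambda(\mathfrak{a}))$. Moreover $H$ intertwines the germ of the $l(N)$-action with its $\psi_t$-twist $g \mapsto l(\psi_t(g))$, which has the same image $l(N)$, so $\mathrm{Ad}(H)$ preserves $\lambda(\n)$; hence $H$ lies in the closed subgroup whose Lie algebra is $N_{\P(\n_-)}(\lambda(\n)) \cap Z_{\P(\n_-)}(\lambda(\mathfrak{a}))$, and being close to the identity it lies in its identity component. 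By Lemma \ref{lem:nonlocrigid} that Lie algebra is $\lambda(\g^\mathfrak{a})$, so $H = \exp(\lambda(Z))$ for some $Z$ in the Lie algebra $\g^\mathfrak{a}$ of $MA$; differentiating the intertwining relation at the identity of $N$ then yields $\psi_t = \mathrm{Ad}(\exp(-Z))|_N \in \mathrm{Ad}(MA)|_N$, contradicting the choice of $t$. Thus $\rho_t$ is not $C^\infty$-conjugate to $l|_\Gamma$, so $l|_\Gamma$ is not $C^2$-locally rigid.

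The hard part will be this last step: one must control the would-be conjugacy at the germ level well enough — that it is polynomial, by Sternberg, and that, thanks to the smallness of $t$, it sits in the identity component of the relevant group of polynomial germs — so that its infinitesimal generator genuinely lies in $N_{\P(\n_-)}(\lambda(\n)) \cap Z_{\P(\n_-)}(\lambda(\mathfrak{a}))$ and Lemma \ref{lem:nonlocrigid} can be applied; the passage from ``$\mathrm{Ad}(H)$ preserves $\lambda(\n)$'' to ``the generator normalizes $\lambda(\n)$'' is exactly where the identity-component argument is needed. The remaining points — that the map induced by $\psi_t$ extends smoothly to $G/P$ depending continuously on $t$, and that $\rho_t \to l|_\Gamma$ in the $C^2$ topology — are routine, since $\psi_t$ is linear in exponential coordinates of $N$ and equals the identity at $t = 0$.
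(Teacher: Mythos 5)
Your deformation is the same one the paper exploits (an automorphism of $AN$ fixing $A$ and preserving $N$, i.e.\ an element of $\mathrm{Der}(\n)^{\mathfrak{a}}\cong\mathfrak{sp}(\g_1,\omega)\oplus\R\,\mathrm{Id}$ that is not in $\ad(\g^{\mathfrak{a}})|_{\n}=\mathfrak{u}(n)\oplus\R\,\mathrm{Id}$), and the obstruction is made to come from Lemma \ref{lem:nonlocrigid}, exactly as in the paper; the difference is that the paper argues softly, by comparing dimensions of the group of such automorphisms and of the subgroup of $J^3(G/P,\o)$ commuting with $l(A)$ and normalizing $l(N)$ (the gap in dimensions being $\dim H^1(\n,\g)^{\mathfrak{a}}\neq 0$), whereas you analyse a hypothetical conjugacy $h$ directly. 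Your determination of $h$ on $(G/P)\setminus\{\o\}$ as the map induced by $\psi_t^{\pm 1}$ (via density of $\tilde\Lambda$ and triviality of $Z_N(a)$), and the Sternberg-type argument that the germ $H$ at $\o$ is then a polynomial germ commuting with $l(a)$ whose $\Ad$-action preserves $\lambda(\n)$, are sound.

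The genuine gap is exactly at the step you flag: you assert that for small $t$ the germ $H$ at $\o$ is $C^\infty$-close to the identity, and you need this to place $H$ in the identity component of the group $S$ of polynomial germs commuting with $l(a)$ and normalizing $\lambda(\n)$, so that $\log H$ lies in $N_{\P(\n_-)}(\lambda(\n))\cap Z_{\P(\n_-)}(\lambda(\mathfrak{a}))=\lambda(\g^{\mathfrak{a}})$. But in your identification $\o$ is the point at infinity of $N$, so $\psi_t\to\mathrm{id}$ (uniformly with all derivatives on compact subsets of $N$) gives no control on the germ of $h$ at $\o$; nothing you have proved excludes that $H_t$ stays near a nontrivial element of the discrete kernel of $P\mapsto\Ad(P)|_{\lambda(\n)}$ on $S$, i.e.\ in another component, and then the passage to Lemma \ref{lem:nonlocrigid} fails as written. (The real-hyperbolic toy case shows the issue: the boundary extension of a linear map $A$ of $\R^n$, read in the inverted chart at $\infty$, is $y\mapsto Ay\,|y|^2/|Ay|^2$, whose germ at $0$ is not governed in any naive way by $\|A-\mathrm{Id}\|$; controlling the germ at $\o$ is precisely the nontrivial content.) The step can be repaired without proving closeness: assuming local rigidity you get a conjugacy, hence an element $H_t\in S$ with $\Ad(H_t)|_{\lambda(\n)}$ corresponding to $\exp(t\delta)$, for \emph{every} sufficiently small $t$; since $S$ is an algebraic subgroup of a jet group, its image in $\mathrm{Aut}(\n)$ is a Lie subgroup whose Lie algebra is $\ad(\g^{\mathfrak{a}})|_{\n}$ by Lemma \ref{lem:nonlocrigid}, and containing $\exp(t\delta)$ for all small $t$ forces $\delta\in\mathfrak{u}(n)\oplus\R\,\mathrm{Id}$, the desired contradiction. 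Alternatively, drop the analysis of an individual conjugacy altogether and run the paper's dimension count at the $3$-jet level.
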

\begin{proof}
Let $l:P \to J^3(G/P, \o)$ be the homomorphism induced by the action of $P$ on $G/P$ by the left translation.
We will show that there is an automorphism $\phi$ of the group $AN$ close to the identity such that 
\begin{itemize}
\item $\phi|_A = \mathrm{id}_A$, $\phi(N) = N$, and
\item the homomorphisms $l \circ \phi$, $l|_{AN}$ of $AN$ into $J^3(G/P, \o)$ are not conjugate.
\end{itemize} 
Let $G_1$ be the group of automorphisms of $AN$ that fix $A$ and preserve $N$ and $G_2$ the subgroup of $J^3(G/P, \o)$ consisting of elements commuting with $l(A)$ and normalizing $l(N)$.
It suffices to show that the dimension of $G_1$ is larger that that of $G_2$.
It is easy to see that the Lie algebra of $G_1$ can be identified with the space $\mathrm{Der}(\n)^\mathfrak{a}$ of $\mathfrak{a}$-equivariant derivations of $\n$.
On the other hand, the Lie algebra of $G_2$ can be identified with the subalgebra of $\j^3(G/P, \o)$ consisting of elements centralizing $l(\mathfrak{a})$ and normalizing $l(\n)$.
By Lemma \ref{lem:nonlocrigid}, this subalgebra is equal to $l(\g^\mathfrak{a})$.
Since the  codimension of $l(\g^\mathfrak{a}) \subset \mathrm{Der}(\n)^\mathfrak{a}$ is equal to the dimension of $H^1(\n, \g)^\mathfrak{a} \neq 0$, the claim follows
\end{proof}

\centerline{\bf References}
\begin{enumerate}
\renewcommand{\labelenumi}{[\arabic{enumi}]}
\renewcommand{\makelabel}{\rm}
\setcounter{enumi}{0}
\bibitem{Asaoka1} M. Asaoka, Rigidity of certain solvable actions on the sphere, Geom. Topol., {\bf16} (2012), no. 3, 1835--1857.

\bibitem{Asaoka2} M. Asaoka, Rigidity of certain solvable actions on the torus, Geometry, dynamics, and foliations 2013, 269--281. 
"
\bibitem{Burslem-Wilkinson} L. Burslem and A. Wilkinson, Global rigidity of solvable group actions on $S^1$, Geom. Topol., {\bf8} (2004), 877--924 (electronic).

\bibitem{Fisher} D. Fisher, Recent progress in the Zimmer program, Preprint, 2017. arXiv: 1711.07089.

\bibitem{Ghys} \'E. Ghys, Rigidit\'e diff\'erentiable des groupes fuchsiens, Inst. Hautes
\'Etudes Sci. Publ. Math., {\bf78} (1993), 163--185.

\bibitem{Knapp} A. W. Knapp, Lie Groups Beyond an Introduction, Second Edition, Progress in Mathematics, {\bf140}, Birkh\"auser, 2002.

\bibitem{Okada} M. Okada, Local rigidity of certain actions of nilpotent-by-cyclic groups on the sphere, J. Math. Sci. Univ. Tokyo, {\bf20} (2019), 15--53.

\bibitem{Raghunathan} M. S. Raghunathan, Discrete Subgroups of Lie Groups, Springer, New York, 1972.

\bibitem{Sternberg} S. Sternberg, Local contractions and a theorem of Poincar\'e, Amer. J. Math., {\bf79} (1957), 809--824.

\bibitem{Stowe} D. Stowe, The stationary set of a group action, Proc. Amer. Math. Soc., {\bf79} (1980), no.1, 139--146. 

\bibitem{Vogan} D. Vogan, Representations of real reductive Lie groups, Progress in Mathematics, {\bf15}, Birkh\"auser, 1981.

\bibitem{Wilkinson-Xue} A. Wilkinson and J. Xue, Rigidity of some abelian-by-cyclic solvable group actions on $\mathbb{T}^N$. Preprint, 2019. arXiv:1902.01003

\end{enumerate}

\end{document}